\newif\ifpdfAuthoring \pdfAuthoringtrue
\ifpdfAuthoring \usepackage[ bookmarksnumbered=true,
\newcommand {\BF}{{\mathfrak W}} \newcommand {\BP} {{\mathbb B}}
\newcommand {\C} {{\mathbb C}} \newcommand {\Div} {\sym{Div}}
\newcommand {\GTB} {\TB^*} \newcommand {\HP} {{\mathbb H}} \newcommand
{\Lvec} {shadow vector} \newcommand {\Mp}[1][\Z] {\sym{Mp}(2,{#1})}
\newcommand {\N} {{\mathbb N}} \newcommand {\Orth}
{\mathop{\null\mathrm {O}}\nolimits} \newcommand {\Q} {{\mathbb Q}}
\newcommand {\R} {{\mathbb R}} \newcommand {\SL}[1][\Z]
{\sym{SL}(2,{#1})}  \newcommand
{\TB} {{\mathfrak B}} \newcommand {\TQ} {G(\TB)}
\newcommand {\Z} {{\mathbb Z}} \newcommand {\dual}[1]
{{#1}^\sharp} \newcommand {\ev}[1] {{#1}_{\rm ev}} \newcommand {\ex}
{{\mathbf e}} \newcommand {\e}[1] {e\left(#1\right)} \newcommand
{\isom} {\cong} \newcommand {\lam} {\lambda} \newcommand {\lat}[1]
{\underline{#1}}
\newcommand {\leg}[2] {\left(\frac{#1}{#2}\right)}
\newcommand {\tleg}[2] {\left(\tfrac{#1}{#2}\right)}
\newcommand {\mat}[4]
{\left(\begin{smallmatrix}#1&#2\\#3&#4\end{smallmatrix}\right)}
\newcommand {\ord} {\sym{ord}} 
\newcommand {\red}[1] {\widetilde #1} \newcommand {\remout}[1]
{\relax}  \newcommand
{\sectheta} {\vartheta^*} \newcommand {\spv}[1] {{#1}^{\bullet}}
\newcommand {\sym}[1] {\operatorname{#1}} \newcommand {\ve}[1]
{\mathbf {#1}} \newcommand {\vt}[1] {\vartheta_{#1}}
\newcommand {\Th} {Th}
\theoremstyle{plain}
\newtheorem{Theorem}{Theorem}[section]
\newtheorem{Conjecture}[Theorem]{Conjecture}
\newtheorem*{Fact}{Fact}
\newtheorem{Proposition}[Theorem]{Proposition}
\newtheorem{Corollary}[Theorem]{Corollary}
\newtheorem{Lemma}[Theorem]{Lemma}
\newtheorem{Supplement}[Theorem]{Supplement}
\newtheorem*{TheoremX}{Theorem}
\theoremstyle{definition} \newtheorem*{Definition}{Definition}
\newtheorem{Example}[Theorem]{Example}
\newtheorem*{Remark}{Remark}
\def\part{\@startsection{part}{0}%
  \z@{1.7\linespacing\@plus\linespacing}{1.5\linespacing}%
  {\normalfont\scshape\centering}}
\renewcommand{\@secnumfont}{\bfseries}
\renewcommand{\@bibtitlestyle}{%
  \@xp\part\@xp*\@xp{\refname}%
}
\title{%
  Theta Blocks }
\author{%
  Valery Gritsenko}
\address{%
  Universit\'e de Lille, Villeneuve d'Ascq, France and
  National Research University ``Higher School of Economics'', Moscow, Russian Federation}
\email{%
  Valery.Gritsenko@math.univ-lille1.fr}
\thanks{The first author was
  supported by the Laboratory of Mirror Symmetry of the National
  Research University ``Higher School of Economics'' (Russian Federation
  Government grant, agreement no.~14.641.31.0001)}
\author{%
  Nils-Peter Skoruppa}
\address{%
  Department Mathematik, Universit\"at Siegen, 57068 Siegen, Germany}
\email{%
  nils.skoruppa@gmail.com}
\author{%
  Don Zagier} \address{%
  Max-Planck-Institut f\"ur Mathematik, Vivatsgasse 7, 53111 Bonn, Germany}
\email{%
  dbz@mpim-bonn.mpg.de}
\keywords{%
  Jacobi forms, product expansions of Jacobi forms, Jacobi forms of
  lattice index, Macdonald identities}
\subjclass[2010]{%
  11F50 (primary),
  11F55 (secondary)%
}
\begin{document}

\begin{abstract}
  We define theta blocks as products of Jacobi theta functions divided
  by powers of the Dedekind eta-function and show that they give a
  powerful new method to construct Jacobi forms and Siegel modular
  forms, with applications also in lattice theory and algebraic
  geometry. One of the central questions is when a theta block defines
  a Jacobi form. It turns out that this seemingly simple question is
  connected to various deep problems in different fields ranging from
  Fourier analysis over infinite-dimensional Lie algebras to the
  theory of moduli spaces in algebraic geometry. We give several
  answers to this question.
\end{abstract}

\vspace*{-2.5cm}
\maketitle

\tableofcontents
\addtocontents{toc}{\protect\setcounter{tocdepth}{1}}

\part*{Introduction}
\label{sec:introduction}

The Jacobi theta function $\vartheta(\tau,z)$, defined for
$\tau\in\HP$, $z\in\C$ either as the theta series
\begin{equation}
  \label{eq:theta}
  \vartheta(\tau,z)=\sum_{r=-\infty}^\infty \leg{-4}r
  q^{r^2/8}\,\zeta^{r/2} \qquad \bigl(q=e^{2\pi i\tau},\;\zeta=e^{2\pi
    iz}\bigr)
\end{equation}
or else by the triple product
\begin{equation}
  \label{eq:triple-product}
  \vartheta(\tau,z) = q^{1/8}\,\zeta^{1/2}\,
  \prod_{n=1}^\infty\bigl(1-q^n\bigr)
  \bigl(1-q^n\zeta\bigr)\bigl(1-q^{n-1}\zeta^{-1}\bigr)
  ,
\end{equation}
is a holomorphic Jacobi form (with non-trivial character) of weight
1/2 and index 1/2.  (The definitions of holomorphic Jacobi forms with
character and of their weight and index are reviewed in \S2.)  For
$a\in\N$ we denote by $\vartheta_a$ the Jacobi form
\begin{equation*}
  \vartheta_a(\tau,z)\;:=\;\vartheta(\tau,az)
\end{equation*}
of weight 1/2 and index $a^2/2$, while
\begin{equation*}
  \eta(\tau)
  =
  \sum_{r=1}^\infty \leg{12}rq^{r^2/24}
  =
  q^{1/24}\prod_{n=1}^\infty \bigl(1-q^n\bigr)
\end{equation*}
denotes the Dedekind eta-function. The starting point of this paper is
the following observation:
\begin{Fact}
  Let $a$ and $b$ be positive integers. Then the quotient
  \begin{equation*}
    Q_{a,b}(\tau,z)
    =
    \frac{\vartheta_a(\tau,z)\,\vartheta_b(\tau,z)\,
      \vartheta_{a+b}(\tau,z)}{\eta(\tau)}
  \end{equation*}
  is a holomorphic Jacobi form of weight $1$ and index $a^2+ab+b^2$,
  and is a cusp form if $3g^3\mid ab(a+b)$, where $g=\gcd(a,b)$.
\end{Fact}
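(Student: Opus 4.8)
The plan is to first verify that $Q_{a,b}$ is a weak Jacobi form and then to pin down its behaviour at the cusp through one elementary inequality. For the transformation law I would argue multiplicatively. Writing $v$ for the multiplier of $\vartheta$ (so $v=v_\eta^3$, where $v_\eta$ is the eta-multiplier), each factor $\vartheta(\tau,cz)$ is a Jacobi form of weight $1/2$, index $c^2/2$ and multiplier $v$ in the $\tau$-variable; hence the numerator has weight $3/2$, index $\tfrac12(a^2+b^2+(a+b)^2)=a^2+ab+b^2=:m$, and multiplier $v^3=v_\eta^9$, while under $z\mapsto z+\lambda\tau+\mu$ the three half-integral indices contribute signs whose product is $(-1)^{2(a+b)(\lambda+\mu)}=1$. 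Dividing by $\eta$ lowers the weight to $1$, leaves the index $m$ (now genuinely integral) and the trivial elliptic character untouched, and produces the $\tau$-multiplier $v_\eta^9/v_\eta=v_\eta^8$, an order-$3$ character matching the leading term $q^{1/3}$ found below. Since $\eta$ is zero-free on $\HP$, the quotient is holomorphic on $\HP\times\C$, so $Q_{a,b}$ is a weak Jacobi form of weight $1$, index $m$ and character $v_\eta^8$.

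It remains to control the cusp. Substituting the triple product and cancelling $\prod_n(1-q^n)$ gives
\begin{equation*}
  Q_{a,b}=q^{1/3}\prod_{c}\bigl(\zeta^{c/2}-\zeta^{-c/2}\bigr)\prod_{n\ge1}(1-q^n)^2\prod_{c}(1-q^n\zeta^{c})(1-q^n\zeta^{-c})\qquad(c\in\{a,b,a+b\}).
\end{equation*}
Writing $Q_{a,b}=\sum c(n,r)q^n\zeta^r$, the holomorphic (resp.\ cusp) property is equivalent to $\ord(Q_{a,b}):=\min\{n-r^2/4m : c(n,r)\neq0\}\ge0$ (resp.\ $>0$). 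I would evaluate this order by the tropical identity $\ord(Q_{a,b})=\min_{s\in\R}\bigl(G(s)+ms^2\bigr)$, where $G(s)$ is the smallest exponent of $q$ after the formal substitution $\zeta=q^{-s}$; since the lowest power in each factor of the product is explicit, $G(s)$ can be read off directly.

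Carrying out this computation, the Bernoulli-type contributions collapse to the clean formula
\begin{equation*}
  G(s)+ms^2=\tfrac13-\tfrac12\sum_{c\in\{a,b,a+b\}}\{cs\}\bigl(1-\{cs\}\bigr),
\end{equation*}
with $\{\cdot\}$ the fractional part, so holomorphy reduces to the assertion $\sum_{c}\{cs\}(1-\{cs\})\le\tfrac23$ for every $s$. The arithmetic of the block enters only through the relation $a+b=(a+b)$: setting $\alpha=\{as\}$, $\beta=\{bs\}$ gives $\{(a+b)s\}=\{\alpha+\beta\}$, and I would maximise $F(\alpha,\beta)=\alpha(1-\alpha)+\beta(1-\beta)+\{\alpha+\beta\}(1-\{\alpha+\beta\})$ over $[0,1)^2$. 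Elementary calculus shows that its maximum is exactly $2/3$, attained only at $(\tfrac13,\tfrac13)$ and $(\tfrac23,\tfrac23)$; this yields $\ord(Q_{a,b})\ge0$ and hence the holomorphic Jacobi form assertion.

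For the cusp statement, $\ord>0$ holds precisely when the closure of the curve $s\mapsto(\{as\},\{bs\})$ avoids both maximisers. With $g=\gcd(a,b)$ and $a=ga'$, $b=gb'$, this closure is the subtorus of $(\R/\Z)^2$ in the primitive direction $(a',b')$, and a short congruence computation shows it contains $(\tfrac13,\tfrac13)$ or $(\tfrac23,\tfrac23)$ iff $a'\equiv b'\pmod 3$. As $\gcd(a',b')=1$, one has $a'\not\equiv b'\pmod 3\iff 3\mid a'b'(a'+b')\iff 3g^3\mid ab(a+b)$, so the stated hypothesis forces $\ord>0$, i.e.\ a cusp form. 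The main obstacle is the second and third steps: justifying the order formula $\ord=\min_s(G(s)+ms^2)$ rigorously---the Fourier support lies in a parabolic region so the minimum is attained, and any cancellation among leading monomials only raises the true order, which is harmless here---and performing the constrained maximisation; the rest is bookkeeping with the transformation laws.
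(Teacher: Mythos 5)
Your proposal is correct and is essentially the paper's own first proof of this Fact (Theorem 6.1): the paper likewise reduces holomorphy at infinity to the order function $\ord(Q_{a,b},x)=B(ax)+B(bx)+B((a+b)x)-\tfrac1{24}$, relaxes the line $x\mapsto(ax,bx,-(a+b)x)$ to the hyperplane $u+v+w=0$, and shows by Lagrange multipliers that the minimum $\tfrac1{24}$ is attained exactly where all fractional parts equal $\tfrac13$ or all equal $\tfrac23$ --- which is precisely your maximization of $F(\alpha,\beta)$ at $(\tfrac13,\tfrac13)$ and $(\tfrac23,\tfrac23)$ in different notation, since $B(x)=\tfrac18-\tfrac12\{x\}(1-\{x\})$. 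Your explicit tropical computation of the order and your closure-of-the-orbit argument for the cusp condition (equivalent to the paper's criterion $a'\not\equiv b'\bmod 3$) merely fill in steps the paper delegates to its general theory of $\ord(\phi,\cdot)$ in Section 2 and to the reader, respectively.
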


We will give several proofs and generalizations of this result.  To do
this, we first give (in \S3) a general criterion for the divisibility
of a holomorphic Jacobi form, and in particular of a product of
$\vartheta_a$'s, by a given power of $\eta$.  This will involve
defining the notion of the {\em order} of a Jacobi form at infinity, a
notion which has apparently not previously been introduced but which
seems quite fundamental to the theory.  This criterion will then be
used to prove the holomorphy of $Q_{a,b}$ and to give many other
examples---both infinite families proved theoretically and sporadic
examples found by computer---of theta products divisible by high
powers of $\eta$.  A typical example is the family of holomorphic
Jacobi forms of weight~2
\begin{equation}
  \label{eq:R-definition}
  R_{a,b,c,d} =
  \frac{\vartheta_a\,\vartheta_b\,\vartheta_c\,\vartheta_d\,
    \vartheta_{a+b}\,\vartheta_{b+c}\,\vartheta_{c+d}\,\vartheta_{a+b+c}\,
    \vartheta_{b+c+d}\,\vartheta_{a+b+c+d}}{\eta^6}
\end{equation}
where $a$, $b$, $c$ and $d$ are natural numbers.  In many cases,
including both the families $Q_{a,b}$ and $R_{a,b,c,d}$, we will also
give explicit formulas for quotients of the form
$\eta^{-s}\vartheta_{a_1}\cdots\vartheta_{a_N}$ as theta series of
rank $N-~s$. Some of these are obtained by using a general criterion
(described in~\S3) for the divisibility of one theta series by
another, while others arise by specializing the Macdonald identities
(also known as Kac-Weyl denominator formulas) for suitable root
systems.

A weakly holomorphic Jacobi form of the form
$\vartheta_{a_1}\cdots\vartheta_{a_N}/\eta^d$ is called a theta block
of length~$N$, and it is called a holomorphic theta block if it is a
Jacobi form. Its weight is equal to $(N-d)/2$, and one of the
principal aims of this article is to construct explicit examples of
holomorphic theta blocks whose weight is relatively small with respect
to the length. For instance, the Jacobi form $R_{a,b,c,d}$ has
length~$10$ and weight~$2$, and more generally in
Section~\ref{sec:A4-family} we will construct families of length
$n(n+1)/2$ and weight~$n/2$. In Section~\ref{sec:families} we will
develop a general theory for constructing such families and will see
many more concrete examples in the sections following it. We will be
interested both in theoretical bounds for the minimal weight $k$ for
given length~$N$ (in Section~\ref{sec:long-and-low} it is shown that
the minimal weight is bounded below and above by $c_1\log N$ and
$c_2(\log N)^3$ for positive constants~$c_i$) and in constructing
  explicit holomorphic theta blocks of small weight.

The special families that we
  construct turn out to give a very useful way of constructing
Jacobi forms, especially Jacobi forms of low weight.  For instance,
both the first Jacobi form and the first Jacobi cusp form of weight 2
and trivial character, which have index 25 and 37, respectively and
were constructed with some effort in~\cite{Eichler-Zagier}, are now
obtained immediately as the two first cases $(a,b,c,d)=(1,1,1,1)$ and
$(1,1,1,2)$ of the family $R_{a,b,c,d}$, and many other interesting
examples of Jacobi forms of low weight and given character can be
obtained as special cases of products of the functions $Q_{a,b}$ or of
the other families.  Such forms have several applications, e.g.~to
questions concerning the classification of moduli spaces of polarized
abelian surfaces or of K3-surfaces.  We will describe these
applications and give some general discussion of the situation for
small weight.  In particular, we shall see that all holomorphic Jacobi
forms of weight 1/2 and weight 1 and arbitrary character can be
obtained as theta quotients
$\eta^{-s}\vartheta_{a_1}^{\pm}\cdots\vartheta_{a_N}^{\pm1}$, and will
give conjectures and partial results for higher weights.  We expect
that the spaces of Jacobi forms of small weight and arbitrary index
and character on the full modular group are in fact spanned by theta
quotients. As we shall see in Section~\ref{sec:generalities} this
statement is, however, false for large weights.

We finally mention a side result of our studies of theta blocks,
namely a rather short proof
(in~\S\ref{sec:Examples-constructed-from-root-systems}) of the Macdonald
identities based on Jacobi forms of lattice index.

\part*{Part I: Basic Theory}

\section{Review of Jacobi forms}
\label{sec:review}

We first recall the definition of Jacobi forms as given
in~\cite{Eichler-Zagier}.  Let $k$ and $m$ be be non-negative
integers.  Then a {\em holomorphic Jacobi form of weight $k$ and index
  $m$} (on the full modular group $\Gamma=\text{SL}(2,\Z)$, or more
precisely on the full Jacobi group
$\Gamma^{\text J}=\Gamma\times\Z^2$) is a holomorphic function
$\phi:\HP\times\C\rightarrow\C$ which satisfies the two transformation
equations
\begin{equation}
  \label{eq:G-inv}
  \phi\bigl(\frac{a\tau+b}{c\tau+d},\,\frac z{c\tau+d}\bigr)
  =(c\tau+d)^k\,\ex\bigl(\frac{mcz^2}{c\tau+d}\bigr)\,\phi(\tau,z)
\end{equation}
and
\begin{equation}
  \label{eq:H-inv}
  \phi\bigl(\tau,\,z+\lam\tau+\mu\bigr)
  =
  \ex\bigl(-m(\lam^2\tau+2\lam z)\bigr)\,\phi(\tau,z)
\end{equation}
for all $\tau\in\HP$, $z\in\C$, $\mat abcd\in\Gamma$ and
$\bigl(\smallmatrix\lam\\ \mu\endsmallmatrix\bigr)\in\Z^2$ (here
$\ex(x)=e^{2\pi ix}$ as usual) and which has a Fourier expansion of
the form
\begin{equation}
  \label{eq-fouexp}
  \phi(\tau,z)
  =
  \sum_{
    \begin{subarray}{c}n\in\Z\\ n\ge0
    \end{subarray}
  }
  \sum_{
    \begin{subarray}{c}r\in\Z\\ r^2\le4mn
    \end{subarray}
  }
  c(n,r)\,q^n\,\zeta^r\,,
\end{equation}
where $q$ and $\zeta$ denote $\ex(\tau)$ and $\ex(z)$, respectively.
The Fourier coefficients $c(n,r)$ then automatically satisfy the
periodicity property
\begin{equation}
  \label{eq:periodicity}
  c(n,r)=c(n+\lam r+\lam^2m,\,r+2\lam m) \qquad\text{for all $\lam\in\Z$}
\end{equation}
(this is equivalent to~\eqref{eq:H-inv}), so that $c(n,r)$ is actually
only a function of the numbers $d=4nm-r^2$ and $\,r\bmod{2m}\,$ in
$\Z_{\ge0}$ and $\Z/2m\Z$.
A {\em Jacobi cusp form} of weight $k$ and index $m$ is a holomorphic
Jacobi form in which the condition $4nm-r^2\ge0$ in~\eqref{eq-fouexp}
is strengthened to $4nm-~r^2>~0$, while a {\em weak Jacobi form} is
defined like a holomorphic Jacobi form but with the condition
$4nm-r^2\ge0$ in~\eqref{eq-fouexp} dropped entirely; the periodicity
property~\eqref{eq:periodicity} then implies that $c(n,r)=0$ unless
$m\lam^2+r\lam+n\ge0$ for all $\lam\in\Z$ and hence that $|r|$ is
still bounded (by $\sqrt{4nm+m^2}$) for each $n$, so that $\phi$ still
belongs to $\C[\zeta,\zeta^{-1}][[q]]\,$.  Finally, a {\em weakly
  holomorphic Jacobi form} of weight~$k$ and index~$m$ is a
holomorphic function $\phi:\HP\times\C\to\C$
satisfying~\eqref{eq:G-inv}, \eqref{eq:H-inv} and~\eqref{eq-fouexp}
but without the condition $4nm\ge r^2$ in~\eqref{eq-fouexp} and with
the condition $n\ge0$ weakened to $n\ge n_0$ for some $n_0\in\Z$.  An
equivalent definition is that $\Delta(\tau)^h\phi(\tau,z)$ is a
holomorphic Jacobi form (of weight~$k+12h$ and index~$m$) for some
$h\in\Z$, where $\Delta=\eta^{24}\in S_{12}(\Gamma)$.  Such a form has
a Fourier expansion in $\C[\zeta,\zeta^{-1}][\![q^{-1},q]\!]$, the ring of
Laurent series in~$q$ with coefficients which are Laurent polynomials
in~$\zeta$.

The spaces of holomorphic Jacobi forms, Jacobi cusp forms, weak Jacobi
forms and weakly holomorphic Jacobi forms are denoted by $J_{k,m}(1)$,
$J_{k,m}^{\text{cusp}}(1)$, $J_{k,m}^{\text{weak}}(1)$ and
$J_{k,m}^!(1)$, respectively, the latter in analogy with the more
standard notation $M_k^!=M_k^!(\Gamma)$ for the space of weakly
holomorphic modular forms of weight~$k$ on~$\Gamma$ (= holomorphic
functions in~$\HP$ which transform like modular forms of weight~$k$
but are allowed to grow like a negative power of $q$ as
$\Im(\tau)\to\infty$).  The ``1'' in parentheses, which was not used
in~\cite{Eichler-Zagier}, means that the Jacobi forms under
consideration have trivial character, and will be dropped when forms
with arbitrary character are permitted.  For $m=0$ the Jacobi forms
are independent of~$z$, so that we have
$J_{k,0}(1)=J_{k,0}^{\text{weak}}(1)=M_k(\Gamma)$,
$J_{k,0}^{\text{cusp}}(1)=S_k(\Gamma)$, and
$J_{k,0}^{\,!}(1)=M_k^{\,!}(\Gamma)$.  We also have
$J_{k,m}(1)J_{k',m'}(1)\subseteq J_{k+k',m+m'}(1)$, so that the vector
space $J_{*,*}(1)=\bigoplus_{k,m\ge0}J_{k,m}(1)$ is a bigraded ring.
Note that the weights of weak or weakly holomorphic Jacobi forms can
be negative, although in the case of weak Jacobi forms they are
bounded below by $-2m$.

In this paper we will still consider Jacobi forms on the full modular
group, but will allow rational weights and indices.  For such forms
the transformation equations~\eqref{eq:G-inv} and~\eqref{eq:H-inv} are
true only up to certain roots of unity (of bounded order) depending on
$\mat abcd$ and $\bigl(\smallmatrix\lam\\ \mu\endsmallmatrix\bigr)$,
and the exponents~$n$ and~$r$ in~\eqref{eq-fouexp} can be rational
(though again with bounded denominator).  The quickest way to give a
definition is simply to say that $\phi(\tau,z)^N$ is a holomorphic (or
cuspidal, or weak, or weakly holomorphic) Jacobi form of weight $Nk$
and index $Nm$ for some positive integer $N$.  The explicit formulas
for the roots of unity occurring in the transformation equations with
respect to the action of $\Gamma$ and $\Z^2$ (multiplier system) are
quite complicated, but we do not have to give them explicitly because
there is an easy implicit description which suffices for the cases we
are interested in (products of the functions $\vartheta_a(\tau,z)$ and
of rational powers of $\eta(\tau)$).  We use the symbol $\varepsilon$
to denote the multiplier system of the function $\eta(\tau)$, and more
generally $\varepsilon^h$ for any $h\in\Q$ to denote the multiplier
system of (any branch of) the function $\eta(\tau)^h$.  We also note
that the index~$m$ of any Jacobi form $\phi$, even a weakly
holomorphic one or one with arbitrary character, is always a
non-negative half-integer, because $2m$ is the number of zeros of the
function $z\mapsto\phi(\tau,z)$ in a fundamental domain for the action
of the group $\Z\tau+\Z$ of translations of $\C$.  For $m$ integral
and $k,\,h\in\Q$ we will say that a Jacobi form $\phi$ of weight~$k$
and index~$m$ has {\em character $\varepsilon^h$} if
$\eta(\tau)^{-h}\phi(\tau,z)$ is a (weakly holomorphic) Jacobi form in
the usual sense, i.e., if $k-h/2\in\Z$ and
$\eta^{-h}\phi\in J_{k-h/2,m}^{\,!}(1)$.  For half-integral index we
observe that the square of the Jacobi theta function
$\vartheta(\tau,z)^2$ is a holomorphic Jacobi form of weight $1$,
index~$1$ and character~$\varepsilon^6$ in the above sense, so we
simply {\em define} its character to be $\varepsilon^3$; then for
$m\in\Z_{\ge0}+\frac12$ and $k,\,h\in\Q$ we define a Jacobi form of
weight~$k$, index~$m$ and character $\varepsilon^h$ by the requirement
that $\eta(\tau)^{-h-3}\vartheta(\tau,z)\phi(\tau,z)$ belong to
$J^{\,!}_{k-1-h/2,m+1/2}(1)$.  The definitions in both cases depend
only on $h$ modulo~24, so we get spaces $J_{k,m}(\varepsilon^h)$,
$J_{k,m}^{\text{cusp}}(\varepsilon^h)$,
$J_{k,m}^{\text{weak}}(\varepsilon^h)$ and
$J_{k,m}^{\,!}(\varepsilon^h)$ for all $m\in\frac12\Z_{\ge0}$,
$k\in\Q$ and $h\in\Q/24\Z$ with $2k\equiv h\bmod2$.  The
formulas~\eqref{eq:G-inv}, \eqref{eq:H-inv} and~\eqref{eq-fouexp}
imply that
$\phi\in q^{h/24}\,\zeta^m\,\C[\zeta,\zeta^{-1}][\![q^{-1},q]\!]$ for
$\phi$ belonging to any of these spaces.  We clearly have
$J_{k,m}(\varepsilon^h)J_{k',m'}(\varepsilon^{h'})\subseteq
J_{k+k',m+m'}(\varepsilon^{h+h'})$ and also
$\phi_a\in J_{k,a^2m}(\varepsilon^h)$ if
$\phi\in J_{k,m}(\varepsilon^h)$, where $\phi_a(\tau,z)$ denotes the
Jacobi form $\phi(\tau,az)$.  In particular we have
$\vartheta_a\in J_{1/2,a^2/2}(\varepsilon^3)$ and more generally
\begin{equation*}
  \vartheta_{\ve a}\,
  :=
  \,\prod_{i=1}^N\vartheta_{a_i}\;\in\;J_{N/2,A/2}(\varepsilon^{3N})
\end{equation*}
for $\ve a=(a_1,\dots,a_N)\in\Z^N,\;\,A=\sum_{i=1}^Na_i^2$.

It is not hard to show that every function whose $N$-th power, for
some positive integer $N$, is a (weak or weakly holomorphic) Jacobi
form of intgral weight and index with trivial character is indeed in
$J_{k,m}(\varepsilon^h)$ (or $J_{k,m}^{\text{weak}}(\varepsilon^h)$ or
$J_{k,m}^{\,!}(\varepsilon^h)$) for suitable rational $k$ and $h$.
Moreover, it is easily verified that, for any index $m$ in
$\frac12\Z$, the transformation formula~\eqref{eq:H-inv} remains true
if one multiplies the right-hand side by the
factor~$\ex\bigl(m(\lam+\mu)\bigr)$. Note also, that for any rational
$k$ and $h$ and half-integral $m$ every element of $\phi$ in
$J_{k,m}^{\,!}(\varepsilon^h)$ has still a Fourier expansion of the
form~\eqref{eq-fouexp}, where, however, $r$ runs through $\Z$ or
$\frac12\Z$ accordingly as $m$ is integral or not, and $n$ runs
through all rational numbers $n\ge n_o$ which are in $h/24+\Z$. The
(modified) transformation formula~\eqref{eq:H-inv} implies that, for
any integer $\lambda$,
\begin{equation}
  \label{eq:FC-inv}
  C_\phi(\Delta,r)=\ex(m\lambda)C_\phi(\Delta,r+2m\lambda)
  ,
\end{equation}
where $C(\Delta,r)=c\big(\frac{r^2-\Delta}{4m},r\big)$.

Finally, we mention another special Jacobi form:
\begin{equation}
  \label{eq:omega}
  \sectheta(\tau,z)
  =
  \sum_{r\in\Z}
  \leg{12}rq^{r^2/24}\zeta^{r/2}
  ,
\end{equation}
which appears also in the famous Watson quintuple product identity
\begin{multline*}
  \sectheta(\tau,z)
  =\eta(\tau)\frac{\vartheta(\tau,2z)}{\vartheta(\tau,z)} =
  q^{1/24}\zeta^{1/2} \cdot\\
  \cdot \prod_{n=1}^\infty\big(1-q^n\big)
  \bigl(1+q^n\zeta\bigr)\bigl(1+q^{n-1}\zeta^{-1}\bigr)
  \bigl(1-q^{2n-1}\zeta^2\bigr)\bigl(1-q^{2n-1}\zeta^{-2}\bigr) .
\end{multline*}
The Jacobi form $\sectheta$ has weight $1/2$, index $3/2$ and
multiplier system~$\varepsilon$.  For an integer $a$, we will use the
notation $\sectheta_a$ for the Jacobi form~$\sectheta(\tau,az)$.

\section{The order of a weakly holomorphic Jacobi form at infinity}
\label{sec:order}

Let $\phi$ be a weakly holomorphic non-zero Jacobi form $\phi$ of index
$m$ with Fourier coefficients $c_\phi(n,r)$. We associate to $\phi$ a
function $\ord(\phi,x)$ of a real variable $x$ by setting
\begin{equation}
  \label{eq:valuation}
  \ord(\phi,x)
  =
  \min\Big\{n+rx+mx^2: n,r\text{ such that }c_\phi(n,r)\not=0\Big\}
  .
\end{equation}
This function has several remarkable properties and it will play a key
role in the construction of theta blocks of small weights. In
particular, as the following theorem shows, the map
$\phi\mapsto \ord(\phi,\cdot)$ defines a valuation with values in the
(additive) group of continuous functions on $\R/\Z$.  A similar
valuation could be associated to other cusps if one had to consider
Jacobi forms on subgroups of $\SL$ which have more than one cusp,
which justifies calling $\ord(\phi,\cdot)$ the {\em order of $\phi$ at
  infinity}.
\begin{Theorem}
  \label{thm:order}
  The function $\ord_\phi=\ord(\phi,\,\cdot\,)$ defined by~\eqref{eq:valuation} has the
  following properties:
  \begin{enumerate}
  \item It is continuous, piecewise quadratic, and periodic with
    period~$1$.
  \item If $\phi$ is of index $m=0$ (i.e.~if $\phi$ is a weakly
    holomorphic elliptic modular form, independent of~$z$), then
    $\ord_\phi$ is constant and equals the usual order of $\phi$ at
    the cusp infinity.
  \item For fixed any real $u$, $x$ and $y$, there is a constant
    $C=C(u,x,y)$ such that one has
    \begin{equation*}
      \phi(\tau,x\tau+y)\,\ex(mx^2\tau)
      = \big(C+o(1)\big)e^{-2\pi\ord(\phi,x)v}
    \end{equation*}
    as $\tau=u+iv$ and $v$ tends to infinity. The constant $C$ depends
    only on $u,x,y$ modulo $N\Z$ for a suitable integer~$N\ge 1$ and
    is different from zero for almost all~$u,x,y$.
  \item For any two weakly holomorphic Jacobi forms $\phi$ and $\psi$
    one has
    \begin{equation*}
      \ord_{\phi\,\psi}= \ord_{\phi}+\ord_{\psi}
      .
    \end{equation*}
  \item Let $\phi$ in $J_{k,m}^{\,!}(\varepsilon^h)$. Then $\phi$ is
    in $J_{k,m}(\varepsilon^h)$ if and only if $\ord_\phi\ge 0$, and
    $\phi$ is in $J_{k,m}^{\text{cusp}}(\varepsilon^h)$ if and only if
    $\ord_\phi> 0$.
  \item For any integer $l$ and any weakly holomorphic Jacobi form
    $\phi$, one has $\ord_{U_l\phi}(x)=\ord_\phi(lx)$, where $U_l$
    denotes the operator $(U_l\phi)(\tau,z)=\phi(\tau,lz)$.
  \end{enumerate}
\end{Theorem}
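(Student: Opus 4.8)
The plan is to establish the six properties in a logical order, deriving the later analytic and structural statements from a careful study of the function $\ord(\phi,x)$ as a minimum of the quadratic functions $n+rx+mx^2$ over the support of $\phi$.

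First I would prove (1). For fixed $x$ the value $\ord(\phi,x)$ is the infimum of the family of quadratic functions $q_{n,r}(x)=mx^2+rx+n$ indexed by pairs $(n,r)$ with $c_\phi(n,r)\neq 0$. Since $\phi$ is weakly holomorphic, for each $n$ only finitely many $r$ occur and $n$ is bounded below, so on any compact $x$-interval the infimum is attained and is a minimum of finitely many quadratics; this already gives continuity and the piecewise-quadratic shape (the leading term $mx^2$ is common to all pieces, so the pieces differ only by their linear part $rx+n$). Periodicity is the key structural input: the periodicity relation~\eqref{eq:periodicity}, $c_\phi(n,r)=c_\phi(n+\lambda r+\lambda^2 m, r+2\lambda m)$, says exactly that the set of active quadratics is invariant under the substitution $x\mapsto x+\lambda$, because $q_{n,r}(x+1)=q_{n+r+m,\,r+2m}(x)$. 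Hence $\ord(\phi,x+1)=\ord(\phi,x)$.

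Next I would prove (3), which is the analytic heart and from which (2), (4), (5) follow cleanly. Writing $\tau=u+iv$ and substituting $z=x\tau+y$ into the Fourier expansion, each term $c_\phi(n,r)q^n\zeta^r$ contributes $c_\phi(n,r)\,\ex\bigl((n+rx+mx^2)\tau\bigr)\ex(ry)\ex(-mx^2\tau)\cdot\ex(mx^2\tau)$ after one multiplies by the stated factor $\ex(mx^2\tau)$; the upshot is that $\phi(\tau,x\tau+y)\ex(mx^2\tau)=\sum c_\phi(n,r)\,\ex(ry)\,\ex\bigl((n+rx+mx^2)\tau\bigr)$, a generalized $q$-series whose exponents are precisely the values $q_{n,r}(x)$ appearing in~\eqref{eq:valuation}. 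As $v\to\infty$ the dominant terms are those minimizing the exponent, i.e.\ those achieving $\ord(\phi,x)$, so the leading asymptotic is $\bigl(C+o(1)\bigr)e^{-2\pi\ord(\phi,x)v}$ with $C=\sum_{(n,r):q_{n,r}(x)=\ord(\phi,x)}c_\phi(n,r)\,\ex(ry+r x u)$, the sum over the (finitely many) minimizing pairs. This $C$ depends on $u,x,y$ only through residues modulo a fixed $N$ (controlling the denominators of the exponents), and it can vanish only on the measure-zero locus where the minimizing contributions cancel, giving the ``almost all'' clause. Statement~(4) is immediate from~(3) or directly from the fact that the minimum of the sum-exponents of a product is the sum of the minima (the leading coefficients multiply, and a genericity argument shows the product of leading coefficients is generically nonzero, so no cancellation destroys additivity of the minimal exponent); (2) is the trivial case $m=0$, $r=0$, where every $q_{n,0}(x)=n$ is constant and the minimum is the ordinary order at infinity.

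For (5) I would use~(3) together with the defining relation $\phi\in J_{k,m}(\varepsilon^h)$ iff the associated weakly holomorphic form has no strictly negative $q$-exponents in its true Fourier expansion, i.e.\ iff $n\geq 0$ (resp.\ $n>0$ for cusp forms) whenever $c_\phi(n,r)\neq 0$ with $r^2\leq 4mn$. The point is that the condition $c_\phi(n,r)=0$ unless $4mn-r^2\geq 0$ (holomorphy) is equivalent to $\min_{n,r}(n+rx+mx^2)\geq 0$ for all $x$, since completing the square gives $n+rx+mx^2=m\bigl(x+\tfrac{r}{2m}\bigr)^2+\tfrac{4mn-r^2}{4m}$, whose global minimum over $x$ is $\tfrac{4mn-r^2}{4m}$; thus $\ord_\phi\geq 0$ everywhere exactly detects $4mn\geq r^2$ for all active $(n,r)$, and the strict version detects the cusp condition. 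Finally~(6) follows by direct substitution: if $\psi=U_l\phi$ then $c_\psi(n,r)=c_\phi(n,r/l)$ when $l\mid r$ and vanishes otherwise (because $(U_l\phi)(\tau,z)=\phi(\tau,lz)$ replaces $\zeta^r$ by $\zeta^{lr}$), so that $\ord(U_l\phi,x)=\min\{n+(lr')x+(l^2m)x^2\}=\min\{n+r'(lx)+m(lx)^2\}=\ord(\phi,lx)$, after noting the index of $U_l\phi$ is $l^2 m$.

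I expect the main obstacle to be the rigorous bookkeeping in~(3): one must handle the rational, bounded-denominator exponents that arise for forms with character $\varepsilon^h$ and half-integral index, verify that on each fixed line the number of minimizing terms is finite and uniformly controlled, and pin down the precise modulus $N$ and the non-vanishing locus of $C$. The genericity claim ``$C\neq 0$ for almost all $u,x,y$'' requires showing that the finite exponential sum defining the leading coefficient is not identically zero as a function of $(u,y)$, which reduces to the linear independence of the characters $\ex(ry+rxu)$ attached to distinct minimizing pairs — a routine but careful Dirichlet-series/Vandermonde argument that I would isolate as the technical lemma underlying the whole theorem.
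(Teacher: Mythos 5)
Your proposal is correct and follows essentially the same route as the paper's proof: for (1), the local reduction to a minimum of finitely many quadratics plus periodicity read off from the invariance of the Fourier support under the lattice shift (you invoke~\eqref{eq:periodicity}, the paper the equivalent relation~\eqref{eq:FC-inv}, whose extra phase factor in the half-integral-index case does not affect the support); for (3), the direct substitution $z=x\tau+y$ identifying the dominant exponent $\ord(\phi,x)$ and the finite sum of minimizing terms as the leading coefficient; (4) deduced from (3); (2) and (6) by inspection; and (5) via completing the square, i.e.\ the observation that $\ord_\phi\ge 0$ (resp.\ $>0$) everywhere is equivalent to every active quadratic $n+rx+mx^2$ having non-positive (resp.\ negative) discriminant $r^2-4mn$. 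The only cosmetic difference is that you frame (5) as also resting on (3), whereas your actual argument, like the paper's, is the purely algebraic discriminant criterion.
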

\begin{proof}
  For proving (1) we note that $\ord_\phi$ is locally equal to the
  minimum of {\em finitely} many quadratic polynomials, hence is
  continuous and piecewise quadratic. If we write
  \begin{equation*}
    \ord(\phi,x)
    = \min\Big\{\tfrac{(r+2mx)^2-\Delta}{4m}: \Delta, r, \text{ such that }
    c_\phi\big(\tfrac{r^2-\Delta}{4m},r\big)\not=0\Big\}
    ,
  \end{equation*}
  we see that the periodicity is an immediate consequence of the
  identity~\eqref{eq:FC-inv}.

  The statements~(2) and~(6) are obvious, and (4) follows immediately
  from~(3).

  For~(3) we observe that the left-hand side of the claimed identity
  equals
  \begin{multline*}
    \sum_{n,r}
    c_\phi(n,r)\, \ex\big((n+rx+mx^2)\tau+ry\big)\\
    = \sum_{(n,r)\in S} c(n,r)\,\ex(\ord(\phi,x) \tau + ry) +o(e^{-2
      \pi\,\ord(\phi,x) v }) ,
  \end{multline*}
  where $S$ is the (finite) set of pairs $(n,r)$ of rational numbers
  such that $n+rx+mx^2=\ord(\phi,x)$ and $c(n,r)\not=0$.

  Finally, for~(5) we note that $\phi$ is a holomorphic (cusp) form if
  and only if $c_\phi(n,r)=0$ unless the discriminant $r^2-4mn$ of the
  quadratic polynomial $f(x)=n+rx+mx^2$ is (strictly) negative,
  i.e.~unless $f(x)$ is (strictly) positive for all $x$. This proves
  the theorem.
\end{proof}

The order of Jacobi's theta function $\vartheta(\tau,z)$ (introduced
in~\eqref{eq:theta}) will play an important role in the following. We
shall use the letter $B$ for it, i.e.~we set $B(x)=\ord(\vartheta,x)$.
From the Fourier development~\eqref{eq:theta} of $\vartheta$ we see
that $\ord(\vartheta,x)$ equals the minimum of
$\frac18 r^2+\frac12 rx + \frac12 x^2 =\frac12(x+\frac r2)^2$, where
$r$ ranges through the odd integers. In other words
\begin{equation}
  \label{eq:B}
  B(x)
  :=
  \ord(\vartheta,x)
  =
  \min_{k\in\Z} \tfrac12 (x-\tfrac12 + k)^2
  .
\end{equation}
Note also that
\begin{equation*}
  \ord(\vartheta/\eta,x)=\tfrac12 \BP(x)
  ,
\end{equation*}
where $\BP(x)$ is the periodic function with period~$1$ which, for
$0\le x\le1$, equals the second Bernoulli polynomial $x^2-x+\frac16$.

\section{Theta blocks}
\label{sec:generalities}

Recall from Section~\ref{sec:review} that
$\vt a(\tau,z)=\vartheta(\tau,az)$ defines an element
of~$J_{1/2,a^2/2}(\varepsilon^3)$. From Theorem~\ref{thm:order} we deduce that
$\ord(\vt a,x)=B(ax)$ with the function $B(x)$ defined
in~\eqref{eq:B}. From the product expansion~\eqref{eq:triple-product}
of $\vartheta$ we deduce that, for fixed $\tau$, the set of zeros of
$\vartheta(\tau,\cdot)$ coincides with the
lattice~$\Z\tau+\Z$. Accordingly, we find that the zeros of
$\vt a(\tau,z)$ are all simple and are given by the $a$-division
points of the lattice $\Z\tau+\Z$., i.e.~by the points of the lattice
$\frac1a\left(\Z\tau+\Z\right)$.
\begin{Definition}
  A \emph{theta block} of {\em length $r$} is a function of the form
  \begin{equation}
    \label{eq:theta_block}
    \vt{a_1}\vt{a_2}\cdots\vt{a_r}\eta^n
    ,
  \end{equation}
  where $n$ is an integer and the $a_j$ are integers different
  from zero.  A \emph{generalized theta block} is a holomorphic
  function in $\HP\times\C$ of the form
  \begin{equation}
    \label{eq:gen_theta_block}
    \frac{\vt{a_1}\vt{a_2}\cdots\vt{a_r}}{\vt{b_1}\vt{b_2}\cdots\vt{b_s}}\eta^n
    ,
  \end{equation}
  where $n$ is an integer and the $a_j$, $b_j$ are non-zero integers.
  We call a theta block or generalized theta block {\em holomorphic} if
  it is holomorphic also at infinity, i.e., if it is a Jacobi
  form. Conversely, an arbitrary function of the
  form~\eqref{eq:gen_theta_block}, without the requirement of
  holomorphy in $\HP\times\C$, is called a {\em theta quotient}.

\end{Definition}
Occasionally we will also allow rational values for $n$, and will then
call the corresponding function a theta block, generalized theta block
or theta quotient {\em with fractional $eta$-power}. Clearly, any such
function is a meromorphic Jacobi form in
$J_{k,M/2}^{\text{mer.}}(\varepsilon^h)$, where
\begin{equation*}
  k = \tfrac{r-s+n}2,
  \quad
  M = \sum_{i=1}^r a_i^2-\sum_{i=1}^s b_i^2,
  \quad
  h = 3r-3s+n
 ,
\end{equation*}
If $f$ is a generalized theta block (with integral or fractional
eta-power), then $f$ is a weakly holomorphic Jacobi form in
$J_{k,M/2}^{\text{!}}(\varepsilon^h)$.
\begin{Example}
  \label{ex:S_a}
  The function $\sectheta(\tau,z)$ defined in~\eqref{eq:omega} is a
  generalized theta block. More generally, for every positive
  integer~$a$ we have the generalized theta block
  \begin{equation*}
    S_a
    =
    \prod_{d|a}\vt d^{\mu(a/d)}
    ,
  \end{equation*}
  where $\mu$ denotes the M\"obius function.  Note that $S_a$ is
  holomorphic in $\HP \times \C$, its zeros, as function of $z$ for
  fixed $\tau$, are simple and are given by the primitive $a$-division
  points of the lattice $L_\tau=\Z\tau+\Z$, i.e.~by those points of
  $\frac1a L_\tau$ whose images in~$\frac1a L_\tau/L_\tau$ have exact
  order~$a$. Hence $S_a$ defines an element
  of~$J_{0,\psi(a)\phi(a)/2}^!(\varepsilon^{3\varphi(a)})$ for
  $a\ge 2$ (whereas, for $a=1$, we have $S_1=\vartheta$), where
  $\varphi(a)$ is the Euler $\varphi$-function and $\psi(a)$ denotes
  the sum of all positive divisors $d$ of $a$ such that $d/a$ is
  squarefree. Its order at infinity is given by
  \begin{equation*}
    \ord(S_a,x) = \sum_{d|a} \mu(a/d)\,B(dx)
    .
  \end{equation*}
\end{Example}

Note that the theta blocks form a semigroup with respect to the usual
multiplication of functions. We shall denote this semigroup
by~$\TB$. Similarly, the generalized theta blocks form a semigroup
which we shall denote by $\GTB$. The theta quotients, finally, form a
group denoted by $\TQ$. We shall determine the structure of this
group.

For a fixed $\tau$, the divisor of a theta block $f(\tau,z)$, viewed
as a theta function on $\C/\Z\tau+\Z$, is of the form
$\sum_a n_a\Pi_a$, where $a$ runs through $\Z_{>0}$, where the
integers $n_a$ vanish for almost all $a$, and where $\Pi_a$ is the
(formal) sum of the primitive $a$-division points of
$\C/\Z\tau+\Z$. The formal sum
\begin{equation*}
  \Div(f):=\sum_a n_a(a)\in\Z[\Z_{>0}]
\end{equation*}
does not depend on $\tau$. Moreover, the map $f\mapsto \Div(f)$
defines a group homomorphism. Using this map the structure of $\TQ$
can be described as follows.
\begin{Theorem}
  \label{thm:structure-of-group-of-theta-blocks}
  The map $f \mapsto \Div(f)$ defines an exact sequence
  \begin{equation*}
    1 \rightarrow \eta^\Z
    \rightarrow \TQ
    \xrightarrow{\Div} \Z[\Z_{>0}]
    \rightarrow 1.
  \end{equation*}
  The sequence splits via the map
  $D=\sum n_a(a) \mapsto S_D := \prod_a S_a^{n_a}$
\end{Theorem}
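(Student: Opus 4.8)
The plan is to establish the exactness of the sequence
\begin{equation*}
  1 \rightarrow \eta^\Z \rightarrow \TQ \xrightarrow{\Div} \Z[\Z_{>0}] \rightarrow 1
\end{equation*}
by checking the three standard conditions together with the splitting, working entirely through the divisor homomorphism $\Div$. First I would verify that $\Div$ is a well-defined group homomorphism on $\TQ$. Each theta quotient is a product of factors $\vt a^{\pm 1}$ and a power of $\eta$; since $\eta(\tau)$ is nowhere vanishing on $\HP$ and independent of $z$, it contributes nothing to the $z$-divisor, while $\vt a$ has simple zeros exactly at the $a$-division points $\frac1a(\Z\tau+\Z)$, as recalled after Theorem~\ref{thm:order}. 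The subtle point is that $\Div(f) = \sum_a n_a (a)$ is indexed by the \emph{primitive} division points $\Pi_a$, so I would decompose the full zero-set of $\vt a$ into primitive $d$-division points for $d \mid a$, giving $\Div(\vt a) = \sum_{d \mid a}(d)$; multiplicativity of divisors under multiplication of functions then makes $f \mapsto \Div(f)$ a homomorphism, and the claim that $\Div(f)$ is independent of $\tau$ follows because the combinatorial pattern of primitive division points does not vary with $\tau$.

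Next I would prove surjectivity of $\Div$ by exhibiting an explicit preimage, which simultaneously produces the splitting. This is exactly the role of the Möbius-inverted functions $S_a = \prod_{d\mid a}\vt d^{\mu(a/d)}$ from Example~\ref{ex:S_a}: there it is recorded that $S_a$ has simple zeros precisely at the primitive $a$-division points, so $\Div(S_a) = (a)$. Consequently the map $D = \sum_a n_a(a) \mapsto S_D := \prod_a S_a^{n_a}$ satisfies $\Div(S_D) = D$, which both shows $\Div$ is onto and provides a group-theoretic section $\Z[\Z_{>0}] \to \TQ$ of $\Div$. One should check that $D \mapsto S_D$ is a homomorphism, which is immediate from the definition of $S_D$ as a product, and that each $S_D$ genuinely lies in $\TQ$ (it is a theta quotient, possibly with a fractional $\eta$-power, but the point is it is nowhere vanishing in $\tau$ and has the prescribed $z$-divisor).

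The main obstacle is the exactness in the middle, i.e.\ identifying the kernel of $\Div$ with $\eta^\Z$. The inclusion $\eta^\Z \subseteq \ker\Div$ is clear since $\eta$ is $z$-independent and nonvanishing. For the reverse inclusion I would argue as follows. Suppose $\Div(f) = 0$. Because the section $S$ realizes every divisor, I may assume without loss of generality (after dividing $f$ by $S_{\Div(f)}$) that $f$ itself has trivial divisor, so that $f(\tau,\cdot)$ is a theta function on $\C/(\Z\tau+\Z)$ with no zeros; hence, for each fixed $\tau$, $f$ is a nonvanishing holomorphic function on the compact torus. The key step is to show that a theta quotient with empty $z$-divisor must be independent of $z$: a meromorphic Jacobi form of index $M/2$ has exactly $M$ zeros (counted with multiplicity) in a fundamental domain for the lattice of translations, as noted in Section~\ref{sec:review}, so $\Div(f)=0$ forces index $M=0$ and thus $f$ is a function of $\tau$ alone. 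A $z$-independent theta quotient is a quotient of powers of $\eta$ and constants, and matching the leading $q$-behavior forces $f \in \eta^\Z$. I expect the delicate bookkeeping to be the passage from "trivial $z$-divisor'' to "index zero,'' where I must be careful that all division points, primitive and imprimitive, are correctly accounted for, and that the multiplicities coming from the $\mu(a/d)$ exponents cancel exactly as required.
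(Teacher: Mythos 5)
Your proof is correct and takes essentially the same route as the paper: surjectivity and the splitting come from the section $D \mapsto S_D$ built from the functions $S_a$, and the kernel is identified exactly as in the paper's proof, namely no zeros or poles in $z$ forces index $0$, hence $z$-independence, hence a nonvanishing weakly holomorphic modular form on $\SL$, hence a power of $\eta$. The only cosmetic remark is that your reduction ``dividing $f$ by $S_{\Div(f)}$'' is vacuous once $\Div(f)=0$ has been assumed; apart from that, your argument matches the paper's, with somewhat more detail on the homomorphism property of $\Div$ and on normalizing the constant to $1$.
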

\begin{proof}
  From the discussion in Example~\ref{ex:S_a} it is clear that
  $D\mapsto S_D$ defines a section of the map $\Div$, which is then,
  in particular, surjective. If a theta block, for each fixed $\tau$,
  has no zeroes or poles in $\C/\Z\tau+\Z$, then it is of index $0$,
  hence an elliptic modular form without zeroes in the upper half plan
  (but with possibly a pole at the cusp infinity), hence a power of
  $\eta$.
\end{proof}

There are two immediate consequences of the theorem.
\begin{Corollary}
  \label{cor:gen-theta-block-criterion}
  A theta quotient is a generalized theta block (i.e.~weakly
  holomorphic) if and only if it equals a product of the functions
  $S_a$ and a power of $\eta$.
\end{Corollary}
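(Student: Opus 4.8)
The plan is to derive this directly from Theorem~\ref{thm:structure-of-group-of-theta-blocks}, whose splitting already does almost all of the work. Given a theta quotient $f\in\TQ$, the exactness and splitting of the sequence show that $f$ factors \emph{uniquely} as $f=S_D\cdot\eta^m$ with $D=\Div(f)=\sum_a n_a(a)$ and $m\in\Z$: since $\Div(S_D)=D=\Div(f)$ by the defining property of the section, the quotient $f/S_D$ lies in $\ker(\Div)=\eta^\Z$ and hence equals $\eta^m$ for some integer $m$. Thus the only thing left to identify is \emph{when} this canonical factorization is a genuine product of the $S_a$ (with nonnegative exponents) rather than a quotient, and the claim to be proved is that this happens precisely when $f$ is holomorphic on $\HP\times\C$.

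For the easy implication I would argue as follows. Suppose $f=\prod_a S_a^{n_a}\cdot\eta^m$ with every $n_a\ge 0$. By Example~\ref{ex:S_a} each $S_a$ is holomorphic on $\HP\times\C$, and $\eta$ is holomorphic and nowhere vanishing on $\HP$ (so that even negative powers $\eta^m$ are harmless in $\HP\times\C$). A finite product of such functions is again holomorphic on $\HP\times\C$, so $f$ is a generalized theta block, and it is weakly holomorphic by the remark following the Definition.

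For the converse I would read off the coefficients $n_a$ geometrically. By the discussion preceding Theorem~\ref{thm:structure-of-group-of-theta-blocks}, the divisor of $f(\tau,\cdot)$ on $\C/(\Z\tau+\Z)$ is $\sum_a n_a\Pi_a$, so $n_a$ is exactly the order of vanishing of $z\mapsto f(\tau,z)$ along the primitive $a$-division points. If $f$ is a generalized theta block, hence holomorphic on $\HP\times\C$, then for each fixed $\tau$ the function $f(\tau,\cdot)$ has no poles, which forces $n_a\ge 0$ for every $a$. Substituting back into the canonical factorization gives $f=\prod_a S_a^{n_a}\cdot\eta^m$ with all $n_a\ge0$, i.e.\ a product of the $S_a$ and a power of $\eta$, as required.

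The substance of the statement is therefore carried entirely by Theorem~\ref{thm:structure-of-group-of-theta-blocks}; the only genuinely new point is the dictionary ``holomorphy on $\HP\times\C$ $\Longleftrightarrow$ nonnegativity of all divisor coefficients $n_a$''. The one thing I would take care to state cleanly—and what I expect to be the only delicate point rather than a real obstacle—is that the order of $f$ at \emph{each} primitive $a$-division point is the \emph{same} integer $n_a$, so that a single sign condition per $a$ suffices; this is already built into the divisor being a $\Z$-combination of the symmetric sums $\Pi_a$ rather than of individual points.
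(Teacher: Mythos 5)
Your proposal is correct and follows exactly the route the paper intends: the paper states this corollary as an immediate consequence of Theorem~\ref{thm:structure-of-group-of-theta-blocks}, and your argument—using the splitting to write $f=S_{\Div(f)}\cdot\eta^m$ and then translating holomorphy on $\HP\times\C$ into nonnegativity of the divisor coefficients $n_a$—is precisely the omitted verification. The care you take with the dictionary between poles of $f(\tau,\cdot)$ and negative coefficients in $\sum_a n_a\Pi_a$ is the right (and only) detail to make explicit.
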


\begin{Theorem}
  \label{thm:number-of-general-theta-blocks}
  For any positive integer or half-integer $m$, the number of
  generalized theta blocks of index $m$, counted up to multiples of
  powers of~$\eta$, is finite. It equals the coefficient of $q^{2m}$
  in the power series expansion of
  $1/\prod_{a=1}^\infty \big(1-q^{\varphi(a)\psi(a)}\big)$.
\end{Theorem}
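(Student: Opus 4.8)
The plan is to set up a bijection between the equivalence classes of generalized theta blocks of index $m$ (taken modulo powers of $\eta$) and the effective divisors in $\Z[\Z_{>0}]$ of a prescribed total weight, and then to recognize the count of the latter as a partition problem whose generating function is the stated infinite product. The backbone is Theorem~\ref{thm:structure-of-group-of-theta-blocks}: the exact sequence tells us that $\Div$ identifies $\TQ/\eta^\Z$ with $\Z[\Z_{>0}]$, so two theta quotients agree modulo powers of $\eta$ exactly when they have the same divisor.

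First I would single out which classes contain a generalized theta block. I claim a theta quotient $f$ is a generalized theta block precisely when its divisor $\Div(f)=\sum_a n_a(a)$ is \emph{effective}, i.e.\ $n_a\ge0$ for all $a$. Indeed, holomorphy of $f$ in $\HP\times\C$ forces, for each fixed $\tau$, the elliptic theta function $z\mapsto f(\tau,z)$ to be entire, hence to have effective divisor $\sum_a n_a\Pi_a$; conversely, for effective $D$ the section $S_D=\prod_a S_a^{n_a}$ of Theorem~\ref{thm:structure-of-group-of-theta-blocks} is a product of the functions $S_a$ (each holomorphic in $\HP\times\C$ by Example~\ref{ex:S_a}) with nonnegative exponents, hence holomorphic, and is a theta quotient, hence a generalized theta block. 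This is exactly the content of Corollary~\ref{cor:gen-theta-block-criterion} read off together with effectivity. Consequently the generalized theta blocks modulo powers of $\eta$ are in bijection with the effective divisors $\N[\Z_{>0}]=\{(n_a)_{a\ge1}: n_a\in\N,\ \text{almost all }0\}$.

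Next I would pin down the index in terms of the divisor. From Example~\ref{ex:S_a}, $S_a$ has index $\varphi(a)\psi(a)/2$ for $a\ge2$, and the same formula holds for $a=1$, where $S_1=\vartheta$ has index $1/2=\varphi(1)\psi(1)/2$. Since $\eta$ has index $0$, the index of a class is well defined and equals $\sum_a n_a\,\varphi(a)\psi(a)/2$ for the corresponding divisor. Hence the classes of generalized theta blocks of index $m$ correspond bijectively to the tuples $(n_a)_{a\ge1}$ of nonnegative integers, almost all zero, with
\begin{equation*}
  \sum_{a\ge1} n_a\,\varphi(a)\psi(a)=2m .
\end{equation*}
The number of such tuples is by definition the coefficient of $q^{2m}$ in $\prod_{a\ge1}\sum_{n\ge0}q^{n\varphi(a)\psi(a)}=\prod_{a\ge1}\bigl(1-q^{\varphi(a)\psi(a)}\bigr)^{-1}$, which is the asserted value.

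Finally, finiteness I would settle by noting that $\psi(a)\ge a$ (the divisor $d=a$ is always among those summed, since the corresponding quotient is $1$) and $\varphi(a)\ge1$, so $\varphi(a)\psi(a)\ge a$; thus only finitely many $a$ satisfy $\varphi(a)\psi(a)\le 2m$, the infinite product is a well-defined formal power series, and every individual coefficient, in particular the count for index $m$, is finite. The only point needing genuine care, and the main (if modest) obstacle, is the equivalence between holomorphy in $\HP\times\C$ and effectivity of the divisor, which is what guarantees that the decomposition handed to us by Corollary~\ref{cor:gen-theta-block-criterion} uses only \emph{nonnegative} powers of the $S_a$; once that is in place, the rest is the bookkeeping of the bijection with $\N[\Z_{>0}]$ and the identification of the generating function.
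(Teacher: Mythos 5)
Your proposal is correct and follows essentially the same route as the paper: both reduce the count, via Theorem~\ref{thm:structure-of-group-of-theta-blocks} and Corollary~\ref{cor:gen-theta-block-criterion}, to counting effective divisors $\sum_a n_a(a)$ with $\sum_a n_a\varphi(a)\psi(a)=2m$, which is exactly the stated coefficient. The only (immaterial) difference is the finiteness bound — you use $\varphi(a)\psi(a)\ge a$ where the paper uses $\varphi(a)\psi(a)=a^2\prod_{p\mid a}\bigl(1-\tfrac1{p^2}\bigr)>6a^2/\pi^2$ — and you spell out the holomorphy-equals-effectivity step and the generating-function bookkeeping that the paper leaves implicit.
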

\begin{proof}
  Indeed, according to the theorem the number in question equals the
  number of elements $D=\sum_an_a(a)$ in $\Z[\Z_{>0}]$ such that all
  $n_a$ are non-negative and $m=\frac12 \sum_a n_a\varphi(a)\psi(a)$.
  But this number is finite since
  \begin{equation*}
    \varphi(a)\psi(a)
    =a^2\prod_{p|a}\big(1-\frac1{p^2}\big)
    >
    a^2\prod_{p}\big(1-\frac1{p^2}\big)
    =
    6a^2/\pi^2
    ,
  \end{equation*}
  which is bigger than $2m$ for $a$ large.
\end{proof}
\begin{Remark}
  It is known~\cite{Skoruppa:Critical-Weight} that, for fixed $m$ and
  $h$, as $k$ tends to infinity one has
  $\dim {J_{k,m}}\big(\varepsilon^h\big) = c\cdot k + O(1)$, where $c$
  is a constant depending on $m$ and $h$. In particular, we see that
  generalized theta blocks of a given index~$m$ can never span the whole space
  of Jacobi forms of weight $k$, index $m$ and given character if $k$
  is sufficiently large.  Table~\ref{tab:nearly_hol_theta_blocks}
  lists, for small indices~$m$, all generalized theta blocks of index
  $m$, up to powers of $\eta$, normalized by a fractional $\eta$-power
  so that the minimum of their order at infinity becomes zero, i.e.~so
  that they are holomorphic but not cuspidal.
\end{Remark}

\begin{table}[tbp]
  \centering
  \caption{\Small%
    For small index $m$ the sets $S_m$ of all non-cuspidal generalized
    theta blocks with fractional $\eta$-power
    in~$\bigoplus_{k,h\in\Q}J_{k,m}\big(\varepsilon^h\big)$}
  \begin{tabular}{ll}
    \toprule
    $m$ & $S_m$\\
    \midrule
    $\frac{1}{2}$& $\vartheta_{1}$  \\
    $1$& $\vartheta_{1}^{2}$  \\
    $\frac{3}{2}$& $\vartheta_{1}^{3}$,  $\frac
                   {\vartheta_{2}}{\vartheta_{1}}\eta^{\frac{1}{24}}$  \\
    $2$& $\vartheta_{1}^{4}$,  $\vartheta_{2}$  \\
    $\frac{5}{2}$& $\vartheta_{1}^{5}$, 
                   $\vartheta_{1}\vartheta_{2}\eta^{-\frac{1}{40}}$  \\
    $3$& $\vartheta_{1}^{6}$, 
         $\vartheta_{1}^{2}\vartheta_{2}\eta^{-\frac{1}{24}}$,  $\frac
         {\vartheta_{2}^{2}}{\vartheta_{1}^{2}}\eta^{\frac{1}{12}}$  \\
    $\frac{7}{2}$& $\vartheta_{1}^{7}$, 
                   $\vartheta_{1}^{3}\vartheta_{2}\eta^{-\frac{3}{56}}$,  $\frac
                   {\vartheta_{2}^{2}}{\vartheta_{1}}\eta^{\frac{1}{28}}$  \\
    $4$& $\vartheta_{1}^{8}$, 
         $\vartheta_{1}^{4}\vartheta_{2}\eta^{-\frac{1}{16}}$, 
         $\vartheta_{2}^{2}$,  $\frac
         {\vartheta_{3}}{\vartheta_{1}}\eta^{\frac{1}{16}}$  \\
    $\frac{9}{2}$& $\vartheta_{1}^{9}$, 
                   $\vartheta_{1}^{5}\vartheta_{2}\eta^{-\frac{5}{72}}$, 
                   $\vartheta_{1}\vartheta_{2}^{2}\eta^{-\frac{1}{36}}$,  $\frac
                   {\vartheta_{2}^{3}}{\vartheta_{1}^{3}}\eta^{\frac{1}{8}}$, 
                   $\vartheta_{3}$  \\
    $5$& $\vartheta_{1}^{10}$, 
         $\vartheta_{1}^{6}\vartheta_{2}\eta^{-\frac{3}{40}}$, 
         $\vartheta_{1}^{2}\vartheta_{2}^{2}\eta^{-\frac{1}{20}}$,  $\frac
         {\vartheta_{2}^{3}}{\vartheta_{1}^{2}}\eta^{\frac{3}{40}}$, 
         $\vartheta_{1}\vartheta_{3}$  \\
    $\frac{11}{2}$& $\vartheta_{1}^{11}$, 
                    $\vartheta_{1}^{7}\vartheta_{2}\eta^{-\frac{7}{88}}$, 
                    $\vartheta_{1}^{3}\vartheta_{2}^{2}\eta^{-\frac{3}{44}}$,  $\frac
                    {\vartheta_{2}^{3}}{\vartheta_{1}}\eta^{\frac{3}{88}}$, 
                    $\vartheta_{1}^{2}\vartheta_{3}$,  $\frac
                    {\vartheta_{2}\vartheta_{3}}{\vartheta_{1}^{2}}\eta^{\frac{9}{88}}$  \\
    $6$& $\vartheta_{1}^{12}$, 
         $\vartheta_{1}^{8}\vartheta_{2}\eta^{-\frac{1}{12}}$, 
         $\vartheta_{1}^{4}\vartheta_{2}^{2}\eta^{-\frac{1}{12}}$, 
         $\vartheta_{2}^{3}$,  $\frac
         {\vartheta_{2}^{4}}{\vartheta_{1}^{4}}\eta^{\frac{1}{6}}$, 
         $\vartheta_{1}^{3}\vartheta_{3}$,  $\frac
         {\vartheta_{2}\vartheta_{3}}{\vartheta_{1}}\eta^{\frac{1}{24}}$,  $\frac
         {\vartheta_{4}}{\vartheta_{2}}\eta^{\frac{1}{24}}$  \\
    \bottomrule
  \end{tabular}
  \label{tab:nearly_hol_theta_blocks}
\end{table}

As we have seen, it is easy to decide whether a theta quotient is
weakly holomorphic.  It remains to analyze the behaviors of a general
theta block at infinity. We shall discuss this question from various
points of view in the next sections. Here we confine ourselves to the
study of the map which associates to a theta quotient its order at
infinity. For this we note that $\ord(f,\cdot)$, for a theta quotient $f$, is
an element of the additive group of real valued functions on the real
line which is spanned by the functions $B(ax)$ ($a\in\Z_{>0}$) and
$\frac 1{24}$.  It is a somewhat surprising fact that the order at
infinity already determines the theta quotient.  Namely, we shall
prove
\begin{Theorem}
  The map $f\mapsto \ord(f,\cdot)$ defines an isomorphism between the group
  of theta quotients $\TQ$ and the additive group $\BF$ of functions
  spanned by the $B(ax)$ ($a\in\Z_{>0}$) and the constant
  function~$\frac 1{24}$.
\end{Theorem}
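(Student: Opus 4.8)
The plan is to verify quickly that $f\mapsto\ord(f,\cdot)$ is a surjective homomorphism and then to put all the effort into injectivity. That it is a homomorphism is immediate from part~(4) of Theorem~\ref{thm:order}, which gives $\ord_{fg}=\ord_f+\ord_g$; thus the multiplicative group $\TQ$ is carried into an additive group of functions. Because $\ord(\vt a,x)=B(ax)$ and, by part~(2) of Theorem~\ref{thm:order} applied to the index-$0$ form $\eta$, $\ord(\eta,x)$ is the constant $\tfrac1{24}$ (the order of $\eta$ at infinity), the image contains every generator of $\BF$ and, being a subgroup, equals all of $\BF$. Hence the map is onto $\BF$.

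Next I would reduce each theta quotient to a normal form. Since $\vartheta$ is odd in $z$ we have $\vt{-a}=-\vt a$, so up to an overall sign (which does not change $\ord$ and which we suppress) every $f\in\TQ$ can be written, after collecting equal factors, as $f=\eta^{\,n}\prod_{a\ge1}\vt a^{\,c_a}$ with $n\in\Z$ and integers $c_a$, almost all zero. Then $\ord(f,x)=\tfrac n{24}+\sum_{a\ge1}c_a\,B(ax)$, so the kernel of $\ord$ is trivial precisely when the functions $\{B(ax):a\ge1\}\cup\{\tfrac1{24}\}$ are linearly independent over $\Q$: a relation forcing $n=0$ and all $c_a=0$ gives $f=1$. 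Establishing this independence is the entire content of the theorem.

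The main obstacle is therefore this independence, which I would settle by examining the singularities of the piecewise quadratic functions $B(ax)$. By~\eqref{eq:B}, $B$ is smooth away from the integers and its derivative has a jump of exactly $-1$ at each integer; consequently $B(ax)$ is smooth off $\tfrac1a\Z$, and at each point of $\tfrac1a\Z$ the derivative of $B(ax)$ jumps by $-a$. Suppose $\tfrac n{24}+\sum_a c_a B(ax)\equiv0$. At a rational $x_0=p/q$ in lowest terms the only summands contributing a kink are those with $q\mid a$, so vanishing of the total jump of the derivative gives $\sum_{q\mid a}a\,c_a=0$ for every $q\ge1$. This system is triangular: choosing $q$ to be the largest index $N$ with $c_N\ne0$ yields $N\,c_N=0$, a contradiction, whence all $c_a=0$ and then $n=0$. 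Thus $f=1$, so $\ord$ is injective and hence an isomorphism $\TQ\to\BF$. An equivalent route uses $B(x)=\tfrac12\BP(x)+\tfrac1{24}$: since $\BP(ax)$ has Fourier modes only at frequencies divisible by $a$, the relation forces $\sum_{a\mid m}a^2c_a=0$ for all $m\ge1$, which is triangular in the same way.
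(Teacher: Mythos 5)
Your proof is correct, and its core argument is genuinely different from the paper's. Both proofs reduce the theorem to the linear independence of the functions $B(ax)$ ($a\in\Z_{>0}$) together with the constant $\frac1{24}$ (the homomorphism and surjectivity parts being immediate, as you say). The paper establishes this independence by expanding $B$ in its Fourier series $B(x)=\frac1{4\pi^2}\sum_{n\neq0}e^{2\pi inx}/n^2+\frac1{24}$ and appealing to uniqueness of Fourier coefficients: a vanishing integral combination forces the divisor-sum relations $\sum_{a\mid m}a^2c_a=0$ for all $m\ge1$. That is precisely the ``equivalent route'' you sketch in your last sentence, so your aside \emph{is} the paper's proof --- with the caveat that this divisor-sum system is not triangular ``in the same way'' as yours: one must start from the \emph{smallest} index with $c_a\neq0$ (or use M\"obius inversion), since for the largest index $N$ the proper divisors of $N$ may still carry nonzero coefficients. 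Your primary argument, by contrast, is purely local and more elementary: the derivative of $B(ax)$ jumps by $-a$ exactly on $\frac1a\Z$, so a vanishing combination yields the multiple-sum system $\sum_{q\mid a}ac_a=0$ for all $q$, which \emph{is} killed from the top; no Fourier analysis is needed. What the paper's Fourier computation buys, beyond independence, is the explicit formula~\eqref{eq:fourier-development-of-order} for the order of a theta quotient, and with it the identification of $\BF$ with a group of polynomials that is then exploited in Theorem~\ref{thm:the-polynomial-formulation} and equation~\eqref{eq:holo-at-infty}; your kink argument proves the present theorem but produces no such formula. Finally, one shared cosmetic point: since $\vt{a}/\vt{-a}=-1$, the constant $-1$ lies in $\TQ$ and in the kernel of $f\mapsto\ord(f,\cdot)$; you suppress this sign explicitly, the paper implicitly, so you are no worse off there.
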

\begin{proof}
  We shall prove in a moment that the functions $B(ax)$ and $1/24$ are
  linearly independent over $\Z$ (and even over $\C$). Hence from the
  order at infinity $\ord(f,\cdot)$ of a theta quotient $f$ as
  in~\eqref{eq:gen_theta_block} we can read off the numbers $a_j$,
  $b_j$ and $n$, which proves the theorem.

  The claimed linear independence of the $B(ax)$ and $1/24$ becomes
  obvious if one expands $B(x)$ into its Fourier series:
  \begin{equation*}
    B(x) = \frac 1{4\pi^2} \sum_{\begin{subarray}{c} n\in\Z\\ n\not=0
      \end{subarray}
    }\frac{e^{2\pi inx}}{n^2} + \frac 1{24} .
  \end{equation*}
  Hence, if $b(x)=\sum_{l\ge 1} c_l B(lx)+c_0/24$ with integers $c_l$,
  almost all equal to zero, then
  \begin{equation}
    \label{eq:fourier-development-of-order}
    b(x) = \frac 1{4\pi^2} \sum_{\begin{subarray}{c} n\in\Z\\ n\not=0
      \end{subarray}
    } \frac {p(e^{2\pi i nx})-p(0)+p(1)/2} {n^2},
  \end{equation}
  where $p(t)$ denotes the polynomial\footnote{These formulas could be
    written more smoothly if we had defined $\vt a$ as the quotient
    $\vartheta(\tau,az)/\eta(\tau)$, whose order is
    $\frac12\BP_2(ax)$, where $\BP_2(x)=y^2-y+\frac16$ ($y=$
    fractional part of $x$) is the periodically continued second
    Bernoulli polynomial.  } $p(t) = \sum_{l\ge 0} c_lt^l$. (For the
  identity we used also $\sum_{n=1}^\infty 1/n^2=\pi^2/6$.)  By the
  uniqueness of the Fourier expansion of $b(x)$ the polynomial $p$ is
  uniquely determined by $p$, i.e.~we have a map $b\mapsto p$, which
  defines an isomorphism of $\BF$ with the group of polynomials over
  $\Z$. This implies the claimed linear independence.
\end{proof}

It is worthwhile to summarize the discussion of this section in terms
of the composition of the isomorphism $f \mapsto \ord(f,\cdot)$ with the
isomorphism of $\BF$ and the group of polynomials over $\Z$ used in the
preceding proof.
\begin{Theorem}
  \label{thm:the-polynomial-formulation}
  The map
  \begin{equation*}
    p(t)=\sum_{l\ge 0}c_lt^l \mapsto \vartheta_p =
    \eta^{2c_0}\prod_{l\ge 1}\big(\vartheta_l/\eta)^{c_l}
  \end{equation*}
  defines an isomorphism of the (additive) group $\frac12\Z+t\Z[t]$
  and the group~$\TQ$ of theta quotients.  The theta
  quotient~$\vartheta_p$ defines a meromorphic Jacobi form of weight
  $k=p(0)$, index~$m=\frac 12 \big(p'(1)+p''(1)\big)$ and
  character~$\varepsilon^h$ with $h = 2p(1)$.  It is weakly
  holomorphic if and only if, for all positive integers~$N$,
  \begin{equation}
    \label{eq:holo-in-H}
    \frac 1N\sum_{\zeta^N=1} p(\zeta)\ge c_0
  \end{equation}
  (the sum is over all $N$-th roots of unity).  Its order at infinity
  $\ord(\vt p,\cdot)$ is given by
  \begin{equation}
    \label{eq:holo-at-infty}
    \ord(\vt p,x) = \frac1{4\pi^2} \sum_{\begin{subarray}{c} n\in\Z \\
        n\not=0
      \end{subarray}
    } \frac{p(e^{2\pi i xn})} {n^2} .
  \end{equation}
\end{Theorem}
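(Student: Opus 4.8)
The statement is essentially a reformulation of the two preceding theorems together with the bookkeeping formulas of \S\ref{sec:generalities}, so the plan is to verify each assertion by a short direct computation rather than to prove anything genuinely new. First I would unwind the definition of the map: expanding the product gives
\[
  \vartheta_p = \eta^{\,2c_0 - \sum_{l\ge1}c_l}\prod_{l\ge1}\vartheta_l^{c_l},
\]
so that $\vartheta_p$ is a theta quotient whose $\vartheta_l$-exponent is $c_l$ and whose $\eta$-exponent is the integer $2c_0-\sum_{l\ge1}c_l$ (integral precisely because $2c_0\in\Z$; this is the reason the natural domain is $\frac12\Z+t\Z[t]$ rather than $\Z[t]$). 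The assignment $p\mapsto\vartheta_p$ is visibly a homomorphism, since passing from $p$ to the exponent datum $\bigl(2c_0-\sum_{l\ge1}c_l;\,(c_l)_{l\ge1}\bigr)$ is linear and multiplication of theta quotients adds exponents. For bijectivity I would note that every element of $\TQ$ has a unique exponent datum $(n;(c_l))$ with $n,c_l\in\Z$, and that $2c_0-\sum_{l\ge1}c_l=n$ has the unique solution $c_0=\tfrac12\bigl(n+\sum_{l\ge1}c_l\bigr)\in\frac12\Z$; injectivity is immediate from the linear independence of the $B(l\,\cdot\,)$ and $\tfrac1{24}$ established in the proof of the preceding theorem.

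The weight, index and character then follow by substituting $r-s=\sum_{l\ge1}c_l$, $n=2c_0-\sum_{l\ge1}c_l$ and $\sum a_i^2-\sum b_j^2=\sum_{l\ge1}c_l\,l^2$ into the formulas $k=\tfrac{r-s+n}2$, $m=\tfrac12\bigl(\sum a_i^2-\sum b_j^2\bigr)$, $h=3(r-s)+n$ recorded in \S\ref{sec:generalities}. One gets $k=c_0=p(0)$ and $h=2\bigl(c_0+\sum_{l\ge1}c_l\bigr)=2p(1)$ at once, while $m=\tfrac12\sum_{l\ge1}c_l\,l^2=\tfrac12\bigl(p'(1)+p''(1)\bigr)$ using $l^2=l+l(l-1)$.

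For the order at infinity I would use additivity of $\ord$ (Theorem~\ref{thm:order}(4)) together with $\ord(\vartheta_l,x)=B(lx)$ and $\ord(\eta,x)=\tfrac1{24}$, giving
\[
  \ord(\vartheta_p,x)=\bigl(2c_0-\textstyle\sum_{l\ge1}c_l\bigr)\tfrac1{24}+\sum_{l\ge1}c_l\,B(lx).
\]
Substituting the Fourier expansion $B(x)=\tfrac1{4\pi^2}\sum_{n\neq0}e^{2\pi inx}/n^2+\tfrac1{24}$ and recognizing $\sum_{l\ge1}c_l\,e^{2\pi inlx}=p(e^{2\pi inx})-c_0$, all constant terms cancel (here one uses $\sum_{n\neq0}1/n^2=\pi^2/3$ to absorb the $c_0$-contribution), leaving exactly the claimed formula~\eqref{eq:holo-at-infty}.

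Finally, the weak holomorphy criterion is the only point requiring a genuine (though short) argument, and is where I expect the main obstacle to lie. By the Definition a theta quotient is weakly holomorphic exactly when it is a generalized theta block, i.e.\ holomorphic on $\HP\times\C$; since $\eta$ has no zeros there, this is controlled entirely by the $\vartheta_l$. At a primitive $N$-division point of $\Z\tau+\Z$ the factor $\vartheta_l$ vanishes, to first order, precisely when $N\mid l$, so the local order of $\vartheta_p$ there is $\sum_{j\ge1}c_{jN}$, and holomorphy is equivalent to $\sum_{j\ge1}c_{jN}\ge0$ for every $N\ge1$. I would then evaluate $\tfrac1N\sum_{\zeta^N=1}p(\zeta)=c_0+\sum_{j\ge1}c_{jN}$ by orthogonality of $N$-th roots of unity, which turns the condition into~\eqref{eq:holo-in-H}. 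The delicate step is getting this divisor computation at the division points exactly right—identifying the local order as $\sum_{j\ge1}c_{jN}$ and matching it cleanly to the root-of-unity average—while everything else is a mechanical translation of the earlier results.
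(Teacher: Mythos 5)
Your proposal is correct, and most of it coincides with the paper's proof, which dismisses the weight/index/character computation as obvious and treats the order formula~\eqref{eq:holo-at-infty} as a restatement of~\eqref{eq:fourier-development-of-order} — exactly the computation you carry out (including the absorption of the constant terms via $\sum_{n\neq0}1/n^2=\pi^2/3$). The one point where you take a different route is the weak-holomorphy criterion~\eqref{eq:holo-in-H}. The paper writes $\vartheta_p=\eta^{c}\prod_a S_a^{n_a}$ using the splitting of Theorem~\ref{thm:structure-of-group-of-theta-blocks}, notes $p(t)=c_0+\sum_l\sum_a n_a\mu(a/l)t^l$, performs the M\"obius-type inversion $n_a+c_0=\frac1a\sum_{\zeta^a=1}p(\zeta)$, and then quotes Corollary~\ref{cor:gen-theta-block-criterion} (weakly holomorphic iff all $n_a\ge0$). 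You instead compute the divisor of $\vartheta_p$ directly: at a primitive $N$-division point each $\vartheta_l$ vanishes simply precisely when $N\mid l$, so the local order is $\sum_{j\ge1}c_{jN}$, and orthogonality of roots of unity identifies this with $\frac1N\sum_{\zeta^N=1}p(\zeta)-c_0$. The two arguments are equivalent — the exponent $n_N$ in the $S_a$-decomposition is exactly the order of vanishing at the primitive $N$-division points, so your $\sum_{j\ge1}c_{jN}$ is the paper's $n_N$ — but yours is self-contained and makes the geometric meaning of~\eqref{eq:holo-in-H} transparent (non-negativity of the divisor at each torsion level), whereas the paper buys brevity by reusing the already-established structure theorem. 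Your worry that the divisor step is delicate is unfounded: since the zeros of $\vartheta(\tau,\cdot)$ are simple and located exactly on $\Z\tau+\Z$, the multiplicity count at a primitive $N$-division point is immediate, and every division point is primitive for exactly one $N$, so no double counting can occur.
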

\begin{proof}
  The statements concerning the weight, index and character are
  obvious. (See the discussion at the beginning of this section.)  The
  formula for the order at infinity is a restatement
  of~\eqref{eq:fourier-development-of-order}.  Finally, if we write
  $\vartheta_p=\eta^{c}\prod_a S_a^{n_a}$, then
  $p(t)=c_0+\sum_l\sum_a n_a\mu(a/l)t^l$ (where $\mu(a/l)=0$ if $a$ is not
  a multiple of $l$). Accordingly we find
  $n_a+c_0=\frac1a \sum_{\zeta^a=1}p(\zeta)$, and we recognize the stated
  criterion for being weakly holomorphic as a restatement of the first
  corollary of Theorem~\ref{thm:structure-of-group-of-theta-blocks}.
\end{proof}

The construction of generalized theta blocks, i.e.,~of theta quotients
which define Jacobi forms, amounts therefore to the construction of
polynomials $p(t)$ whose coefficients apart from the constant term are
integers, that satisfy~\eqref{eq:holo-in-H} and such that the
right-hand side of~\eqref{eq:holo-at-infty} is non-negative for all
$x\in\R$. We come back to this question in the following section.

We end this section by a criterion for a Jacobi form to be a
generalized theta block, whose easy proof we leave to the reader.
\begin{Theorem}
  \label{eq:theta-block-criterion}
  A weakly holomorphic Jacobi form $\phi$ on the full modular group is
  a generalized theta block if and only if, for every $\tau$, the
  function $z\mapsto \phi(\tau,z)$ has at most division points of
  $\C/\Z\tau+\Z$ as zeroes.
\end{Theorem}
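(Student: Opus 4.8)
\emph{Necessity} is immediate, so the plan is to concentrate on the converse. If $\phi=\frac{\vt{a_1}\cdots\vt{a_r}}{\vt{b_1}\cdots\vt{b_s}}\eta^n$ is holomorphic on $\HP\times\C$, then for each fixed $\tau$ its zeros in $z$ lie in the zero set of the numerator; since the zeros of $\vt{a_i}(\tau,\cdot)$ are $a_i$-division points of $\C/(\Z\tau+\Z)$ and $\eta$ is independent of $z$, every zero is a division point. So I would assume from now on that $\phi\neq0$ is weakly holomorphic and that, for every $\tau$, the map $z\mapsto\phi(\tau,z)$ vanishes only at division points.

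The first thing I would record is that this hypothesis secretly forces $\phi(\tau,\cdot)\not\equiv0$ for every $\tau$, since otherwise the entire plane, including non-division points, would consist of zeros. For fixed $\tau$ the divisor of $\phi(\tau,\cdot)$ on $\C/(\Z\tau+\Z)$ is then a finite sum of division points with non-negative multiplicities and total degree $2m$, where $m$ is the index. As the division points vary holomorphically with $\tau$ and the total degree is constant, the multiplicity at the division point of a fixed type $(p,q)\bmod a$ (with $\gcd(p,q,a)=1$) is locally constant, hence constant on the connected set $\HP$. The transformation law~\eqref{eq:G-inv} under $\gamma\in\SL$ carries the zeros of $\phi(\tau,\cdot)$ to those of $\phi(\gamma\tau,\cdot)$, preserving multiplicities and sending the division point of type $(p,q)\bmod a$ to that of type $\gamma\cdot(p,q)\bmod a$; comparing with the constancy just established shows the multiplicity is $\SL$-invariant. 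Since $\SL$ surjects onto $\SL[\Z/a]$, which acts transitively on the primitive vectors of $(\Z/a)^2$, the multiplicity depends only on $a$, say $n_a\ge0$. Hence $\Div\bigl(\phi(\tau,\cdot)\bigr)=\sum_a n_a\Pi_a$ for every $\tau$, where $\Pi_a$ denotes the sum of the primitive $a$-division points.

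Next I would form the generalized theta block $S_D=\prod_a S_a^{n_a}$ attached to $D=\sum_a n_a(a)$ as in Theorem~\ref{thm:structure-of-group-of-theta-blocks}; by Example~\ref{ex:S_a} it has exactly the same $z$-divisor $\sum_a n_a\Pi_a$ as $\phi$ for every $\tau$. Consequently $\psi=\phi/S_D$ is a meromorphic Jacobi form with neither zeros nor poles in $z$, hence of index $0$, i.e.\ a function of $\tau$ alone transforming like a weakly holomorphic modular form with multiplier a power of $\varepsilon$. Because each $\vt a$, and therefore each $S_a$ and $S_D$, is not identically zero in $z$ for any fixed $\tau\in\HP$, the quotient $\psi$ is holomorphic on all of $\HP$; and a zero $\psi(\tau_0)=0$ would force $\phi(\tau_0,\cdot)\equiv0$, which we excluded, so $\psi$ is also nowhere vanishing on $\HP$. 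A weakly holomorphic modular form that is holomorphic and nowhere zero on $\HP$ (with at worst a pole at the cusp) is a constant times a power of $\eta$, exactly as in the final step of the proof of Theorem~\ref{thm:structure-of-group-of-theta-blocks}. Thus $\phi=c\,\eta^{t}\prod_a S_a^{n_a}$, which by Corollary~\ref{cor:gen-theta-block-criterion} is a generalized theta block.

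The necessity and the closing reduction to index $0$ are routine. The step I expect to demand the most care is the passage from \emph{``the $z$-divisor is supported on division points''} to \emph{``the $z$-divisor equals $\sum_a n_a\Pi_a$''}: this rests on combining the rigidity of the divisor in $\tau$ with the $\SL$-transformation behaviour and the transitivity of $\SL[\Z/a]$ on primitive vectors, and it must be accompanied by the observation that the hypothesis tacitly rules out identical vanishing on a fibre $\{\tau_0\}\times\C$, which is precisely what makes $\psi$ nonvanishing and hence a genuine power of $\eta$.
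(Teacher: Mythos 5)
Your architecture is certainly the intended one --- the paper itself gives no proof (it is ``left to the reader''), and your argument is built exactly on the machinery the paper sets up for it: show the $z$-divisor equals $\sum_a n_a\Pi_a$, divide by $S_D$, and rerun the endgame of the proof of Theorem~\ref{thm:structure-of-group-of-theta-blocks} to identify the remaining factor as a constant times a power of $\eta$. The necessity direction, the observation that the hypothesis forbids identical vanishing of $\phi(\tau_0,\cdot)$ on a fibre, the reduction modulo the elliptic periodicity, the $\SL\to\SL[\Z/a\Z]$ transitivity step, and the final appeal to Corollary~\ref{cor:gen-theta-block-criterion} are all correct.

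There is, however, a genuine gap at exactly the step you flag as the crux. You justify the rigidity claim --- that the multiplicity of the zero of $\phi(\tau,\cdot)$ at the division point of a fixed type $(p,q)\bmod a$ is locally constant in $\tau$ --- by saying that the division points move holomorphically and the total degree $2m$ is constant. That deduction is not valid as stated, because the set of \emph{all} division points (all denominators together) is dense in $\C$: Hurwitz/argument-principle reasoning confines the zeros near $x_0=(p\tau_0+q)/a$ to a small disk and keeps their total count there constant, but that disk also contains, for every nearby $\tau$, the sections of infinitely many other types, so nothing yet prevents a zero from splitting off onto a section of huge denominator passing through the disk. (Your argument would be fine if the zeros were known to lie in a single discrete set such as $\frac1a(\Z\tau+\Z)$, but a uniform bound on the denominators is essentially what has to be proved.) The gap is fillable by an analyticity argument: for each type $t=(p,q,a)$ and each $k\ge1$, the locus $A_{t,k}=\{\tau:\ \phi(\tau,\cdot)\text{ vanishes to order }\ge k\text{ at }(p\tau+q)/a\}$ is the common zero set of the functions $\tau\mapsto(\partial_z^j\phi)\bigl(\tau,(p\tau+q)/a\bigr)$, $0\le j<k$, each holomorphic in $\tau$, so $A_{t,k}$ is either discrete or all of $\HP$. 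Letting $\nu_t$ be the largest $k$ with $A_{t,k}=\HP$, the multiplicity of type $t$ is $\ge\nu_t$ for every $\tau$ and equal to $\nu_t$ off a discrete set; choosing one $\tau^*$ outside the countable union of these discrete sets gives $\sum_t\nu_t=2m$, and then for arbitrary $\tau$ the termwise inequality in $2m=\sum_t(\text{mult at }\tau)\ge\sum_t\nu_t=2m$ forces equality for every type, i.e.\ the multiplicities are constant. (Alternatively: the zero divisor of $\phi$ in $\HP\times\C$ is an analytic hypersurface contained in the countable union of the graphs $\tau\mapsto(p\tau+q)/a$, and an irreducible one-dimensional component cannot be covered by countably many proper analytic subsets, hence coincides with one of these graphs.) With this step supplied, your proof is complete.
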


\section{Long theta blocks of low weight}
\label{sec:long-and-low}

In this section and the next section we shall be interested to
construct Jacobi forms as theta blocks (with fractional eta power)
consisting of many factors $\vt a$ but still with low weight. As
already remarked after Theorem~\ref{thm:the-polynomial-formulation}
this amounts to the construction of polynomials $p(t)$ in $\R+t\Z[t]$
whose coefficients apart from the first one are non-negative, such that
$p(1)$ is large but $p(0)$ is at the same time small, and such that
the right-hand side of~\eqref{eq:holo-at-infty} is non-negative.

Accordingly the growth of the following function $\sym{wt}(N)$ is of
interest:
\begin{equation}
  \label{eq:the-challenge-quantities}
  \begin{split}
    \max(N) &= \sup \Big\{ \min_x \Bigl({\textstyle\sum}_{j=1}^N
    B(a_jx)\Bigr):a_1,\dots,a_N\in\Z_{\ge 1} \Big\}
    ,\\
    \sym{wt}(N) &= \tfrac N2 - 12\max(N) .
  \end{split}
\end{equation}
The quantity $\sym{wt}(N)$ measures the lowest weight for which there
exists a Jacobi form which is a theta block built from exactly $N$
factors $\vt a$. Clearly, $\max(N) < N/24$ (since
$\min_x B(ax) < \int_0^1 B(ax)\,dx=1/24$) and therefore
$\sym{wt}(N) > 0$.  Alternatively, this inequality also follows from
the fact that a non-constant holomorphic Jacobi form has positive
weight.

As a consequence of Theorem~\ref{thm:the-polynomial-formulation} we
can relate our problem to one which is well studied in the literature.
\begin{Lemma}
  Let $T_N$ denote the set of polynomials $p(t)$ in $\R+t\Z_{\ge0}[t]$
  such that $p(e^{2\pi ix})+p(e^{-2\pi ix})\ge 0$ for all real $x$,
  and whose sum of non-constant coefficients equals~$N$.  One has
  \begin{equation*}
    \sym{wt}(N) \le \inf\big\{p(0):
    p\in T_N \big\}
    \,
    .
  \end{equation*}
\end{Lemma}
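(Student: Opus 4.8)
The plan is to take an arbitrary $p\in T_N$ and build from it an honest holomorphic theta block of length $N$ whose weight is exactly $p(0)$; since $\sym{wt}(N)$ is the smallest weight attainable by such a block, this forces $\sym{wt}(N)\le p(0)$, and taking the infimum over $p\in T_N$ gives the claim. Concretely, write $p(t)=c_0+\sum_{l\ge1}c_lt^l$ with $c_0\in\R$ and $c_l\in\Z_{\ge0}$, and feed $p$ into the isomorphism of Theorem~\ref{thm:the-polynomial-formulation} to obtain the theta quotient $\vt p=\eta^{2c_0}\prod_{l\ge1}(\vt l/\eta)^{c_l}=\eta^{2c_0-N}\prod_{l\ge1}\vt l^{c_l}$, the last equality using $\sum_{l\ge1}c_l=N$. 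Because the exponents $c_l$ ($l\ge1$) are non-negative integers summing to $N$, this is literally a theta block with exactly $N$ theta factors (and fractional $\eta$-power $2c_0-N$); in particular it is weakly holomorphic, and by the same theorem its weight is $p(0)=c_0$.

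The heart of the argument is to check that $\vt p$ is in fact \emph{holomorphic}. By part~(5) of Theorem~\ref{thm:order} this amounts to $\ord(\vt p,x)\ge0$ for all $x$, and Theorem~\ref{thm:the-polynomial-formulation} supplies the closed formula $\ord(\vt p,x)=\frac1{4\pi^2}\sum_{n\ne0}p(e^{2\pi ixn})/n^2$. The key manipulation is to pair $n$ with $-n$ and rewrite this as
\begin{equation*}
  \ord(\vt p,x)=\frac1{4\pi^2}\sum_{n\ge1}\frac{p(e^{2\pi ixn})+p(e^{-2\pi ixn})}{n^2}.
\end{equation*}
Now the defining condition of $T_N$, namely $p(e^{2\pi it})+p(e^{-2\pi it})\ge0$ for all real $t$, applies term by term with $t=nx$, so every summand is non-negative and $\ord(\vt p,x)\ge0$ follows. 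Hence $\vt p$ is a holomorphic theta block of length $N$ and weight $p(0)$, exactly what the first paragraph needs. This is precisely why $T_N$ is defined with the \emph{symmetrized} positivity $p(e^{2\pi it})+p(e^{-2\pi it})\ge0$ rather than with $p$ itself.

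Finally, for the logic to close, I would record that $\sym{wt}(N)$ really is the infimum of the weights of all holomorphic length-$N$ theta blocks: for a fixed factor multiset $a_1,\dots,a_N$ the holomorphy condition $\ord\ge0$ forces the $\eta$-power $n$ in $\vt{a_1}\cdots\vt{a_N}\eta^n$ to satisfy $n\ge-24\min_x\sum_jB(a_jx)$, so the lightest admissible block with these factors has weight $\frac N2-12\min_x\sum_jB(a_jx)$, and the infimum over all factor choices is exactly $\frac N2-12\max(N)=\sym{wt}(N)$. Thus exhibiting a holomorphic length-$N$ block of weight $p(0)$ yields $\sym{wt}(N)\le p(0)$. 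I expect no serious obstacle: the only point needing a little care is that $\max(N)$ is a supremum which may fail to be attained, but since we are after an upper bound for $\sym{wt}(N)$ this is harmless, and the argument is otherwise a direct transcription through the dictionary of Theorem~\ref{thm:the-polynomial-formulation}.
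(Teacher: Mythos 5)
Your proof is correct and takes essentially the same approach as the paper: the paper's one-line proof simply asserts that the image of $T_N$ under the map of Theorem~\ref{thm:the-polynomial-formulation} consists of theta blocks (with fractional eta power) whose order function is non-negative, and your pairing of $n$ with $-n$ in the formula~\eqref{eq:holo-at-infty} is precisely the verification of that containment. Your closing paragraph, identifying $\sym{wt}(N)$ as the infimum of weights of holomorphic length-$N$ theta blocks, spells out what the paper leaves implicit in the definition~\eqref{eq:the-challenge-quantities}.
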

\begin{proof}
  The inequality results from the fact that the image of $T_N$ under
  the map of Theorem~\ref{thm:the-polynomial-formulation} is contained
  in the set $\widetilde T_N$ of theta blocks (with fractional eta
  power) whose order function is non-negative.
\end{proof}

We do not know whether the image of $T_N$ equals $\widetilde T_N$.  If
this held true then the inequality of the lemma would in fact be an
equality.

The asymptotic behavior of
\begin{equation*}
  \sym{ct}(N):=\inf\big\{p(0): p\in T_N \big\}
\end{equation*}
was studied in \cite{Odlyzko}, \cite{Kolountzakis},
\cite{Belov-Konyagin} et.~al. In the first two of these articles it
was shown that $\sym{ct}(N)$ does not grow faster than
$\log(n)\,n^{1/3}$ and $n^{1/3}$, respectively. The so far strongest
result (to the best of our knowledge) is $\sym{ct}(N)\ll \log^3 N$ for
$N\ge 2$ (see~\cite[Thm.~0.5]{Belov-Konyagin}).  More precisely, one
has
\begin{TheoremX}[\protect{\cite[Cor.~5.4]{Belov-Konyagin}}]
  For all $N$ one has
  \begin{equation*}
    \sym{ct}^{\downarrow}
    :=
    \inf\left\{p(0):
    p\in T_N^{\downarrow} \right\}
    \le
    45\,000
    (1+(\log N)^3)
    ,
  \end{equation*}
  where $T_N^{\downarrow}$ is the subset of polynomials
  $p(t)=a_0+a_1t+a_2+\cdots$ in $T_N$ whose non-constant coefficients
  $a_1$, $a_2$, \dots form a decreasing sequence.
\end{TheoremX}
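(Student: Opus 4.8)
The condition defining $T_N$ is, after writing $p(t)=a_0+\sum_{l\ge1}a_lt^l$ and $t=e^{2\pi ix}$, exactly the nonnegativity of the cosine polynomial
\[
  g(x)\;:=\;a_0+\sum_{l\ge1}a_l\cos(2\pi lx)\;\ge\;0\qquad(x\in\R),
\]
with $a_l\in\Z_{\ge0}$, $a_1\ge a_2\ge\cdots$, and $\sum_{l\ge1}a_l=N$. Since $\int_0^1 g\,dx=a_0$ while $g(0)=a_0+N$, the task is to exhibit a nonnegative trigonometric polynomial with nonnegative \emph{decreasing integer} coefficients summing to $N$ away from the origin, whose mean $a_0$ is as small as possible; equivalently, the oscillating part $\sum_{l\ge1}a_l\cos(2\pi lx)$ must have total mass $N$ at the origin yet dip below zero by at most $a_0$. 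My plan is to construct such a $g$ explicitly and estimate its mean.

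The elementary nonnegative building block with integer coefficients is the Fejér kernel $\bigl|\sum_{j=0}^{m-1}e^{2\pi ijx}\bigr|^2=m+2\sum_{l=1}^{m-1}(m-l)\cos(2\pi lx)$, which produces a $p\in T_N^{\downarrow}$ with $N=\binom m2$ and $a_0=m/2$, hence the baseline $\sym{ct}^{\downarrow}\ll\sqrt N$. More generally, Fejér's theorem asserts that any convex decreasing coefficient sequence yields a nonnegative cosine polynomial, but a short computation shows that a convex decreasing \emph{integer} sequence summing to $N$ must have leading term $\gg\sqrt N$ (its successive differences are decreasing positive integers on a support of length $\ll a_1$), so no convex construction beats the square-root barrier. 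The whole difficulty is therefore to exploit genuine cancellation: one must allow the oscillating part to be large in $L^2$ while keeping its negative excursions only polylogarithmically small, a property invisible at the level of the coefficients themselves.

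The approach I would pursue is a multiscale (dyadic) construction. I would split the frequency range into $O(\log N)$ blocks $l\in[2^s,2^{s+1})$; on each block the near-optimal (Fejér-type) target profile is slowly varying, so I would realize it by a dilated and modulated copy of a Fejér block, chosen to have integer coefficients that are nearly constant on the block and decreasing across blocks. Superimposing these $O(\log N)$ pieces gives a polynomial with decreasing integer coefficients and total mass~$N$, while its mean is the sum of the blockwise contributions. The frequencies used in distinct blocks must be selected by an additive/equidistribution criterion so that the negative excursions of the separate oscillating pieces do not reinforce one another; if each block can be arranged to contribute only $O((\log N)^2)$ to the mean, summing over the $O(\log N)$ scales yields $\sym{ct}^{\downarrow}\ll(\log N)^3$, the explicit constant $45\,000$ coming out of the bookkeeping.

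The main obstacle is precisely the control of \emph{global} nonnegativity. Unlike the Fejér or convex constructions, positivity here is not coefficientwise, so one must prove a uniform lower bound for a superposition of many oscillating kernels — an $L^\infty$ (lower-envelope) estimate — and reconcile it simultaneously with the decreasing-coefficient constraint. I expect this to demand the full strength of the machinery of~\cite{Belov-Konyagin} rather than any soft argument; obtaining the sharp exponent $3$, as opposed to the weaker $N^{1/3}$ of~\cite{Odlyzko,Kolountzakis}, is exactly where the delicate choice of frequencies and the careful estimation of their interference must enter.
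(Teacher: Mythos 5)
This statement is not proved in the paper at all: it is quoted verbatim from the literature (Belov--Konyagin, Cor.~5.4), and the authors use it as a black box to get the upper bound in Theorem~\ref{thm:theoretical-answer-to-the-challenge}. So the only question is whether your argument stands on its own as a proof, and it does not. Your reduction of the membership condition for $T_N^{\downarrow}$ to the nonnegativity of the cosine polynomial $a_0+\sum_{l\ge1}a_l\cos(2\pi lx)$ is correct, and your Fej\'er-kernel baseline $p(0)\ll\sqrt N$ (with $N=\binom m2$, $p(0)=m/2$) is fine, as is the observation that convex integer coefficient sequences cannot beat the square-root barrier. But everything from that point on is a plan rather than a proof.

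The genuine gap is the multiscale construction itself: you never write down the coefficients of the dyadic blocks, never verify that the superposition has \emph{decreasing integer} coefficients across block boundaries (modulation and dilation of Fej\'er blocks generically destroys monotonicity), never formulate the ``additive/equidistribution criterion'' selecting the frequencies, and --- most importantly --- never prove the uniform lower bound that controls the negative excursions of the superposed oscillating pieces. That $L^\infty$ estimate, together with the claimed $O((\log N)^2)$ per-block contribution to the mean, \emph{is} the theorem; without it you have established nothing beyond the $\sqrt N$ baseline. Your own closing remark, that this step would ``demand the full strength of the machinery of~\cite{Belov-Konyagin},'' concedes exactly this: the argument defers its essential difficulty to the result being proved. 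Likewise the constant $45\,000$ cannot ``come out of the bookkeeping'' when no bookkeeping has been carried out. As submitted, the proposal is a plausible research outline, not a proof of the stated bound.
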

Note that the right-hand side is also an upper bound for $\sym{ct}(N)$
since $T_N^{\downarrow}$ is a subset of $T_N$. The same paper
(\protect{\cite[Thm.~0.5]{Belov-Konyagin}}) also gives an estimate of
$\sym{ct}^{\downarrow}(N)$ from below, namely
\begin{equation*}
  \tfrac{\log^2N}{\log\log N}
  \ll
  \sym{ct}^{\downarrow}(N)
  .
\end{equation*}
However, since we know neither the exact relation between
$\sym{wt}(N)$ and $\sym{ct}(N)$ nor between the latter and
$\sym{ct}^{\downarrow}(N)$, the last estimate is not useful for us. It
might give an indication for a lower bound of $\sym{wt}(N)$ though.

We can indeed prove a similar estimate for $\sym{wt}(N)$ from below by
relating $\sym{wt}(N)$ to another well-studied problem, namely the
determination of the quantity
\begin{equation*}
  A(N)
  :=
  \inf\left\{
    \int_0^1\big|\sym{Re} p(e^{2\pi i x})\big|\,dx:
    p(t)\in t\Z_{\ge 0}[t],\ p(1)=N
  \right\}
  .
\end{equation*}
We thank Danylo Radchenko who pointed out this connection to us and
also found and proved the following Lemma.
\begin{Lemma}
  \label{eq:L1-estimate}
  For all $N\ge 1$, one has
  \begin{equation*}
    \sym{wt}(N)
    \ge
    \Big(\frac 6{\pi^2}-\frac12\Big)
    \cdot A(N)
    .
  \end{equation*}
\end{Lemma}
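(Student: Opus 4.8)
The plan is to translate $\sym{wt}(N)$ into an assertion about the order functions of Theorem~\ref{thm:the-polynomial-formulation} and then to compare those order functions, in $L^1$, against the integrals defining $A(N)$. First I would write $p(t)=\sum_{j=1}^N t^{a_j}\in t\Z_{\ge 0}[t]$, so that $p(1)=N$ and so that multisets $\{a_1,\dots,a_N\}$ correspond bijectively to the polynomials occurring in $A(N)$; this makes the index sets in $\max(N)$ and $A(N)$ literally the same. Using the Fourier expansion of $B$ recorded just before Theorem~\ref{thm:the-polynomial-formulation}, pairing $n$ with $-n$ and using that $p$ has real coefficients, I would rewrite
\[
  \sum_{j=1}^N B(a_j x)=\frac N{24}+\frac1{2\pi^2}\,F_p(x),\qquad
  F_p(x):=\sum_{n\ge 1}\frac{g(nx)}{n^2},\quad g(x):=\sym{Re}\,p(e^{2\pi i x}).
\]
Taking $\min_x$, then $\sup_p$, and substituting into $\sym{wt}(N)=\tfrac N2-12\max(N)$, the two $N/2$ terms cancel and I obtain the clean identity
\[
  \sym{wt}(N)=\frac{6}{\pi^2}\,\inf_p\,\max_x\bigl(-F_p(x)\bigr).
\]

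The heart of the matter is then a single pointwise-to-$L^1$ estimate, valid for each fixed $p$. Since $p$ has no constant term, each $g(n\,\cdot)$ has mean zero over $\R/\Z$, hence so does $F_p$; therefore $\int_0^1 (F_p)_-=\tfrac12\|F_p\|_1$, and consequently
\[
  \max_x\bigl(-F_p(x)\bigr)=\|(F_p)_-\|_\infty\ge\|(F_p)_-\|_1=\tfrac12\|F_p\|_1.
\]
Next I would bound $\|F_p\|_1$ below by isolating the $n=1$ term: writing $F_p=g+R$ with $R=\sum_{n\ge 2}g(n\,\cdot)/n^2$, using $\|g(n\,\cdot)\|_1=\|g\|_1$ and $\sum_{n\ge 2}n^{-2}=\tfrac{\pi^2}6-1$, the triangle inequality gives $\|F_p\|_1\ge\|g\|_1-\|R\|_1\ge\bigl(2-\tfrac{\pi^2}6\bigr)\|g\|_1$. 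Combining the two displays yields the per-$p$ bound $\max_x(-F_p)\ge\bigl(1-\tfrac{\pi^2}{12}\bigr)\|g\|_1$, where $\|g\|_1=\int_0^1\bigl|\sym{Re}\,p(e^{2\pi i x})\bigr|\,dx$.

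Finally I would assemble the pieces. Because $1-\tfrac{\pi^2}{12}>0$, the per-$p$ inequality survives passing to the infimum, and since $\inf_p\|g\|_1=A(N)$ by definition, I conclude
\[
  \sym{wt}(N)=\frac6{\pi^2}\inf_p\max_x(-F_p)
  \ge\frac6{\pi^2}\Bigl(1-\frac{\pi^2}{12}\Bigr)A(N)
  =\Bigl(\frac6{\pi^2}-\frac12\Bigr)A(N).
\]
The only genuinely delicate point is the lower bound on $\|F_p\|_1$: the tail $R$ must not swamp the main term $g$, which is exactly why the numerical fact $2-\tfrac{\pi^2}6>0$ is needed and why the resulting constant is precisely $\tfrac6{\pi^2}-\tfrac12$. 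Everything else—the identity for $\sym{wt}(N)$, the mean-zero reduction from $L^\infty$ to $L^1$, and the positivity that lets one commute the constant past the infimum—is routine.
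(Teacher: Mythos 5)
Your proof is correct and is essentially the paper's own argument: you reduce the sup of the negative part to half the $L^1$-norm via the mean-zero property, then isolate the $n=1$ harmonic by the triangle inequality using that all dilates $g(n\,\cdot)$ have the same $L^1$-norm, which is exactly how the constant $\frac{6}{\pi^2}-\frac12$ arises in the paper as well. The only difference is cosmetic: you first package the definitions into the identity $\sym{wt}(N)=\frac{6}{\pi^2}\inf_p\max_x\bigl(-F_p(x)\bigr)$, where $F_p=2\pi^2\,\ord(\vartheta_p,\cdot)$, whereas the paper argues directly with the order function and an auxiliary constant $c_0$.
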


\begin{proof}
  Let $p(t)$ be in $t\Z[t]$, and let $c_0$ be a real number such that
  $c_0/12+\ord(\vt {p},x)=\ord(\vt {c_0+p},x)\ge 0$ for all $x$.  Then
  $c_0/12$ is an upper bound for
  -$\int_0^1 \min(\ord({\vt {p}},x),0)\,dx$. But the latter integral
  equals $\frac12\int_0^1|\ord({\vt {p}},x)|\,dx$ (since
  $\int_0^1 \ord({\vt {p}},x)\,dx$ equals
  $0$). From~\eqref{eq:holo-at-infty} we therefore obtain, setting
  $I_n=\int_0^1|\sym{Re} p(e^{2\pi inx})|\,dx$,
\begin{equation*}
    c_0\ge 6\int_0^1|\ord({\vt {p}},x)|\,dx
    \ge
    \frac 3{\pi^2}\bigg(I_1
    -
    \sum_{n\ge 2}\frac {I_n}{n^2}
    \Bigg)
    =\Big(\frac 6{\pi^2}-\frac 12\Big)I_1
    ,
  \end{equation*}
  where for the last equality we used that the $p(e^{2\pi inx})$ all
  have the same $L^1$-norm. The lemma is now obvious.
\end{proof}

Lower bounds for the left-hand side of the inequality of the last
lemma have been studied (in a more general context)
in~\cite{McGehee-Carruth-Pigno}. In particular, their results imply
\begin{TheoremX}[\protect{\cite[Thm.~2]{McGehee-Carruth-Pigno}}]
  \label{thm:MCP-consequence}
  For all $N\ge 1$ one has
  \begin{equation*}
    A(N)
    \ge
    \frac {H_{2N}}{60}
    ,
  \end{equation*}
  where $H_{2N}=\sum_{n=1}^{2N}\tfrac 1n$ denote the $2N$th harmonic
  number.
\end{TheoremX}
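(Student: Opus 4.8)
The plan is to deduce the stated inequality directly from the sharp $L^1$ lower bound for exponential sums proved in~\cite{McGehee-Carruth-Pigno}, applied to the real trigonometric polynomial $\sym{Re}\,p(e^{2\pi i x})$, and then to reduce the resulting estimate to an elementary optimization over coefficient sequences. Write $p(t)=\sum_{j\ge 1}a_jt^j$ with $a_j\in\Z_{\ge 0}$ and $\sum_j a_j=p(1)=N$, and set $M=\#\{j:a_j\neq 0\}$. Since the coefficients are real,
\begin{equation*}
  \sym{Re}\,p(e^{2\pi i x})
  =\sum_{j\ge 1}a_j\cos(2\pi j x)
  =\tfrac12\sum_{j\ge 1}a_j\bigl(e^{2\pi i jx}+e^{-2\pi i jx}\bigr)
  ,
\end{equation*}
which is an exponential sum supported on the $2M$ distinct frequencies $\pm j$ (for $a_j\neq 0$), the coefficient at $\pm j$ being $a_j/2$. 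The crucial point is that, because the $a_j$ are \emph{nonzero integers}, each of these $2M$ coefficients has modulus $\ge\tfrac12$, while their total mass is $\tfrac12\cdot 2\sum_j a_j=N$.

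First I would invoke the main theorem of~\cite{McGehee-Carruth-Pigno} in its sharp form: for distinct integer frequencies $\lambda_k$ and complex coefficients $c_k$ one has $\int_0^1\bigl|\sum_k c_k e^{2\pi i\lambda_k x}\bigr|\,dx\ge\frac1{30}\sum_k c_k^\ast/k$, where $c_1^\ast\ge c_2^\ast\ge\cdots$ is the decreasing rearrangement of the moduli $|c_k|$. Applying this to $f=\sym{Re}\,p$ gives
\begin{equation*}
  \int_0^1\bigl|\sym{Re}\,p(e^{2\pi i x})\bigr|\,dx
  \ \ge\ \frac1{30}\sum_{k=1}^{2M}\frac{b_k}{k}
  ,
\end{equation*}
where $b_1\ge b_2\ge\cdots\ge b_{2M}$ is the decreasing rearrangement of the multiset $\{a_j/2,\,a_j/2:a_j\neq 0\}$, so that $b_k\ge\tfrac12$ for every $k$ and $\sum_k b_k=N$ (whence also $2M\le 2N$).

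It then remains to show $\sum_{k=1}^{2M}b_k/k\ge\tfrac12 H_{2N}$, the extremal configuration being $p=t+t^2+\cdots+t^N$, for which all $b_k=\tfrac12$ and there are exactly $2N$ of them. Here I would use summation by parts. Extend $b_k=0$ for $k>2M$ and compare with $\tilde b_k=\tfrac12$ for $1\le k\le 2N$ and $\tilde b_k=0$ otherwise; the two sequences have the same total mass $N$. Since each nonzero $b_k\ge\tfrac12$, the partial sums satisfy $B_j:=\sum_{k\le j}b_k\ge\min(j/2,N)=\sum_{k\le j}\tilde b_k=:\tilde B_j$ for every $j$. Writing $\sum_k b_k/k=\sum_{k}B_k\bigl(\tfrac1k-\tfrac1{k+1}\bigr)$ and using that the weights $\tfrac1k-\tfrac1{k+1}$ are positive together with $B_k\ge\tilde B_k$, one obtains $\sum_k b_k/k\ge\sum_k\tilde b_k/k=\tfrac12 H_{2N}$. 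Combining with the previous display and taking the infimum over $p$ yields $A(N)\ge\frac1{30}\cdot\frac12 H_{2N}=H_{2N}/60$.

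The genuine mathematical content — the logarithmic growth of the $L^1$ norm of an exponential sum — is entirely contained in the cited theorem, which I would treat as a black box; it is by far the main obstacle, were one to prove the statement from scratch. On our side only two points need care: the bookkeeping factor $2$ arising from passing to the real part (each frequency $j$ splitting into $\pm j$, each carrying half the mass but still modulus $\ge\tfrac12$), which produces both the $2N$ in $H_{2N}$ and the extra factor $2$ in the constant $60$; and the verification, via the majorization/Abel-summation step, that the equal-coefficient polynomial is the worst case. I would stress that the \emph{sharp} form of the inequality, with $\sum_k c_k^\ast/k$ rather than merely a $\log(\#\text{terms})$ bound, is essential: the number of distinct frequencies $2M$ can be as small as $2$ even when $N$ is large, so a bound phrased only in terms of the number of terms would not suffice.
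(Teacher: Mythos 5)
Your overall route is the same as the paper's: treat the McGehee--Carruth--Pigno $L^1$ bound as a black box, apply it to the symmetrized sum $\mathrm{Re}\,p(e^{2\pi ix})=\sum_j\tfrac{a_j}2\bigl(e^{2\pi ijx}+e^{-2\pi ijx}\bigr)$, and do the coefficient bookkeeping (the paper compresses exactly this into the words ``is an obvious consequence''). The problem is that the black box you invoke is not a theorem. The Littlewood-conjecture bound does \emph{not} hold with the decreasing rearrangement $c_k^\ast$ of the coefficient moduli: in the genuine theorem the weight $1/k$ is attached to the coefficient of the $k$-th frequency \emph{in the increasing order of the frequencies}, and this ordering is essential. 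A counterexample to your rearranged form is the modulated Fej\'er kernel $f(x)=e^{2\pi i(n+1)x}K_n(x)$: it has $\|f\|_{L^1[0,1]}=\|K_n\|_{L^1}=1$, its coefficients $1-|j|/(n+1)$ sit at the distinct frequencies $n+1+j$ ($|j|\le n$), and the rearranged sum satisfies $\sum_k c_k^\ast/k\ge\tfrac12\bigl(H_n-\tfrac n{n+1}\bigr)\to\infty$, so no absolute constant can work; a large coefficient buried in the middle of the spectrum must be weighted by its frequency position, not by its size. Consequently your closing paragraph, which insists that the rearranged form is ``essential'', is wrong on both counts: that form is false, and it is also not needed.

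It is not needed because your majorization/Abel-summation step never uses the rearrangement: it only uses that every nonzero coefficient of $\mathrm{Re}\,p$ has modulus $\ge\tfrac12$ and that the total mass is $N$, which gives the partial-sum bound $B_j\ge\min(j/2,N)$ for the coefficients listed in \emph{any} order --- in particular in the frequency order $-j_M<\dots<-j_1<j_1<\dots<j_M$ demanded by the correct theorem. So if you replace your ``sharp form'' by the frequency-ordered inequality $\int_0^1\bigl|\sum_kc_ke^{2\pi i\lambda_kx}\bigr|\,dx\ge\tfrac1{30}\sum_k|c_k|/k$, your proof goes through verbatim and yields $A(N)\ge\tfrac1{30}\cdot\tfrac12H_{2N}=H_{2N}/60$. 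Note also that your constant $1/30$ is the one that makes the numerology come out: if one instead uses the inequality exactly as displayed in the paper (with constant $1/60$), the same unavoidable factor-of-two bookkeeping from passing to real parts only gives $H_{2N}/120$, so the paper's inline restatement of the cited theorem appears to carry a factor-of-two slip, while its stated conclusion $H_{2N}/60$ (and the constant $555.930\ldots=60/(6/\pi^2-\tfrac12)$ used downstream) matches your reading.
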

This theorem as stated here is not exactly identical to
\cite[Thm.~2]{McGehee-Carruth-Pigno}. In fact, they prove, for any
sequence of integers $a_1<a_2<\dots< a_n$ and any sequence of complex
numbers $\lambda_1$, \dots, $\lambda_N$, the inequality
\begin{equation*}
  \int_0^1\big|\sum_{j=1}^n\lambda_j e^{2\pi ia_jx}\big|
  \ge
  \frac 1{60}\sum_{j=1}^n\frac{|\lambda_j|}{j}
  .
\end{equation*}
Theorem~\ref{thm:MCP-consequence} is an obvious consequence.

Summarizing the preceding discussion we obtain
\begin{Theorem}
  \label{thm:theoretical-answer-to-the-challenge}
  The quantity $\sym{wt(N)}$ in~\eqref{eq:the-challenge-quantities}
  satisfies
  \begin{equation*}
    \frac {H_{2N}}{555.930\dots}\le \sym{wt}(N) \le 45000 (1+\log^3N)
  \end{equation*}
  for all $N\ge 1$.
\end{Theorem}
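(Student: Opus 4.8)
The plan is to read off both bounds by concatenating inequalities already established in this section, since the theorem is essentially a summary. For the \emph{upper} bound I would start from the first Lemma of this section, which gives $\sym{wt}(N)\le\sym{ct}(N)=\inf\{p(0):p\in T_N\}$. Because $T_N^{\downarrow}$ is a subset of $T_N$, the infimum taken over the smaller family can only be larger, so $\sym{ct}(N)\le\sym{ct}^{\downarrow}(N)$; feeding in the quoted Belov--Konyagin estimate $\sym{ct}^{\downarrow}(N)\le 45000\,(1+(\log N)^3)$ then yields the right-hand inequality at once.

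For the \emph{lower} bound I would chain Lemma~\ref{eq:L1-estimate}, which gives $\sym{wt}(N)\ge(\tfrac 6{\pi^2}-\tfrac12)\,A(N)$, with Theorem~\ref{thm:MCP-consequence}, which gives $A(N)\ge H_{2N}/60$. This produces
\[
  \sym{wt}(N)\ \ge\ \Big(\frac 6{\pi^2}-\frac12\Big)\frac{H_{2N}}{60}.
\]
The only remaining step is to identify the displayed constant: since $\tfrac 6{\pi^2}-\tfrac12=\tfrac{12-\pi^2}{2\pi^2}$, the coefficient of $H_{2N}$ equals $\tfrac{12-\pi^2}{120\pi^2}$, whose reciprocal is $\tfrac{120\pi^2}{12-\pi^2}=555.930\ldots$, exactly the constant appearing in the statement.

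I do not expect any genuine obstacle here: both halves are formal consequences of results proved or quoted just above, so the work is purely bookkeeping. The one point that deserves a moment's care is the \emph{direction} of the monotonicity step $\sym{ct}(N)\le\sym{ct}^{\downarrow}(N)$---one must note that passing to the subset $T_N^{\downarrow}\subseteq T_N$ \emph{raises} the infimum, so that the Belov--Konyagin bound for the restricted problem really does control $\sym{ct}(N)$ from above. Beyond that, I would simply verify the arithmetic of the constant $555.930\ldots$ and record that both inequalities hold for every $N\ge 1$, as asserted.
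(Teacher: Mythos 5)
Your proof is correct and is exactly the paper's argument: the paper presents this theorem as a summary of the preceding discussion, i.e.\ the chain $\sym{wt}(N)\le\sym{ct}(N)\le\sym{ct}^{\downarrow}(N)\le 45000(1+\log^3 N)$ for the upper bound and Lemma~\ref{eq:L1-estimate} combined with Theorem~\ref{thm:MCP-consequence} for the lower bound, with $\bigl(\tfrac{6}{\pi^2}-\tfrac12\bigr)\tfrac1{60}=\tfrac{12-\pi^2}{120\pi^2}$ giving the constant $555.930\dots$ as its reciprocal. Your attention to the direction of the monotonicity step $\sym{ct}(N)\le\sym{ct}^{\downarrow}(N)$ matches the paper's own remark that the Belov--Konyagin bound controls $\sym{ct}(N)$ because $T_N^{\downarrow}\subseteq T_N$.
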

In particular, $\sym{wt}(N)$ grows at least like a constant
times~$\log N$ and at most like a constant times~$\log^3 N$ as $N$
goes to infinity. Note, however, that the bounds given in the theorem
are very poor for $N$ of intermediate size. For instance, for $N=50$
these bounds are
\begin{equation*}
  0.00933 \;\le\;\sym{wt}(50) \;\le\;2.74\,\times\,10^6\,,
\end{equation*}
whereas Table~\ref{tab:best-values} in the next section shows that in
fact $\,\sym{wt}(50)<2.224\,$.

As we see from Theorem~\ref{thm:theoretical-answer-to-the-challenge}
there exist theta blocks with an arbitrary high number $N$ of
$\vartheta$-factors which are Jacobi forms but have relatively small
weight $\ll \log^3 N$. It is challenging to construct such theta
blocks explicitly. The rest of this article will somehow pivot around
this subject. In particular, we shall construct infinite families of
theta blocks with a high number of $\vt a$-factors, fairly small
weight and yet holomorphic at infinity. We shall even develop a theory
that will permit to construct such families systematically.  In the next section, however, we confine ourselves to describing the
results of our direct search for interesting theta blocks.

\part*{Part II: Examples}

\section{Experimental search for long theta blocks}
\label{sec:Experimental-search-for-long-theta-blocks}

As we explained in the last section we are interested in long theta
blocks of low weight which are holomorphic at infinity.  For this we
need, first of all, to describe an efficient method to calculate the
minimum of the order of a theta block.  For $\ve a=(a_1,\dots,a_N)$ in
$\Z^N$, set
\begin{gather*}
  \vt {\ve a}:=\prod_{j=1}^N \vt {a_j},
  \\
  B_{\ve a}(x):=\sum_{j=1}^N B(a_jx), \quad s_{\ve a} = 24\min_{x}
  B_{\ve a}(x) .
\end{gather*}
Recall that the theta block $\vt {\ve a}$ has $B_{\ve a}$ as order at
infinity. Hence $s_{\ve a}$ is the maximal fractional power of $\eta$
by which we can divide $\vt {\ve a}$ and still have a Jacobi form. The
weight of the resulting form is
\begin{equation*}
  k_{\ve a}
  =
  \tfrac 12 (N - s_{\ve a})
  .
\end{equation*}

Note that $B(x)$ is one half of the square of the distance of $x$ to
the closest point in $\frac12+\Z$. Accordingly, $B_{\ve a}(x)$, for a
given $x$, is one half of the square of the Euclidean distance of
$x\ve a$ to the closest point in $\ve {\frac 12}+\Z^N$, where
$\ve {\frac 12}=(\frac12,\dots,\frac12)$.  In other words,
$s_{\ve a}/24$ is one half of the square distance of the line
$\R \cdot \ve a$ to the set $\ve {\frac12}+\Z^N$. If $\frac {\ve n}2$
is a point in $\ve {\frac12}+\Z^N$ then its square distance to
$\R\cdot {\ve a}$ equals the square length of its orthogonal
projection onto the orthogonal complement of $R\cdot {\ve a}$, i.e.~it
equals
$\frac 14\left({\ve n}^2-{(\ve n \cdot \ve a)^2}/{\ve a^2}\right)$.
If we set
\begin{equation}
  \label{eq:S-a}
  S_{\ve a}({\ve n})
  :=
  {\ve n}^2\cdot{\ve a^2} - {(\ve n \cdot \ve a)^2}
  ,
\end{equation}
then we can summarize
\begin{equation}
  \label{eq:B-QF}
  s_{\ve a}
  =
  24\min_{x} B_{\ve a}(x)
  =
  3\min_{\ve n \in \ve 1+2\Z^N} S_{\ve a}({\ve n})/{\ve a}^2
  .
\end{equation}
This formula has several consequences.

First of all, if ${\ve n}_0$ in $\ve 1+2\Z^N$ ($\ve 1=(1,\dots,1)$)
minimizes $S_{\ve a}({\ve n})$, then the minimum of $B_{\ve a}(x)$ is
assumed at $x={{\ve n}_0\cdot \ve a}/2{\ve a^2}$. Note that $x$ is a
rational number with denominator $2{\ve a^2}$. More precisely, we can
state

\begin{Proposition}
  $B_{\ve a}(x)$ assumes its minimum at one of the points
  \begin{equation}
    x=\frac s{2M}+\frac{k}M
    \quad (0\le k <M)
    ,
  \end{equation}
  where $s=\sum_{j=1}^Na_j$ and $M=\sum_{j=1}^Na_j^2$.
\end{Proposition}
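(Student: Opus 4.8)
The plan is to read the conclusion off directly from the reformulation~\eqref{eq:B-QF} together with the observation recorded just before the statement: if $\ve n_0\in\ve 1+2\Z^N$ minimizes $S_{\ve a}(\ve n)$, then $B_{\ve a}$ attains its minimum at the point $x_0=(\ve n_0\cdot\ve a)/(2\ve a^2)$. Since $\ve a^2=M$, the entire task is to understand the numerator $\ve n_0\cdot\ve a$ modulo~$2M$ and to invoke the periodicity of $B_{\ve a}$.

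First I would note that $B_{\ve a}$ has period~$1$, since each summand $B(a_jx)$ has period $1/a_j$ (as $B$ has period~$1$ and $a_j\in\Z$) and hence period dividing~$1$. The key step is then a parity computation. Writing $\ve n_0=\ve 1+2\ve m$ with $\ve m\in\Z^N$ (possible exactly because $\ve n_0\in\ve 1+2\Z^N$), one finds
\[
  \ve n_0\cdot\ve a \;=\; \ve 1\cdot\ve a + 2\,\ve m\cdot\ve a \;=\; s+2t,
  \qquad t:=\ve m\cdot\ve a\in\Z,
\]
so that
\[
  x_0 \;=\; \frac{\ve n_0\cdot\ve a}{2M} \;=\; \frac{s}{2M}+\frac tM .
\]
Choosing $k\in\{0,\dots,M-1\}$ with $k\equiv t\pmod M$, the point $\frac s{2M}+\frac kM$ differs from $x_0$ by the integer $(t-k)/M$, hence is again a minimizer of $B_{\ve a}$ by its period-$1$ periodicity. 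This is precisely one of the asserted points.

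I do not expect a genuine obstacle here: once~\eqref{eq:B-QF} and the geometric description of the minimizer are available, the whole content is the congruence $\ve n_0\cdot\ve a\equiv s\pmod 2$, forced by $\ve n_0\equiv\ve 1\pmod 2$, which locks the numerator into the residue class of $s$ and thereby produces the half-integer shift $s/(2M)$. The only point needing a moment's care is to check that reducing $t$ modulo~$M$ amounts to an \emph{integer} translation of $x_0$, so that periodicity really applies; this holds because the two denominators agree.
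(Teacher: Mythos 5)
Your proof is correct and follows essentially the same route as the paper: the paper derives the Proposition from the observation (stated just before it) that the minimum of $B_{\ve a}$ is attained at $x_0=(\ve n_0\cdot\ve a)/(2\ve a^2)$ for a minimizer $\ve n_0\in\ve 1+2\Z^N$ of $S_{\ve a}$, and your parity computation $\ve n_0\cdot\ve a=s+2t$ together with the period-$1$ periodicity of $B_{\ve a}$ is exactly the (implicit) remaining step that locks $x_0$ into the set $\tfrac s{2M}+\tfrac1M\Z$ modulo $\Z$.
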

\begin{Remark}
  The proposition tells us in particular that we can determine the
  minimum of $B_{\ve a}(x)$ for a given ${\ve a}$ by trying all the
  $M$ values $x$ as in the theorem, which needs
  $M=\sum_{j=1}^Na_j^2$ steps.
\end{Remark}

Secondly,~\eqref{eq:B-QF} implies the following criterion for
$\vt {\ve a}/\eta^d$ defining a Jacobi form.

\begin{Proposition}
  \label{prop:hyperbolic-criterion}
  The quotient $\vt {\ve a}/\eta^d$ is holomorphic at infinity if and
  only if
  \begin{equation*}
    S_{\ve a}({\ve n}) \ge \frac d3 \,{\ve a}^2
  \end{equation*}
  for all vectors ${\ve n}$ in $\ve 1+2\Z^N$ (with $S_a({\ve n})$ as
  in~\eqref{eq:S-a}). It is a cusp form if and only if the inequality
  is strict for all ${\ve n}$ in $\ve 1+2\Z^N$.
\end{Proposition}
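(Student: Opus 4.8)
The plan is to read everything off part~(5) of Theorem~\ref{thm:order} together with the closed formula~\eqref{eq:B-QF}, so that the proposition becomes a pure translation of already-established facts. First I would compute the order at infinity of the quotient. We know $\ord(\vt a,x)=B(ax)$, so by the additivity in part~(4) of Theorem~\ref{thm:order} (which, as the valuation property says, extends to a homomorphism on the group of theta quotients) the numerator has order $\ord(\vt{\ve a},x)=B_{\ve a}(x)=\sum_{j=1}^N B(a_jx)$. The factor $\eta^d$ has index $0$, so by part~(2) its order is the constant $d/24$ (its $q$-expansion begins with $q^{d/24}$). Hence
\[
  \ord(\vt{\ve a}/\eta^d,\,x)=B_{\ve a}(x)-\tfrac d{24}.
\]

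Next I would apply part~(5): the form $\vt{\ve a}/\eta^d$ is holomorphic at infinity exactly when this order is everywhere $\ge 0$, i.e.\ when $B_{\ve a}(x)\ge d/24$ for all $x$. Since $B_{\ve a}$ is continuous and periodic it attains its minimum, so this single condition is $s_{\ve a}=24\min_x B_{\ve a}(x)\ge d$. The cusp-form case is identical with every $\ge$ replaced by $>$, giving $s_{\ve a}>d$. It then remains only to insert~\eqref{eq:B-QF}, namely $s_{\ve a}=3\min_{\ve n\in\ve 1+2\Z^N}S_{\ve a}(\ve n)/{\ve a}^2$, so that $s_{\ve a}\ge d$ reads $\min_{\ve n}S_{\ve a}(\ve n)/{\ve a}^2\ge d/3$, which is precisely $S_{\ve a}(\ve n)\ge\frac d3\,{\ve a}^2$ for every $\ve n\in\ve 1+2\Z^N$; the strict version gives the cusp-form criterion.

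The one point needing care---and the closest thing to an obstacle---is the passage in the strict case from ``$\min_{\ve n}S_{\ve a}(\ve n)/{\ve a}^2> d/3$'' to ``$S_{\ve a}(\ve n)>\frac d3\,{\ve a}^2$ \emph{for all} $\ve n$''. This equivalence is valid only because the minimum over the coset is genuinely attained, so that the strict bound at a minimizer propagates to every lattice point (every value being $\ge$ the minimal one). This is already guaranteed: $B_{\ve a}$ attains its minimum as noted above, and~\eqref{eq:B-QF} identifies $s_{\ve a}/24$ with an honest minimum over $\ve 1+2\Z^N$ (equivalently, $S_{\ve a}(\ve n)/{\ve a}^2$ is the squared distance of $\ve n$ to the rational line $\R\ve a$, which is proper on the discrete coset modulo translation by $2\ve a$, hence achieves its infimum). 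With the minimum attained in both directions, the equivalences above are clean and the proof is complete.
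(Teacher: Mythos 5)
Your proof is correct and follows exactly the paper's route: the paper states Proposition~\ref{prop:hyperbolic-criterion} as an immediate consequence of~\eqref{eq:B-QF} together with the order-at-infinity framework (the order of $\vt{\ve a}/\eta^d$ is $B_{\ve a}(x)-d/24$, and Theorem~\ref{thm:order}(5) translates holomorphy and cuspidality into the stated lattice-point inequalities), which is precisely what you spell out. Your extra care about attainment of the minimum in the strict case is sound and is left implicit in the paper; note it also follows at once from the fact that $S_{\ve a}(\ve n)$ takes non-negative integer values, so its infimum over $\ve 1+2\Z^N$ is attained.
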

\begin{Remark}
  As we shall see below it is sometimes useful to write
  $S_{\ve a}({\ve n})$ in a slightly different form. Namely, as a
  simple computation shows, one has
  \begin{equation*}
    S_{\ve a}({\ve n})=\sum_{1\le i<j\le N} \left(a_in_j-a_jn_i\right)^2
    ,
  \end{equation*}
  where we used ${\ve n}=(n_1,\dots,n_N)$.
\end{Remark}

For minimizing $S_{\ve a}({\ve n})$ for a given $\ve a$ the following
formula is sometimes useful.
\begin{Proposition}
  \label{prop:dual-criterion}
  Let $\ve u_i$ ($1\le j\le r$) be linearly independent vectors in
  $\Z^N$ spanning the orthogonal complement of $\ve a$, and let
  $G=\left(\ve u_i\cdot \ve u_j\right)_{1\le i,j\le r}$ be the Gram
  matrix of the $\ve u_j$. Then
  \begin{equation*}
    S_{\ve a}({\ve n}) = \left(\ve xG^{-1}\ve x^t\right){\ve a}^2
    ,
  \end{equation*}
  where $\ve x=(\ve n\cdot \ve u_1,\dots,\ve n\cdot \ve u_r)$.
\end{Proposition}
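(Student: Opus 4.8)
The plan is to interpret $S_{\ve a}(\ve n)$ geometrically and then reduce the statement to the standard formula for the squared norm of an orthogonal projection expressed in terms of a Gram matrix. First I would record that, exactly as in the discussion preceding~\eqref{eq:B-QF}, one has
\[
S_{\ve a}(\ve n) = \ve n^2\,\ve a^2 - (\ve n\cdot\ve a)^2 = \ve a^2\bigl(\ve n^2 - (\ve n\cdot\ve a)^2/\ve a^2\bigr) = \ve a^2\,\|\pi(\ve n)\|^2 ,
\]
where $\pi$ denotes the orthogonal projection of $\R^N$ onto the orthogonal complement $\ve a^\perp$ of $\ve a$. The last equality is Pythagoras: $(\ve n\cdot\ve a)^2/\ve a^2$ is precisely the squared length of the projection of $\ve n$ onto the line $\R\cdot\ve a$, so subtracting it from $\ve n^2$ leaves the squared length of the component in $\ve a^\perp$. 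It therefore remains only to show $\|\pi(\ve n)\|^2 = \ve x G^{-1}\ve x^t$.

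Since the $\ve u_j$ are linearly independent, $G$ is positive definite and hence invertible, so the right-hand side is well defined. The vectors $\ve u_1,\dots,\ve u_r$ form a basis of $\ve a^\perp$, so I can write $\pi(\ve n)=\sum_{j=1}^r c_j\ve u_j$ for a unique row vector $\ve c=(c_1,\dots,c_r)$. Because $\ve n-\pi(\ve n)$ lies in $\R\cdot\ve a$ and is therefore orthogonal to each $\ve u_i$, taking the inner product of $\pi(\ve n)$ with $\ve u_i$ gives
\[
\ve n\cdot\ve u_i = \pi(\ve n)\cdot\ve u_i = \sum_{j=1}^r (\ve u_i\cdot\ve u_j)\,c_j = (G\ve c^t)_i ,
\]
that is, the normal equations $\ve x^t = G\ve c^t$, whence $\ve c^t = G^{-1}\ve x^t$.

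Finally I would compute the squared norm directly in this basis:
\[
\|\pi(\ve n)\|^2 = \sum_{i,j} c_i c_j\,(\ve u_i\cdot\ve u_j) = \ve c\,G\,\ve c^t = (\ve x G^{-1})\,G\,(G^{-1}\ve x^t) = \ve x G^{-1}\ve x^t ,
\]
where I used that $G$, and hence $G^{-1}$, is symmetric, so that $\ve c = (\ve c^t)^t = (G^{-1}\ve x^t)^t = \ve x G^{-1}$. Combining this with the first display yields $S_{\ve a}(\ve n)=(\ve x G^{-1}\ve x^t)\,\ve a^2$, as claimed. There is no genuine obstacle here: the whole argument is the elementary theory of orthogonal projection onto a subspace presented by a (non-orthonormal) basis, and the only point demanding a little care is the transpose bookkeeping together with the symmetry of $G$ when passing between the column vector $\ve c^t$ and the row vector $\ve c$.
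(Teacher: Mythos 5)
Your proof is correct and follows essentially the same route as the paper's: both identify $S_{\ve a}(\ve n)/\ve a^2$ with the squared norm of the orthogonal projection of $\ve n$ onto the span of the $\ve u_j$ and then evaluate that norm via the Gram matrix. The only cosmetic difference is that the paper expresses the projection in the dual basis $\ve u_j^*$ (whose Gram matrix is $G^{-1}$), whereas you solve the normal equations $G\ve c^t=\ve x^t$ directly --- the same computation in different notation.
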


\begin{Remark}
  If the $\ve u_j$ of the proposition do not span the orthogonal
  complement of $\ve a$ but are still orthogonal to $\ve a$, then we
  have still
  \begin{equation*}
    S_{\ve a}({\ve n}) \ge \left(\ve xG^{-1}\ve x^t\right){\ve a}^2
  \end{equation*}
  for all vectors ${\ve n}$ in $\ve 1+2\Z^N$, where
  $\ve x=(\ve n\cdot \ve u_1,\dots,\ve n\cdot \ve u_r)$ (as one easily
  sees by complementing the $\ve u_j$ to a full basis of the
  orthogonal comeplement of $\ve a$ by integral vectors which are
  orthogonal to the $\ve u_j$.)

  Assume that $\ve u_j^2$ is odd for all~$j$. Then
  $\ve n \cdot \ve u_j\equiv \ve u_j^2\equiv 1\bmod 2$ for ${\ve n}$
  in $\ve 1+2\Z^N$, i.e.~$\ve x\in \ve 1+2\Z^r$, and hence
  \begin{equation*}
    S_{\ve a}({\ve n}) \ge \left(\min_{\ve x \in \ve 1+2\Z^r} \ve xG^{-1}\ve x^t\right){\ve a}^2
  \end{equation*}

  If the $\ve u_i$ are in addition pairwise orthogonal, so that
  $G^{-1}$ is the diagonal matrix with $1/\ve u_j^2$ ($1\le j\le r$)
  as diagonal elements, we conclude (using $(\ve n\cdot u_j)\ge 1$)
  \begin{equation*}
    S_{\ve a}({\ve n}) \ge \left(\sum_{j=1}^r \frac 1{\ve u_j^2}\right){\ve a}^2
  \end{equation*}
  for all vectors ${\ve n}$ in $\ve 1+2\Z^N$.
\end{Remark}
\begin{proof}[Proof of Proposition~\ref{prop:dual-criterion}]
  Indeed, for $\ve x$ in $\R^N$ let $\ve x_\perp$ the orthogonal
  projection of $\ve x$ onto the space spanned by the $\ve u_j$. Then
  $\ve n^2=\ve n_\perp^2 + (\ve n-\ve n_\perp)^2$ and
  $\ve n_\perp\cdot\ve a=0$, and hence
  \begin{equation*}
    S_{\ve a}(\ve n)
    =
    \ve n_\perp^2\cdot \ve a^2 + (\ve n-\ve n_\perp)^2\cdot \ve a^2
    - \left((\ve n-\ve n_\perp)\cdot\ve a\right)^2
    \ge \ve n_\perp^2\cdot \ve a^2
    .
  \end{equation*}
  But $\ve n_\perp=\sum_{j=1}^r (\ve n \cdot \ve u_j)\ve u_j^*$, where
  $\ve u_j^*$ are the vectors of the dual basis of $\ve u_j$
  ($j=1,\dots,r$) in the space spanned by the $\ve u_j$. Therefore
  $\ve n_\perp^2 = \ve x H \ve x^t$ with
  $\ve x=(\ve n\cdot \ve u_1,\dots,\ve n\cdot \ve u_r)$ and
  $H=\left(\ve u_i^*\cdot \ve u_j^*\right)_{i,j}$. Since $H=G^{-1}$,
  the proposition is now obvious.
\end{proof}

\begin{table}[th]
  \centering
  \caption{\Small Best experimental values of $k_{\ve a}$ for
    $N\le 50$.  The first three rows give the true best
    values. ($\underline{a}$ stands for the vector $(1,2,\dots,a)$,
    $a$ for the vector $(a)$ and '$\cdot$' for concatenation; hence
    $\underline 5\cdot2\cdot 7=(1,2,3,4,5,2,7)$.)}
  \tiny
  \begin{tabular}{@{} >{$}r<{$} >{$}r<{$} >{$}r<{$} >{$}l<{$} @{}}
    \toprule
    {N}&{k_{\ve a}\phantom{0.500}}&{\ve a^2}&{\ve a}\\
    \midrule

1 & 1/2 = 0.500 & 1 & 1\\2 & 7/10 = 0.700 & 5 & \underline{2}\\3 & 6/7 = 0.857 & 14 & \underline{3}\\4 & 9/10 = 0.900 & 15 & \underline{3}\cdot1\\5 & 25/22 = 1.136 & 55 & \underline{5}\\6 & 27/28 = 0.964 & 56 & \underline{5}\cdot1\\7 & 11/9 = 1.222 & 108 & \underline{5}\cdot2\cdot7\\8 & 49/40 = 1.225 & 240 & \underline{7}\cdot10\\9 & 37/28 = 1.321 & 168 & \underline{6}\cdot2\cdot3\cdot8\\10 & 13/11 = 1.182 & 286 & \underline{9}\cdot1\\11 & 289/193 = 1.497 & 386 & \underline{10}\cdot1\\12 & 465/338 = 1.376 & 507 & \underline{11}\cdot1\\13 & 589/398 = 1.480 & 796 & \underline{11}\cdot1\cdot17\\14 & 304/205 = 1.483 & 820 & \underline{13}\cdot1\\15 & 1917/1210 = 1.584 & 605 & \underline{10}\cdot1\cdot3\cdot4\cdot5\cdot13\\16 & 281/172 = 1.634 & 1032 & \underline{14}\cdot1\cdot4\\17 & 175/107 = 1.636 & 1284 & \underline{14}\cdot2\cdot3\cdot16\\18 & 5007/3002 = 1.668 & 1501 & \underline{16}\cdot\underline{2}\\19 & 1463/895 = 1.635 & 1790 & \underline{17}\cdot\underline{2}\\20 & 256/151 = 1.695 & 2114 & \underline{18}\cdot\underline{2}\\21 & 2839/1650 = 1.721 & 2475 & \underline{19}\cdot\underline{2}\\22 & 9607/5750 = 1.671 & 2875 & \underline{20}\cdot\underline{2}\\23 & 2933/1658 = 1.769 & 3316 & \underline{21}\cdot\underline{2}\\24 & 2391/1339 = 1.786 & 2678 & \underline{19}\cdot\underline{3}\cdot5\cdot13\\25 & 13961/7618 = 1.833 & 3809 & \underline{22}\cdot\underline{3}\\26 & 54/29 = 1.862 & 4350 & \underline{23}\cdot1\cdot3\cdot4\\27 & 18441/9926 = 1.858 & 4963 & \underline{23}\cdot\underline{3}\cdot25\\28 & 20515/11078 = 1.852 & 5539 & \underline{25}\cdot\underline{3}\\29 & 4577/2486 = 1.841 & 6215 & \underline{26}\cdot\underline{3}\\30 & 6459/3472 = 1.860 & 6944 & \underline{27}\cdot\underline{3}\\31 & 9679/5190 = 1.865 & 7785 & \underline{27}\cdot\underline{3}\cdot29\\32 & 427/220 = 1.941 & 7040 & \underline{26}\cdot\underline{5}\cdot28\\33 & 8187/4285 = 1.911 & 8570 & \underline{29}\cdot\underline{3}\cdot1\\34 & 34583/17338 = 1.995 & 8669 & \underline{28}\cdot\underline{5}\cdot30\\35 & 13259/6970 = 1.902 & 10455 & \underline{31}\cdot\underline{3}\cdot5\\36 & 42723/21038 = 2.031 & 10519 & \underline{31}\cdot\underline{3}\cdot5\cdot8\\37 & 12403/6272 = 1.978 & 12544 & \underline{33}\cdot\underline{3}\cdot1\\38 & 1002/479 = 2.092 & 11496 & \underline{32}\cdot\underline{5}\cdot1\\39 & 3371/1678 = 2.009 & 12585 & \underline{33}\cdot\underline{5}\cdot1\\40 & 63307/30026 = 2.108 & 15013 & \underline{35}\cdot\underline{3}\cdot5\cdot8\\41 & 18392/8795 = 2.091 & 17590 & \underline{37}\cdot\underline{3}\cdot1\\42 & 17649/8131 = 2.171 & 16262 & \underline{36}\cdot\underline{5}\cdot1\\43 & 2763/1306 = 2.116 & 17631 & \underline{37}\cdot\underline{5}\cdot1\\44 & 29753/13714 = 2.170 & 20571 & \underline{39}\cdot\underline{4}\cdot1\\45 & 21777/10298 = 2.115 & 20596 & \underline{39}\cdot\underline{5}\cdot1\\46 & 25033/11105 = 2.254 & 22210 & \underline{40}\cdot\underline{4}\cdot2\cdot6\\47 & 11381/5306 = 2.145 & 23877 & \underline{41}\cdot\underline{5}\cdot1\\48 & 40449/18310 = 2.209 & 27465 & \underline{43}\cdot\underline{4}\cdot1\\49 & 126745/58802 = 2.155 & 29401 & \underline{44}\cdot\underline{4}\cdot1\\50 & 34937/15713 = 2.223 & 31426 & \underline{45}\cdot\underline{4}\cdot1\\    
    
    \bottomrule
  \end{tabular}
  \medskip
  \label{tab:best-values}
\end{table}

We are interested in the behavior of $s_{\ve a}$ (or $k_{\ve a}$) as a
function of $\ve a$, and, in particular, to find $\ve a$ in $\Z^N$ for
big $N$ but with $s_{\ve a}$ as big as possible, or, equivalently,
with $k_{\ve a}$ as small as possible. As is clear from the definition
of $s_{\ve a}$ its value does not change if we divide ${\ve a}$ by the
gcd of its entries. When looking for ${\ve a}$ with best $s_{\ve a}$
we can therefore assume that ${\ve a}$ is primitive. Except for the
first few $N$, we do not know any method to determine, for a
given~$N$, the smallest possible weight $k_{\ve a}$, when ${\ve a}$
runs though all integral vectors (with positive entries) of
length~$N$. For $N=1$ the minimum $s_a$ of $B(ax)$, for any integral
$a$, is~$0$, which is assumed by $\vartheta(\tau,z)$.

Already for $N=2$ it is not completely evident to
determine~$s_{\ve a}$ for a given (primitive)~${\ve a}=(a,b)$. A
simple calculation shows $S_{(a,b)}(r,s) = (as-br)^2$.  Writing
$r=-1-2k$ and $s=1+2l$, we have
$S_{(a,b)}(r,s)=\left(a+b+2(al+bk)\right)^2$. The minimum over all
integers $k$ and $l$ equals obviously the rest $s$ of $a+b$
modulo~$2$, whence~$s_{(a,b)}=3/(a^2+b^2)$ if $a+b$ is odd, and
$s_{(a,b)}=0$ otherwise.  The maximal $s_{(a,b)}$ is therefore assumed
for $a,b=1,2$, for which we have $s_{(a,b)}=3/5$.

For larger $N$ we did searches by trial and error to find ${\ve a}$
with small~$k_{\ve a}$.  Our best results are listed in
Table~\ref{tab:best-values}. We do not know how far off
our~$k_{\ve a}$ are from the true minima. Note that, for small $N$,
Theorem~\ref{thm:theoretical-answer-to-the-challenge} does not give
any useful hint in this respect.

\FloatBarrier

\section{Theta quarks}
\label{sec:quarks}

It turns out that there are infinite families of theta blocks which
are holomorphic Jacobi forms. An explanation for this will be given by
the theory which we shall develop in Section~\ref{sec:families}. In
this section we discuss the first non-trivial example of such a
family, the family of {\em theta quarks}, which was already introduced
in the introduction. Recall that this family is given by
\begin{equation*}
  Q_{a,b}
  =
  \vartheta_a\vartheta_b\vartheta_{a+b}/\eta
  \qquad(a,b\in\Z_{> 0})
  .
\end{equation*}
We use the word ``quark'' for these functions because the product
of any three of them is a Jacobi form without character on the full
modular group.  We shall give six different proofs for the fact that
$Q_{a,b}$, for any pair of positive integers $a$, $b$ is indeed
holomorphic at infinity.
\begin{Theorem}
  \label{thm:quarks-are-Jf}
  For any pair of positive integers $a$ and $b$, the function
  $Q_{a,b}$ defines a holomorphic Jacobi form of weight~$1$,
  index~$a^2+ab+b^2$ and character $\varepsilon^3$. It is a cusp form
  if and only if $3\mid a'b'(a'+b')$ where $a'=a/g$ and $b'=b/g$ with
  $g$ denoting the greatest common divisor of $a$ and $b$.
\end{Theorem}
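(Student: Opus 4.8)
The plan is to read off the weight, index, and character of $Q_{a,b}$ from the general formulas for a generalized theta block (here with three theta--factors and one negative power of $\eta$, giving weight $(3-1)/2=1$, index $\tfrac12\bigl(a^2+b^2+(a+b)^2\bigr)=a^2+ab+b^2$, and the asserted character), and then to reduce the two analytic assertions to the lattice criterion of Proposition~\ref{prop:hyperbolic-criterion}. Setting $\ve a=(a,b,a+b)$ we have $Q_{a,b}=\vt{\ve a}/\eta$, so that proposition (with $d=1$) says that $Q_{a,b}$, which is already weakly holomorphic, is holomorphic at infinity if and only if $S_{\ve a}(\ve n)\ge\tfrac13\ve a^2$ for all $\ve n\in\ve 1+2\Z^3$, and is a cusp form if and only if this inequality is strict for all such $\ve n$. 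Since $S_{\ve a}$, and hence $s_{\ve a}$, is unchanged when $\ve a$ is divided by $\gcd(a,b,a+b)=\gcd(a,b)=g$, I would first reduce to the case $\gcd(a,b)=1$; the statement for general $a,b$ then follows with $a'=a/g$, $b'=b/g$.

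The key point is that $\ve u_1=(1,1,-1)$ is orthogonal to $\ve a$ and has odd norm $\ve u_1^2=3$. Hence $\ve n\cdot\ve u_1$ is odd for every $\ve n\in\ve 1+2\Z^3$, and the inequality in the Remark following Proposition~\ref{prop:dual-criterion} gives $S_{\ve a}(\ve n)\ge(\ve n\cdot\ve u_1)^2\,\ve a^2/3\ge\ve a^2/3$; this already proves holomorphy at infinity. To study equality I would complete $\ve u_1$ to a basis of $\ve a^\perp$ by $\ve u_2=(a+2b,\,-(2a+b),\,b-a)$, which is orthogonal to both $\ve a$ and $\ve u_1$ and has $\ve u_2^2=3\ve a^2$. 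Proposition~\ref{prop:dual-criterion} then yields the exact identity $3\,S_{\ve a}(\ve n)=\ve a^2(\ve n\cdot\ve u_1)^2+(\ve n\cdot\ve u_2)^2$, so the minimal value $\tfrac13\ve a^2$ is attained by some $\ve n\in\ve 1+2\Z^3$ exactly when there is such an $\ve n$ with $\ve n\cdot\ve u_1=\pm1$ and $\ve n\cdot\ve u_2=0$. Thus $Q_{a,b}$ fails to be a cusp form precisely when such an $\ve n$ exists.

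Fixing the sign $\ve n\cdot\ve u_1=+1$, the two conditions force the projections of $\ve n$ onto $\ve a^\perp$ and onto $\R\ve a$ to be $\tfrac13\ve u_1$ and a multiple of $\ve a$, i.e.\ $3\ve n=\ve u_1+m\ve a$ for some $m\in\Z$ (integrality of $m$ uses $\gcd(a,b)=1$). Reducing $3\ve n=\ve u_1+m\ve a$ modulo $3$ componentwise turns the integrality of $\ve n$ into the congruences $ma\equiv mb\equiv-1\pmod3$, which are solvable in $m$ exactly when $a\equiv b\not\equiv0\pmod3$, that is, exactly when $3\nmid ab(a+b)$. Reducing the same identity modulo $2$ shows that all entries of $\ve n$ are odd if and only if $ma\equiv mb\equiv0\pmod2$; whenever the mod-$3$ system is solvable I would, using $\gcd(a,b)=1$ and the Chinese Remainder Theorem, pick $m$ simultaneously even and in the required residue class mod~$3$, which makes $\ve n$ lie in $\ve 1+2\Z^3$. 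Hence an equality vector exists iff $3\nmid ab(a+b)$, so $Q_{a,b}$ is a cusp form iff $3\mid a'b'(a'+b')$, as claimed.

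I expect the holomorphy to be immediate once $\ve u_1$ is spotted, so the substance lies in the cuspidality criterion. The main obstacle is not the congruence condition modulo~$3$, which is a short computation, but verifying that the congruence solution can be taken inside $\ve 1+2\Z^3$, i.e.\ meeting the parity constraint and the mod-$3$ integrality constraint at once; the clean reduction of the parity requirement to $ma\equiv mb\equiv0\pmod2$, combined with $\gcd(a,b)=1$, is exactly what lets the Chinese Remainder argument close the proof.
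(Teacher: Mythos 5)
Your proposal is correct, and for the holomorphy part it coincides with the paper's second proof: the same vector $\ve u_1=(1,1,-1)$ orthogonal to $\ve a=(a,b,a+b)$, the same appeal to the remark after Proposition~\ref{prop:dual-criterion}, and Proposition~\ref{prop:hyperbolic-criterion} with $d=1$. Where you add genuine content is the cuspidality criterion: the paper stops at ``cusp form iff the inequality $S_{\ve a}(\ve n)\ge\tfrac13\ve a^2$ is strict'' and leaves the translation into $3\mid a'b'(a'+b')$ to the reader, whereas you carry it out, and your completion checks out. Indeed $\ve u_2=(a+2b,\,-(2a+b),\,b-a)$ is orthogonal to both $\ve a$ and $\ve u_1$ with $\ve u_2^2=3\ve a^2$, so Proposition~\ref{prop:dual-criterion} gives the exact identity $3\,S_{\ve a}(\ve n)=\ve a^2(\ve n\cdot\ve u_1)^2+(\ve n\cdot\ve u_2)^2$; equality forces $3\ve n=\ve u_1+m\ve a$ with $m\in\Z$ (using primitivity of $\ve a$ after the valid reduction to $\gcd(a,b)=1$, since $S_{\ve a}(\ve n)/\ve a^2$ is scaling-invariant); and the reduction modulo $3$ and modulo $2$, closed by the Chinese Remainder choice of an even $m$ in the right class mod $3$, correctly shows that an equality vector in $\ve 1+2\Z^3$ exists exactly when $3\nmid ab(a+b)$. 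One small caveat: the ``asserted character'' that the general formula for theta blocks actually yields is $\varepsilon^{3\cdot 3-1}=\varepsilon^8$ (consistent with Table~\ref{tab:low-weight-tb} and with Theorem~\ref{thm:app-theorem}), not the $\varepsilon^3$ appearing in the theorem statement, which is an inconsistency in the paper itself rather than an error on your part.
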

\begin{Remark}
  Note the the condition $3\mid a'b'(a'+b')$ is equivalent to
  $a'\not\equiv b'\bmod 3$ as we shall occasionally use in the
  following proofs.
\end{Remark}
\begin{proof}[First proof]
  According to Theorem~\ref{thm:order} we have to show that
  \begin{equation*}
    \min_x\ord(Q_{a,b},x)\ge 0
  \end{equation*}
  with equality if and only if $3g$ divides~$a-b$.  For this recall
  \begin{equation*}
    \ord(Q_{a,b},x)=B(ax)+B(bx)+B(-(a+b)x) - 1/24
  \end{equation*}
  (where we used that $B(x)$ is an even function), so that
  \begin{equation*}
    \min_{x}\ord(Q_{a,b},x)
    \ge
    \min_{(x,y,z)\in H}\left(B(x)+B(y)+B(z)\right) - 1/24.
  \end{equation*}
  where $H$ denotes the hyperplane $x+y+z=0$.  If $x$, $y$ or $z$ is
  an integer the right-hand side is greater or equal to
  $B(0) = 1/8> 1/{24}$. Otherwise the right-hand side is
  differentiable in small neighborhood of $(x,y,z)$ and we can apply
  the method of Lagrangian multipliers: if $(x,y,z)$ is a local
  minimum then
  $(\overline x, \overline y, \overline z) =\lambda (1,1,1)$ for some
  $\lambda$, where $\overline x, \overline y,\dots$ denote the
  fractional parts of $x, y,\dots$.  The minimum of $B(x)+B(y)+B(z)$
  on $H$ is therefore taken on at
  $\overline x=\overline y=\overline z=1/3$ or
  $\overline x=\overline y=\overline z=2/3$, and it equals in either
  case~$1/24$.

  We leave it to the reader to work out when $Q_{a,b}$ is a cusp form.
\end{proof}

\begin{proof}[Second proof]
  For this proof we use the criterion of
  Proposition~\ref{prop:hyperbolic-criterion}. In the notations of the
  preceding section, we have $Q_{a,b}=\vartheta_{\ve a}/\eta$, where
  $\ve a=(a,b,a+b)$. The vector $\ve u=(1,1,-1)$ is perpendicular to
  $\ve a=(a,b,a+b)$, and hence by the remark after
  Proposition~\ref{prop:dual-criterion}
  \begin{equation*}
    S_{\ve a}(\ve n)\ge \frac 1{\ve u^2}\ve a^2=\frac 13\ve a^2
  \end{equation*}
  fo all $\ve n$ in $\ve 1+2\Z^3$.  According to
  Proposition~\ref{prop:hyperbolic-criterion}, the Jacobi form
  $Q_{a,b}$ is therefore holomorphic at infinity, and it is a cusp
  form if and only if the last inequality is strict for all~$\ve n$.
\end{proof}

\noindent{\em Third proof.}
The holomorphy of $Q_{a,b}$ also follows from the following explicit
formula for its Fourier expansion.
\begin{Theorem}
  \label{thm:theta-quarks-Fourier-expansion}
  One has
  \begin{equation}
    \label{eq:quark-expansion}
    Q_{a,b}
    =
    -\sum_{r,s\in\Z}
    \leg s3
    q^{r^2+rs+s^2/3}
    \zeta^{(a-b)r+as}
    .
  \end{equation} 
\end{Theorem}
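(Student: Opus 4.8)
The plan is to prove the Fourier expansion formula~\eqref{eq:quark-expansion} directly from the product expansions of the constituent theta functions, using the fact that $Q_{a,b}$ is a weakly holomorphic Jacobi form of index $a^2+ab+b^2$ whose theta decomposition is controlled by a one-variable theta series attached to the $A_2$ root lattice. The cleanest route starts from the observation that the numerator $\vartheta_a\vartheta_b\vartheta_{a+b}$, when written via~\eqref{eq:triple-product}, carries the three linear forms $az$, $bz$, and $(a+b)z$; the single factor of $\eta$ in the denominator is exactly what is needed to cancel one infinite product and leave a clean lattice theta series.

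First I would recall the classical Jacobi (or Macdonald) identity for the root system $A_2$, which expresses the product $\vartheta(\tau,z_1)\vartheta(\tau,z_2)\vartheta(\tau,z_1+z_2)/\eta(\tau)$ as a theta series summed over a two-dimensional lattice. Concretely, one has the denominator-formula identity
\begin{equation*}
  \frac{\vartheta(\tau,z_1)\,\vartheta(\tau,z_2)\,\vartheta(\tau,z_1+z_2)}{\eta(\tau)}
  =
  -\sum_{r,s\in\Z}
  \leg s3
  q^{r^2+rs+s^2/3}\,
  \e{(r+\tfrac{2s}{3})z_1 - (r-\tfrac{s}{3})z_2}
  ,
\end{equation*}
or some normalization thereof, where the quadratic form $r^2+rs+s^2/3$ is (a rescaling of) the norm form of the $A_2$ lattice and the character $\leg{s}{3}$ records the Weyl-group sign. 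Setting $z_1=az$ and $z_2=bz$ and collecting the exponent of $\zeta=\e z$ should produce precisely $(a-b)r+as$, matching the right-hand side of~\eqref{eq:quark-expansion}. Thus the whole theorem reduces to verifying this $A_2$ specialization and then performing the linear substitution in the $\zeta$-exponent.

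The key steps, in order, are: (i) establish the two-variable identity above, which I would do by checking that both sides are holomorphic Jacobi forms of the same weight, index, and character in the pair $(z_1,z_2)$ and then comparing a finite number of Fourier coefficients, or alternatively by manipulating the triple products directly and recognizing the Jacobi triple product for the resulting double sum; (ii) substitute $z_1\mapsto az$, $z_2\mapsto bz$ and simplify the $\zeta$-exponent from $(r+\tfrac{2s}{3})a - (r-\tfrac{s}{3})b$ to the integer $(a-b)r+as$ (here one must verify the fractional parts in $s$ cancel, which they do because the coefficient of $s$ is $\tfrac{2a}{3}+\tfrac{b}{3}=\tfrac{2a+b}{3}$ paired against the support condition on $s$); and (iii) read off from the $q$-exponent $r^2+rs+s^2/3$ and the fact that $d=4mn-(\text{char.~exp.})^2\ge0$ that the form is indeed holomorphic, giving the third, independent proof of Theorem~\ref{thm:quarks-are-Jf}.

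The main obstacle will be step~(i): pinning down the precise normalization (the sign $-1$, the Legendre symbol $\leg{s}{3}$, and the exact shape $s^2/3$ in the exponent) so that the two-variable identity is correct on the nose rather than up to an unenlightening constant or a shift. The cleanest way around this is to avoid invoking an external form of the $A_2$ Macdonald identity and instead derive it self-contained: expand each of the three triple products, multiply, and use the $\eta$-factor to absorb the $\prod(1-q^n)$ term, then reorganize the resulting triple sum over the exponents of $q$ and $\zeta_1,\zeta_2$ into a sum over a rank-two sublattice, at which point the characteristic $\leg{s}{3}$ emerges as the sign of a permutation / the residue of $s$ modulo $3$. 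I expect the bookkeeping of half-integer shifts $q^{1/8}\zeta^{1/2}$ from each $\vartheta$ against the $q^{1/24}$ from $\eta$ to be the fiddliest part, but it is entirely mechanical once the lattice structure is identified.
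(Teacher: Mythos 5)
Your strategy---first prove a two-variable identity for $\vartheta(\tau,z_1)\vartheta(\tau,z_2)\vartheta(\tau,z_1+z_2)/\eta(\tau)$, then specialize $z_1=az$, $z_2=bz$---is viable, but as written step~(ii) contains a genuine error, and the part you defer as ``mechanical bookkeeping'' is in fact the entire mathematical content of the theorem. Concretely: with your guessed exponents, specialization gives the $\zeta$-exponent $(a-b)r+\tfrac{(2a+b)s}{3}$, and the support condition coming from $\leg s3$ is exactly $3\nmid s$, so the fractional part does \emph{not} cancel; even when $3\mid 2a+b$ the exponent is $(2a+b)s/3$, not $as$. So this is not a matter of normalization or sign: the linear forms in the correct two-variable identity must be integral from the outset. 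The identity you actually need is
\begin{equation*}
  \frac{\vartheta(\tau,z_1)\,\vartheta(\tau,z_2)\,\vartheta(\tau,z_1+z_2)}{\eta(\tau)}
  =
  -\sum_{r,s\in\Z}\leg s3\,q^{r^2+rs+s^2/3}\,\e{(r+s)z_1-rz_2}
  ,
\end{equation*}
which upon $z_1=az$, $z_2=bz$ gives $(r+s)a-rb=(a-b)r+as$, i.e.\ precisely \eqref{eq:quark-expansion}.

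The missing idea that proves this identity---and which constitutes the paper's proof, carried out directly in the specialized variable---is an explicit isomorphism of rank-three lattices, applied to the product of the three theta \emph{series} \eqref{eq:theta} rather than to the triple products. Writing the third factor with summation variable $-n$ (which produces the overall minus sign), the product becomes a sum over $l\equiv m\equiv n\bmod 2$ with exponent $\tfrac{l-n}2z_1+\tfrac{m-n}2z_2$ and character $\leg{-4}{lmn}$, and one applies
\begin{gather*}
  \bigl\{(l,m,n)\in\Z^3:\, l\equiv m\equiv n\bmod 2\bigr\}
  \;\cong\;
  \bigl\{(r,s,t)\in\Z^3:\, s\equiv t\bmod 3\bigr\},
  \\
  (l,m,n)\mapsto\bigl(\tfrac{n-m}2,\;\tfrac{l+m}2-n,\;-l-m-n\bigr)
  .
\end{gather*}
Under this map $\tfrac{l^2+m^2+n^2}8=r^2+rs+\tfrac{s^2}3+\tfrac{t^2}{24}$, the exponent becomes $(r+s)z_1-rz_2$ (independent of $t$), and $\leg{-4}{lmn}=\leg{-4}t$. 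The variable $t$ thus decouples, and the resummation
\begin{equation*}
  \sum_{t\equiv s\bmod 3}\leg{-4}t\,q^{t^2/24}
  =
  \leg s3\,\eta(\tau)
\end{equation*}
is exactly what cancels the $\eta$ in the denominator and makes $\leg s3$ ``emerge''; no generic reorganization produces this without exhibiting such an isomorphism. With this inserted, your plan and the paper's proof are the same computation, yours merely performed before specialization. Your fallback route for step~(i)---matching weight, index, character and finitely many coefficients of two lattice-index Jacobi forms---can also be completed, but it requires the structure theory of Part~III (Theorem~\ref{thm:picture}, or Theorem~\ref{thm:McDonalds-identity} applied to $A_2$, cf.~Example~\ref{A-n-example}), a substantially heavier input than the change of variables above.
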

\begin{proof}
  We have an isomorphism of $\Z$-lattices
  \begin{gather*}
    \bigl\{(l,m,n)\,\in\,\Z^3\mid\,l\equiv m\equiv n\bmod2\bigr\}
    \cong
    \bigl\{(r,s,t)\,\in\,\Z^3\mid\,s\equiv t\bmod3\bigr\} \\
    (l,\,m,\,n)\mapsto
    \bigl(\frac{n-m}2\,,\;\frac{l+m}2-n\,,\;-l-m-n\bigr)
  \end{gather*}
  with respect to which $\leg{-4}{lmn}=\leg{-4}t.\,$ Hence
  \begin{equation*}
    \begin{aligned}
      -\,\vartheta_a\,\vartheta_b\,\vartheta_{a+b} &=
      \sum_{l,\,m,\,n\,\in\,\Z}\leg{-4}{lmn}
      q^{\frac{l^2+m^2+n^2}8}\,\zeta^{\frac{al+bm-(a+b)n}2} \\
      &= \sum_{\begin{subarray}{c} r,\,s,\,t\,\in\,\Z\\s\equiv
          t \bmod 3
        \end{subarray}
      }\leg{-4}t q^{r^2+rs+s^2\!/3\,+\,t^2\!/24}\;\zeta^{(a-b)r+as}\,,
    \end{aligned}
  \end{equation*}
  and~\eqref{eq:quark-expansion} follows because
  $\sum\limits_{t\equiv s\bmod3}\leg{-4}t q^{t^2\!/24} =\leg s3
  \eta(\tau)$ for all $s$.
\end{proof}

\begin{Remark}
  The above isomorphism of lattices is $\mathfrak S_3$-equivariant if
  we introduce new coordinates $(u,v,w)$ with $u+v+w=0$,
  $u\equiv v\equiv w\bmod3$ which are related to $r$ and~$s$ by
  $(u,v,w)=(-3r-2s,\,3r+s,\,s)$.  Then~\eqref{eq:quark-expansion} can
  be symmetrically written in terms of the three integers $a$, $b$ and
  $c=-a-b$ with sum 0 by
  \begin{equation}
    \label{eq:nicer-quark-expansion}
    Q_{a,b} = \sum_{\begin{subarray}{c}u+v+w=0 \\
        u\equiv v\equiv w\bmod 3
      \end{subarray}
    } \leg u3\,q^{(u^2+v^2+w^2)/18}\;\zeta^{-(au+bv+cw)/3}\,,
  \end{equation}
  and the proof for this follows by using the equivariant isomorphism
  from the lattice
  $\{(t,u,v,w)\in\Z^4\mid t\equiv u\equiv v\equiv w\bmod3,\;u+v+w=0\}$
  to the lattice $\{(l,m,n)\in\Z^3\mid l\equiv m\equiv n\bmod2\}$
  given by $(l,m,n)=-\frac13(t+2u,t+2v,t+2w)$.
\end{Remark}

\begin{proof}[Fourth proof]
  Using the formula~\eqref{eq:nicer-quark-expansion} we have to show
  \begin{equation*}
    \ve a^2\cdot\ve x^2-(\ve a \cdot \ve x)^2 \ge 0
    ,
  \end{equation*}
  where $\ve a = (a,b,c)$ and $\ve x = (u,v,w)/3$; but this is the
  Cauchy-Schwarz inequality.  Recall that the Cauchy-Schwarz
  inequality is strict unless $\ve a$ is a multiple of $\ve x$.  In
  other words, $Q_{a,b}$ is a cusp form if and only if
  $\ve a = (a,b,-(a+b))$ is never proportional to a vector
  $\ve x = (u,v,w)$ in $\Z^3$ with $u\equiv v\equiv w\bmod 3$.
\end{proof}

\begin{proof}[Fifth proof]
  As we shall see in Section~\ref{sec:families} we can obtain the
  theta quarks as {\em pullbacks} of the function $\vartheta_{A_2}$
  defined by the Macdonald identity (also known as Kac-Weyl
  denominator formula) for the affine Lie algebra with positive root
  system~$A_2$.  The theory of affine Kac-Moody algebras gives in
  particular a formula for the Fourier expansion of this function,
  which shows that the pullbacks are indeed holomorphic at infinity
  (see \cite{Macdonald}, \cite{KP}).
  More details will be given in Part III (see
  Example~\ref{A-n-example}), where we shall also give a new proof of
  the Macdonald identities which does not make any use of affine Lie
  algebras.
\end{proof}

\begin{proof}[Sixth proof]
  In Section~\ref{sec:Borcherds-products} we shall see that the
  function $\vartheta_{A_2}$, which the fifth proof is based on, is
  the first Fourier-Jacobi coefficient of a holomorphic Borcherds
  product (see~\eqref{eq:A-2-lift}), and hence its pullbacks to theta
  quarks are in particular holomorphic at infinity. For details we
  refer the reader to the proof of Theorem~\ref{thm:app-theorem} and
  the subsequent remark.
\end{proof}

\section{Other families of low weight}
\label{sec:Other-families-of-low-weight}

The series of theta quarks of the preceding section is not the only
infinite family of theta blocks of low weight. In fact, as we shall
see in Part~III there are infinitely many such families.  In this
section we discuss various of these families which have low
weight. More precisely we shall discuss families of weight~$1$, $3/2$
and $2$. Recall that a theta block of weight $k$ consists of~$N$
functions $\vartheta_a$ divided by $\eta^{N-k}$. If the character
is~$\varepsilon^h$ then $2N+2k\equiv h\bmod 24$, hence the length of
the theta block occurs in the arithmetic progression
\begin{equation*}
  N=-k+h/2+12d\quad(d=0,1,2\dots)
  .
\end{equation*}
In Table~\ref{tab:low-weight-tb} we list various families of theta
blocks of low weight. For systematic reasons, which shall become clear
in Part~III we included also the family $Q_{a,b}$ of the last section
and renamed the function $R_{a,b,c,d}$ of \eqref{eq:R-definition} to
$\mathfrak {A}_{4,a,b,c,d}$.

Most remarkable is the series $\mathfrak{A}_{4;a,b,c,d}$, which, for
given $a,b,c,d$, yields a Jacobi form in $J_{2,m}$ with index
\begin{multline*}
  m=a^2+(a+b)^2+(a+b+c)^2+(a+b+c+d)^2\\+b^2+(b+c)^2+(b+c+d)^2+c^2+(c+d)^2+d^2.
\end{multline*}
In particular we have $\mathfrak{A}_{4;1,1,1,2}$ in $J_{2,37}$. The
latter space is one-dimens\-ional and contains only one cusp form,
which is in fact the cusp form of smallest index in weight $2$ with
trivial character. The first few coefficients of this cusp form were
computed laboriously in~\cite[p.145]{Eichler-Zagier}. Here
$\mathfrak{A}_{4;1,1,1,2}$ provides a closed formula.

\begin{table}[th]
  \begin{adjustwidth}{-.5cm}{-.5cm}
    \centering
    \caption{Families of theta blocks of low weight.}
    \newcolumntype{L}{>{$}l<{$}} \newcolumntype{R}{>{$}r<{$}}
    \begin{tabular}{lRL}
      \toprule
      Wt&\text{Char.}&\text{Family}\\
      \midrule
      \multirow{3}{*}{$1$}
        &8&Q_{a,b}=\mathfrak{A}_{2;a,b}=\mathfrak{D}_{2;a,b}=\eta^{-1} \, \vt { a } \, \vt { a + b } \, \vt { b }\\
        &10&\mathfrak{B}_{2;a,b}=\mathfrak{C}_{2;a,b}=\eta^{-2} \, \vt { a } \, \vt { a + b } \, \vt { a + 2 b } \, \vt { b }\\
        &14&\mathfrak{G}_{2,a,b}=\eta^{-4} \, \vt { a } \, \vt { 3 a + b } \, \vt { 3 a + 2 b } \, \vt { 2 a + b } \, \vt { a + b } \, \vt { b }\\
      \midrule
      \multirow{3}{*}{$\frac 32$}
        &15&\mathfrak{A}_{3;a,b,c}=\mathfrak{D}_{3;a,b,c}=\eta^{-3} \, \vt { a } \, \vt { a + b } \, \vt { a + b + c } \, \vt { b } \, \vt { b + c } \, \vt { c }\\
        &21&\mathfrak{B}_{3;a,b,c}=\eta^{-6} \, \vt { a } \, \vt { a + b } \, \vt { a + 2 b + 2 c } \, \vt { a + b + c } \, \vt { a + b + 2 c } \, \vt { b } \, \vt { b + c } \, \vt { b + 2 c } \, \vt { c }\\
        &21&\mathfrak{C}_{3;a,b,c}=\eta^{-6} \, \vt { a } \, \vt { 2 a + 2 b + c } \, \vt { a + b } \, \vt { a + 2 b + c } \, \vt { a + b + c } \, \vt { b } \, \vt { 2 b + c } \, \vt { b + c } \, \vt { c }\\
      \midrule
      \multirow{11}{*}{$2$}
        &0&\mathfrak{A}_{4;a,b,c,d}=\eta^{-6} \, \vt { a } \, \vt { a + b } \, \vt { a + b + c } \, \vt { a + b + c + d } \, \vt { b } \, \vt { b + c } \, \vt { b + c + d } \, \vt { c } \, \vt { c + d } \, \vt { d }\\
        &12&\mathfrak{B}_{4;a,b,c,d}=\eta^{-12} \, \vt { a } \, \vt { a + b } \, \vt { a + 2 b + 2 c + 2 d } \, \vt { a + b + c } \, \vt { a + b + 2 c + 2 d } \, \vt { a + b + c + d }\\&&\qquad\cdot \, \vt { a + b + c + 2 d } \, \vt { b } \, \vt { b + c } \, \vt { b + 2 c + 2 d } \, \vt { b + c + d } \, \vt { b + c + 2 d } \, \vt { c } \, \vt { c + d } \, \vt { c + 2 d } \, \vt { d }\\
        &12&\mathfrak{C}_{4;a,b,c,d}=\eta^{-12} \, \vt { a } \, \vt { 2 a + 2 b + 2 c + d } \, \vt { a + b } \, \vt { a + 2 b + 2 c + d } \, \vt { a + b + c } \, \vt { a + b + 2 c + d }\\&&\qquad\cdot \, \vt { a + b + c + d } \, \vt { b } \, \vt { 2 b + 2 c + d } \, \vt { b + c } \, \vt { b + 2 c + d } \, \vt { b + c + d } \, \vt { c } \, \vt { 2 c + d } \, \vt { c + d } \, \vt { d }\\
        &4&\mathfrak{D}_{4;a,b,c,d}=\eta^{-8} \, \vt { a } \, \vt { a + b } \, \vt { a + 2 b + c + d } \, \vt { a + b + c } \, \vt { a + b + c + d } \, \vt { a + b + d } \, \vt { b } \, \vt { b + c }\\&&\qquad\cdot \, \vt { b + c + d } \, \vt { b + d } \, \vt { c } \, \vt { d }\\
        &4&\mathfrak{F}_{4;a,b,c,d}=\eta^{-20} \, \vt { a } \, \vt { 2 a + 3 b + 4 c + 2 d } \, \vt { a + b } \, \vt { a + 3 b + 4 c + 2 d } \, \vt { a + 2 b + 2 c }\\&&\qquad\cdot \, \vt { a + 2 b + 4 c + 2 d } \, \vt { a + 2 b + 3 c + d } \, \vt { a + 2 b + 3 c + 2 d } \, \vt { a + 2 b + 2 c + d } \, \vt { a + 2 b + 2 c + 2 d }\\&&\qquad\cdot \, \vt { a + b + c } \, \vt { a + b + 2 c } \, \vt { a + b + 2 c + d } \, \vt { a + b + 2 c + 2 d } \, \vt { a + b + c + d } \, \vt { b } \, \vt { b + c } \, \vt { b + 2 c }\\&&\qquad\cdot  \, \vt { b + 2 c + d } \, \vt { b + 2 c + 2 d } \, \vt { b + c + d } \, \vt { c } \, \vt { c + d } \, \vt { d }\\
      \bottomrule
    \end{tabular}
    \label{tab:low-weight-tb}
  \end{adjustwidth}
\end{table}

We leave it as an exercise to the reader to verify that the given
families are indeed holomorphic at infinity. In principle this can be
done along the lines of the first two proofs for the family of theta
quarks as in the preceding section. However, for weights $3/2$ and $2$,
this becomes rather tedious. A more conceptual proof will be given in
Part~III (cf. Theorem~\ref{thm:Kac-Weil-den-formula}), and the family
$\mathfrak{A}_{4;a,b,d}$ will be discussed in the next section as one
instance in a natural infinite collection of families of theta
blocks. Here we confine ourselves to the families
$\mathfrak{B}_{2;a,b}$ and~$\mathfrak{G}_{2,a,b}$.

\begin{Proposition}
  The function
  \begin{equation*}
    \mathfrak{B}_{2;a,b}
    =
    \frac {\vt { a } \, \vt { a + b } \, \vt { a + 2 b } \, \vt { b }}{\eta^2}
  \end{equation*}
  is a holomorphic Jacobi form of weight $1$ and (integral or half
  integral) index $3 a^2/2 + 3 a b + 3 b^2$ with
  character~$\eta^{10}$.  For coprime $a$ and $b$ it is a cusp form if
  and only if $a$ is odd or $3\nmid b(a+b)$.
\end{Proposition}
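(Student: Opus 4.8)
The weight, index and character are immediate from the general bookkeeping for theta blocks at the beginning of Section~\ref{sec:generalities}: with $r=4$ numerator factors, no denominator factors and $\eta$-power $n=-2$ one gets weight $k=\frac{4-2}{2}=1$, index $m=\frac12\bigl(a^2+(a+b)^2+(a+2b)^2+b^2\bigr)=\frac32 a^2+3ab+3b^2$ and character $\varepsilon^{h}$ with $h=3\cdot4-2=10$ (the index being integral or half-integral according as $a$ is even or odd). So everything reduces to proving holomorphy at infinity and locating the cusp forms, for which I would invoke Proposition~\ref{prop:hyperbolic-criterion} with $\ve a=(a,\,a+b,\,a+2b,\,b)$ and $d=2$.

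The plan is to produce an explicit orthogonal basis of $\ve a^\perp$. One checks that $\ve u_1=(-1,1,0,-1)$ and $\ve u_2=(0,1,-1,1)$ are orthogonal to $\ve a$, orthogonal to each other, and both of norm $3$, while $\ve u_3=(a+2b,\,b,\,-a,\,-(a+b))$ is orthogonal to $\ve a,\ve u_1,\ve u_2$ and has norm $\ve u_3^2=\ve a^2$. Since $\ve u_1,\ve u_2,\ve u_3$ span $\ve a^\perp$, Proposition~\ref{prop:dual-criterion} gives the exact identity
\begin{equation*}
  S_{\ve a}(\ve n)=\frac{\ve a^2}{3}\Big((\ve n\cdot\ve u_1)^2+(\ve n\cdot\ve u_2)^2\Big)+(\ve n\cdot\ve u_3)^2 .
\end{equation*}
For $\ve n\in\ve 1+2\Z^4$ the numbers $\ve n\cdot\ve u_1$ and $\ve n\cdot\ve u_2$ are odd (because $\ve u_1^2,\ve u_2^2$ are odd), hence each has absolute value at least $1$; therefore $S_{\ve a}(\ve n)\ge\frac23\ve a^2=\frac d3\ve a^2$, which by Proposition~\ref{prop:hyperbolic-criterion} proves holomorphy at infinity.

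For the cusp-form criterion I would read off from the displayed identity that $S_{\ve a}(\ve n)=\frac23\ve a^2$ can hold only if $(\ve n\cdot\ve u_1)^2=(\ve n\cdot\ve u_2)^2=1$ and $\ve n\cdot\ve u_3=0$. Reducing $\ve u_3$ modulo $2$ gives $\ve u_3\equiv(a,b,a,a+b)$, so for $\ve n\in\ve 1+2\Z^4$ one has $\ve n\cdot\ve u_3\equiv a\pmod2$; thus if $a$ is odd then $\ve n\cdot\ve u_3$ is never $0$, the inequality is strict for every $\ve n$, and $\mathfrak B_{2;a,b}$ is a cusp form. If $a$ is even (so $b$ is odd, as $\gcd(a,b)=1$) one must decide whether the three linear conditions $\ve n\cdot\ve u_1=\epsilon_1$, $\ve n\cdot\ve u_2=\epsilon_2$ (with $\epsilon_1,\epsilon_2\in\{\pm1\}$) and $\ve n\cdot\ve u_3=0$ admit a solution $\ve n\in\ve 1+2\Z^4$. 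Solving the first two relations for two coordinates and substituting into the third collapses everything to the single Diophantine relation
\begin{equation*}
  3\,(b\,n_1-a\,n_4)=(a-b)\,\epsilon_1-a\,\epsilon_2 .
\end{equation*}
Here the parities look after themselves: the right-hand side is always odd, $\gcd(a,b)=1$ makes the equation solvable in integers, and since $a$ is even any integer solution already has $n_1$ odd while $n_4$ (and then automatically $n_2,n_3$) can be forced odd. The whole question therefore reduces to whether some sign choice makes the right-hand side divisible by $3$, and a short case analysis on the residues of $a,b$ modulo $3$ (using $\gcd(a,b)=1$) shows this is possible exactly when $3\mid b(a+b)$. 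Combining the two cases yields the stated criterion.

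I expect the only genuine work to be this last step: checking that the parity constraints are harmless and matching the solvability of $3\mid(a-b)\epsilon_1-a\epsilon_2$ with the divisibility $3\mid b(a+b)$. The holomorphy bound and the reduction to the exact formula for $S_{\ve a}(\ve n)$ are routine once the orthogonal vectors $\ve u_1,\ve u_2,\ve u_3$ have been guessed.
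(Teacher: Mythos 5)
Your proof is correct, and on the holomorphy part it coincides with the paper's own argument: the paper likewise applies Proposition~\ref{prop:hyperbolic-criterion} together with the remark following Proposition~\ref{prop:dual-criterion}, using the same two norm-$3$ vectors orthogonal to $\ve a=(a,a+b,a+2b,b)$ (its $(0,1,-1,1)$ and $(1,-1,0,1)$ are, up to sign and order, your $\ve u_2$ and $\ve u_1$), which gives $S_{\ve a}(\ve n)\ge \tfrac 23\ve a^2$ on $\ve 1+2\Z^4$ and hence holomorphy at infinity. Where you go beyond the paper is the cusp criterion, whose proof the paper explicitly leaves to the reader. Your completion of $\{\ve u_1,\ve u_2\}$ to an orthogonal basis of $\ve a^{\perp}$ by $\ve u_3=(a+2b,\,b,\,-a,\,-(a+b))$, of norm exactly $\ve a^2$, upgrades the paper's inequality to the identity $S_{\ve a}(\ve n)=\tfrac{\ve a^2}{3}\bigl((\ve n\cdot\ve u_1)^2+(\ve n\cdot\ve u_2)^2\bigr)+(\ve n\cdot\ve u_3)^2$, so that failure of cuspidality is equivalent to solving $\ve n\cdot\ve u_1=\epsilon_1$, $\ve n\cdot\ve u_2=\epsilon_2$, $\ve n\cdot\ve u_3=0$ with $\ve n\in\ve 1+2\Z^4$ and $\epsilon_i=\pm1$. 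I verified the details you compress at the end: eliminating $n_2,n_3$ does yield $3(bn_1-an_4)=(a-b)\epsilon_1-a\epsilon_2$; when $a$ is even and $b$ odd, every integral solution automatically has $n_1$ odd, and since the homogeneous solutions shift $(n_1,n_4)$ by $t\,(a,b)$ one can make $n_4$ odd without changing the parity of $n_1$, after which $n_2=\epsilon_1+n_1+n_4$ and $n_3=n_2+n_4-\epsilon_2$ are odd as well; and the mod~$3$ case analysis works out as claimed (if $3\mid b$ take $\epsilon_1=\epsilon_2$; if $3\mid a+b$ take $\epsilon_1=-\epsilon_2$; while $3\nmid b(a+b)$ together with $\gcd(a,b)=1$ forces either $a\equiv 0$, $b\not\equiv 0$ or $a\equiv b\not\equiv 0 \bmod 3$, and in both cases $(a-b)\epsilon_1-a\epsilon_2\not\equiv 0 \bmod 3$ for every sign choice), matching the stated condition exactly. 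So your write-up reproduces the paper's proof of holomorphy and, in addition, supplies a complete proof of the cusp-form statement that the paper omits.
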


\begin{proof}
  We analyze the theta block $\vartheta_{\ve a}/\eta^2$ for
  $\ve a=(a,a+b,a+2b,b)$ (notation as in
  \S\ref{sec:Experimental-search-for-long-theta-blocks}).  According
  to Proposition~\ref{prop:hyperbolic-criterion}, we have to prove
  that
  \begin{equation*}
    S_{\ve a}(\ve n)
    \ge \frac 23 {\ve a}^2
  \end{equation*}
  for all $\ve n$ in $\ve 1+2\Z^4$.  For this we use the remark after
  Proposition~\ref{prop:dual-criterion}: The vectors
  $\ve u_1 = (0,1,-1,1)$ and $\ve u_2=(1,-1,0,1)$ and $\ve a$ are
  pairwise orthogonal, and $\ve u_1^2=\ve u_2^2=3$, and hence the
  claimed inequality follows.  We leave the proof of the cusp
  condition to the reader.
\end{proof}

\begin{Proposition}
  The function
  \begin{equation*}
    \mathfrak{G}_{2;a,b} =
    \frac{ \vt { a } \, \vt { 3 a + b } \, \vt { 3 a + 2 b } \, \vt { 2 a + b } \, \vt { a + b } \, \vt { b }}{\eta^4}
  \end{equation*}
  is a holomorphic Jacobi form of weight $1$ and index
  $4(3a^2+3ab+b^2)$ with character $\eta^{14}$.
\end{Proposition}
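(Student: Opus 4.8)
The assertions about weight, index and character are immediate from the general formulas recorded at the start of Section~\ref{sec:generalities}. Here the numerator has $r=6$ factors $\vt{\bullet}$, there is no denominator ($s=0$), and the $\eta$-power is $n=-4$, so that $k=\tfrac{r-s+n}2=1$, the character is $\varepsilon^{h}$ with $h=3r-3s+n=14$, and the index is $\tfrac12\ve a^2$ for $\ve a=(a,\,3a+b,\,3a+2b,\,2a+b,\,a+b,\,b)$. A short computation gives $\ve a^2=8(3a^2+3ab+b^2)$, whence the index $4(3a^2+3ab+b^2)$.

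Only holomorphy at infinity requires work. By Proposition~\ref{prop:hyperbolic-criterion}, applied with $d=4$, I must show
\[
  S_{\ve a}(\ve n)\ \ge\ \tfrac43\,\ve a^2 \qquad\text{for all }\ve n\in\ve 1+2\Z^6 .
\]
The elementary estimate used for $\mathfrak{B}_{2;a,b}$ is not enough here: writing $\ve a=a\,\ve p+b\,\ve q$ with $\ve p=(1,3,3,2,1,0)$ and $\ve q=(0,1,2,1,1,1)$, the vectors orthogonal to $\ve a$ \emph{for all} $a,b$ form the four-dimensional space $U=\ve p^{\perp}\cap\ve q^{\perp}$, and one checks that $U$ carries no orthogonal basis by odd-norm vectors, so the pairwise-orthogonal recipe from the remark after Proposition~\ref{prop:dual-criterion} yields only $\tfrac23\,\ve a^2$. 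I will therefore invoke the full (non-diagonal) Gram-matrix form of Proposition~\ref{prop:dual-criterion}.

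Take
\[
  \ve u_1=(1,-1,0,1,0,0),\quad
  \ve u_2=(-1,0,0,1,-1,0),\quad
  \ve u_3=(0,1,-1,0,0,1),\quad
  \ve u_4=(0,0,1,-1,-1,0),
\]
each of norm $\ve u_j^2=3$. One verifies $\ve u_j\cdot\ve p=\ve u_j\cdot\ve q=0$, so all four lie in $U$, and they are linearly independent, hence span $U$. Since $U$ has codimension one inside the (five-dimensional) orthogonal complement of a fixed $\ve a$, this is precisely the non-spanning situation of the remark, which gives $S_{\ve a}(\ve n)\ge(\ve x\,G^{-1}\ve x^{t})\,\ve a^2$ with $\ve x=(\ve n\cdot\ve u_1,\dots,\ve n\cdot\ve u_4)$ and $G$ the Gram matrix of the $\ve u_j$. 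The point is that $G$ is block diagonal: $\ve u_2$ is orthogonal to $\ve u_1,\ve u_3,\ve u_4$, whose Gram matrix is $4I_3-J$ with $J$ the all-ones matrix. As every $\ve u_j^2$ is odd, each $\ve n\cdot\ve u_j$ is odd for $\ve n\in\ve 1+2\Z^6$, so it remains to minimise $\tfrac13 x_2^2+\ve y\,(4I_3-J)^{-1}\ve y^{t}$ over odd integers, where $x_2=\ve n\cdot\ve u_2$ and $\ve y=(\ve n\cdot\ve u_1,\ve n\cdot\ve u_3,\ve n\cdot\ve u_4)$.

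The first term is $\ge\tfrac13$. For the second, $(4I_3-J)^{-1}=\tfrac14(I_3+J)$, so
\[
  \ve y\,(4I_3-J)^{-1}\ve y^{t}
  =\tfrac14\Big(\textstyle\sum_i y_i^2+\big(\sum_i y_i\big)^2\Big)
  \ \ge\ \tfrac14(3+1)=1 ,
\]
the bound being reached at $\ve y=(1,1,-1)$. Hence $S_{\ve a}(\ve n)\ge\big(\tfrac13+1\big)\ve a^2=\tfrac43\,\ve a^2$, exactly the required inequality, and $\mathfrak{G}_{2;a,b}$ is holomorphic at infinity. The main obstacle is recognising that the one- or two-vector estimate falls short of $\tfrac43$ by precisely a factor of two, forcing one to pass to the full orthogonal complement $U$; the argument then hinges on the fortunate factorisation $G=[3]\oplus(4I_3-J)$, whose inverse $\tfrac14(I_3+J)$ renders the odd-vector minimum computable in closed form and equal, on the nose, to the needed value $\tfrac43$. (Since this lower bound need not be attained by a genuine $\ve n$, the estimate alone does not settle whether $\mathfrak{G}_{2;a,b}$ is cuspidal, which is why no cusp criterion is asserted.)
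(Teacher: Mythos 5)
Your proof is correct and follows essentially the same route as the paper: both reduce holomorphy at infinity via Proposition~\ref{prop:hyperbolic-criterion} to the bound $S_{\ve a}(\ve n)\ge\tfrac43\,\ve a^2$ and establish it by applying Proposition~\ref{prop:dual-criterion} (together with the remark following it) to four norm-$3$ integral vectors orthogonal to $\ve a$, whose Gram matrix splits into a $1\times1$ block contributing $\tfrac13$ and a $3\times3$ block contributing $1$ when minimized over odd vectors. The only differences are cosmetic: you choose a slightly different basis of the same orthogonal complement and compute the $3\times3$ minimum via the explicit inverse $(4I_3-J)^{-1}=\tfrac14(I_3+J)$, whereas the paper reaches the same value by a parity argument on the even quadratic form $K$.
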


\begin{proof}
  We proceed as in the preceding proof.  Setting
  \begin{equation*}
    \ve a=(a,3a+b,3a+2b,2a+b,a+b,b)
    ,
  \end{equation*}
  we have to prove
  \begin{equation*}
    S_{\ve a}(\ve n) \ge \frac 43 \ve a^2
  \end{equation*}
  for all $\ve n$ in $\ve 1+2\Z^6$. For this we apply
  Proposition~\ref{prop:dual-criterion} to the vectors $\ve u_j$ given
  by
  \begin{equation*}
    \begin{pmatrix}
      \ve u_1\\ \ve u_2\\ \ve u_3\\ \ve u_4
    \end{pmatrix}
    =
    \left(
      \begin{array}{rrrrrr}
        1&-1&0&1&0&0\\
        1&0&0&0&-1&1\\
        0&1&-1&0&0&1\\
        1&0&0&-1&1&0
      \end{array}
    \right)
    .
  \end{equation*}
  It is quickly checked that they are orthogonal to $\ve a$, and that
  the Gram matrix $G=\left(\ve u_i\cdot\ve u_j\right)$ satisfies
  \begin{equation*}
    4G^{-1}
    =
    \left(
      \begin{array}{rrrr}
        2 & -1 & 1 & 0 \\
        -1 & 2 & -1 & 0 \\
        1 & -1 & 2 & 0 \\
        0 & 0 & 0 & \frac{4}{3}
      \end{array}
    \right)
    .
  \end{equation*}
  Using $\ve n\cdot \ve u_j\equiv 1\bmod 2$ for
  $\ve n\in \ve 1+2\Z^6$, we deduce from
  Proposition~\ref{prop:dual-criterion}
  \begin{equation*}
    S_{\ve a}(\ve n)/\ve a^2
    \ge
    \frac 14 \min_{\ve x \in \ve 1+2\Z^3} \ve x^tK\ve x + \frac 13
    ,
  \end{equation*}
  where
  \begin{equation*}
    K
    =
    \left(
      \begin{array}{rrr}
        2 & -1 & 1 \\
        -1 & 2 & -1 \\
        1 & -1 & 2
      \end{array}
    \right)
    .
  \end{equation*}
  The minimum in question must be an even integer (since $K$ is
  even). It must be $\ge 4$ since for odd $x,y,z$, we have
  $\frac 12(x,y,z)K(x,y,z)^t=x^2-xy+xz+y^2-yz+z^2\equiv 0\bmod 2$; in
  fact, it is $4$ as one sees for $x-y=z=1$. The desired estimate is
  now obvious.
\end{proof}

\FloatBarrier

\section{An infinite collection of families}
\label{sec:A4-family}

In the previous section we saw various infinite families of theta
blocks. In Part III we shall propose a general theory which explains
the existence of these families and generates even more. More
specifically, we shall associate an infinite family to every root
system. The infinite families which we shall propose in this section
turn out to be those attached to the root systems $A_n$. However, we
include this section in the hope that the reader might find it
profitable to study the latter families here using elementary arguments
without having to go through the details of the theory developed in
Part III.

For the rest of this section we fix an integer $n\ge 2$. For any
integral vector $\ve a = (a_0,\dots,a_n)$ with pairwise different
entries, we set
\begin{equation}
  \label{eq:A-n-series}
  \Theta_{\ve a}
  :=
  \eta^{-n(n-1)/2} \prod_{0\le i < j \le n} \vt {a_i-a_j}
  .
\end{equation}
Clearly, $\Theta_{\ve a}$ depends only on the coset of $\ve a$ in
$\Z^{n+1}/\Z\cdot \ve 1$, where ${\ve 1}=(1,1,\cdots,1)$. Moreover,
changing the signs of any entries or the order of the entries
of~$\ve a$ leaves $\Theta_{\ve a}$ invariant up to sign. The
assumption that the~$a_j$ are pairwise different ensures that
$\Theta_{\ve a}$ does not vanish identically. Note that for $n=2$, we
have
\begin{equation*}
  \Theta_{a+b,b,0}=\eta^{-1}\vartheta_{a}\vartheta_{a+c}\vartheta_{b}
  ,
\end{equation*}
which is the family of theta quarks, and similarly
\begin{equation*}
  \Theta_{\left(0,a,a+b,a+b+c,a+b+c+d\right)}
  =
  R_{a,b,c,d} 
  .
\end{equation*}

We also define a quadratic form $Q$ by
\begin{equation*}
  Q({\ve a}) := \frac 12 \sum_{0\le i < j \le n} (a_i-a_j)^2
  =
  \frac {n+1}2\left(\sum_{i=0}^n a_i^2\right)
  - \frac 12 \left(\sum_{i=0}^n a_i\right)^2
  .
\end{equation*}
Again we recognize that $Q(\ve a)$ depends only on
$\ve a \bmod \Z\cdot\ve 1$.

In this section we shall prove that the functions $\Theta_{\ve a}$,
with ${\ve a}$ as above a vector in $\Z^{n+1}$ having pairwise distinct
entries, are theta blocks.  More precisely, we shall prove:

\begin{Theorem}
  \label{thm:A-n-series}
  The function $\Theta_{\ve a}$ defined in~\eqref{eq:A-n-series} is a
  theta block of length $n(n+1)/2$ and weight~$n/2$. More precisely,
  $\Theta_{\ve a}$ belongs to the space
  $J_{\frac n2, Q({\ve a})}\big(\varepsilon^{n(n+2)}\big)$. In
  particular, if $n+1$ is relatively prime to~$6$, it belongs to
  $J_{\frac n2, Q({\ve a})}$.
\end{Theorem}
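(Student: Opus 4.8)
The plan is to treat the assertions about weight, index and character as formal consequences of the multiplicativity of these invariants established in \S\ref{sec:review}, and to reserve the real work for holomorphy at infinity. First I would record that $\Theta_{\ve a}$ is a product of $\binom{n+1}{2}=n(n+1)/2$ factors $\vt{a_i-a_j}\in J_{1/2,(a_i-a_j)^2/2}(\varepsilon^3)$ divided by $\eta^{n(n-1)/2}$; adding up weights, indices and the exponents of $\varepsilon$ gives weight $\tfrac12\binom{n+1}{2}-\tfrac12\cdot\tfrac{n(n-1)}{2}=\tfrac n2$, index $\tfrac12\sum_{i<j}(a_i-a_j)^2=Q(\ve a)$, and character $\varepsilon^{3n(n+1)/2-n(n-1)/2}=\varepsilon^{n(n+2)}$. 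For the final ``in particular'' I would note that $\varepsilon^{n(n+2)}$ is trivial exactly when $24\mid n(n+2)$, and that $\gcd(n+1,6)=1$ forces $n$ even (so that, writing $n=2m$, $n(n+2)=4m(m+1)$ is divisible by $8$) and $n\equiv0,1\bmod3$ (so $3\mid n(n+2)$), whence $24\mid n(n+2)$.

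The substance is to show membership in $J_{\frac n2,Q(\ve a)}$ rather than merely in $J^{!}$, i.e.\ holomorphy at infinity. By parts (4) and (5) of Theorem~\ref{thm:order} this amounts to the inequality $\ord(\Theta_{\ve a},x)=\sum_{i<j}B((a_i-a_j)x)-\tfrac{n(n-1)}{48}\ge 0$ for all $x$. Writing $f(\ve y):=\sum_{0\le i<j\le n}B(y_i-y_j)$ for $\ve y\in\R^{n+1}$, the left-hand side is $f(x\ve a)-\tfrac{n(n-1)}{48}$, and since $f$ depends only on the differences $y_i-y_j$ it descends to $\R^{n+1}/\R\ve 1$. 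Hence it suffices to prove the ($\ve a$-independent) statement
\[
\min_{\ve y}f(\ve y)=\frac{n(n-1)}{48},
\]
the minimum being attained at the equally spaced point $\ve y^{*}=\tfrac1{n+1}(n,n-1,\dots,0)$.

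I would prove this by the same mechanism as the first proof for the theta quarks, now exploiting convexity on a fundamental domain. The function $f$ is invariant under the affine Weyl group $S_{n+1}\ltimes Q$, where $Q=\{\ve v\in\Z^{n+1}:\sum_i v_i=0\}$ is the root lattice of $A_n$ (permutations and integer root-lattice translations clearly preserve $f$); a fundamental domain for it is the closed alcove $A=\{y_0\ge y_1\ge\cdots\ge y_n\ge y_0-1\}$, so $\min_{\R^{n+1}/\R\ve1}f=\min_{A}f$. On $A$ every difference $y_i-y_j$ ($i<j$) lies in $[0,1]$, where $B(t)=\tfrac12(t-\tfrac12)^2$; therefore $f$ restricted to $A$ equals the genuine quadratic $q(\ve y)=\tfrac12\sum_{i<j}(y_i-y_j-\tfrac12)^2$, whose Hessian is the positive definite Laplacian of the complete graph $K_{n+1}$ on $\ve1^{\perp}$. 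Thus $q$ is strictly convex, its unique critical point solves $(n+1)y_k-\sum_\ell y_\ell+k-\tfrac n2=0$, i.e.\ $y_k=\mathrm{const}-k/(n+1)$, which is $\ve y^{*}$ and lies in the interior of $A$; hence $\ve y^{*}$ is the global minimum. A direct evaluation, grouping the $\binom{n+1}{2}$ pairs by $j-i=k$, gives $f(\ve y^{*})=\tfrac12\sum_{k=1}^{n}(n+1-k)\bigl(\tfrac{k}{n+1}-\tfrac12\bigr)^2=\tfrac{n(n-1)}{48}$, completing the argument.

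The main obstacle is exactly this minimization: unlike the theta-quark case, where the Lagrange condition already pinned down the critical configuration, for general $n$ the critical-point equation $\sum_{j\ne i}B'(y_i-y_j)=0$ has many solutions, so one cannot simply enumerate candidates. The device that rescues the argument is the observation that on the fundamental alcove $f$ is \emph{literally} a convex quadratic, which both guarantees a unique minimizer there and identifies it with the Weyl-vector direction $\ve y^{*}$; verifying that this interior critical point really is the global minimum over $\R^{n+1}/\R\ve1$ (via the alcove being a fundamental domain) is the one step requiring care. I would expect the cusp condition, not asserted in this theorem, to fall out of the same analysis by asking when the line $\R\ve a$ meets a global minimizer of $f$.
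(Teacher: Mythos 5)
Your proof is correct and follows essentially the same route as the paper's: both reduce the problem to minimizing $f(\ve y)=\sum_{0\le i<j\le n}B(y_i-y_j)$ over $\R^{n+1}/\R\ve 1$, pass to a fundamental region for the symmetry group (your affine Weyl alcove, the paper's ordered configuration $0\le x_0\le\cdots\le x_n\le 1$) on which every term of $B$ is an exact quadratic, and identify the equally spaced point $y_k=\mathrm{const}-k/(n+1)$ as the minimizer, with the same value $n(n-1)/48$; your convexity/graph-Laplacian argument merely makes explicit what the paper's Lagrange-multiplier step leaves implicit. Note that your threshold $n(n-1)/48$ is the right one: the inequality displayed at the start of the paper's proof has $n(n-1)/24$ instead, but this is an internal slip there, since on the ordered region the paper's quadratic $S$ equals $2f$ and its computed minimum $n(n-1)/24$ for $S$ gives exactly your bound for $f$.
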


The first case where the character $\varepsilon^{n(n+2)}$ is trivial
occurs for $n=4$, when the $\Theta_{\ve a}$ define Jacobi forms in
$J_{2,Q(\ve a)}$. In fact, the family $\Theta_{\ve a}$ equals the
family $R_{a,b,c,d}=\mathfrak{A}_{4;a,b,c,d}$ mentioned in the
introduction and in Table~\ref{tab:low-weight-tb}.
There are at least three more infinite families (all of the
form six theta functions over $\eta^6$) which yield Jacobi forms of
weight~$2$ without character (see
Table~\ref{tab:theta-R-representations} in
Section~\ref{sec:Examples-constructed-from-root-systems}).

Finally, one may ask when $\Theta_{\ve a}$ is a cusp form. The answer,
whose proof can be found at the end of the proof of
Theorem~\ref{thm:A-n-series} below, is as follows.
\begin{Supplement}
  Let $g$ denote the gcd of the differences $a_i-a_j$. Then
  $\Theta_{\ve a}$ is a cusp form if and only if there exists
  $0\le i<j\le n$ such that $(a_i-a_j)/g$ is divisible by~$n+1$.
\end{Supplement}

Just as for the family $Q_{a,b}$ of theta quarks in
Theorem~\ref{thm:theta-quarks-Fourier-expansion}, one can describe the
Fourier expansion of $\Theta_{\ve a}$ in closed form.

\begin{Theorem}
  \label{thm:A-n-Fourier-expansion}
  For the theta block $\Theta_{\ve a}$ defined
  in~\eqref{eq:A-n-series}, one has
  \begin{equation*}
    \Theta_{\ve a}
    =
    \sum_{
      \begin{subarray}c
        \ve x \in (\frac n2 + \Z)^{n+1}\\
        \ve x\cdot \ve 1 = 0
      \end{subarray}
    }
    \sigma(\ve x)\, q^{\ve x^2/2(n+1)}\,\zeta^{\ve a\cdot \ve x}
    ,  
  \end{equation*}
  where $\sigma(x)=\sym{sig}(\pi)$ if there is a permutation $\pi$ of
  $\{0,\dots,n\}$ such that
  $\ve x \equiv -\frac n2\ve 1 + (\pi(0),\pi(1),\dots,\pi(n)) \bmod
  (n+1)\Z$, and $\sigma(x)=0$ otherwise.
\end{Theorem}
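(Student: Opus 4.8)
The plan is to follow the method of Theorem~\ref{thm:theta-quarks-Fourier-expansion}, expanding the product of theta functions as a lattice sum and then reorganizing it. First I would write each factor via~\eqref{eq:theta}, so that $\vt{a_i-a_j}=\sum_{r\ \mathrm{odd}}\leg{-4}{r}q^{r^2/8}\zeta^{(a_i-a_j)r/2}$, and index the whole product by an antisymmetric array $(r_{ij})$ of odd integers ($r_{ji}=-r_{ij}$, $r_{ii}=0$), one entry per positive root $\ve e_i-\ve e_j$. A short computation shows that the exponent of $q$ is $\tfrac18\sum_{i<j}r_{ij}^2$ and that the exponent of $\zeta$ is $\sum_i a_i\bigl(\tfrac12\sum_j r_{ij}\bigr)=\ve a\cdot\ve x$, where $\ve x=(x_0,\dots,x_n)$ has $x_i=\tfrac12\sum_j r_{ij}$. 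Antisymmetry forces $\ve x\cdot\ve 1=0$, and since each $x_i$ is half the sum of the $n$ odd entries in its row one has $x_i\in\tfrac n2+\Z$; thus $\ve x$ runs exactly over the index set of the claimed sum.

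It then remains to evaluate, for each fixed $\ve x$, the sum over all antisymmetric odd arrays with prescribed row sums $\sum_j r_{ij}=2x_i$ and to prove
\begin{equation*}
  \sum_{(r_{ij})}\Big(\prod_{i<j}\leg{-4}{r_{ij}}\Big)\,q^{\frac18\sum_{i<j}r_{ij}^2}
  =\sigma(\ve x)\,q^{\ve x^2/2(n+1)}\,\eta^{n(n-1)/2},
\end{equation*}
after which division by $\eta^{n(n-1)/2}$ gives the theorem. This identity is precisely the affine Weyl--Kac denominator formula (the Macdonald identity) for $\widehat{\mathfrak{sl}}_{n+1}$: the fibres over $\ve x$ are cosets of the rank $n(n-1)/2$ lattice of circulations, the finite Weyl group is the symmetric group $S_{n+1}$ permuting coordinates, the Weyl vector $\rho=-\tfrac n2\ve 1+(0,1,\dots,n)$ produces the shift in the support condition $\ve x\equiv-\tfrac n2\ve 1+(\pi(0),\dots,\pi(n))$, the sign $\sigma(\ve x)=\sym{sig}(\pi)$ is the Weyl sign character, and the affine translation lattice $(n+1)\cdot\{\ve v:\ve v\cdot\ve 1=0\}$ accounts for the reduction $\bmod\,(n+1)\Z$. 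The quadratic exponent $\ve x^2/2(n+1)$ is the denominator-formula energy, the factor $n+1$ being the Coxeter number of $A_n$.

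For the normalization I would fix the power of $\eta$ not by recomputing the circulation theta series but from the weight and character constraints of Theorem~\ref{thm:A-n-series}: the product of the $\binom{n+1}2$ theta factors has character $\varepsilon^{3n(n+1)/2}$, and matching the character $\varepsilon^{n(n+2)}$ of $\Theta_{\ve a}$ forces exactly the exponent $n(n-1)/2$. As a consistency check, the principal specialization $\ve a=\rho$ collapses the outer sum and recovers the classical Macdonald identity for $\eta^{n(n+2)}$, the exponent $n(n+2)=\dim\mathfrak{sl}_{n+1}$ matching the character already established.

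The main obstacle is establishing the displayed denominator identity with all normalizations correct: one must pin down the Weyl-vector shift yielding the support $\{-\tfrac n2\ve 1+\pi\bmod(n+1)\}$, verify that the $q$-exponent is exactly $\ve x^2/2(n+1)$ rather than some shifted quadratic, and confirm that the overall sign of each fibre is the permutation signature. Since the paper proves the Macdonald identities independently in Part~III by methods not using affine Lie algebras (cf.\ the fifth and sixth proofs of Theorem~\ref{thm:quarks-are-Jf}), I would invoke that result rather than reprove it; the genuinely new work is the translation into the coordinates $(r_{ij})\leftrightarrow\ve x$ above and the check that the resulting support and sign agree with the definition of $\sigma(\ve x)$.
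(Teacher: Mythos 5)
Your proposal is correct and takes essentially the same route as the paper: the paper also deduces this theorem from the multivariable Macdonald/denominator identity of Part III (Theorem~\ref{thm:McDonalds-identity}, equivalently Theorem~\ref{thm:picture} applied to the $A_n$ lattice, star $e_i-e_j$, and permutation group), explicitly leaving the coordinate translation ``to the interested reader.'' Your antisymmetric-array bookkeeping $(r_{ij})\leftrightarrow\ve x$, the fiber identity, and the sign/support check are precisely those omitted details, so your argument coincides with the paper's intended proof rather than offering a genuinely different one.
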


A proof of this identity will be given in a more general context in
Part~III. It can easily be deduced by from
Theorem~\ref{thm:McDonalds-identity} in Part III applied to the root
system~$A_n$. Alternatively it can also be obtained directly, without
referring to root systems, by restriction to one variable of a more
general identity for many variables discussed in
Theorem~\ref{thm:picture} below. More precisely, our identity is
obtained by applying Theorem~\ref{thm:picture} to (in the notations of
that theorem)
\begin{align*}
  &\lat L=\left(L, (\ve x,\ve y)\mapsto \tfrac {\ve x\cdot\ve y}{n+1}\right),
    \quad
    s:e_i-e_j \ (0\le i<j\le n),
  \\
  &G = \text{permutations of the entries of vectors in $L$},
  \\
  &w=\left(-\tfrac n2,-\tfrac n2+1,\dots,\tfrac n2\right)
    ,
\end{align*}
where $L$ denotes the lattice of all vectors $\ve x$ in $\Z^{n+1}$
which satisfy $\sum_{i=1}^{n+1}x_i=0$ and $x_i\equiv x_j\bmod (n+1)$
for all $0\le i,j\le n$, and where $e_i$ is the vector of length~$n+1$
with $+1$ at the $i$th place and~$0$ at all other places. To obtain
literally our theorem one has then, first of all, to replace the
variable $z \in \C\otimes L$ used in Theorem~\ref{thm:picture} by
$\ve a z$ where now $z$ runs through the complex numbers. Secondly,
one has to use that $\lat L$ is isometric to the lattice $\Z^{n+1}/\Z$
equipped with the quadratic form~$Q$ from the beginning of this
section via the map (from right to left)
$\ve a \mapsto (n+1)\ve a - (\ve a\cdot \ve 1)\ve1$. We leave the
details to the interested reader.

It might be amusing to look for a combinatorial proof of the identity
of Theorem~\ref{thm:A-n-Fourier-expansion} along the lines of the
proof of the special case
Theorem~\ref{thm:theta-quarks-Fourier-expansion}. We finally mention a
nice restatement of Theorem~\ref{thm:A-n-Fourier-expansion}, which is
as follows.

\begin{Theorem}
  \label{thm:A-n-Fourier-expansion-determinant}
  One has
  \begin{equation}
    \label{eq:determinant-identity}
    \Theta_{\ve a}(\tau,z)
    =
    \int_0^1 \det \left[
      \begin{matrix}
        \vartheta_{0}^\ast(\tau,za_0+w)&\cdots
        &\vartheta^\ast_{0}(\tau,za_n+w)
        \\
        \vdots&&\vdots
        \\
        \vartheta_{n}^\ast(\tau,za_0+w)&\cdots
        &\vartheta_{n}^\ast(\tau,za_n+w)
      \end{matrix}
    \right]\,dw
    ,
  \end{equation}
  where
  \begin{equation*}
    \vartheta_{j}^\ast
    =
    \sum_{s\in j-\frac {n}2 + (n+1)\Z}q^{\frac {s^2}{2(n+1)}}\zeta^s
    .
  \end{equation*}
\end{Theorem}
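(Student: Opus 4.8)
The identity is essentially a repackaging of the Fourier expansion in Theorem~\ref{thm:A-n-Fourier-expansion}, so the plan is to expand the determinant, insert the defining series of each $\vartheta_j^\ast$, carry out the $w$-integration, and check that the outcome agrees term by term with that expansion.

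First I would apply the Leibniz formula to the determinant, grouping it by columns so that
\[
  \det\bigl[\vartheta_j^\ast(\tau,za_i+w)\bigr]_{0\le i,j\le n}
  = \sum_{\pi}\sym{sig}(\pi)\,\prod_{i=0}^n \vartheta_{\pi(i)}^\ast(\tau,za_i+w),
\]
where $\pi$ ranges over the permutations of $\{0,\dots,n\}$ and the point $za_i+w$ is paired with the index $\pi(i)$. Substituting $\vartheta_{\pi(i)}^\ast=\sum_{s_i} q^{s_i^2/2(n+1)}\zeta^{s_i}$, with $s_i$ running over $\pi(i)-\tfrac n2+(n+1)\Z$ and $\zeta^{s_i}$ becoming $\ex\!\bigl(s_i(za_i+w)\bigr)$, the product turns into a sum over vectors $\ve s=(s_0,\dots,s_n)$ with prescribed residues, each term carrying the factor $q^{\ve s^2/2(n+1)}\,\ex(z\,\ve a\cdot\ve s)\,\ex(w\,\ve s\cdot\ve 1)$.

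The decisive step is the integration in $w$. Interchanging the (absolutely, and for $w\in[0,1]$ uniformly, convergent) sum with the integral, the orthogonality $\int_0^1\ex(w\,\ve s\cdot\ve 1)\,dw=1$ if $\ve s\cdot\ve 1=0$ and $0$ otherwise (note $\ve s\cdot\ve 1\in(n+1)\Z$, hence an integer) keeps exactly the terms with $\ve s\cdot\ve 1=0$. What survives is $\sum_\pi\sym{sig}(\pi)\sum_{\ve s}q^{\ve s^2/2(n+1)}\zeta^{\ve a\cdot\ve s}$, the inner sum over $\ve s$ with $\ve s\cdot\ve 1=0$ and $s_i\equiv\pi(i)-\tfrac n2\bmod(n+1)$.

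It remains to reindex by the value $\ve x=\ve s$ and to match the result with Theorem~\ref{thm:A-n-Fourier-expansion}. For a fixed $\ve x\in(\tfrac n2+\Z)^{n+1}$ with $\ve x\cdot\ve 1=0$ the congruences read $\pi(i)\equiv x_i+\tfrac n2\bmod(n+1)$; since $\{0,\dots,n\}$ is a complete residue system modulo $n+1$, this pins down a unique contributing permutation $\pi$ precisely when the classes $x_i+\tfrac n2$ are pairwise distinct modulo $n+1$, and leaves no contribution otherwise, the sign being $\sym{sig}(\pi)=\sigma(\ve x)$ in the first case. This is exactly the coefficient $\sigma(\ve x)$ of Theorem~\ref{thm:A-n-Fourier-expansion}, so the integrated determinant equals $\Theta_{\ve a}$. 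The only genuinely delicate point is this final combinatorial identification—verifying that each admissible $\ve x$ selects one permutation with the correct sign; the substitution and the character orthogonality on $[0,1]$ are entirely routine.
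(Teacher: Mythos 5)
Your proposal is correct and follows essentially the same route as the paper: the paper also expands the determinant by the Leibniz formula, inserts the series for the $\vartheta_j^\ast$, and integrates over $w\in[0,1]$ to recover the Fourier expansion of Theorem~\ref{thm:A-n-Fourier-expansion}. Your write-up merely makes explicit the orthogonality step and the final matching of $\sym{sig}(\pi)$ with $\sigma(\ve x)$, which the paper leaves to the reader.
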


This is indeed merely a restatement of the preceding Theorem. To
recognize this, write the determinant after the integral in the form
\begin{multline*}
  \sum_{\pi} \sym{sig}(\pi) \prod_{j=0}^n
  \vartheta_{\pi(j)}^\ast(\tau,a_jz+w)
  \\
  = \sum_{\pi\in S_N} \sym{sig}(\pi) \prod_{j=0}^n \sum_{x_j \in
    \pi(j)-\frac n2 + (n+1)\Z} q^{{x_j^2}/{2(n+1)}}\,\e{(a_jz+w)x_j} ,
\end{multline*}
where $\pi$ runs through the group of permutations of $\{0,\dots,n\}$.
Writing the product as an $(n+1)$-ary theta series, and integrating in
$w$ from $0$ to $1$ yields the Fourier expansion of $\Theta_{\ve a}$
as given in Theorem~\ref{thm:A-n-Fourier-expansion}.

Note that~\eqref{eq:determinant-identity} suggests an
elementary proof. Namely, it is obvious that, for any fixed $\tau$,
the right-hand side $I_{\ve a}$ vanishes at the $(a_i-a_j)$-division
points of $\C/\Z\tau+\Z$ (as it should in view of the claimed identity
and the zeros of $\Theta_{\ve a}$).  Indeed, if we replace $z$ be
$(\tau\lambda+\mu)\mu/(a_i-a_j)$ with any integers~$\mu, \lambda$ then
the determinant on the right-hand side
of~\eqref{eq:determinant-identity} becomes zero since the $i$th and
$j$th row become equal up to multiplication by a constant (since
$\frac {a_i}{a_i-a_j}=\frac {a_j}{a_i-a_j}+1$).  Unfortunately, this
still does not prove that the divisors of $I_{\ve a}(\tau,\_)$ and
$\Theta_{\ve a}(\tau,\_)$, viewed as theta functions of the elliptic
curve $\C/\Z\tau+\Z$, coincide; for this we would have to consider
also multiplicities. However, if we could prove that the divisors
coincide (or at least one is contained in the other) and that
$I_{\ve a}$ is also in
$J_{\frac n2, Q({\ve a})}\big(\varepsilon^{n(n+2)}\big)$ (note that
the transformation law with respect to $z\mapsto z+\lambda \tau+\nu$
with integral $\lambda,\mu$ is obvious) then we could conclude that
$I_{\ve a}$ and $\Theta_{\ve a}$ are equal up to multiplication by a
holomorphic modular function $f$ of weight $0$ on $\SL$. Comparing the
non-zero terms with lowest $q$-power, i.e.~verifying
\begin{multline*}
  q^{-(n(n-1)/48}\prod_{0\le i<j\le n}
  q^{1/8}\zeta^{(a_i-a_j)/2}\left(1-\zeta^{-(a_i-a_j)/2}\right)
  \\
  = q^{w^2/2(n+1)}\sum_\pi \sym{sig}(\pi)\zeta^{ w \cdot \left(
      a_{\pi(0)},\dots,a_{\pi(n)} \right)} ,
\end{multline*} shows then that $f$ is also holomorphic at infinity,
whence constant (and equal to $1$).

\begin{proof}[Proof of Theorem~\ref{thm:A-n-series}]
  We have to show that
  \begin{equation*}
    f(x):=\sum_{0\le i<j\le n} B\left((a_i-a_j)x\right) \ge \frac {n(n-1)}{24}
  \end{equation*}
  for all $x$ in $\R$. For this we replace $a_ix$ by $x_i$
  ($i=0,\dots,n$) and show that, more generally,
  \begin{equation*}
    \sum_{0\le i<j\le n} B\left(x_i-x_j\right) \ge \frac {n(n-1)}{24}
  \end{equation*}
  for any $\ve x = (x_0,\dots,x_n)$ in $\R^{n+1}$.

  Since the function in question is symmetric and periodic in
  each variable, we can
  assume that $0\le x_0\le \cdots \le x_n\le 1$, in which case
  $B(x_i-x_j)=\frac 12\left(x_i-x_j+\frac 12\right)^2$ for
  $0\le i<j\le n$, so we need only find the minimum of
  \begin{equation*}
    S:=\sum_{0\le i<j\le n} \left(x_i-x_j+\frac 12\right)^2  
  \end{equation*}
  over $\R^{n+1}/\R\cdot\ve 1$. Restricting to $\ve x$ with
  $\sum_i x_i=0$ (i.e.~the orthogonal complement of $\R\cdot\ve 1$
  in~$\R^{n+1}$) and minimizing $S$ using Lagrange multipliers shows
  that $S$ assumes its local minima where the partial derivatives
  $\frac {\partial S}{\partial x_k}$ ($0\le k\le n$) are independent
  of~$k$. Since we have
  $\frac 12\frac {\partial S}{\partial x_k}=(n+1)x_i+\frac 12 (n-2k)$
  (for $\sum_i x_i=0$) the latter condition is
  $x_k=\frac 1{2(n+1)}(2k-n)$, and then
  \begin{equation*}
    S=\sum_{0\le i<j\le n} \left(\frac {i-j}{n+1} +\frac 12\right)^2
    =\frac {n(n-1)}{24}
    ,
  \end{equation*}
  which proves the theorem.

  Note that we the preceding proof also shows that
  $f(x) = \frac {n(n-1)}{24}$ if and only if the differences
  $(a_i-a_j)x$ are in $\frac 1{n+1}\Z$ but not integral
  ($0\le i<j\le n$).  From this the supplement to the theorem is
  obvious.
\end{proof}

\part*{Part III: General Theory}

\section{Infinite families and Jacobi forms of lattice index}
\label{sec:families}

In this section we describe a general principle for constructing
infinite families of theta blocks which are proper Jacobi forms. This
principle is summarized in Theorem~\ref{thm:picture}. As we shall see
in the next section, all of the infinite series of theta blocks that
we studied in the previous sections can in fact be obtained using this
principle. To explain our construction we need to consider a more
general type of Jacobi form, namely Jacobi forms whose index is a
lattice. We explain these in the following paragraphs before we state
the aforementioned construction. A more thorough theory of lattice
index Jacobi forms is developed in~\cite{Boylan-Skoruppa-Joli-1},
\cite{G}, \cite{CG} and various other articles. We recall here the
basics of the theory of Jacobi forms of lattice index as developed
in~\cite{Boylan-Skoruppa-Joli-1}.

Let $\lat L=(L,\beta)$ be an integral lattice. Hence $L$ is a free
$\Z$-module of, say, rank~$n$ and $\beta:L\times L\rightarrow \Z$ is a
symmetric non-degenerate bilinear form. If $U$ is a $\Z$-submodule of
full rank in $\Q\otimes L$ we denote by $U^\sharp$ its dual subgroup,
i.e.~the subgroup of all elements $y$ in $\Q\otimes L$ such that
$\beta(y,x)$ takes integral values for all $x\in U$. We shall use in
the following $\beta(x)=\frac 12 \beta(x,x)$. Note that~$\beta(x)$ is
not necessarily integral. If it is we call $\lat L$ {\em even},
otherwise {\em odd}.  In any case, the map $x\mapsto \beta(x)$ defines
an element of order $2$ in the dual group $\sym{Hom}(L,\Q/\Z)$ of $L$
that is trivial on $2L$. The kernel $\ev{L}$ of this homomorphism
defines an even sublattice of index~$2$ in~$L$. Since $\beta$ is
non-degenerate there exists an element $r$ in $\Q\otimes L$ such that
$\beta(x)\equiv \beta(r,x)\bmod\Z$ for $x$ in~$L$. We set
\begin{equation*}
  \spv L
  :=
  \big\{
  r \in \Q\otimes L:
  \beta(x)\equiv \beta(r,x)\bmod\Z
  \text{ for all $x$ in $L$}
  \big\}
  ,
\end{equation*}
and following the literature we call $\spv L$ on lattices the {\em
  shadow of $\lat L$}, and we call the elements of $\spv L$ {\em \Lvec
  s of~$\lat L$}.  Clearly, for an even $\lat L$, we have
$\spv L=\dual L$, and, for an odd $\lat L$, we have
$\dual{\ev L} = \dual L \cup \spv L$ (i.e.~$\spv L$ is the non-trivial
coset in $\dual{\ev L}/\dual L$).

Recall from Section~\ref{sec:review} that $\varepsilon^h$ denotes the
$\SL$-cocycle defined by $\varepsilon^h(A) = f(A\tau)/f(\tau)$, where
$f(\tau)$ denotes any (fixed) branch of the function
$\eta(\tau)^h$. By slight abuse of language we occasionally call the
multiplier system $\varepsilon^h$ a {\em character}.

Let $k$ and $h$ be rational numbers such that $k \equiv h/2 \bmod \Z$.

\begin{Definition}
  A {\em Jacobi form of weight~$k$, index~$\lat L$ and
    character~$\varepsilon^h$} is a holomorphic function
  $\phi(\tau,z)$ of a variable $\tau\in\HP$ and a variable
  $z\in \C\otimes L$ which satisfies the following properties:
  \begin{enumerate}
  \item[(i)] For all $A = \mat abcd$ in $\SL$ one has
    \begin{equation}
      \label{eq:A-transformation-rule}
      \phi\left(A\tau,\frac z{c\tau+d}\right) =
      \e{\frac{c\,\beta(z)}{c\tau+d}}\,(c\tau +
      d)^{k-h/2}\,\varepsilon^h(A)\,\phi(\tau,z) .
    \end{equation}
  \item[(ii)] For all $x,y \in L$ one has
    \begin{equation*}
      \phi(\tau,z+x\tau + y)
      =
      e\big( \beta(x+y) \big)\,e\big(-\tau \beta(x)-\beta(x,z)\big)
      \,\phi(\tau,z) .
    \end{equation*}
  \item[(iii)] The Fourier development of $\phi$ is of the form
    \begin{equation}
      \label{eq:FC}
      \phi(\tau,z)
      =
      \sum_{n\in \frac h{24}+\Z}
      \sum_{\begin{subarray}{c} r\in \spv{L}\\
          n \ge \beta(r)
        \end{subarray}} c(n,r) \, q^n \, e\big(\beta(r,z)\big) .
    \end{equation}
  \end{enumerate}
  The space of Jacobi forms of weight $k$, index $\lat L$ and
  character $\varepsilon^r$ is denoted by
  $J_{k,\lat L}\left(\varepsilon^h\right)$.
\end{Definition}
Note that the crucial point in~(iii) is the condition $n\ge \beta(r)$.
That~$\phi$ has Fourier expansion with $n$ and $r$ in the range
described by the first conditions below the sum signs holds true for
any holomorphic~$\phi(\tau,z)$ satisfying the transformation laws~(i)
and~(ii)~(as one easily sees by applying these transformations laws to
$\tau\mapsto \tau+1$ and, for all $\mu$ in $L$, to $z\mapsto
z+\mu$). Note also that the factor $e\big(\beta(x+y)\big)$ in~(ii)
defines a linear character of the group $L\times L$. It is trivial if
$\lat L$ is even. A priori, for the transformation formula~(ii), one
could consider also other characters of $L\times L$.  However, it can
be shown~\cite{Boylan-Skoruppa-Joli-1} that, for a character different
from the given one, there are no non-trivial functions satisfying~(i)
and~(ii).

Note also that $J_{k,\lat L}\left(\varepsilon^h\right)$ depends only
on the coset $h+24\Z$, as follows from
$\varepsilon^{h+24k}(A)=(c\tau+d)^{12k}\varepsilon^h(A)$
($A=\mat abcd$).

If we fix a $\Z$-basis $\{a_p\}$ for $L$ we can identify $L$ and
$\C\otimes L$ with $\Z^n$ and $\C^n$, respectively, and Jacobi forms
of index $\lat L$ can be considered as holomorphic functions on
$\HP\times \C^n$.  In fact, if $\lat L$ is an even lattice, so that
the Gram matrix $F=\frac 12\left(\beta(a_p,a_q)\right)$ is
half-integral, and if $h=0$, the space $J_{k,\lat L}(\varepsilon^h)$
then becomes what in the literature~\cite{Skoruppa:Critical-Weight} is
usually called the space of Jacobi forms of weight $k$ and matrix
index $F$ and which is denoted by $J_{k,F}$.  Moreover, if $\lat L$ is
of rank 1 with determinant $m=|L^\sharp/L|$, then
$J_{k,\lat L}\left(\varepsilon^h\right)$ is nothing other than the
space $J_{k,\frac m2}(\varepsilon^h)$ that was introduces in
Section~\ref{sec:review}.

There is a family of natural maps between all these spaces of Jacobi
forms.  Namely, if $\alpha:\lat L\rightarrow \lat M$ is an isometric
embedding then the application
$(\alpha^*\phi)(\tau,z)=\phi(\tau, \alpha z)$ defines a map
\begin{equation}
  \label{eq:the-so-useful-pullback}
  \alpha^*:
  J_{k,\lat M}(\varepsilon^h) \rightarrow J_{k,\lat L}(\varepsilon^h).
\end{equation}
This follows immediately from the definition of our Jacobi forms.

There are two particular cases where such embeddings are of special
interest for our considerations. The fist case occurs when a lattice
$\lat L=(L,\beta)$ can be isometrically embedded into the lattice
$\lat \Z^N := (\Z^N,\cdot)$ (where the dot denotes the standard scalar
product of column vectors).  Such an embedding permits to construct
Jacobi forms of index $\lat L$ in a simple way.  Namely, let
$\alpha_j$ be the coordinate functions of this embedding, so that
$\beta(x,x)=\sum_j\alpha_j(x)^2$.  Then
\begin{equation*}
  \prod_{j=1}^N\vartheta\left(\tau,\alpha_j(z)\right) \in J_{\frac
    N2,\lat L}(\varepsilon^{3N}) .
\end{equation*}
Vice versa, if such a product defines a Jacobi form of index $\lat L$
then necessarily $\beta(x,x)=\sum_j\alpha_j(x)^2$, and the $\alpha_j$
define an isometric embedding of $\lat L$ into $\lat \Z^N$.

The other interesting embedding is of the form
\begin{equation*}
  s_x:(\Z,(u,v)\mapsto muv)\rightarrow \lat L=(L,\beta), \quad s_x(u)\mapsto ux ,
\end{equation*}
where $x$ is a non-zero element in $L$ and $m=\beta(x,x)$. Here we
obtain maps
\begin{equation*}
  s_x^*: J_{k,\lat L}\left(\varepsilon^h\right)
  \rightarrow
  J_{k,\frac m2}\left(\varepsilon^h\right), \quad
  \phi(\tau,z) \mapsto \phi(\tau,xw)\quad (w\in\C).
\end{equation*}

In fact, all the families of theta blocks that we found so far are of
the form $\{s_x^*\phi\}_{x\in L}$ for suitable lattices $\lat L$ and
special Jacobi forms $\phi$
in~$J_{k,\lat L}\left(\varepsilon^h\right)$. Moreover, these special
Jacobi forms $\phi$ are always obtained via the first construction,
i.e.~via an embedding of $\lat L$ into $\lat \Z^N$ for a suitable
$N$. In all these examples the weight $k$ of the special Jacobi form
equals $n/2$, where $n$ is the rank of $\lat L$. This is due to the
fact that in those cases we can divide by a power of $\eta$.  In
general a division by a power of $\eta$ will not yield a proper Jacobi
form since the condition~(iii) in the definition of Jacobi forms is
not invariant under such a division. However, a special situation
which makes such a division possible, and which applies to all our
examples, is described by Theorem~\ref{thm:picture} below.

For the statement of the theorem we need some preparations. By a {\em
  eutactic star (of rank $N$) on a lattice $\lat L = (L,\beta)$} we
understand a family $s$ of non-zero vectors $s_j$ in $\dual L$
($1\le j\le N$) such that
\begin{equation*}
  x
  =
  \sum_{j=1}^N \beta(s_j,x)s_j
\end{equation*}
for all $x$ in $\Q\otimes L$.  For a eutactic star $s$, one has
\begin{equation*}
  \beta(x,x)=\sum_j\beta(s_j,x)^2
\end{equation*}
for all $x$, i.e.~the map
$x\mapsto \big(\beta(s_1,x),\dots,\beta(s_N,x)\big)$ defines an
isometric embedding $\alpha_s:\lat L\rightarrow \lat \Z^N$. Vice
versa, if $\alpha$ is such an embedding, then, since $\beta$ is
non-degenerate, there exist vectors $s_j$ such that the $j$th
coordinate function of $\alpha$ is given by $\beta(s_j,x)$. It is easy
to show that the family $s_j$ (omitting the possible zero vectors) is
a eutactic star.

For a eutactic star $s$ on $\lat L$, we set
\begin{equation*}
  \vartheta_s(\tau,z)
  =
  \prod_{j=1}^N \vartheta\big(\tau,\beta(s_j,z)\big)
  \quad
  (z\in\C\otimes L)
  .
\end{equation*}
From our previous discussion we know that the function $\vartheta_s$
defines a non-zero (holomorphic) Jacobi form of weight $N/2$ and index
$\lat L$.  We are interested to find eutactic stars $s$ such that the
$\vartheta_s$ can be divided by a high power of $\eta$ and still
remains holomorphic at infinity (i.e.~satisfies the condition
$n \ge \beta(r)$ in the Fourier expansion~(iii) in the definition of
Jacobi forms).  It is not hard to see that the weight of a non-zero
Jacobi form of index $\lat L$ which has rank $n$ is $\ge n/2$. Thus the
highest power of $\eta$ by which we are allowed to divide
$\vartheta_s$ is $\eta^{N-n}$. We shall not discuss here the
question of determining the exact power but refer the reader
to~\cite{Boylan-Skoruppa-Joli-1}.  Instead we describe here one
situation where $\eta^{(n-N)/2}\vartheta_s(\tau,z)$ is in fact a
holomorphic Jacobi form.

For this let G be a subgroup of the orthogonal group $O(\lat L)$ that
leaves $s$ invariant up to signs, i.e. such that for each $g$ in $G$
there exists a permutation $\sigma$ of the indices $1 \le j \le N$ and
signs $\epsilon_j\in\{\pm 1\}$ such that $gs_j = \epsilon_j s_{\sigma(j)}$
for all $j$.  We set
\begin{equation*}
  \sym{sn}(g)=\prod_j \epsilon_j
  .
\end{equation*}
Note that $\sym{sn}(g)$ does not depend on the choice of $\sigma$.  It
follows that $g\mapsto\sym{sn}(g)$ defines a linear character
$\sym{sn}:G\rightarrow\{\pm 1\}$.

The group $G$ acts naturally on ${\spv L}/\ev L$.  We call the
eutactic star $s$ {\em $G$-extremal on $\lat L$} if there is exactly
one $G$-orbit in ${\spv L}/\ev L$ whose elements have their
stabilizers in the kernel of~$\sym{sn}$.

\begin{Theorem}
  \label{thm:picture}
  Let $\lat L=(L,\beta)$ be an integral lattice of rank~$n$, let $s$
  be a $G$-extremal eutactic star of rank~$N$ on $\lat L$. Then there is a
  constant $\gamma$ and a vector $w$ in~$\spv L$ such that
  \begin{equation}
    \label{eq:product-identity}
    \eta^{n-N} \prod_{j=1}^N\vartheta\big(\tau,\beta(s_j,z)\big)
    =
    \gamma
    \sum_{x\in w+\ev L} q^{\beta(x)}\sum_{g\in G}\sym{sn}(g)\,e\big(\beta(gx,z)\big)
    .
  \end{equation}
  In particular, the product on the left defines an element of the
  space of Jacobi forms~$J_{n/2,\lat L}(\varepsilon^{n+2N})$.
\end{Theorem}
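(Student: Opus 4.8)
The plan is to recognise the left-hand side as a $G$-antisymmetric Jacobi form of lattice index and to pin it down, via its theta decomposition, as a scalar multiple of the antisymmetrised theta series on the right. First I would record the elementary transformation behaviour. Since each factor $\vartheta(\tau,\beta(s_j,z))$ lies in $J_{1/2,\lat L}(\varepsilon^3)$ and the factor $\eta^{n-N}$ contributes weight $(n-N)/2$ and character $\varepsilon^{n-N}$, the product $\Phi:=\eta^{n-N}\vartheta_s$ is a priori a weakly holomorphic Jacobi form in $J^{\,!}_{n/2,\lat L}(\varepsilon^{n+2N})$, which already yields the asserted weight, index and character. Moreover, using $gs_j=\epsilon_js_{\sigma(j)}$ together with the oddness $\vartheta(\tau,-w)=-\vartheta(\tau,w)$, a direct reindexing of the product gives $\vartheta_s(\tau,gz)=\sym{sn}(g)\,\vartheta_s(\tau,z)$, hence $\Phi(\tau,gz)=\sym{sn}(g)\,\Phi(\tau,z)$ for all $g\in G$.

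Next I would pass to the theta decomposition $\Phi(\tau,z)=\sum_{x}b_x(\tau)\,e(\beta(x,z))$ afforded by the index-$\lat L$ structure. Property (ii) of the definition forces $b_x(\tau)=h_{[x]}(\tau)\,q^{\beta(x)}$, where $h_{[x]}$ is of weight $0$ and depends only on the class $[x]=x+\ev L$ in $\spv L/\ev L$. The antisymmetry translates into $b_{gx}=\sym{sn}(g)\,b_x$, so any class whose $G$-stabiliser is not contained in the kernel of $\sym{sn}$ must have $h_{[x]}=0$. By the $G$-extremality hypothesis exactly one $G$-orbit of classes survives, and along that orbit all surviving components coincide with a single weight-$0$ function $h:=h_{[w]}$ up to the sign $\sym{sn}(g)$. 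Writing $\Xi(\tau,z)=\sum_{g\in G}\sym{sn}(g)\,\theta_{\ev L,gw}(\tau,z)$ for the antisymmetrised coset theta series (with $\theta_{\ev L,c}(\tau,z)=\sum_{x\in c+\ev L}q^{\beta(x)}e(\beta(x,z))$), which one checks is a holomorphic Jacobi form of the same weight $n/2$ and index $\lat L$, one obtains $\Phi=h\cdot\Xi$ up to a nonzero scalar. Thus $h=\Phi/\Xi$ is a modular function of weight $0$, holomorphic on $\HP$ because $\eta$ is nonvanishing there and $\vartheta_s$ is entire.

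The crux, and the main obstacle, is to show that this weight-$0$ function $h$ is constant. Equivalently, one must verify that $\Phi$ and $\Xi$ carry the same character $\varepsilon^{n+2N}$, so that $h$ has trivial multiplier on $\SL$, and that $h$ is holomorphic at the cusp. For the latter I would compare the terms of lowest $q$-order on both sides: the minimal vectors of $w+\ev L$ are exactly the $G$-images of the Weyl-vector-like element $w\in\spv L$, and matching these leading coefficients both fixes the constant $\gamma$ and rules out a pole of $h$ at infinity. A holomorphic modular form of weight $0$ and trivial character on $\SL$ is constant, so $\Phi=\gamma\,\Xi$, which is the identity~\eqref{eq:product-identity}. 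This is precisely the step where the eutactic and extremality hypotheses are indispensable, and it is equivalent to a Macdonald (Kac--Weyl denominator) identity, as already anticipated in the fifth proof of Theorem~\ref{thm:quarks-are-Jf}; in the general lattice-index formalism it rests on the theory developed in~\cite{Boylan-Skoruppa-Joli-1}.

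Finally, the concluding ("in particular") assertion is immediate once the identity is established: its right-hand side is manifestly a Fourier series of the shape demanded in (iii), each monomial $q^{\beta(x)}e(\beta(x,z))$ having its $q$-exponent equal to $\beta(x)$ with $x\in w+\ev L\subseteq\spv L$. Hence the defining inequality of (iii) holds (with equality throughout), and $\Phi\in J_{n/2,\lat L}(\varepsilon^{n+2N})$ as claimed.
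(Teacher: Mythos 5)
Your overall skeleton---$G$-antisymmetry of the product, theta decomposition, extremality forcing the support onto a single orbit, and reduction to a scalar modular factor---is the same as the paper's, but your argument breaks down at exactly the point you yourself call ``the crux'': the holomorphy of $h$ at the cusp, and here your proposed fix is circular. Write $\Phi=\eta^{n-N}\vartheta_s=\nu\,h\,\Xi$. The lowest $q$-order of $\Phi$ is $(n+2N)/24$ (namely $N/8$ from the $N$ theta factors plus $(n-N)/24$ from the eta power, the leading coefficient $\prod_j\bigl(e(\tfrac12\beta(s_j,z))-e(-\tfrac12\beta(s_j,z))\bigr)$ being a nonzero function of $z$), while the lowest $q$-order of $\Xi$ is $m_0:=\min_{x\in w+\ev L}\beta(x)$. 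Since the leading terms of $h$ and $\Xi$ cannot cancel, the order of $h$ at infinity equals $(n+2N)/24-m_0$; so ruling out a pole of $h$ is \emph{equivalent} to the inequality $m_0\le (n+2N)/24$. This is a ``strange formula'' of Freudenthal--de Vries type (for root systems it is literally $\beta(\rho)=\dim\mathfrak{g}/24$), and in the paper it is deduced only \emph{after} the theorem, in the Remark following it. Comparing leading coefficients cannot produce it: knowing \emph{which} vectors of $w+\ev L$ are minimal says nothing about the numerical value of $m_0$. The same circularity infects your character step: the assertion that $\Xi$ has character $\varepsilon^{n+2N}$ already contains $\beta(w)\equiv(n+2N)/24\bmod\Z$, again a consequence of the identity rather than an input. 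Deferring these points to the Macdonald identities or to~\cite{Boylan-Skoruppa-Joli-1} defeats the purpose, since Theorem~\ref{thm:picture} is precisely the paper's route \emph{to} the Macdonald identities.

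The paper escapes this trap by not dividing by $\eta^{N-n}$ at the outset. It runs the decomposition argument on the space $V_k(\varepsilon^h)$ of $G$-antiinvariant \emph{holomorphic} Jacobi forms and applies it to $\vartheta_s\in J_{N/2,\lat L}(\varepsilon^{3N})$ itself: condition~(iii) in the definition of Jacobi forms guarantees that every Fourier exponent of $\vartheta_s$ satisfies $n\ge\beta(r)$, so the scalar factor $f$ in $\vartheta_s=f\,\phi_s$ (your $h$, but now of weight $(N-n)/2$) is holomorphic at infinity \emph{for free}. The identity is then clinched not at the cusp but inside $\HP$: if $f(\tau_0)=0$ for some $\tau_0\in\HP$, then $\vartheta_s(\tau_0,\cdot)$ would vanish identically, contradicting the triple product expansion; hence $f$ is a nowhere-vanishing level-one form, i.e.\ a constant times a power of $\eta$, and comparing weights forces $f=\gamma\,\eta^{N-n}$. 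This proves \eqref{eq:product-identity}, and both the holomorphy at infinity of $\eta^{n-N}\vartheta_s$ and the equality $\beta(w_0)=(n+2N)/24$ then come out as corollaries rather than being needed as inputs. If you reorganize your argument this way---decompose $\vartheta_s$ rather than $\Phi$, and replace your cusp analysis by the non-vanishing argument in $\HP$---your remaining steps go through essentially verbatim.
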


\begin{Remark}
  Let $x$ be an element of $\R\otimes L$ such that
  $\beta(s_j,x)\not=0$ for all~$j$. (Such $x$ exist since the $s_j$
  span $\R\otimes L$ and therefore cannot be contained in any
  hyperplane.) The identity~\eqref{eq:product-identity} then holds
  true with $w$ replaced by
  \begin{equation*}
    w_0
    =\frac12\left(\epsilon_1 s_1
      + \epsilon_2 s_2
      + \cdots
      + \epsilon_N  s_N\right)
    ,
  \end{equation*}
  where $\epsilon_j$ denotes the sign of~$\beta(s_j,x)$.  Indeed,
  comparing the coefficients of the smallest $q$-power on both sides
  of~\eqref{eq:product-identity} one finds that
  \begin{equation*}
    \prod_{j=1}^N\left(e\left(\tfrac12\beta(s_j,z)\right) - e\left(\tfrac 12\beta(-s_j,z)\right)\right)
    =
    \gamma
    \sum_{x,g}
    \sym{sn}(g) e\left(\beta(g(w+x),z)\right)
    ,
  \end{equation*}
  where the sum on the right is over all $g$ in~$G$ and all $x$
  in~$\ev L$ such that $\beta(w+x)=(n+2N)/24$.  The left-hand side
  equals the sum $\sum_v \pm e\left(\beta(v,z)\right)$, where $v$ runs
  through all vectors $v$ of the form
  $v=v_\sigma=\frac12\sum_{j=1}^N \sigma_j s_j$ with
  $\sigma_j=\pm1$. From this we see that we can replace~$w$ by any
  $v_{\sigma_0}$ among these $v$ which is different from~$0$ and
  different from all $v_{\sigma}$ with $\sigma\not=\sigma_0$. But
  $w_0=v_{\epsilon}$ is such a $v_{\sigma_0}$ since $\beta(w_0,x)>0$
  and $\beta(w_0-v_\sigma,x)>0$ for all $\sigma\not=\epsilon$.

  Note also that it follows that $q^{\beta(w_0)}$ is the smallest
  $q$-power occurring on both sides of~~\eqref{eq:product-identity}. In
  other words
  \begin{equation*}
    \beta(w_0)=\frac {n+2N}{24}
    .
  \end{equation*}
\end{Remark}
 
\begin{proof}[Proof of Theorem~\ref{thm:picture}]
  As before denote the product on the left-hand side of the claimed
  identity (without the $\eta$-power) by $\vartheta_s$. It is clear
  that $\vartheta_s$ is an element of
  $J_{N/2,\lat L}(\varepsilon^{3N})$. However, $\vartheta_s$ satisfies
  in addition $g^*\vartheta_s = \sym{sn}(g)\,\vartheta_s$ for all $g$
  in $G$, as follows from the very definition of $\sym{sn}$ and the
  identity $\vartheta(\tau,-z) = -\vartheta(\tau,z)$.

  For an integer $h$ and for $k$ in~$\frac h2+\Z$, let
  $V_k(\varepsilon^h)$ be the subspace of all Jacobi forms $\phi$ in
  $J_{k,\lat L}(\varepsilon^h)$ that satisfy
  \begin{equation}
    \label{eq:G-invariance}
    g^*\phi = \sym{sn}(g)\,\phi
    \quad
    \text{for all }g\in G
    .
  \end{equation}

  Denote the function on the right-hand side of the claimed
  identity~\eqref{eq:product-identity} by $\phi_s$.
  We shall show in a moment that, for all integers~$h$ and all $k$
  in~$\frac h2 +\Z$, we have
  \begin{equation}
    \label{eq:V_k}
    V_k(\varepsilon^h)
    =
    M_{k-(r+n)/2}\,\eta^r \phi_s
    ,
  \end{equation}
  where $0\le r<24$, $h\equiv r+n+2N\bmod 24$. Here, for any $l$, we
  use $M_l$ for the space of elliptic modular forms of weight $l$ on
  $\SL$ (which is trivial unless $l$ is an even integer).

  But then the claimed identity~\eqref{eq:product-identity} is
  immediate.  Namely, from~\eqref{eq:V_k}, we deduce
  $\vartheta_s = f\eta^r\phi_s$ for some modular form $f$ of level
  one. If $f$ had a zero at a point $\tau_0$ in the upper half-plane,
  then $\vartheta_s(\tau_0,z)$ would vanish identically as function
  of~$z$.  However, this is impossible as the product expansion for
  $\vartheta(\tau,z)$ shows.  We conclude that~$f$ must itself be a
  power of $\eta$. Comparing weights then proves the claimed formula.

  Note that we used here only that the left-hand side of
  in~\eqref{eq:V_k} is contained in the right-hand side. It follows
  from $\vartheta_s = \eta^{n-N}\phi_s$ that $\phi_s$ is an element
  of~$V_{n/2}(\varepsilon^{n+2N})$, whence that the right-hand side
  of~\eqref{eq:V_k} is contained in the left-hand side.
  
  It remains to prove that the left-hand side of~\eqref{eq:V_k} is
  contained in the right-hand side.  Applying the transformation
  law~(ii) for Jacobi forms to $z\mapsto z+x\tau$ ($x\in L$) we
  obtain, for the Fourier coefficients $c(n,r)$ of a Jacobi form
  $\phi$ in $J_{k,\lat L}(\varepsilon^h)$, the identities
  \begin{equation*}
    c\big(n+\beta(r+x)-\beta(r),r+x\big)
    =
    c(n,r)\,e\big(\beta(x)\big)
    .
  \end{equation*}
  Hence, if we set
  \begin{equation*}
    C(D,r):=c\big(D+\beta(r),r\big)
    ,
  \end{equation*}
  then $C(D,r+x) = C(D,r)\,e\big(\beta(x)\big)$ for all~$x$ in~$L$. In
  particular, we recognize that $r\mapsto C(D,r)$, for fixed $D$,
  factors through a map on~$\spv L/\ev L$.
  
  Now assume that $\phi$ is contained in the left-hand side
  of~\eqref{eq:V_k}. Then $g^*\phi=\sym{sn}(g)\phi$ for all $g$, from
  which we deduce
  \begin{equation*}
    C(D,g^{-1}r) = \sym{sn}(g)\,C(D,r)
    .
  \end{equation*}
  Since $s$ is extremal this implies $C(D,r)=0$ unless the stabilizer
  of $r~+~\ev L$ in~$G$ is contained in the kernel of~$\sym{sn}$. By
  assumption there is exactly one $G$-orbit in~$\spv L/\ev L$ whose
  elements have stabilizer in the kernel of~$\sym{sn}$. Let $w+\ev L$
  an element of this orbit. The Fourier expansion~(iii) of~$\phi$ can
  then be written in the form
  \begin{multline*}
    \phi(\tau,z) = \sum_{r\in \spv{L}} \sum_{\begin{subarray}{c} D\in
        -\beta(r) + \frac h{24}+\Z\\ D\ge 0
      \end{subarray}} C(D,r) \, q^{D+\beta(r)} \,
    e\big(\beta(r,z)\big)\\
    =\nu\sum_{g\in G, x\in L} \sum_{\begin{subarray}{c} D\in -\beta(w)
        + \frac h{24}+\Z\\ D\ge 0
      \end{subarray}} \sym{sn}(g)\,C(D,w) \, q^{D+\beta(w+x)} \,
    e\big(\beta(g(w+x),z)\big) ,
  \end{multline*}
  where $1/\nu$ is the order of the stabilizer of $w+\ev L$ in the
  group $G$.  We therefore find $\phi=f\,\phi_s$, where
  $f=\nu\sum_{D} C(D,w)\,q^D$. From the usual theory of transformation
  laws for theta functions one can easily deduce that $\phi_s$ defines
  an element of $J_{n/2,\lat L}(\varepsilon^{n+2N})$ (for details we
  refer the reader to~\cite{Boylan-Skoruppa-Joli-1}). It follows that
  $f$ is a modular form on $\SL$ of weight $k-n/2$ with multiplier
  system $\varepsilon^r$. But this space of modular forms equals
  $M_{k-(r+n)/2}\eta^r$, which proves that $\phi$ lies in the the
  right-hand side of~\eqref{eq:V_k}. This proves the theorem.
\end{proof}

\begin{Example}[Jacobi triple product identity]
  \label{ex:JTPI}
  The simplest non-trivial example for the situation described in
  Theorem~\ref{thm:picture} is given by the eutactic star $s$ on
  $\lat \Z$ consisting of the single vector $s_1=1$ in $\Z$.  Here $s$
  is $G$-extremal, where $G$ is generated by $[-1]$ (multiplication by
  $-1$), and where $\sym{sn}([-1])=-1$. The discriminant module of
  $\dual{\ev \Z}/\ev \Z = (\frac 12 \Z)/2\Z$ decomposes into the three
  $G$-orbits $\{\red {{1/2}}, \red {{{3}/2}} \}$, $\{\red 1\}$ and
  $\{\red 0\}$ (where $\red x$ denotes the coset of $x$ modulo
  $2\Z$). Only the stabilizer of the first one is trivial. In this
  case the resulting identity\eqref{eq:product-identity} takes the
  form~\eqref{eq:theta}, which is the Jacobi triple product identity.
\end{Example}

\section{Examples constructed from root systems}
\label{sec:Examples-constructed-from-root-systems}

The theorem of the preceding section described a general principle for
constructing from special lattices Jacobi forms in several variables
that generate infinite families of holomorphic theta blocks
(i.e.~theta blocks that are holomorphic at infinity).  In this
section, we show that there are indeed infinitely many lattices to
which the theorem can be applied, namely, lattices constructed from
root systems. Example~\ref{ex:JTPI} is the most basic example for this
theory. The corresponding infinite families of theta blocks which will
arise from our construction in fact include all the examples of
families that were introduced in the previous sections.

The main result of this section can be summarized as follows.

\begin{Theorem}
  \label{thm:Kac-Weil-den-formula}
  Let $R$ be a root system\footnote{All {\em root systems} considered
    here are to be understood in the strict sense
    (see~\cite[\S~9.2]{Humphreys}), i.e.~any root system can be
    partitioned into the union of pairwise orthogonal sets each of
    which is a root system in the Euclidean space generated by its
    elements and as such isomorphic to one of the irreducible root
    systems $A_n$, $B_n$, $C_n$, $D_n$, $E_6$, $E_7$, $E_8$, $F_4$,
    $G_2$.} of dimension $n$, let~$R^+$ be a system of positive roots
  of~$R$ and let $F$ denote the subset of simple roots in $R^+$.  For
  $r$ in $R^+$ and $f$ in $F$, let $\gamma_{r,f}$ be the (non-negative)
  integers such that $r=\sum_{f\in F}\gamma_{r,f}f$. The function
  \begin{equation*}
    \vartheta_R(\tau,z)
    :=
    \eta(\tau)^{n-|R^+|}
    \prod_{r\in R^+}\vartheta\big(\tau,\sum_{f\in F}\gamma_{r,f}z_f\big)
    .
  \end{equation*}
  ($\tau\in\HP$, $z=\{z_f\}_{f\in F} \in \C^{F}$) defines a Jacobi
  form in $J_{n/2,\lat R}\big(\varepsilon^{n+2N}\big)$.  Here the
  lattice $\lat R$ equals $\Z^F$ equipped with the quadratic form
  $Q(z):=\frac12\sum_{r\in R^+}\big(\sum_f\gamma_{r,f} z_f\big)^2$.
\end{Theorem}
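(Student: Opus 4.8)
The plan is to realize $\vartheta_R$ as an instance of the product identity~\eqref{eq:product-identity} and to read off the conclusion directly from Theorem~\ref{thm:picture}. Write $L=\Z^F$, so that $\lat R=(L,Q)$ is the integral lattice of rank~$n$ attached to $R$, abbreviate $\ell_r(z)=\sum_{f\in F}\gamma_{r,f}z_f$ for $r\in R^+$, and set $N=|R^+|$. By construction the bilinear form of $Q$ is $\beta(z,z')=\sum_{r\in R^+}\ell_r(z)\ell_r(z')$, so $z\mapsto(\ell_r(z))_{r\in R^+}$ is an isometric embedding $\lat R\hookrightarrow\lat\Z^N$. Since $\beta$ is non-degenerate, each coordinate is of the form $\ell_r=\beta(s_r,\cdot)$ for a unique $s_r\in\dual L$, and $s=\{s_r\}_{r\in R^+}$ is a eutactic star of rank~$N$ on $\lat R$ with $\vartheta_s(\tau,z)=\prod_{r\in R^+}\vartheta(\tau,\ell_r(z))$. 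Thus $\eta^{n-N}\vartheta_s$ is exactly $\vartheta_R$, and the whole statement follows once we exhibit a group $G$ for which $s$ is $G$-extremal.

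For $G$ I would take the Weyl group $W=W(R)$, realized as the group of isometries of $\lat R$ generated by the reflections in the star vectors $s_r$ (the arrangement $\{\pm s_r\}$ defines the same reflection group as~$R$). For $w\in W$ and $r\in R^+$ one has $w\,s_r=\epsilon_r\,s_{r'}$ with $r'\in R^+$ and $\epsilon_r=\pm1$, which is precisely the requirement that $W$ leave $s$ invariant up to signs; moreover $\sym{sn}(w)=\prod_{r}\epsilon_r=(-1)^{\ell(w)}=\det(w)$, so $\sym{sn}$ is the sign character of $W$ and $\ker(\sym{sn})$ is its rotation subgroup. (For $R=A_n$ this recovers $W=S_{n+1}$, $\sym{sn}=\sym{sig}$, in agreement with Theorem~\ref{thm:A-n-Fourier-expansion}.)

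The crux — and the step I expect to be the main obstacle — is the verification that $s$ is $W$-extremal, i.e.\ that $\spv L/\ev L$ contains exactly one $W$-orbit all of whose stabilizers lie in $\ker(\sym{sn})$. My plan is to produce the distinguished orbit as that of the Weyl vector $w_0=\tfrac12\sum_{r\in R^+}s_r\in\spv L$, the vector singled out in the Remark after Theorem~\ref{thm:picture} (taking the signs $\epsilon_j$ there to be those of $\ell_r(x)$ for $x$ in the positive chamber, so that all $\epsilon_j=+1$). That $w_0$ defines a class in $\spv L$ is automatic: for $x\in L$ one has $\beta(x)-\beta(w_0,x)=\tfrac12\sum_{r\in R^+}\ell_r(x)\bigl(\ell_r(x)-1\bigr)\in\Z$, since consecutive integers have even product. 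The content is then twofold: first, that no element of determinant $-1$ fixes the class $w_0+\ev L$, which reflects the regularity of the Weyl vector; and second, uniqueness, namely that every other class in $\spv L/\ev L$ is fixed by some reflection modulo $\ev L$ and hence carries an odd element in its stabilizer. I would establish both by analyzing $\spv L/\ev L$ as a $W$-module (a discriminant-form computation of the type carried out in~\cite{Boylan-Skoruppa-Joli-1}), showing that the classes with reflection-free stabilizer are exactly the $W$-translates of $w_0$; the case $R=A_n$, where $\spv L/\ev L$ is cyclic and $W$ acts through $\det$ on the nonzero classes, is the model computation. The non-simply-laced systems, where short and long roots must be tracked separately, are where this analysis needs the most care.

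Granting $W$-extremality, Theorem~\ref{thm:picture} applies verbatim with $G=W$ and $N=|R^+|$, and yields both the product formula~\eqref{eq:product-identity} for $\vartheta_R$ and the membership $\vartheta_R\in J_{n/2,\lat R}(\varepsilon^{n+2N})$, which is the assertion of the theorem.
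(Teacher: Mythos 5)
Your reduction is the paper's own: with $Q$ defined as $\tfrac12\sum_{r}\ell_r^2$, the map $z\mapsto(\ell_r(z))_{r\in R^+}$ is a tautological isometric embedding $\lat R\hookrightarrow\lat\Z^N$, the vectors $s_r$ with $\ell_r=\beta(s_r,\cdot)$ form a eutactic star, one takes $G$ to be the Weyl group with $\sym{sn}(g)=(-1)^{\ell(g)}=\det(g)$, the distinguished class is that of the Weyl vector $w_0=\tfrac12\sum_{r\in R^+}s_r$ (your parity argument for $w_0\in\spv L$ is also the paper's), and Theorem~\ref{thm:picture} then yields the statement. Two points of this setup are, however, asserted rather than proved: that the Weyl group really acts on $(\Z^F,Q)$ by isometries permuting the $\pm s_r$ (in the paper this rests on the Schur-lemma identity $h(z,z)=\sum_{r\in R^+}(r,z)^2$ of the Lemma preceding Theorem~\ref{eq:roots-are-extremal}, which identifies $(\Z^F,Q)$ with the intrinsically defined lattice on which the Weyl group visibly acts), and the reduction to irreducible $R$ via $\vartheta_{R\oplus R'}=\vartheta_R\vartheta_{R'}$, which any chamber-geometric argument needs.

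The genuine gap is the $G$-extremality itself: you correctly identify it as the crux, but then offer only a program (``a discriminant-form computation of the type carried out in~\cite{Boylan-Skoruppa-Joli-1}''), and this program is exactly where the content of the theorem lies --- the paper devotes the whole section containing Lemma~\ref{lem:fundamental-lemma} to it (Theorem~\ref{eq:roots-are-extremal}). Moreover, both of your supporting ideas are defective. First, ruling out odd stabilizers of $w_0+\ev L$ ``by regularity of the Weyl vector'' confuses the stabilizer of the \emph{vector} $w_0$ with that of its \emph{class} modulo $\ev L$: an element of determinant $-1$ could move $w_0$ to a different representative of the same class. This is a real phenomenon --- the paper's footnote to Lemma~\ref{lem:fundamental-lemma} records that for $E_6$ the class $w+\ev W$ contains the distinct minimal vectors $w-hf$ --- and it is precisely why the paper proves this half \emph{indirectly}: since $\vartheta_R\not\equiv0$ and $C(D,g(x))=\sym{sn}(g)C(D,x)$, some class must have stabilizer in $\ker(\sym{sn})$, and the geometric half shows the orbit of $w+\ev W$ is the only candidate. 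Second, your ``model computation'' for $A_n$ starts from a false premise when $n\ge3$: there $\spv L/\ev L$ has cardinality at least $(n+1)^{n-1}$ while all its elements are killed by $2(n+1)$, so it is not cyclic. The paper's actual argument is uniform in $R$ and involves no discriminant case analysis: for a minimal-length representative $v$ of a class lying in the closed fundamental chamber, Lemma~\ref{lem:fundamental-lemma} (proved from $(\alpha,v)\le h$, the identity $h=\tfrac12(\alpha,\alpha)+(\alpha,w)$ of Lemma~\ref{eq:h-w-alpha-identity}, and integrality of the $(v,f_i^\vee)$) shows that either the reflection $g_\alpha$ stabilizes the class, or $v$ lies on a wall and a simple reflection stabilizes it, or $v=w$. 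An argument of this strength is what your proposal must supply to become a proof; everything before and after that step is correct and coincides with the paper.
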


\begin{table}[tbh]
  \centering
  \caption{\Small%
    The Jacobi form $\vt R$ associated to the irreducible root system
    $R$ consists of $|R^+|$ many $\vartheta$'s multiplied by
    $\eta^{-\nu}$ and has weight $k$ and character~$\varepsilon^{l}$.}
  \begin{tabular}{>$l<$>$l<$>$l<$>$l<$>$l<$}
    \toprule
    R&|R^+|&\nu&k&l\\
    \midrule
    A_n&n(n+1)/2&n(n-1)/2&n/2&n(n+2)
    \\
    B_n&n^2&n(n-1)&n/2&n(2n+1)
    \\
    C_n&n^2&n(n-1)&n/2&n(2n+1)
    \\
    D_n&n(n-1)&n(n-2)&n/2&n(2n-1)
    \\
    E_6&36&30&3&6
    \\
    E_7&63&56&7/2&13
    \\
    E_8&120&114&4&8
    \\
    F_4&24&20&2&4
    \\
    G_2&6&4&1&14
    \\
    \bottomrule
  \end{tabular}
  \label{tab:parameters-of-the-root-families}
\end{table}

\begin{Remark}
  1.  We remark that the matrix $C := \big(\gamma_{r,f}\big)$ that
  defines~$\vartheta_R$ does not depend on the choice of the set of
  positive roots (up to permutations of its rows or columns). Indeed,
  the Weyl group of $R$ acts transitively on the collection of
  possible sets of positive roots, at the same time permuting the
  respective subsets of simple roots. It is not difficult to calculate
  the matrix $C$ directly from the Dynkin diagram or Cartan matrix of
  $R$ (see e.g.~\cite[\S21.3]{Fulton-Harris}).

  2. Obviously it suffices to prove the theorem for irreducible root
  systems since for any two root systems $R$ and $R'$ with ambient
  Euclidean spaces $E$ and $E'$ one has
  $\vartheta_{R\oplus R'}=\vartheta_R\vartheta_{R'}$, where
  $R\oplus R'$ denotes the root system
  $\left(R\times\{0\}\right)\cup \left(\{0\}\times R'\right)$ in
  $E\times E'$.

  3. As already explained in the previous section every choice of
  integer vectors $a=\{a_f\}_{f\in F}$ such that $a_f\not=0$ for all
  $f$ yields a theta block
  \begin{equation*}
    \eta(\tau)^{n-N}
    \prod_{r\in R^+}\vartheta\big(\tau,w\sum_{f\in F}\gamma_{r,f}a_f\big)
    \in J_{n/2,Q(a)}\big(\varepsilon^{n+2N}\big)
  \end{equation*}
  in the variables $\tau,w$ in $\HP\times\C$. Note that we can even
  assume that $a_f>0$ for all~$f$. For this let $(\cdot,\cdot)$ denote the
  scalar product of the ambient Euclidean space $E$ of the root
  system~$R$. For a given~$a$ let $x$ be the element of~$E$ such that
  $(x,f)=a_f$ for all~$f$. It follows
  $\sum_f \gamma_{r,f} a_f = (x,r)$ for all $r$ in~$R^+$. The general
  theory of root systems shows that there is a $g$ in the Weyl group
  of $R$ which maps $x$ into the fundamental Weyl chamber, i.e.~such
  that $a_f' :=(gx,f)>0$ for all~$f$. But there is a permutation
  $r\mapsto r'$ of $R^+$ such that $gr=\pm r'$, and we have
  $\sum_f \gamma_{r',f} a_f' = \pm (gx,gr) = \pm (x,r)= \pm\sum_f
  \gamma_{r,f} a_f$. Therefore, $\{a_f\}_f$ and $\{a_f'\}$ yield the
  same theta block up to a sign.
\end{Remark}

\begin{Example}
  \label{A-n-example}
  The only root system of rank 1 is $A_1$, and we have
  $\vartheta_{A_1}=\vartheta$ (see Example~\ref{ex:JTPI}).  If we
  choose for $R$ the root system $A_n$, then any chosen simple roots
  $f_j$ ($1\le j\le n$) can be ordered such that the positive roots
  are the sums of consecutive roots $f_i+f_{i+1}+\cdots+f_j$
  ($1\le i\le j\le n$). Accordingly
  \begin{equation*}
    \vartheta_{A_n}(\tau,z)
    =
    \eta(\tau)^{-n(n-1)/2}\prod_{1\le i\le j\le n}\vartheta\big(\tau, z_i+\cdots+z_j)
    \in
    J_{n/2,\lat {A_n}}\big(\varepsilon^{n(n+2)}\big)
    .
  \end{equation*}
  (we write $z_i$ for $z_{f_i}$). For $n=2$, we obtain the function
  \begin{equation*}
    \vartheta_{A_2}(\tau,z)
    =\vartheta(\tau,z_1)\vartheta(\tau,z_2)\vartheta(\tau,z_1+z_2)/\eta(\tau)
    ,
  \end{equation*}
  which under specialization yields the infinite family of theta quarks.
\end{Example}

\begin{Example}
  The spaces of Jacobi forms $J_{2,m}$ are, for integral $m$, deeply
  connected to the arithmetic of the modular forms of weight~$2$ in
  $\Gamma_0(m)$ (see e.g.~\cite{Skoruppa-Zagier}). There are four
  infinite families of theta blocks of weight~$2$ with trivial
  character that we can deduce from the $\vartheta_R$, as is easily
  inferred from Table~\ref{tab:parameters-of-the-root-families}.
  These are the families of theta blocks associated to
  \begin{equation*}
    \vt {A_4},\ 
    \vt {G_2}\vt {B_2} = \vt {G_2}\vt {C_2},\ 
    \vartheta \vt {B_3},\ 
    \vartheta \vt {C_3} 
    .
  \end{equation*}
  (Recall that ${B_2}$ is isomorphic to ${C_2}$.) The members of these
  families consist in each case of~$10$ $\vartheta$'s over~$\eta^6$
  (see Table~\ref{tab:theta-R-representations}).
  
  \begin{table}[tbh]
    \centering
    \caption{\Small%
      The four infinite families $\vt R\left(\tau,(a,b,c,d)z\right)$
      of theta blocks of weight~$2$ and trivial character associated
      to root systems (We write $\vt n$ for the function
      $\vt {}(\tau,n z)$.)}
    \begin{tabular}{>$l<$>$l<$}
      \toprule
      R &\vartheta_R\left(\tau,(a,b,c,d)z\right)\\
      \midrule
      A_4&
           \eta^{-6} \, \vt { a } \, \vt { a + b } \, \vt { a + b + c } \, \vt { a + b + c + d } \, 
           \vt { b } \, \vt { b + c } \, \vt { b + c + d } \, \vt { c } \, \vt { c + d } \, \vt { d }
      \\
      G_2\oplus B_2&
                     \eta^{-6} \, \vt { a } \, \vt { 3 a + b } \, \vt { 3 a + 2 b } \, \vt {
                     2 a + b } \, \vt { a + b } \, \vt { b }

                     \, \vt { c } \, \vt { c + d } \, \vt { c + 2 d } \, \vt { d }
      \\
      A_1\oplus B_3&

                     \eta^{-6} \, \vt { a } \, \vt { b } \, \vt { b + c } \, \vt { b + 2 c + 2 d } \, \vt
                     { b + c + d } \, \vt { b + c + 2 d } \, \vt { c } \, \vt { c + d } \,
                     \vt { c + 2 d } \, \vt { d }
      \\
      A_1\oplus C_3&
                     \eta^{-6} \, \vt { a } \, \vt { b } \, \vt { 2 b + 2 c + d } \, \vt { b + c } \, \vt
                     { b + 2 c + d } \, \vt { b + c + d } \, \vt { c } \, \vt { 2 c + d } \,
                     \vt { c + d } \, \vt { d }
      \\
      \bottomrule
    \end{tabular}
    \label{tab:theta-R-representations}
  \end{table}
  
\end{Example}

We now come to the proof of Theorem~\ref{thm:Kac-Weil-den-formula}. In
the course of the proof we shall redefine $\vartheta_R$ and
$\underline{R}$, but shall eventually see that the new and old
definitions in fact define the same objects.  As already pointed out
in the remark after the theorem we can assume without loss of
generality that the root system $R$ is irreducible.

Let $R$ be an irreducible root system of dimension $n$, let $R^+$ be a
system of positive roots of~$R$, let $N$ be the number of positive
roots, and let\footnote{If all roots have square length~$2$ then $h$
  coincides with the {\em Coxeter number} of the given root system,
  otherwise it is different.}
\begin{equation}
  \label{eq:Coxeter-number}
  h
  =
  \frac 1n \sum_{r\in R^+}(r,r)
  ,
\end{equation}
where $(\cdot,\cdot)$ denotes the Euclidean inner product of the
ambient Euclidean vector space $E$ of the root system $R$.  We let $W$
be the lattice
\begin{equation*}
  W
  =
  \big\{
  x\in E:
  (x,r)/h\in\Z\text{ for all $r\in R$}
  \big\}
  ,
\end{equation*}
and we set
\begin{equation}
  \label{eq:lat-R-definition}
  \lat R = \big(W, (\cdot,\cdot)/h\big)
  .
\end{equation}
The dual lattice $\dual W$ (with respect to the scalar product
$(\cdot,\cdot)/h$) equals the lattice $\Lambda$ spanned by the roots
$r$ in $R$.

\begin{Lemma}
  One has
  \begin{equation}
    \label{eq:root-embedding}
    h (z,z) = \sum_{r\in R^+} (r,z)^2
    ,
  \end{equation}
  for all $z$ in $E$.
\end{Lemma}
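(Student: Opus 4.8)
The plan is to exploit the invariance of the quadratic form on the right-hand side under the Weyl group, together with the irreducibility of the ambient space $E$. First I would observe that since $(r,z)^2=(-r,z)^2$, the sum over positive roots doubles to a sum over all of $R$:
\[
\sum_{r\in R^+}(r,z)^2=\tfrac12\sum_{r\in R}(r,z)^2 .
\]
Correspondingly I would introduce the symmetric bilinear form $B(z,w)=\tfrac12\sum_{r\in R}(r,z)(r,w)$, so that the assertion becomes $B(z,z)=h\,(z,z)$, and it suffices to prove the stronger statement $B=h\,(\cdot,\cdot)$ as bilinear forms on $E$.

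The key point is that $B$ is invariant under the Weyl group $\mathcal W$ of $R$: each element of $\mathcal W$ acts orthogonally on $E$ and permutes the set $R$, so it preserves the sum defining $B$. I would then encode $B$ by the self-adjoint operator $T$ on $E$ determined by $B(z,w)=(Tz,w)$. Since $B$ is $\mathcal W$-invariant and $\mathcal W$ acts by isometries, $T$ commutes with the action of $\mathcal W$; hence each eigenspace of $T$ is a $\mathcal W$-invariant subspace of $E$. Because $R$ is irreducible, $E$ is an irreducible $\mathcal W$-module, so $T$ can have only a single eigenvalue $c$, i.e.\ $T=c\cdot\mathrm{id}$ and therefore $B=c\,(\cdot,\cdot)$.

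It remains to pin down the constant, which I would do by a trace computation. Evaluating $B$ on an orthonormal basis $e_1,\dots,e_n$ of $E$ gives
\[
\sum_{i=1}^n B(e_i,e_i)=\tfrac12\sum_{r\in R}\sum_{i=1}^n (r,e_i)^2=\tfrac12\sum_{r\in R}(r,r)=\sum_{r\in R^+}(r,r) ,
\]
while on the other hand $\sum_i c\,(e_i,e_i)=cn$. Comparing these with the definition $h=\frac1n\sum_{r\in R^+}(r,r)$ yields $c=h$, which establishes $B=h\,(\cdot,\cdot)$ and hence the lemma.

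The main obstacle---indeed essentially the only nontrivial input---is the irreducibility of $E$ as a representation of the Weyl group, which holds precisely because $R$ is assumed irreducible; this is a standard fact in the theory of root systems. Working with the self-adjoint operator $T$ rather than invoking Schur's lemma directly has the advantage of sidestepping the usual real-versus-complex subtleties, since a self-adjoint operator on a real inner-product space is automatically diagonalizable over $\R$ with mutually orthogonal, and here $\mathcal W$-invariant, eigenspaces.
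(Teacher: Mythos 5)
Your proposal is correct and follows essentially the same route as the paper: both define the Weyl-invariant bilinear form $\sum_{r\in R^+}(r,x)(r,y)$, use the irreducibility of $E$ as a module under the Weyl group to conclude that this form is a scalar multiple $c\,(\cdot,\cdot)$ of the given inner product, and then determine $c=h$ by evaluating on an orthonormal basis. The only difference is cosmetic but welcome: where the paper invokes Schur's lemma, you use the spectral theorem for the self-adjoint operator $T$, which cleanly handles the real-versus-complex subtlety that a bare appeal to Schur's lemma over $\R$ would leave open.
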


\begin{proof}
  The bilinear form $\beta(x,y):=\sum_{r\in R^+} (r,x)(r,y)$ is
  symmetric and positive definite (since the roots $r$ span
  $E$). There  hence exists an automorphism $\lambda$ of the real vector
  space $E$ such that $\beta(x,y)=(\lambda(x),y)$. The Weyl group of
  $R$ permutes the roots, and therefore $\beta(x,y)$ is invariant
  under the Weyl group. This in turn implies that $\lambda$ commutes
  with the elements of the Weyl group. However, the latter is known to
  act irreducibly on $E$ (see e.g.~\cite[\S10.4, Lemma
  B]{Humphreys}). By Schur's lemma we then conclude that $\lambda$ is
  multiplication by a scalar~$c$, whence $\beta(x,y)=c(x,y)$. It
  remains to show $c=h$. For this choose an orthonormal basis $e_j$ of
  $E$. Then, using Parseval's identity, we find
  $cn=\sum_j c(e_j,e_j)=\sum_{r,j} (r,e_j)^2=\sum_{r} (r,r)$, which
  proves the lemma.
\end{proof}

The lemma implies that $\lat R$ is an integral lattice and, in
particular, that $W$ is contained in its dual, which is $\Lambda$.

From~\eqref{eq:root-embedding} we immediately have available the
embedding $\lat R\rightarrow \lat\Z^{N}$ defined by
$z\mapsto \left((z,r_1),\dots,(z,r_N)\right)$, where $r_j$ runs
through $R^+$. In other words, $R^+$ is a eutactic star on $\lat
R$. The Weyl group $G$ of $R$ leaves $R$ invariant, and the character
$\sym{sn}$ considered in the preceding section associates to an
element $g$ in the Weyl group the number $(-1)^{\ell(g)}$, where
$\ell(g)$ is the length of $g$, i.e.~the number of roots in $R^+$ such
that $gr$ is negative.

\begin{Theorem}
  \label{eq:roots-are-extremal}
  The eutactic star $R^+$ on $\underline{R}$ is extremal with respect
  to the Weyl group~$G$ of~$R$.
\end{Theorem}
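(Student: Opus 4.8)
The plan is to reduce the purely combinatorial condition in the definition of $G$-extremality to a statement about the affine reflection group attached to $R$, and then to isolate the unique good orbit as the one containing the Weyl vector $\rho=\frac12\sum_{r\in R^+}r$. Throughout $R$ is irreducible, as already arranged in the text. I write $\beta=(\cdot,\cdot)/h$ for the form on $\lat R=(W,\beta)$, recall that $\dual{\lat R}=\Lambda$ is the root lattice, and note that the sign character is $\sym{sn}(g)=(-1)^{\ell(g)}=\det(g)$, so that $\ker\sym{sn}=G^+$ is the rotation subgroup of the Weyl group.

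First I would record two facts about the shadow. Any two solutions $r,r'$ of $\beta(x)\equiv\beta(r,x)\bmod\Z$ (for all $x\in W$) differ by an element of $\dual{\lat R}=\Lambda$, so $\spv{\lat R}$ is a single coset of $\Lambda$; and it is $\widehat G$-stable, being invariant under $G$ and under translation by $W$. Moreover $\rho\in\spv{\lat R}$: writing $m_r=(r,x)/h\in\Z$ for $x\in W$, the Lemma \eqref{eq:root-embedding} gives $\beta(x)=\tfrac12\sum_{r\in R^+}m_r^2$ and $\beta(\rho,x)=\tfrac12\sum_{r\in R^+}m_r$, whence $\beta(x)-\beta(\rho,x)=\tfrac12\sum_r m_r(m_r-1)\in\Z$. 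Thus $\spv{\lat R}=\rho+\Lambda$.

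The key reduction is to translate ``good orbit'' into ``regular point.'' A $G$-orbit in $\spv{\lat R}/\ev{\lat R}$ is the same as a $\widehat G$-orbit in $\spv{\lat R}$, where $\widehat G=G\ltimes\ev{\lat R}$ acts on $\R\otimes W$ by affine isometries; under $g\mapsto t_{w-gw}\,g$ the stabilizer of the coset $w+\ev{\lat R}$ in $G$ is carried isomorphically onto the point stabilizer $\widehat G_w$, compatibly with $\sym{sn}$ extended to $\widehat G$ by $\det$ of the linear part (so trivial on translations, $-1$ on affine reflections). Since point stabilizers in an affine reflection group are generated by the reflections they contain, $\widehat G_w\subseteq\ker\sym{sn}$ holds if and only if $w$ lies on no mirror of $\widehat G$, i.e. if and only if $w$ is regular. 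As $\widehat G$ permutes its alcoves simply transitively, the number of good $G$-orbits therefore equals $|\spv{\lat R}\cap A_0|$, the number of shadow vectors in the open fundamental alcove $A_0$.

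It then remains to show $|\spv{\lat R}\cap A_0|=1$. For existence it suffices that $\rho$ be regular: it is strictly dominant, since $(\rho,f)=\tfrac12(f,f)>0$ for every simple root $f$, and one checks that it lies below the lowest affine wall of $\widehat G$; being a shadow vector, its alcove then contributes the good orbit $G\rho+\ev{\lat R}$. The hard part will be uniqueness — that $A_0$ contains no second shadow vector — and I expect this to be the main obstacle, since it forces one to make the affine picture explicit: to identify $\ev{\lat R}$ (equivalently, to decide whether $\lat R$ is even or odd), and with it the affine translation lattice and the walls bounding $A_0$, and then to intersect with the single coset $\rho+\Lambda$. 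I would carry this out type by type for the irreducible $R$, checking in each case that the only element of $\rho+\Lambda$ satisfying the simple-root inequalities $(x,f)>0$ together with the one top-wall inequality is $\rho$. The computations for $A_1$ (shadow $\tfrac12+\Z$, alcove $(0,1)$, unique point $\rho=\tfrac12$) and $A_2$ (shadow $\Lambda$, with $\rho=\alpha_1+\alpha_2$ the unique lattice point of the open triangle $(x,\alpha_1)>0$, $(x,\alpha_2)>0$, $(x,\alpha_1+\alpha_2)<3$) already exhibit both the mechanism and the bookkeeping to be expected in the remaining types.
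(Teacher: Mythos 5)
Your affine reformulation is set up correctly: the identification $\spv W=\rho+\Lambda$, the transfer $g\mapsto t_{w-gw}\,g$ of coset stabilizers to point stabilizers of $\widehat G=G\ltimes\ev W$, and the compatibility of $\sym{sn}$ with the determinant of the linear part are all fine. The pivotal step, however, has a genuine gap: you invoke Steinberg's theorem (``point stabilizers in an affine reflection group are generated by the reflections they contain'') for the group $\widehat G$, but $\widehat G$ is in general \emph{not} generated by affine reflections. The affine reflections contained in $\widehat G$ are the maps $x\mapsto g_r(x)+t$ with $t\in\ev W\cap\R r$, so the reflection subgroup is $G\ltimes T'$ with $T'=\sum_{r\in R}\bigl(\ev W\cap\R r\bigr)$, and typically $T'\subsetneq\ev W$. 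Already for $R=A_2$: here $h=3$, $W=3\dual{\Lambda}$, the lattice $\lat R$ is even so $\ev W=W$, while $\ev W\cap\R r=3\Z r$ for every root, whence $T'=3\Lambda$ has index $3$ in $\ev W$. Thus $\widehat G$ is an \emph{extended} affine Weyl group: it does not permute alcoves simply transitively, and regular points can have nontrivial stabilizers (for $A_2$, the order-$3$ rotations about the alcove barycenters fix interior points). Consequently the implication ``regular $\Rightarrow$ good'', on which your entire existence argument rests ($\rho$ regular, hence its orbit is good), is unproved: one would have to show in addition that the stabilizer of $\rho$ in $\widehat G$ meets the alcove-automorphism group $\Omega'=\ev W/T'$ only in determinant-$+1$ elements, a type-dependent verification you never perform. (The converse, good $\Rightarrow$ regular, is fine, since a mirror through $w$ puts a reflection into $\widehat G_w$; your equality ``number of good orbits $=|\spv{\lat R}\cap A_0|$'' should likewise only be an inequality.)

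The second gap is that, even granting the reduction, the substantive claim $|\spv{\lat R}\cap A_0|=1$ --- which you yourself call ``the hard part'' --- is checked only for $A_1$ and $A_2$ and otherwise deferred to an unexecuted case-by-case computation over the nine irreducible types; as it stands the proposal is a program, not a proof. For comparison, the paper needs neither ingredient: taking a representative $v$ of minimal length in its coset $v+\ev W$, Lemma~\ref{lem:fundamental-lemma} (proved uniformly using only the highest root $\alpha$ and the identity $h=\frac12(\alpha,\alpha)+(\alpha,w)$ of Lemma~\ref{eq:h-w-alpha-identity}) shows that every coset outside the orbit of $w+\ev W$ is stabilized by a reflection --- either $g_\alpha$ or a reflection in a wall of the Weyl chamber --- hence is not good; and the existence half, that $w+\ev W$ itself is good, is obtained indirectly from $\vartheta_R\not=0$ together with $g^*\vartheta_R=\sym{sn}(g)\,\vartheta_R$, with no regularity or fixed-point-theorem input at all. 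To complete your route you would have to both repair the reflection-group issue (e.g.\ work with $G\ltimes T'$ and then control $\Omega'$) and actually carry out the alcove count; at that point a uniform argument in the style of the paper's Lemma is almost certainly shorter than nine case analyses.
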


We shall prove this theorem in the next section.  We can now apply
Theorem~\ref{thm:picture} and the remark after the theorem to the
eutactic star $R^+$ on $W$ and conclude (leaving the computation of
the constant~$\gamma$ in Theorem~\ref{thm:picture} to the reader)
\begin{Theorem}
  \label{thm:McDonalds-identity}
  Let $R$ be an irreducible root system with a choice of positive
  roots $R^+$, and let $w$ be half the sum of the positive roots
  of~$R$. Then, in the notations of the preceding paragraphs, we have
  \begin{multline}
    \label{eq:root-theta}
    \vartheta_{R}(\tau,z):=\eta(\tau)^{n-N}\prod_{r\in R^+}\vartheta\big(\tau,(r,z)/h\big)\\
    = \sum_{x\in w+\ev W}q^{(x,x)/2h}\sum_{g\in G} \sym{sn}(g) \,
    e\big((gx,z)/h\big)
  \end{multline}
  for all $\tau$ is the upper half-plane and all $z$ in $\C\otimes W$.
  In particular the function $\vartheta_{R}$ defines a holomorphic
  Jacobi form in $J_{n/2,\lat R}(\varepsilon^{n+2N})$.
\end{Theorem}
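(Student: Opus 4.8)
The plan is to derive Theorem~\ref{thm:McDonalds-identity} as a direct application of Theorem~\ref{thm:picture} to the eutactic star $R^+$ on the lattice $\lat R=(W,(\cdot,\cdot)/h)$, once the hypotheses of that theorem have been checked. First I would record that $R^+$ is indeed a eutactic star on $\lat R$: taking $s_r=r$ for $r\in R^+$, these vectors lie in $\dual W=\Lambda$, and polarizing the Lemma's identity~\eqref{eq:root-embedding} gives $hx=\sum_{r\in R^+}(r,x)\,r$, hence $x=\sum_{r\in R^+}\beta(r,x)\,r$ with $\beta=(\cdot,\cdot)/h$, which is exactly the eutactic condition. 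The relevant sign character is the one already described before Theorem~\ref{eq:roots-are-extremal}: since every $g$ in the Weyl group $G$ permutes $R$ up to signs and sends exactly $\ell(g)$ positive roots to negative ones, one has $\sym{sn}(g)=(-1)^{\ell(g)}=\det(g)$.

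Next I would invoke Theorem~\ref{eq:roots-are-extremal} (assumed here; its proof is deferred to the next section), which asserts that this star is $G$-extremal. Theorem~\ref{thm:picture} then applies verbatim and produces a constant $\gamma$ and a vector $w\in\spv W$ with
\[
\vartheta_R=\gamma\sum_{x\in w+\ev W}q^{(x,x)/2h}\sum_{g\in G}\sym{sn}(g)\,e\big((gx,z)/h\big),
\]
using $\beta(x)=(x,x)/2h$ and $\beta(gx,z)=(gx,z)/h$. This is already the shape of~\eqref{eq:root-theta}, so only the identification of $w$ and $\gamma$ remains.

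For $w$, I would use the remark following Theorem~\ref{thm:picture}: one may take $w=w_0=\tfrac12\sum_r\epsilon_r\,r$, where $\epsilon_r$ is the sign of $(r,x)$ for a generic $x$. Choosing $x$ in the open fundamental Weyl chamber makes $(r,x)>0$ for all $r\in R^+$, so $w=\tfrac12\sum_{r\in R^+}r=\rho$, half the sum of the positive roots, as claimed. To pin down $\gamma=1$ I would compare the coefficient of the single extremal exponential $e\big((\rho,z)/h\big)$ on the two sides. On the left the lowest $q$-term of $\eta^{n-N}\prod_{r}\vartheta(\tau,(r,z)/h)$ is $q^{(n+2N)/24}\prod_{r\in R^+}\big(e((r,z)/2h)-e(-(r,z)/2h)\big)$, and in this product the weight $\rho$ occurs with multiplicity one: a sign choice with $\sum_r\epsilon_r r=\sum_r r$ forces $\sum_{r:\epsilon_r=-1}r=0$, which is impossible for a nonempty subset of $R^+$ because $(\cdot,x)$ is strictly positive on $R^+$. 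On the right the exponential $e((\rho,z)/h)$ arises only from $g$ with $g\rho=\rho$, i.e.\ $g=e$ since $\rho$ is regular, with coefficient $\gamma\,\sym{sn}(e)=\gamma$. Hence $\gamma=1$, and the membership $\vartheta_R\in J_{n/2,\lat R}(\varepsilon^{n+2N})$ comes for free from the conclusion of Theorem~\ref{thm:picture}.

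The genuinely hard input is the extremality statement Theorem~\ref{eq:roots-are-extremal}, which I am permitted to assume and which is handled in the following section. Granting it, the only delicacy within this argument is the clean determination of $\gamma$, and this rests entirely on the regularity of $\rho$: its trivial $W$-stabilizer is what forces the extremal weight to occur with multiplicity one on both sides and thereby isolates the single coefficient yielding $\gamma=1$. As a consistency check I would also note $\beta(\rho)=(n+2N)/24$, matching the lowest $q$-power $q^{(n-N)/24+N/8}$ on the left, as guaranteed by the remark after Theorem~\ref{thm:picture}.
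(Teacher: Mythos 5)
Your overall route is exactly the paper's: the paper also proves this theorem by noting that $R^+$ is a eutactic star on $\lat R$ (polarization of~\eqref{eq:root-embedding}), that $\sym{sn}(g)=(-1)^{\ell(g)}=\det(g)$, then citing the extremality statement Theorem~\ref{eq:roots-are-extremal} (proved in the following section) and applying Theorem~\ref{thm:picture} together with the remark after it, which identifies $w$ with $\rho=\frac12\sum_{r\in R^+}r$ just as you do. All of these steps in your write-up are correct. The paper, however, stops there and explicitly leaves ``the computation of the constant $\gamma$'' to the reader; your attempt to supply that computation, namely the argument that $\gamma=1$, is where there is a genuine gap.

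The flaw is the assertion that on the right-hand side the extremal exponential $e\big((\rho,z)/h\big)$ ``arises only from $g$ with $g\rho=\rho$''. The outer sum runs over the whole coset $\rho+\ev W$, so every $g\in G$ with $g\rho\equiv\rho\bmod\ev W$ contributes the pair $(x,g)=(g^{-1}\rho,\,g)$ to this coefficient, and by extremality all such $g$ have $\sym{sn}(g)=+1$, so these contributions add up rather than cancel: the coefficient of $q^{\beta(\rho)}e\big((\rho,z)/h\big)$ on the right is $\gamma\cdot\big|\mathrm{Stab}_G(\rho+\ev W)\big|$, not $\gamma$. Regularity of $\rho$ gives triviality of the stabilizer of the \emph{vector} $\rho$, not of the \emph{coset} $\rho+\ev W$, and Theorem~\ref{eq:roots-are-extremal} only places that coset stabilizer inside $\ker\sym{sn}$. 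This is not a hypothetical worry: for $R=A_2$, realized in $\{x\in\Z^3: x_1+x_2+x_3=0\}$, one has $\ev W=W=\{x:\ x_i\equiv x_j\bmod 3\}$ and $\rho=(1,0,-1)$, and the $3$-cycle $g$ sends $\rho$ to $(-1,1,0)$ with $g\rho-\rho=(-2,1,1)\in\ev W$; structurally, every reflection acts as $-1$ on $\dual W/W\cong\Z/3$, so the whole rotation subgroup $\ker\sym{sn}$, of order $3$, stabilizes $\rho+\ev W$. Your coefficient comparison therefore really yields $\gamma=1/\big|\mathrm{Stab}_G(\rho+\ev W)\big|$, and comparing with the paper's independently proven multiplicity-one expansions (Theorem~\ref{thm:theta-quarks-Fourier-expansion}, Theorem~\ref{thm:A-n-Fourier-expansion}) one sees that the double sum over $(x,g)$ in~\eqref{eq:root-theta} picks up each exponential once for every element of this coset stabilizer. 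So the missing ingredient is precisely a determination of $\mathrm{Stab}_G(\rho+\ev W)$ --- the delicate point hidden in the paper's phrase about leaving $\gamma$ to the reader --- and since for the $A_n$ series this stabilizer is nontrivial, the normalization cannot be settled by the regularity of $\rho$ alone; one must either divide by the stabilizer order or recast the right-hand side as a multiplicity-one sum before the asserted identity can be established.
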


\begin{Remark}
  1. Let $F$ be the set of simple roots in~$R^+$. For any $f$ in~$F$
  and $z$ in $\C\otimes W$, we set $z_f:=(f,z)/h$. The application
  $z\mapsto \left\{z_f\right\}_{f\in F}$ defines an isomorphism of
  $\C$-vector spaces $\C\otimes W\rightarrow\C^F$, which maps $W$ onto
  $\Z^F$. We have $(r,z)/h = \sum_{f\in F} \gamma_{r,f} z_f$ and
  accordingly (using~\eqref{eq:root-embedding})
  $(x,x)/2h=Q\left(\{x_f\}\right)$ with $\gamma_{r,f}$ and $Q$ as in
  Theorem~\ref{thm:Kac-Weil-den-formula}. Thus, under the application
  $z\mapsto \left\{z_f\right\}_{f\in F}$, the lattice $\underline {R}$
  and the function $\vartheta_R(\tau,z)$ take on the form described in
  Theorem~\ref{thm:Kac-Weil-den-formula}, which is therefore merely a
  weaker form of the preceding theorem.

  2.  The identities of the preceding theorem, read as identities
  between formal power series by replacing $e\big((gx,z)/h\big)$ by a
  formal variable $e^{gx}$, are known as {\em Macdonald
    identities}~\cite[(0.4))]{Macdonald}\footnote{To identify the
    identity of Theorem~\ref{thm:McDonalds-identity} for a given root
    system $R$ with Macdonald's identity (0.4) in~\cite{Macdonald} for
    the coroot system~$R^\vee$ (i.e.~the system $r^\vee = 2r/(r,r)$
    ($r\in R$) one needs the formula
    $h=(\alpha+\rho,\alpha+\rho)-(\rho,\rho)$, where $\rho$ and
    $\alpha$ are the Weyl vector and highest root of $R$ (see
    Lemma~\ref{eq:h-w-alpha-identity} for a proof).  Moreover, one needs
    to note that Macdonald's $\chi(\mu)$ is zero unless $\mu$ is in
    $\ev M$.}
\end{Remark}

\FloatBarrier

\section{Proof of Theorem~\ref{eq:roots-are-extremal}}

We continue the notations of the paragraphs before
Theorem~\ref{eq:roots-are-extremal}. In other words
\begin{itemize}
\item $R$ is an irreducible root system,
\item $R^+$ a fixed choice of positive roots,
\item $\Lambda$ the lattice spanned by its roots,
\item $G$ the Weyl group of $R$,
\item $h=\frac 1n\sum_{r\in R^+} (r,r)$,
  $w=\frac 12 \sum_{r\in R^+}r$, and
\item $W=h\dual \Lambda$,
  $\underline R =\left(W,(\cdot,\cdot)/h\right)$ (so that $\Lambda$
  becomes the dual of $\underline R$, i.e.~the dual of $W$ with
  respect to~$(\cdot,\cdot)/h$).
\end{itemize}
Note that $w$ is an element of $\spv W$. Indeed,
$(x,x)/h \equiv (2w,x)/h\bmod 2$ for all $x$ in~$W$, as follows
from~\eqref{eq:root-embedding}.  Moreover, let
\begin{itemize}
\item $\alpha$ be the highest root in~$R^+$,
\item $C$ the fundamental Weyl chamber associated to $R^+$, and
\item $r^\vee$, for any root $r$, the coroot of~$r$
  (i.e.~$r^\vee=2r/(r,r)$).
\end{itemize}

Theorem~\ref{eq:roots-are-extremal} is an immediate consequence of the
following lemma, which we shall prove below.

\begin{Lemma}
  \label{lem:fundamental-lemma}
  Let $v$ be any element in $\spv W$ which has minimal length among
  all elements in $v + \ev W$\footnote{In general there might be
    several elements of minimal length in a given coset
    in~$\spv W/\ev W$. For instance, $w$ has minimal length in
    $w+\ev W$ for any irreducible root system, but $(w-hf,w-hf)=(w,w)$
    and $hf\in \ev W$ for two of the six simple roots of $E_6$.}.
  Then:
  \begin{enumerate}
  \item $(\alpha,v) \le h$.
  \item If $(\alpha,v)=h$ then $v\equiv g_\alpha(v) \bmod \ev W$,
    where $g_\alpha$ is the reflection through the hyperplane
    perpendicular to~$\alpha$.
  \item If $(\alpha,v)<h$ and $v\in C$, then $v=w$.
  \end{enumerate}
\end{Lemma}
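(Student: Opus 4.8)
The plan is to run all three parts off a single auxiliary vector, namely $h\alpha^\vee$, and the key structural fact that it lies in the even sublattice $\ev W$. First I would record two preliminary facts. Since $(\alpha^\vee,r)=2(\alpha,r)/(\alpha,\alpha)\in\Z$ for every root $r$ (Cartan integers), the vector $h\alpha^\vee$ satisfies $(h\alpha^\vee,r)/h\in\Z$ for all $r\in R$, so $h\alpha^\vee\in W$; moreover its norm is $\beta(h\alpha^\vee)=\tfrac{(h\alpha^\vee,h\alpha^\vee)}{2h}=\tfrac{h}{2}(\alpha^\vee,\alpha^\vee)=\tfrac{2h}{(\alpha,\alpha)}$, using $(\alpha^\vee,\alpha^\vee)=4/(\alpha,\alpha)$. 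I would then invoke the numerical identity $\tfrac{2h}{(\alpha,\alpha)}=(w,\alpha^\vee)+1$ (so that this quantity is a positive integer, the dual Coxeter number), which follows from the definition $h=\tfrac1n\sum_{r\in R^+}(r,r)$ via its polarized form $\sum_{r\in R}(r,x)(r,y)=2h\,(x,y)$ coming from \eqref{eq:root-embedding} together with standard root-system bookkeeping. Granting it, $\beta(h\alpha^\vee)\in\Z$ and hence $h\alpha^\vee\in\ev W$. I would also note the reflection formula $v-g_\alpha(v)=(v,\alpha^\vee)\alpha$ and that $h\alpha^\vee=\tfrac{2h}{(\alpha,\alpha)}\alpha$.

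For (1) I would test minimality of $v$ against the single translate $h\alpha^\vee\in\ev W$. Because $v-h\alpha^\vee$ lies in the same coset $v+\ev W$, minimality gives $(v,v)\le(v-h\alpha^\vee,v-h\alpha^\vee)$, i.e.\ $2h\,(v,\alpha^\vee)\le h^2(\alpha^\vee,\alpha^\vee)$ (here lattice length and Euclidean length are proportional, so the two notions of minimality agree). Dividing by $2h$ yields $(v,\alpha^\vee)\le\tfrac{2h}{(\alpha,\alpha)}$, which, since $(v,\alpha^\vee)=2(v,\alpha)/(\alpha,\alpha)$, is exactly $(\alpha,v)\le h$. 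Part (2) is then immediate: if $(\alpha,v)=h$ then $(v,\alpha^\vee)=\tfrac{2h}{(\alpha,\alpha)}$, so $v-g_\alpha(v)=(v,\alpha^\vee)\alpha=\tfrac{2h}{(\alpha,\alpha)}\alpha=h\alpha^\vee\in\ev W$, which says $v\equiv g_\alpha(v)\bmod\ev W$.

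For (3) I would use that $v$ lies both in the shadow coset and in the (open) chamber. Writing $\mu=v-w\in\Lambda$ (recall $w\in\spv W$, so $\spv W=w+\Lambda=\dual W$ translated), integrality of $(\mu,f^\vee)$ and $(w,f^\vee)=1$ give $(v,f^\vee)\in\Z$ for every simple root $f$, and $v\in C$ forces $(v,f^\vee)>0$, hence $(v,f^\vee)\ge1$. Likewise $(v,\alpha^\vee)\in\Z$, and $(\alpha,v)<h$ gives $(v,\alpha^\vee)<\tfrac{2h}{(\alpha,\alpha)}=(w,\alpha^\vee)+1$, so $(v,\alpha^\vee)\le(w,\alpha^\vee)$. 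Expanding $\alpha^\vee=\sum_{f\in F}c_f\,f^\vee$, where every $c_f>0$ because the highest root of an irreducible system has full support, one has $\sum_f c_f=(w,\alpha^\vee)$, and therefore
\[
  (w,\alpha^\vee)\ \ge\ (v,\alpha^\vee)=\sum_{f\in F}c_f\,(v,f^\vee)\ \ge\ \sum_{f\in F}c_f=(w,\alpha^\vee).
\]
Equality holds throughout, and since each $c_f>0$ this forces $(v,f^\vee)=1$ for all $f$; as the simple coroots span $\Q\otimes W$ and $w$ satisfies the same equations, $v=w$.

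The step I expect to be the main obstacle is the numerical identity $\tfrac{2h}{(\alpha,\alpha)}=(w,\alpha^\vee)+1$: everything else is linear algebra on the lattice, but this fact—that the quantity $h=\tfrac1n\sum_{r\in R^+}(r,r)$ equals $\tfrac{(\alpha,\alpha)}2$ times the dual Coxeter number—is normalization-sensitive and must be pinned down from \emph{this} definition of $h$ rather than transported from a differently normalized source (for instance the coroot-system convention used in the remark after Theorem~\ref{thm:McDonalds-identity}). I would also double-check that $C$ denotes the \emph{open} chamber, so that the conclusion is $(v,f^\vee)\ge1$ rather than $\ge0$; with the closed chamber the vector $v=0$ would be a spurious minimal solution (e.g.\ when $\lat R$ is even), and part (3) would fail.
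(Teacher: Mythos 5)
Your proposal is correct and follows essentially the same route as the paper's proof: parts (1)--(2) hinge on testing minimality against the translate $h\alpha^\vee\in\ev W$, and part (3) on integrality of $(v,f^\vee)$, the open-chamber condition $(v,f^\vee)\ge 1$, and the full support of the highest root, exactly as in the paper. The identity you flag as the main obstacle, $2h/(\alpha,\alpha)=(w,\alpha^\vee)+1$, is precisely the paper's Lemma~\ref{eq:h-w-alpha-identity} (stated there as $h=\tfrac12(\alpha,\alpha)+(\alpha,w)$ and proved separately); the only cosmetic difference is that the paper deduces $h\alpha^\vee\in\ev W$ from the shadow property $\beta(x)\equiv\beta(w,x)\bmod\Z$ together with integrality of $(w,\alpha^\vee)$, so it needs that identity only in part (3), whereas you invoke it already in the setup.
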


\begin{proof}[Proof of Theorem~\ref{eq:roots-are-extremal}]
Indeed, to prove Theorem~\ref{eq:roots-are-extremal}, let $v+\ev W$
be a class in $\spv W/\ev W$. We can assume that $v$ is a vector of
minimal length in its class and that $v$ is contained in the closure
$\overline C$ (since $E=\bigcup_{g\in G} g\overline C$). By the lemma
$(\alpha,v)\le h$, and either $v=w$, or else $v + \ev W$ is stabilized
by $g_\alpha$ or $v$ is contained in a wall of the Weyl chamber. In
the latter case $v$ is stabilized by the reflection through the
hyperplane containing the wall, which is perpendicular to some
fundamental root. Any reflection through a hyperplane has determinant
$-1$. But determinant and the character $g\mapsto (-1)^{\ell(g)}$
coincide for reflections through hyperplanes perpendicular to roots,
as follows from the fact that a reflection through the hyperplane
perpendicular to a fundamental root has length~$1$~\cite[\S10.2, Lemma
B]{Humphreys}, and that the Weyl group is generated by such
reflections.

It remains to prove that the Weyl group stabilizer of $w+\ev W$ is
contained in the kernel of $\sym{sn}$. For this note that
$\vartheta_R$ satisfies $g^*\vartheta_R=\sym{sn}(g)\vartheta_R$ ($g$
in $G$). Since $\vartheta_R$ is obviously different from zero its
Fourier development contains a Fourier coefficient
$C(D,x)\not=0$. Since $C\left(D,g(x)\right)=\sym{sn}(g)C(D,x)$ for all
$g$ in $G$ and $C(D,x)$ depends only on $x+\ev W$ we see that the
orbit of $x+\ev W$ is not stabilized by any $g$ of odd length. By what
we have seen $x+\ev W$ must then be in the orbit of $w+\ev W$. This
proves Theorem~\ref{eq:roots-are-extremal}.
\end{proof}

\begin{proof}[Proof of Lemma~\ref{lem:fundamental-lemma}]
  To prove the lemma let $f_i$ ($1\le i\le n$) be the simple roots of
  $R^+$, and let $\lambda_i$ be the dual basis of the basis $f_i^\vee$
  of~$E$.  One has $w=\lambda_1+\cdots+\lambda_n$ (see~\cite[\S13.3,
  Lemma~A]{Humphreys} for a short proof).

  To prove (i) note that by the very definition of root system
  $(\alpha^\vee,r)$ is integral for every root~$r$,
  i.e.~$h\alpha^\vee$ defines an element of $W$.  Moreover,
  $(\alpha^\vee,w)$ is integral too (since $w=\sum_i \lambda_i$, and
  since the $f_i^\vee$ are simple roots for the coroot system $r^\vee$
  ($r\in R$), so that in particular $\alpha^\vee$ is an integral
  linear combination of the $f_i^\vee$), so $h\alpha^\vee$ defines
  an element of $\ev W$.  Since $v$ has minimal length in its class
  $v+\ev W$ we have in particular
  $(v-h\alpha^\vee,v-h\alpha^\vee) \ge (v,v)$, i.e.
  $h\ge (\alpha,v)$.

  For (ii) note that $g(v)=v-(\alpha,v)\alpha^\vee$, whence
  $v-g(v)=h\alpha^\vee$.

  For (iii) we  now suppose that $v$ is in $C$ and
  \begin{equation*}
    h>(\alpha,v)
    .
  \end{equation*}
  By Lemma~\ref{eq:h-w-alpha-identity} below we have
  $h=\frac 12 (\alpha,\alpha) + (\alpha,w)$.  It follows that
  \begin{equation*}
    (\alpha^\vee,w) \ge (\alpha^\vee,v)
    ,
  \end{equation*}
  where we used that both sides of this inequality are integers (that
  $(\alpha^\vee,w)$ is an integer was proved above; but then the
  right-hand side is also integral since $v\in w+\Lambda$ and
  $(\alpha^\vee,\Lambda)\subseteq\Z$).

  Since $\alpha^\vee = \sum_i (\alpha^\vee,\lambda_i) f_i^\vee$,
  $w=\sum_i \lambda_i$, and $v=\sum_i (v,f_i^\vee)\lambda_i$ the last
  inequality can be written as
  \begin{equation*}
    \sum_i (\alpha^\vee,\lambda_i) \ge \sum_i (\alpha^\vee,\lambda_i)\,(v,f_i^\vee)  
  \end{equation*}
  But the $(v,f_i^\vee)$ are strictly positive (since $v\in C$) and
  integers (since $v\in w + \Lambda$, $(w,f_i^\vee)=1$ and
  $(\Lambda,f^\vee)\subseteq \Z$). Moreover, the
  $(\alpha^\vee,\lambda_i)$ are strictly positive (since
  $\alpha = \sum_{i} \alpha_if_i$ with non-negative integers
  $\alpha_i$, which are all strictly positive since $\alpha$ is the
  highest root, so that in particular $\alpha-f_i$ is still a linear
  combination in the $f_i$ with non-negative integers). The last
  inequality therefore implies $(v,f_i^\vee)=1$ for all $i$,
  i.e.~$v=w$.  This proves the lemma.
\end{proof}

  \begin{Lemma}
    \label{eq:h-w-alpha-identity}
    Let $\alpha$ be the highest root in $R^+$. Then
    \begin{equation*}
      h = \frac 12\left((\alpha+w,\alpha+w) - (w,w)\right)
      .
    \end{equation*}
  \end{Lemma}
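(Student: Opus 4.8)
The plan is to reduce the asserted identity to a statement about pairings of the highest root $\alpha$ with the positive roots, and then to evaluate that statement using the embedding identity \eqref{eq:root-embedding} proved above. Expanding the right-hand side gives $\frac12\big((\alpha+w,\alpha+w)-(w,w)\big)=\frac12(\alpha,\alpha)+(\alpha,w)$, and since $2w=\sum_{r\in R^+}r$ by the definition of $w$, we have $(\alpha,w)=\frac12\sum_{r\in R^+}(\alpha,r)$. Thus the first step is to record that the claim is equivalent to the content-bearing identity $2h=(\alpha,\alpha)+\sum_{r\in R^+}(\alpha,r)$.

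Next I would feed $z=\alpha$ into \eqref{eq:root-embedding}, which gives $h(\alpha,\alpha)=\sum_{r\in R^+}(\alpha,r)^2$; so it remains to compare $\sum_r(\alpha,r)^2$ with $\sum_r(\alpha,r)$. The structural input I would use is that for every positive root $r\neq\alpha$ the Cartan integer $(\alpha^\vee,r)=2(\alpha,r)/(\alpha,\alpha)$ equals $0$ or $1$, whereas $(\alpha^\vee,\alpha)=2$. Granting this, $(\alpha^\vee,r)^2=(\alpha^\vee,r)$ for $r\neq\alpha$, hence $(\alpha,r)^2=\tfrac{(\alpha,\alpha)}2(\alpha,r)$ for each such $r$; summing over $r\neq\alpha$ and restoring the single exceptional term $r=\alpha$ (whose square $(\alpha,\alpha)^2$ exceeds $\tfrac{(\alpha,\alpha)}2(\alpha,\alpha)$ by exactly $\tfrac{(\alpha,\alpha)^2}2$) yields $\sum_r(\alpha,r)^2=\tfrac{(\alpha,\alpha)}2\sum_r(\alpha,r)+\tfrac{(\alpha,\alpha)^2}2$. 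Equating this with $h(\alpha,\alpha)$ and dividing by $(\alpha,\alpha)/2$ gives precisely the reformulated identity, completing the argument.

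The main obstacle — and the only point where the non-simply-laced root systems need care — is the fact that $(\alpha^\vee,r)\in\{0,1\}$ for positive $r\neq\alpha$. For non-negativity I would use that the highest root is dominant: if $(\alpha,f)<0$ for a simple root $f$, then $\alpha+f$ would be a root exceeding $\alpha$, which is impossible, so $(\alpha,f)\ge0$ for all simple $f$ and therefore $(\alpha,r)\ge0$ for every $r\in R^+$. For the upper bound I would invoke that the highest root is a longest root, so $(r,r)\le(\alpha,\alpha)$; Cauchy--Schwarz then gives $(\alpha^\vee,r)=2(\alpha,r)/(\alpha,\alpha)\le 2\sqrt{(r,r)/(\alpha,\alpha)}\le 2$, with equality forcing $r$ proportional to $\alpha$, i.e.\ $r=\alpha$. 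Since $(\alpha^\vee,r)$ is an integer, $r\neq\alpha$ yields $(\alpha^\vee,r)\le1$. In the simply-laced case one could bypass Cauchy--Schwarz, comparing $(\alpha,r)^2$ with $(\alpha,r)$ directly since all roots have equal length, but the longest-root observation is exactly what makes the computation uniform across all types.
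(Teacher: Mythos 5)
Your proof is correct and follows essentially the same route as the paper's: both evaluate \eqref{eq:root-embedding} at $z=\alpha$ and reduce the identity to the fact that the Cartan integers $(\alpha^\vee,r)$ of positive roots $r\neq\alpha$ lie in $\{0,1\}$, established via the long-root property, Cauchy--Schwarz, and integrality. The only cosmetic difference is in ruling out negative values: the paper excludes $(\alpha^\vee,r)=-1$ because the reflection $s_\alpha(r)=r+\alpha$ would then produce a root exceeding the highest root, while you invoke dominance of the highest root instead --- two versions of the same maximality argument.
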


\begin{proof}
  From~\eqref{eq:root-embedding} we obtain
  \begin{equation*}
    2h = \sum_{r\in R^+} (r,\alpha^\vee)(r,\alpha)
    .
  \end{equation*}
  For any positive root $r\not=\alpha$ not perpendicular to $\alpha$,
  one has $(r,\alpha^\vee)=1$.  (Since the highest root is a long root
  one has $(\alpha,\alpha)\ge (r,r)$, which implies
  $(r,\alpha^\vee)\le (r^\vee,\alpha)$. On the other hand, by the
  Cauchy-Schwartz inequality $(r,\alpha^\vee)(r^\vee,\alpha) < 4$, and
  since both scalar products are integers we find
  $(r,\alpha^\vee)=\pm1$. But $(r,\alpha^\vee)=-1$ would imply
  $s_\alpha(r)=r+\alpha$, contradicting the fact that $s_\alpha(r)$ is
  a root and $\alpha$ is the highest root.)  It follows that
  $2h=(\alpha,\alpha)+(y,\alpha)$, where $y$ is the sum over all
  positive roots which are not perpendicular to $\alpha$ (here we use
  also $(\alpha,\alpha^\vee)=2$). Obviously $(y,\alpha)=(2w,\alpha)$
  so that $2h=(\alpha,\alpha)+2(w,\alpha)$.  The claimed formula
  now becomes obvious.
\end{proof}

\section{Theta blocks of weight \texorpdfstring{$1/2$}{1/2} and
  weight \texorpdfstring{$1$}{1}}
\label{sec:small-weights}

It is possible to give a complete description of the Jacobi forms of
weight $1/2$ and weight~$1$ (and scalar index). A first description
of this kind was can be found in \cite{Skoruppa:Dissertation}, where
is was proved that there are essentially only two Jacobi forms of
weight~$1/2$, and that there is no non-zero Jacobi form of weight~$1$
and trivial character (see Theorem~\ref{thm:singular_weight-rank-1}
below). In~\cite{Skoruppa:Critical-Weight} and
in~\cite{Boylan-Skoruppa-Joli-2} these results were extended to
include Jacobi forms of weight 1 with arbitrary character.

The key for obtaining explicit formulas for Jacobi forms of weight
$1/2$ and weight $1$ is the {\em theta expansion} of a Jacobi form,
and the theory of Weil representations of $\SL$. To explain this,
let $\lat L =(L,\beta)$ be an integral positive definite lattice of
rank $n$.  Recall from Section~\ref{sec:families} that $\spv L$
denotes the shadow of $L$.  For any linear character $\lambda$ of
$\spv L \cup \dual L$ that continues the character
$x\mapsto \e{\beta(x)}$ of $L$ define a holomorphic function
$\vt {\lat L,\lambda}(\tau,z)$ of variables $\tau \in \HP$ and
$z \in \C\otimes L$ by
\begin{equation*}
  \vt {\lat L,\lambda}(\tau,z)
  =
  \sum_{
    r\in \spv L
  }
  \lambda(r)\,
  \e{\tau\beta(r)+\beta(r,z)}
  ,
\end{equation*}
and let $\Theta(\lat L)$ denote the complex space spanned by all the
$\vt {\lat L,\lambda}$, where~$\lambda$ runs through all these
characters.  Note that $\Theta(\lat L)$ has dimension $|\dual L/L|$.
It can be shown that the space $\Theta(\lat L)$ becomes a right
$\Mp$-module via the map
$(\alpha,\phi)\mapsto \phi|_{\lat L,n/2}\alpha$. Here $\Mp$ is the
usual twofold central extension of $\SL$ used in the theory of
elliptic modular forms of weight $1/2$ consisting of pairs
$\alpha=(A,w)$, where $A=\mat abcd$ is in $\SL$ and $w$ is one of
the two holomorphic roots of the function $c\tau+d$
($\tau\in\HP$). Moreover, $\phi|_{\lat L, n/2}\alpha$ is defined as
the right-hand side of~\eqref{eq:A-transformation-rule} with the
factor $(c\tau+d)^{k-h/2}\varepsilon^h(A)$ replaced by
$w(\tau)^{-n}$. That $\Theta(\lat L)$ with respect to the given
action is an $\Mp$-module is a well-known fact for
even~$\lat L$~\cite{Kloosterman-I}; for odd $\lat L$
see~\cite{Boylan-Skoruppa-Joli-1}. The representations associated to
the $\Mp$-modules $\Theta(\lat L)$ can be characterized purely
algebraically as a natural class of representations, which for even
$\lat L$ are known as {\em Weil representations of $\SL$}.

Every Jacobi form $\phi$ in~$J_{k,\lat L}(\varepsilon^h)$ has a {\em
  theta expansion}, i.e.~it can be written in the form
\begin{equation*}
  \phi(\tau,z)
  =
  \sum_{\lambda}h_\lambda(\tau)\,\vt {\lat L,\lambda}(\tau,z)
\end{equation*}
with holomorphic functions $h_\lambda$ and with $\lambda$ running
through the characters of $\spv L\cup \dual L$ whose restriction to
$L$ is $x\mapsto \e{\beta(x)}$. This follows immediately from the
considerations at the end of the proof of
Theorem~\ref{eq:product-identity}.  For integral $h$ and integral or
half-integral $k$, the $h_\lambda$ are modular forms of weight
$k-n/2$ on some congruence subgroup of $\SL$. More precisely, there
exists a natural number $N$ such that $h_\lambda$ in in
$M_{k-n/2}(4N)$, where the latter denotes the space of all
holomorphic functions $h$ on $\HP$ such that
$h(A\tau)=w(\tau)^{2k-n}h(\tau)$ for all $(A,w)$ in $\Gamma(4N)^*$
and, for each $(A,w)$ in $\Mp$ the function $h(A\tau)w(\tau)^{n-2k}$
is bounded in $\Im(\tau)\ge 1$. Here $\Gamma(4N)^*$ is the section
of $\Gamma(4N)$ in $\Mp$ consisting of all $(A,w)$, where
$w(\tau)=\theta(A\tau)/\theta(\tau)$ with
$\theta(\tau)=\sum_{r\in\Z}\e{\tau r^2}$.

Using the invariance of the $\Theta(\lat L)$ under $\Mp$, we can
reformulate the theta expansions of Jacobi forms of index~$\lat L$
as a natural isomorphism
\begin{equation}
  \label{eq:natural-isomorphism}
  J_{k,\lat L}(\varepsilon^h)
  \isom
  \big(
  M_{k-n/2}\otimes \Theta(\lat L)
  \big)(\varepsilon^h)
  .
\end{equation}
Here $M_{k-n/2}$ denotes the (infinite-dimensional) $\Mp$-module
generated by all spaces $M_{k-n/2}(4N)$ with the $\Mp$-action
$((A,w),h)\mapsto h(A\tau)w(\tau)^{n-2k}$. (It can be verified that
the groups $\Gamma^*(4N)$ are normal in $\Mp$, so that $M_{k-n/2}$
is indeed invariant under the given action).  Moreover, for an
$\Mp$-module $V$, we let $V(\varepsilon^h)$ denote the subspace of
all $v$ such that $\alpha.v=\varepsilon(\alpha)^hv$ (where
$\varepsilon$ denotes the linear character of $\Mp$ which, for
$\alpha=(A,w)$ is defined by
$\varepsilon(\alpha) =\eta(A\tau)/w(\tau)\eta(\tau)$).

For the {\em singular weight} of the index $\lat L$, i.e.~for the
weight $k=n/2$, we obtain in particular
\begin{equation*}
  J_{n/2,\lat L}(\varepsilon^h)
  \isom
  \Theta(\lat L)(\varepsilon^h)
  .
\end{equation*}
For lattices of rank one, i.e.~for the lattices
\begin{equation*}
  \lat \Z(m)=(\Z,x,y\mapsto mxy)
\end{equation*}
the $\Mp$-modules $\Theta(\lat \Z(m))$ were decomposed into
irreducible parts in \cite[Satz 1.8,
p.~22]{Skoruppa:Dissertation}\footnote{Actually, in loc.cit.~only
  the $\Mp$-modules $\Theta(\lat \Z(2m))$ were decomposed. However,
  it is quickly checked that $\Theta(\lat \Z(m))$ is a
  $\Mp$-submodule of $\Theta(\lat \Z(4m))$, which allows to infer
  the decomposition of the former from the latter.}. As corollary of
the results there the following was proved (see \cite[Beispiele,
p.~26--27]{Skoruppa:Dissertation}):

\begin{Theorem}(\cite{Skoruppa:Dissertation})
  \label{thm:singular_weight-rank-1}
  For any integer $m\ge 1$ and $0\le a <24$ one has
  $J_{{\frac 12},{\frac m2}}(\eta^h)=0$ unless, for some integer
  $d$, one has $(m,h)=(d^2,3)$ or $(m,h)=(3d^2,1)$. In the latter
  case one has\footnote{We use here $\sectheta_d$ for the function
    $\sectheta(\tau,dz)$, where $\sectheta$ is the quintuple product defined in~\eqref{eq:omega}.
  }
  \begin{equation*}
    J_{{\frac12},{\frac {d^2}2}}(\varepsilon^3) = \C\cdot\vt d,
    \qquad
    J_{{\frac12},{\frac {3d^2}2}}(\varepsilon) =
    \C\cdot\eta\,\sectheta_d
    .
  \end{equation*}
\end{Theorem}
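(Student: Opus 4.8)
The plan is to recognise the theorem as a direct corollary of the singular-weight isomorphism set up just above it, the point being that weight $\tfrac12$ is exactly the singular weight $n/2$ for the rank-one lattices $\lat\Z(m)$. Specialising~\eqref{eq:natural-isomorphism} to $\lat L=\lat\Z(m)$ (so that $J_{1/2,\lat\Z(m)}(\varepsilon^h)=J_{1/2,m/2}(\varepsilon^h)$) gives
\begin{equation*}
  J_{\frac12,\frac m2}(\varepsilon^h)\isom\Theta(\lat\Z(m))(\varepsilon^h),
\end{equation*}
so that $\dim J_{1/2,m/2}(\varepsilon^h)$ is nothing but the multiplicity of the one-dimensional character $\varepsilon^h$ of $\Mp$ in the Weil representation $\Theta(\lat\Z(m))$. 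First I would record the parity constraint $2k\equiv h\bmod2$ attached to these spaces, which forces $h$ to be odd and immediately disposes of all even characters.

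For odd $h$ the task is to decide when $\varepsilon^h$ occurs as a sub-$\Mp$-module, i.e.~when there is a line in $\Theta(\lat\Z(m))$ on which $\Mp$ acts through $\varepsilon^h$. It suffices to test the generators $S=\mat0{-1}10$ and $T=\mat1101$: in the standard basis $\{e_\mu\}$ of the Weil representation indexed by the discriminant group $\dual L/L$, the operator $T$ is diagonal with entries $\e{\beta(\mu)}$ while $\varepsilon^h(T)=\e{h/24}$, and $S$ acts by the Gauss-sum-normalised finite Fourier transform. Requiring a single common eigenvector with the prescribed eigenvalues is exactly the analysis carried out in~\cite{Skoruppa:Dissertation}, where $\Theta(\lat\Z(m))$ is decomposed into irreducibles. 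Quoting that decomposition, a one-dimensional constituent occurs precisely for $m=d^2$ (with character $\varepsilon^3$) and for $m=3d^2$ (with character $\varepsilon$), in each case with multiplicity one; for every other pair $(m,h)$ the multiplicity is zero, giving the vanishing assertion.

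It then remains only to name the generators in the two surviving families. By the results recalled in Section~\ref{sec:review}, $\vt d\in J_{1/2,d^2/2}(\varepsilon^3)$ and $\sectheta_d=\sectheta(\tau,dz)\in J_{1/2,3d^2/2}(\varepsilon)$, and both are manifestly non-zero. Since the multiplicity-one statement bounds each of these two spaces by dimension one, the explicit forms must span them, which yields the asserted equalities.

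The step I expect to be the genuine obstacle is the representation-theoretic input, namely the full decomposition of $\Theta(\lat\Z(m))$ and the proof that one-dimensional constituents arise only in the square and three-times-square families. This is precisely what is imported from~\cite{Skoruppa:Dissertation}; granting it, the remaining ingredients---the parity constraint, the singular-weight reduction, and the identification of $\vt d$ and $\sectheta_d$ as generators under the multiplicity-one bound---are routine.
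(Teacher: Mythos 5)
Your proposal is correct and is essentially the paper's own argument: the paper, too, obtains this theorem as a corollary of the singular-weight case $J_{1/2,m/2}(\varepsilon^h)\isom\Theta(\lat\Z(m))(\varepsilon^h)$ of the theta-expansion isomorphism~\eqref{eq:natural-isomorphism}, importing the decomposition of the $\Mp$-module $\Theta(\lat\Z(m))$ into irreducibles from \cite{Skoruppa:Dissertation} (the only detail you omit is the paper's remark that the dissertation actually decomposes $\Theta(\lat\Z(2m))$, the general case following from the inclusion $\Theta(\lat\Z(m))\subseteq\Theta(\lat\Z(4m))$). Note also that your generator $\sectheta_d\in J_{1/2,3d^2/2}(\varepsilon)$ is the consistent choice, since the extra factor $\eta$ in the paper's displayed formula would change the weight to $1$ and the character to $\varepsilon^2$.
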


For the {\em critical weight} of the index $\lat L$, i.e., for
$k=(n+1)/2$, the isomorphism~\eqref{eq:natural-isomorphism} involves
$M_{1/2}$. Based on a theorem of Serre and Stark a complete
decomposition of the $\Mp$-module $M_{1/2}$ into irreducible parts
was given in~\cite[Satz 5.2, p.101]{Skoruppa:Dissertation}. In
particular, one has, for any natural number~$N$,
\begin{equation*}
  M_{1/2}(4N)
  =
  \bigoplus_{\begin{subarray}{c}
      d|N\\
      N/d \text{ squarefree}
    \end{subarray}
  }
  \Theta^{\text{null}}\big(\lat \Z(2d)\big)
  ,
\end{equation*}
where the spaces on the right are the images of
$\Theta\big(\lat \Z(m)\big)$ under the map
$\vartheta\mapsto \vartheta(\tau,0)$.  As a result of these
consideration we obtain the
isomorphism~\cite{Boylan-Skoruppa-Joli-1}
\begin{equation}
  \label{eq:critical-weight-isomorphism}
  J_{\frac{n+1}2,\,\lat L}(\varepsilon^a)
  \isom
  \bigoplus_{\begin{subarray}{c}
      d|N\\
      N/d \text{ squarefree}
    \end{subarray}
  }
  p^*\Theta\big(Z(2d)\oplus\lat L\big)(\varepsilon^a)
  ,
\end{equation}
where, for any $m$, the map $p$ is the isometric embedding of
$\lat L$ into
$\lat Z(2m)\oplus\lat L=\big(\Z\oplus L, (x\oplus y,x'\oplus
y')\mapsto mxx'+\beta(y,y')\big)$ given by $y\mapsto 0\oplus y$, and
where $p^*$ is the pullback defined
in~\eqref{eq:the-so-useful-pullback}. Moreover, for $N$ one can take
any multiple of the level of $\lat L$ and $24$. Note that the spaces
on the right-hand side of~\eqref{eq:critical-weight-isomorphism} are
spaces of Jacobi forms of singular weight. Thus, Jacobi forms of
singular weight and rank one index are always pullbacks of Jacobi
forms of critical weight of index of rank $n+1$.

To make~\eqref{eq:critical-weight-isomorphism} explicit we would
need a description of the one-dimensional $\Mp$-submodules of
$\Theta(\lat L)$ for arbitrary~$L$. For lattices of rank~$1$ such a
description led to Theorem~\ref{thm:singular_weight-rank-1}.  In
general we do not know how to describe the one-dimensional
$\Mp$-submodules of~$\Theta(\lat L)$. However, for lattices of
rank~2 such a description has been found
in~\cite{Boylan-Skoruppa-Joli-2}. As a result it was possible to
prove the following theorem.

\begin{Theorem}(\cite{Boylan-Skoruppa-Joli-2}) Let $m$ be a positive
  integer, and for $h=2,4,6,8,10,14$ let $R$ and $\phi_R$ be the root
  system and Jacobi form as described in the row of~$h$ in
  Table~\ref{tab:weight_one}. With $\underline R$ denoting the lattice
  defined in Theorem~\ref{thm:Kac-Weil-den-formula}, one has the
  following:
  \begin{enumerate}
  \item For $h=4,6,8,10,14$ the space $J_{1,m}(\varepsilon^h)$ is
    spanned by the theta blocks $\phi_{R}(\tau, \ell z)$, where $\ell$
    runs through all elements of $\lat R$ with square length~$2m$.
  \item For $h=2$ the space $J_{1,m}(\varepsilon^2)$ contains the
    theta blocks $\phi_{R}(\tau, \ell z)$ ($\ell \in \lat R$ with
    square length $2m$), but is in general not spanned by them.
  \item For all other values of $h$ modulo $24$ one has
    $J_{1,m}(\varepsilon^h)=0$.
  \end{enumerate}
\end{Theorem}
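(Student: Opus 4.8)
The plan is to recognize that for a scalar index $m$, i.e.\ for the rank‑one lattice $\lat L=\lat\Z(m)$ with $n=1$, the weight $k=1$ is exactly the \emph{critical weight} $(n+1)/2$, so that the isomorphism~\eqref{eq:critical-weight-isomorphism} applies verbatim and gives
\begin{equation*}
  J_{1,m}(\varepsilon^h)
  \isom
  \bigoplus_{\substack{d\mid N\\ N/d\ \text{squarefree}}}
  p^*\,\Theta\big(\lat\Z(2d)\oplus\lat\Z(m)\big)(\varepsilon^h).
\end{equation*}
Each summand is the $\varepsilon^h$‑isotypic part of the theta module of the \emph{rank‑two} lattice $\lat\Z(2d)\oplus\lat\Z(m)$, equivalently (by the singular‑weight case of~\eqref{eq:natural-isomorphism}, where $M_{0}=\C$) the space of singular‑weight Jacobi forms of that index and character. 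So the first thing I would do is reduce the whole problem to understanding the one‑dimensional $\Mp$‑submodules of $\Theta(\lat L')$ on which $\Mp$ acts through $\varepsilon^h$, for the rank‑two lattices $\lat L'=\lat\Z(2d)\oplus\lat\Z(m)$ — precisely the setting of~\cite{Boylan-Skoruppa-Joli-2}.

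Next I would dispose of the inclusion (``contains'') assertions of parts~(1) and~(2), which follow formally from the theory already built. Table~\ref{tab:weight_one} attaches to each $h\in\{2,4,6,8,10,14\}$ a (possibly reducible or degenerate) root system $R$ and a singular‑weight form $\phi_R$; for $h=6,8,10,14$ these are the genuine rank‑two systems $A_1\oplus A_1$, $A_2$, $B_2\cong C_2$, $G_2$, whose characters $\varepsilon^{n+2N}$ equal $\varepsilon^{6},\varepsilon^{8},\varepsilon^{10},\varepsilon^{14}$. By Theorem~\ref{thm:McDonalds-identity}, $\phi_R=\vartheta_R$ lies in $J_{1,\lat R}(\varepsilon^{n+2N})$. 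Choosing $\ell\in\lat R$ of square length $2m$ (i.e.\ $\beta(\ell,\ell)=2m$) and pulling back along $s_\ell:\lat\Z(2m)\hookrightarrow\lat R$, $u\mapsto u\ell$, the map~\eqref{eq:the-so-useful-pullback} produces $s_\ell^*\vartheta_R=\phi_R(\tau,\ell z)\in J_{1,m}(\varepsilon^{n+2N})$. The remaining characters $h=2,4$ are realized in the same way by the degenerate table entries built from $\vartheta$, $\sectheta$ and $\eta$. This gives the ``contains'' halves of~(1) and~(2) with no further work.

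For the spanning statements I would feed the rank‑two classification of~\cite{Boylan-Skoruppa-Joli-2} back through the displayed isomorphism. The essential input is a complete description of the one‑dimensional $\Mp$‑submodules of $\Theta(\lat L')$ for \emph{arbitrary} rank‑two $\lat L'$, together with the list of characters $\varepsilon^h$ through which $\Mp$ can act on such a submodule. That description yields three things: (a) a one‑dimensional constituent occurs only for $h\equiv 2,4,6,8,10,14\pmod{24}$, which forces $J_{1,m}(\varepsilon^h)=0$ for every other $h$ and gives part~(3); (b) for $h\in\{4,6,8,10,14\}$ each such constituent is the theta expansion of one of the tabulated forms $\phi_R(\tau,\ell z)$, so that these forms span $J_{1,m}(\varepsilon^h)$, giving part~(1); and (c) for $h=2$ there appear additional one‑dimensional constituents not of this shape, so that the theta blocks are present but need not span, giving part~(2). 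Carrying this out also requires the bookkeeping step of identifying the lattice $\lat\Z(2d)\oplus\lat\Z(m)$ with a root lattice $\lat R$ and checking that the abstract pullback $p^*$ matches the geometric specialization $s_\ell^*$ under that identification; this is routine once the relevant isometries are written down.

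The hard part will be exactly step~(b)/(c): decomposing the rank‑two Weil representations $\Theta(\lat\Z(2d)\oplus\lat\Z(m))$ into irreducibles, locating their one‑dimensional constituents, and proving that outside the exceptional character $h=2$ these are exhausted by the Macdonald/root‑system forms. This is a genuinely representation‑theoretic computation and is the technical core of~\cite{Boylan-Skoruppa-Joli-2}; the rank‑one analogue is the comparatively easy Theorem~\ref{thm:singular_weight-rank-1}, which both guides the expected answer and supplies the base case. Everything else — the critical‑weight reduction and the explicit production of the spanning forms via $s_\ell^*$ — is formal given the results established earlier in the paper.
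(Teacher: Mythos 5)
Your proposal matches the paper's treatment: the paper does not prove this theorem itself but cites \cite{Boylan-Skoruppa-Joli-2} after outlining exactly the reduction you describe --- weight $1$ with scalar index $m$ is the critical weight of the rank-one lattice $\lat\Z(m)$, the isomorphism~\eqref{eq:critical-weight-isomorphism} reduces everything to singular-weight constituents (equivalently, one-dimensional $\Mp$-submodules) of the rank-two theta modules $\Theta\big(\lat\Z(2d)\oplus\lat\Z(m)\big)$, and the classification of those submodules is precisely the technical core delegated to the cited reference. Your derivation of the ``contains'' assertions via Theorem~\ref{thm:McDonalds-identity} and the pullbacks $s_\ell^*$ is likewise the mechanism the paper's framework supplies, so the two arguments coincide.
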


\begin{table}[ht]
  \centering
  \caption{\Small%
    The six $\phi_R (\tau,z_1,z_2)$ which yield infinite families of
    theta blocks of weight~$1$ and character $\varepsilon^h$. ( We use
    $a$ and $b$ for the projections onto the first respectively second
    coordinate, and we write $\vt \lambda$ and $\sectheta_{\lambda}$
    for the functions $\vartheta\left(\tau,\lambda(z_1,z_2)\right)$
    and $\sectheta\left(\tau,\lambda(z_1,z_2)\right)$.)}
  \begin{tabular}{>$l<$>$l<$>$l<$}
    \toprule
    h & R &
            \phi_{R} \\
    \midrule
    2 & A_1\oplus A_1 &
                        \eta^2 \, \sectheta_a \, \sectheta_b
    \\
    4 & A_1\oplus A_1 &
                        \eta \, \vt a \, \sectheta_b
    \\
    6 & A_1\oplus A_1 &
                        \vt {a} \, \vt {b} \\
    8 & A_2 &
              \eta^{-1} \, \vt { a } \, \vt { a + b } \, \vt { b }
    \\
    10 & B_2 &
               \eta^{-2} \, \vt { a } \, \vt { a + b } \, \vt { a + 2 b } \, \vt { b }
    \\
    14 & G_2 &
               \eta^{-4} \, \vt { a } \, \vt { 3 a + b } \, \vt { 3 a + 2 b } \, \vt {
               2 a + b } \, \vt { a + b } \, \vt { b }
    \\
    \bottomrule
  \end{tabular}
  \label{tab:weight_one}
\end{table}

\begin{Remark}
  That $J_{1,m}=0$ for all $m$ was already proved in~\cite[Satz 6.1,
  p.~113]{Skoruppa:Dissertation}, and that
  $J_{1,m}(\varepsilon^{16})=0$ and the description of the spaces
  $J_{1,m}(\varepsilon^8)$ was shown
  in~\cite[Thms.~11,~12]{Skoruppa:Critical-Weight}.
\end{Remark}

\FloatBarrier

\part*{Part IV: Applications and open questions}

\section{Borcherds products and theta blocks}
\label{sec:Borcherds-products}

In \cite{GN}, the authors proposed a construction of certain
Borcherds products using Jacobi forms. The general theory of Borcherds
products was developed in~\cite{Borcherds95} and~\cite{Borcherds98}.
We recall the construction of~\cite{GN}.

For any positive integer $m$ there is a level-raising Hecke type
operator $V_{m}: J_{k,t}^{!} \to J_{k,mt}^{!}$ (see~\cite[page
41]{Eichler-Zagier}).  For any $\phi$ in $J_{k,t}^{!}$, the Fourier
coefficients $c_\phi(n,r)$ of $\phi$ and $c_{\phi|V_m}(n,r)$ of
$\phi|V_m$ are related by the formula
\begin{equation*}
  c_{\phi | V_m}( n,r)= \sum_{ d \mid n,r,m } d^{k-1}
  c_\phi\left( \frac{nm}{d^2},\, \frac{r}{d}\right),
\end{equation*}
the sum being over all common positive divisors of $n$, $r$, and $m$.
We consider the following series
\begin{equation}
  \label{eq:lift}
  \sym{Lift}(\phi)(\tau, z,\omega)
  :=
  c_\phi(0,0)\, G_k(\tau)
  +
  \sum_{m \ge 1} \phi | V_m(\tau,z)\,\ex( mt \omega)
  ,
\end{equation}
where $G_k$ for even $k\ge 2$ denotes the Eisenstein series
\begin{equation*}
  G_k(\tau)
  =
  \frac 12 \zeta(1-k)
  + \sum_{n\ge 1}\sigma_{k-1}(n)\,e(\tau),
\end{equation*}
and where $G_k=0$ for all other $k$ (note that $c_\phi(0,0)=0$ for
$k=2$). If $\phi$ is holomorphic at infinity this series is convergent
for all $\mat \tau z z \omega$ with positive definite imaginary part
and defines an element of the space $M_k(\Gamma_t)$ of Siegel modular
forms of weight $k$ and genus $2$ on the paramodular group $\Gamma_t$
(see~\cite{G94}).  The map $\sym{Lift}$ for $t=1$ is the lifting that
was used by Maass to prove the original Saito-Kurokawa conjecture and
that was discussed in detail in~\cite[\S6]{Eichler-Zagier}.

If $\phi$ has weight~$0$, i.e.~if $\phi$ is in $J_{0,t}^{!}$, and if
$c_\phi(n,r)$ is an integer for all $n,r$ with $4tn-r^2 \le 0$, then
we define (see~\cite[eq.~(2.7)]{GN})
\begin{equation}
  \label{eq:B-lift}
  B(\phi)(\tau,z,\omega)
  =
  \Th(\phi)\,
  \ex(C\omega) \, \exp\bigl( -{\rm Lift}({\phi})\bigr)
  ,
\end{equation}
where $C=\frac 12 \sum_{l>0} c_\phi(0,l)\,l$ and where
\begin{equation*}
  \Th(\phi)
  =
  \eta(\tau)^{c_\phi(0,0)}
  \prod_{l \ge 1}
  \left( \frac{\vartheta_{l}(\tau,z)}{\eta(\tau)} \right)^{c_\phi(0,l)}
  .
\end{equation*}
A straightforward computation shows
\begin{equation*}
  B(\phi)
  =
  \Th(\phi)\,
  p^C \,
  \prod_{
    \begin{subarray}c
      n,l,m\in\Z\\
      m\ge 1 
    \end{subarray}
  }
  \left(1-q^n\zeta^lp^{tm}\right)^{c_\phi(nm,l)}
  ,
\end{equation*}
where $p=\ex(\omega)$.  This product and the series~\eqref{eq:B-lift}
converges in a connected subdomain of the Siegel upper half-plane, it
can be meromorphically continued to the whole upper half-plane, and
then it becomes meromorphic modular form of weight $c_\phi(0,0)/2$ for
the paramodular group $\Gamma_t$ with known character and divisor
(see~\cite[Thm.~2.1]{GN}). In fact, the function~$B(\phi)$ is a
Borcherds product in the neighborhood of a one-dimensional cusp of the
paramodular group.
  
As an immediate corollary we obtain the following proposition.
  
\begin{Theorem}
  \label{prop:val-1}
  Let $\phi$ be an element of $J_{0,t}^!$ with Fourier coefficients
  $c_\phi(n,l)$, and assume that $c_\phi(n,r)$ is an integer for any
  $n,r$ such that $4tn-r^2\le 0$, and that the sums
  $\sum_{d\ge 1} c_\phi(d^2n, dl)$ are non-negative for all $n,l$ with
  $4nt-l^2<0$. Then the theta quotient $\Th(\phi)$ is a Jacobi form
  (i.e.~holomorphic in $\HP\times\C$ and at infinity).
\end{Theorem}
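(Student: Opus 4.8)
The plan is to recognise $\Th(\phi)$ as the leading Fourier--Jacobi coefficient of the Borcherds product $B(\phi)$ and to deduce its holomorphy from that of $B(\phi)$. First I would note that the integrality hypothesis on the $c_\phi(n,r)$ with $4tn-r^2\le 0$ is exactly what makes $B(\phi)$ in~\eqref{eq:B-lift} well defined: it guarantees that both the exponents $c_\phi(0,l)$ in the theta quotient $\Th(\phi)$ and the exponents $c_\phi(nm,l)$ in the infinite product are integers. From the product formula displayed just before the theorem, and using that $p=\ex(\omega)$ enters the product only through the factors $p^{tm}$ with $m\ge 1$, one has
\[
  B(\phi)=\Th(\phi)\,p^{C}\bigl(1+O(p^{t})\bigr)
\]
as $\Im(\omega)\to\infty$; hence $p^{C}$ is the lowest power of $p$ occurring and its coefficient is precisely $\Th(\phi)$, which is not identically zero, being a product of the nowhere-vanishing functions $\eta$ and $\vartheta_l$. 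Thus $\Th(\phi)$ is the first Fourier--Jacobi coefficient of $B(\phi)$.

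Next I would invoke the divisor computation of~\cite[Thm.~2.1]{GN}, which expresses the divisor of the meromorphic paramodular form $B(\phi)$ as a sum over the rational quadratic (Humbert) divisors indexed by pairs $(n,l)$ with $4nt-l^{2}<0$, the multiplicity along the divisor attached to $(n,l)$ being the Hecke-type sum $\sum_{d\ge 1}c_\phi(d^{2}n,dl)$. The second hypothesis of the theorem says precisely that all of these multiplicities are non-negative, so the divisor of $B(\phi)$ is effective. A meromorphic modular form with effective divisor has no poles in the interior, and by the Koecher principle holomorphy at the cusps is then automatic; hence $B(\phi)$ is a holomorphic Siegel modular form in $M_{c_\phi(0,0)/2}(\Gamma_t)$.

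It then remains to pass from $B(\phi)$ to its leading coefficient. The Fourier--Jacobi coefficients of a holomorphic paramodular form are holomorphic Jacobi forms: expanding the holomorphic function $B(\phi)(\tau,z,\omega)$ as a series in $p$ with coefficients depending on $(\tau,z)$, each coefficient is holomorphic on all of $\HP\times\C$ (the expansion converges locally uniformly there once $\Im(\omega)$ is large), and the boundedness of $B(\phi)$ at the cusps forces each coefficient to satisfy the condition of holomorphy at infinity. Applying this to the leading coefficient identified in the first step shows that $\Th(\phi)$ lies in the corresponding space of holomorphic Jacobi forms, which is the assertion.

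The step I expect to be the main obstacle, and the point worth spelling out, is the holomorphy of $\Th(\phi)$ on $\HP\times\C$ itself. When some $c_\phi(0,l)$ is negative the theta quotient $\Th(\phi)$ has $\vartheta_l$ in the denominator and so appears to have poles along the $l$-division points; the content of the theorem is that under the stated hypotheses these apparent poles cancel. The argument delivers this without making it visible: near the cusp one may write $B(\phi)=p^{C}\,U\,\Th(\phi)$ with $U=1+O(p^{t})$ a nowhere-vanishing (hence unit) factor, so that $\operatorname{div}B(\phi)$ and $\operatorname{div}\Th(\phi)$ agree there, and effectivity of the former forces effectivity of the latter. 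Care is needed only in matching the sign and normalisation conventions of~\cite{GN}, so that the non-negativity hypothesis indeed corresponds to an effective, rather than anti-effective, divisor.
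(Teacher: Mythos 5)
Your proposal is correct and takes essentially the same route as the paper: holomorphy of the Borcherds product $B(\phi)$ via the non-negativity of the Humbert-surface multiplicities $\sum_{d\ge 1}c_\phi(d^2n,dl)$ given by \cite[Thm.~2.1]{GN}, followed by the observation that $\Th(\phi)$ is the first non-zero Fourier--Jacobi coefficient of $B(\phi)$ and is therefore holomorphic, including at infinity. The paper compresses this into two sentences, and your write-up only adds detail; the single slip is calling $\eta$ and $\vartheta_l$ ``nowhere-vanishing'' (each $\vartheta_l$ vanishes at the $l$-division points), but all you actually need there is that $\Th(\phi)\not\equiv 0$, which does hold.
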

\begin{Remark}
  In particular, if $c_\phi(n,r) \ge 0$ for all $n,r$ with $4tn-r^2<0$
  then the theta quotient of the proposition (which is then in fact a
  theta block) is holomorphic at infinity.
\end{Remark}
\begin{proof}
  As suggested by the product expansion the multiplicities of all
  irreducible rational quadratic divisors (Humbert surfaces) of
  $B(\phi)$ is given by the sums in the proposition (for a proof
  see~\cite[Thm.~2.1]{GN}), whence $B(\phi)$ is holomorphic. The theta
  quotient~$\Th(\phi)$ is the first non-zero Fourier-Jacobi
  coefficient of $B(\phi)$, and is hence holomorphic (including
  infinity).
\end{proof}

\begin{Example}[The first Jacobi and paramodular cusp form of
  weight~$3$]
  \label{ex:val-2}
  Consider the weak Jacobi form of weight~$0$ and index~$13$
  \begin{equation*}
    \varphi_{0,13}
    =\frac{\vartheta_2 \vartheta_3 \vartheta_4}{\vartheta^3}
    =\zeta^{\pm 3}+3\zeta^{\pm 2}+5\zeta^{\pm 1}+6 + O(q)
  \end{equation*}
  where $\zeta^{\pm m}=\zeta^{m}+\zeta^{- m}$.  Note that the
  $q^0$-part contains in fact all non-zero coefficients $c_\phi(n,r)$
  with $52 n-r^2<0$.  Indeed, the product
  \begin{equation*}
    \eta^5\varphi_{0,13}
    =
    \bigl(\eta{\vartheta_2}/{\vartheta}\bigr)^2
    \bigl(\eta^2{\vartheta_3}/{\vartheta}\bigr)
    \bigl(\eta{\vartheta_4}/{\vartheta_2}\bigr)
  \end{equation*}
  defines a generalized theta block of weight $5/2$ (see
  Corollary~\ref{cor:gen-theta-block-criterion}), and it is even
  holomorphic at infinity (in fact, each of the three factors in the
  last formula is already holomorphic at infinity as can be easily
  checked). It follows that $52n-r^2\ge -\frac {52\cdot 5}{24}$ for
  any non-zero Fourier coefficient $c_\phi(n,r)$ of
  $\varphi_{0,13}$. But $c_\phi(n,r)$ depends only on $52n-r^2$ and
  $\pm r\mod 26$. Analyzing the residues $r^2$ modulo $52$ we see that
  all non-zero Fourier coefficients with $52n-r^2<0$ are given by
  $c_\phi(0,3)=1$, $c_\phi(0,2)=3$ and $c_\phi(0,1)=5$.  In
  particular, the Borcherds product $B(\varphi_{0,13})$ is
  holomorphic. Its first Fourier-Jacobi coefficient
  \begin{equation*}
    \varphi_{3,13}
    =
    {\vartheta_3\vartheta_2^3\vartheta^5}/{\eta^3}=Q_{1,1}^2Q_{1,2}.
  \end{equation*}
  turns out to be a product of three theta quarks. It is among all
  Jacobi cusp form of weight $3$ the one with smallest index.  The
  divisor of $B(\varphi_{0,13})$ is a sum of Humbert modular surfaces:
  \begin{equation*}
    \sym{div} B(\varphi_{0,13})
    = \Gamma_{13}\langle z=1/3 \rangle
    +
    3\cdot\Gamma_{13}\langle z=1/2\rangle
    +
    9\cdot\Gamma_{13}\langle z=0 \rangle
    .
  \end{equation*}
  This is a part of the divisor of
  ${\rm Lift} (\varphi_{3,13})\in S_3(\Gamma_{13})$ because the
  lifting procedure preserves the divisor of the lifted Jacobi
  form~$\phi$ (more precisely, of the function
  $\phi(\tau,z)\,\ex(tw)$).  The quotient
  ${\rm Lift} (\varphi_{3,13})/B(\frac{\vartheta_2 \vartheta_3
    \vartheta_4}{\vartheta^3})$ is holomorphic on the the Siegel upper
  half-plane and $\Gamma_{13}$-invariant, whence constant by the
  K\"ocher principle.  Comparing the first Fourier-Jacobi coefficients
  we have two formulas for the first paramodular cusp form
  $F_3^{(13)}$ of (canonical) weight $3$:
  \begin{equation*}
    F_3^{(13)}={\rm Lift} ({\vartheta_3\vartheta_2^3\vartheta^5}/{\eta^3})= B({\vartheta_2
      \vartheta_3 \vartheta_4}/{\vartheta^3})\in S_3(\Gamma_{13}).
  \end{equation*}
  Moreover, we note that
  $F_{3}^{(13)}dZ\in H^{3,0}(\bar{\mathcal A}_{1,13})$ defines a
  canonical differential form on any smooth compact model of the
  moduli space of $(1,13)$-polarized abelian surfaces. The formula
  above determines the main part of its canonical divisor.
\end{Example}

An effective construction of weak Jacobi forms satisfying the
assumptions of Proposition~\ref{prop:val-1} was proposed in~\cite{GPY}
\footnote{The article~\cite{GPY} is partly based on results of the
  current paper.}.

\begin{Theorem}[\protect{\cite{GPY}}]
  \label{thm:GPY-theorem}
  Let $\Theta$ be a theta block of weight~$k>0$ and integral index~$t$
  and trivial character which has integral vanishing order~$v>0$ in
  $q$.  If $v$ is odd assume that $\Theta$ is holomorphic at
  infinity. Then $\psi = (-1)^{v} \frac{ \Theta | V_2 }{ \Theta }$ is
  a weakly holomorphic Jacobi form of weight~$0$ and index~$t$ which
  satisfies the assumptions of
  Theorem~\ref{prop:val-1}.
\end{Theorem}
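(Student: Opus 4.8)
The plan is to check the three hypotheses of Theorem~\ref{prop:val-1} for $\psi=(-1)^v\,\Theta|V_2/\Theta$ in turn, after disposing of the elementary bookkeeping. Since $V_2$ maps $J_{k,t}^{!}$ into $J_{k,2t}^{!}$ and preserves both weight and character, the quotient $\psi$ is a priori a meromorphic Jacobi form of weight $k-k=0$, index $2t-t=t$ and trivial character; here $k$ is a positive integer (integral index and trivial character force $k\in\Z$), so the factor $2^{k-1}$ occurring below is an integer and $\Theta|V_2$ has integral Fourier coefficients. The real content is then that $\psi$ has no poles in $\HP\times\C$, and that its principal part consists of non-negative integers.

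For the holomorphy I would use the coset description of $V_2$,
\begin{equation*}
  \Theta|V_2(\tau,z)=\tfrac12\,\Theta\bigl(\tfrac\tau2,z\bigr)+\tfrac12\,\Theta\bigl(\tfrac{\tau+1}2,z\bigr)+2^{k-1}\Theta(2\tau,2z),
\end{equation*}
and show that each of the three summands vanishes, as a function of $z$, at every zero of $\Theta(\tau,\cdot)$ to at least the same order. For a theta block $\Theta=\eta^{n}\prod_j\vartheta_{a_j}$ the zeros of $z\mapsto\Theta(\tau,z)$ are exactly the division points in $\bigcup_j\frac1{a_j}(\Z\tau+\Z)$, with multiplicity equal to the number of factors vanishing there (the zeros of each $\vartheta_{a_j}$ being simple). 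The zero loci of the three summands are $\bigcup_j\frac1{a_j}(\frac\tau2\Z+\Z)$, $\bigcup_j\frac1{a_j}(\frac{\tau+1}2\Z+\Z)$ and $\bigcup_j\frac1{a_j}(\Z\tau+\frac12\Z)$, each of which contains $\bigcup_j\frac1{a_j}(\Z\tau+\Z)$, since $\Z\tau+\Z$ is contained in $\frac\tau2\Z+\Z$, in $\frac{\tau+1}2\Z+\Z$ and in $\Z\tau+\frac12\Z$. Hence each summand vanishes at a given zero $z_0$ of $\Theta(\tau,\cdot)$ to order at least $\operatorname{ord}_{z_0}\Theta(\tau,\cdot)$; since the order of a constant-coefficient linear combination is at least the minimum of the orders, the same bound holds for $\Theta|V_2(\tau,\cdot)$. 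Thus $\Theta$ divides $\Theta|V_2$ in the ring of holomorphic functions on $\HP\times\C$, so $\psi$ is holomorphic there, and being a ratio of Jacobi forms with $q$-order bounded below it lies in $J_{0,t}^{!}$.

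For the remaining two conditions I would first reduce them to a single statement about the principal part. Because $c_\psi(n,r)$ depends only on $4tn-r^2$ and $r\bmod 2t$, it suffices to prove that $c_\psi(n,r)\in\Z$ whenever $4tn-r^2\le0$ and that $c_\psi(n,r)\ge0$ whenever $4tn-r^2<0$. Granting this, the non-negativity of the averaged sums is automatic: each term $c_\psi(d^2n,dl)$ has discriminant $4t(d^2n)-(dl)^2=d^2(4tn-l^2)<0$, hence is a non-negative principal coefficient, and a sum of such is non-negative. To access the principal part I would expand $\psi$ from the product expansions, writing $\Theta=q^{v}\phi_0(\zeta)\,(1+O(q))$ with $\phi_0(\zeta)=\zeta^{s/2}\prod_j(1-\zeta^{-a_j})$ ($s=\sum_j a_j$), and splitting $\Theta|V_2=\sum_{m,r} c_\Theta(2m,r)q^m\zeta^r+2^{k-1}\Theta(2\tau,2z)$ into its even-index Hecke part and the doubling term. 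For $v$ even the even-index part already reproduces $\phi_0$ at $q^{v/2}$, so $\psi=q^{-v/2}+O(q^{-v/2+1})$ with leading coefficient $+1$. For $v$ odd the would-be half-integral leading term cancels (only even $q$-indices survive), the leading term of $\Theta|V_2$ is governed instead by $c_\Theta(v+1,r)$, and it is precisely here that the hypothesis ``$\Theta$ holomorphic at infinity'' is used: it forces $c_\Theta(n,r)=0$ for $4tn-r^2<0$, controlling that next coefficient. In both cases the sign $(-1)^v$ is what renders the leading coefficient positive.

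The main obstacle is the complete verification that \emph{all} principal coefficients, not merely the leading one, are non-negative integers. I see two routes. The direct one is to push the finite product computation above far enough to express the entire singular part of $(-1)^v\Theta|V_2/\Theta$ as an explicit integral Laurent combination assembled from the factors $1\pm\zeta^{-a_j}$, and to check positivity term by term using that $\Theta$ is a genuine theta block. The more conceptual one is to reinterpret $\sum_{d\ge1}c_\psi(d^2n,dl)$ as the multiplicity of the corresponding Humbert surface in the Borcherds product $B(\psi)$ (as in the proof of Theorem~\ref{prop:val-1}) and to show this divisor is effective by identifying $B(\psi)$, up to explicit elementary factors, with a ratio of additive lifts attached to $\Theta$ whose divisor is manifestly effective because $\Theta$ is holomorphic. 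Either way, the parity of $v$ and the holomorphy hypothesis enter exactly to exclude spurious negative contributions, and establishing this positivity is the crux of the proof.
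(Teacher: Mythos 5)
Your starting point is exactly the one the paper indicates: the paper does not prove Theorem~\ref{thm:GPY-theorem} itself but cites \cite{GPY}, and the remark following the theorem says the statement can be ``read off'' from the coset formula $\Theta|V_2 = 2^{k-1}\Theta(2\tau,2z)+\tfrac12\Theta(\tfrac{\tau}2,z)+\tfrac12\Theta(\tfrac{\tau+1}2,z)$, which you state correctly in general (the paper's remark writes $4$ for $2^{k-1}$, i.e.\ the weight-$3$ case). Your argument for holomorphy of $\psi$ on $\HP\times\C$ is correct: the zero lattices $\tfrac1{a_j}(\tfrac{\tau}2\Z+\Z)$, $\tfrac1{a_j}(\tfrac{\tau+1}2\Z+\Z)$ and $\tfrac1{a_j}(\Z\tau+\tfrac12\Z)$ of the three coset summands each contain $\tfrac1{a_j}(\Z\tau+\Z)$, all $\vartheta$-zeros are simple, so each summand vanishes along the divisor of $\Theta(\tau,\cdot)$ to at least the same order, and together with the bounded $q$-order this puts $\psi$ in $J_{0,t}^{!}$. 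The bookkeeping (weight $0$, index $t$, trivial character, $k\in\Z_{>0}$) is also fine.

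The genuine gap is the part you yourself label ``the crux'': the verification that $c_\psi(n,r)\in\Z$ for $4tn-r^2\le 0$ and that $\sum_{d\ge1}c_\psi(d^2n,dl)\ge 0$ for $4tn-l^2<0$. This is the actual content of the theorem --- it is what connects $\psi$ to Theorem~\ref{prop:val-1} --- and your proposal does not establish it; it only names two unexecuted strategies. The first (expand the singular part from the product expansions and check positivity term by term) is indeed what must be done, but it is the entire difficulty: the coset quotients are explicit products such as $\Theta(\tfrac{\tau}2,z)/\Theta(\tau,z)=q^{-v/2}\prod_{n\ \mathrm{odd}}(1-q^{n/2})^{n_0+N}\prod_j(1-q^{n/2}\zeta^{a_j})(1-q^{n/2}\zeta^{-a_j})$, whose coefficients have mixed signs, and showing how the two odd cosets, the doubling term, the sign $(-1)^v$, and (for odd $v$) the holomorphy-at-infinity hypothesis conspire to make precisely the negative-discriminant coefficients work out is the computation carried out in \cite{GPY}; your remark that the hypothesis ``controls that next coefficient'' is a guess, not an argument. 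The second strategy (read the sums as Humbert multiplicities of $B(\psi)$ and prove its divisor effective) is circular in the present logical order: effectivity of the divisor of $B(\psi)$ is exactly what the assumptions of Theorem~\ref{prop:val-1} are designed to deliver, so it cannot be used to verify them without an independent identification of $B(\psi)$, which is again the content of \cite{GPY}. Note finally that your reduction commits you to the stronger claim that every individual principal coefficient of $\psi$ is non-negative, which is more than the theorem asserts and is equally unproved. In short: the analytic half is done correctly and follows the paper's hint; the arithmetic half, which is the substance of the statement and which the paper outsources to \cite{GPY}, is missing.
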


\begin{Remark}
  The proof of the theorem can be found loc.cit., but the educated
  reader can also read it off from the formula
  \begin{equation*}
    \Theta | V_2(\tau,z)
    =
    4\Theta(2\tau,2z) +\tfrac{1}2
    \Theta(\tfrac{\tau}2,z) +\tfrac{1}2
    \Theta(\tfrac{\tau+1}2,z)
    .  
  \end{equation*}
  This formula shows in particular that the $q$-order of~$\Theta|V_2$
  equals $\lceil v/2\rceil$ and the $q$-order of $\psi$ equals
  ~$-\lfloor v/2 \rfloor$. For $v=1$, the function $\psi$ defines in
  particular a weak Jacobi form.
\end{Remark}

The above example of the paramodular form $F_3^{(13)}$ of weight $3$
is the blue print for the following conjecture.

\begin{Conjecture}[\cite{GPY}]
  Let $\Theta\in J_{k,t}$ be a theta block with trivial character and
  with order of vanishing~$1$ in $q$.  Then
  ${\rm Lift}(\Theta)= B(-\frac{\Theta|V_2}\Theta)$.
\end{Conjecture}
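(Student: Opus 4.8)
The plan is to identify both sides with a single holomorphic paramodular form of weight $k$ on $\Gamma_t$ by comparing their Fourier--Jacobi expansions, the decisive tool being the Koecher principle. Write $\psi:=-\Theta|V_2/\Theta$, so that the right-hand side of the conjecture is $B(\psi)$. I would show that $\mathrm{Lift}(\Theta)$ and $B(\psi)$ are both holomorphic modular forms of the same weight $k$ on $\Gamma_t$ whose first (lowest) Fourier--Jacobi coefficient equals $\Theta$; once this is established, the quotient $\mathrm{Lift}(\Theta)/B(\psi)$ is a holomorphic $\Gamma_t$-invariant function of weight~$0$ on the Siegel upper half-space, hence a constant, and comparing first Fourier--Jacobi coefficients forces that constant to be $1$. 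This is exactly the mechanism used for the index $t=13$ in Example~\ref{ex:val-2}; the task is to make every step work for an arbitrary theta block $\Theta$ of $q$-order~$1$.

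First I would assemble the two forms. Since $\Theta$ has order of vanishing~$1$ in $q$ we have $c_\Theta(0,0)=0$, so $\Theta$ is a Jacobi cusp form and its Gritsenko lift is a Siegel cusp form in $S_k(\Gamma_t)$; by the very definition~\eqref{eq:lift} its lowest Fourier--Jacobi coefficient is $\Theta|V_1=\Theta$. For the other side, Theorem~\ref{thm:GPY-theorem}, applied with $v=1$ (which is odd, so one uses that the holomorphic theta block $\Theta$ is holomorphic at infinity), shows that $\psi\in J_{0,t}^!$ satisfies the hypotheses of Theorem~\ref{prop:val-1}; hence $B(\psi)$ is a holomorphic Borcherds product of weight $c_\psi(0,0)/2$, and by the proof of Theorem~\ref{prop:val-1} its lowest Fourier--Jacobi coefficient is $\Th(\psi)$. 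The key bookkeeping identity is $\Th(\psi)=\Theta$: reading off the $q^0$-part of $\psi$ from $\Theta|V_2=4\Theta(2\tau,2z)+\tfrac12\Theta(\tfrac\tau2,z)+\tfrac12\Theta(\tfrac{\tau+1}2,z)$ one finds that $c_\psi(0,l)$ reproduces, for $l\ge1$, the exponent of the factor $\vartheta_l$ in $\Theta$, and that $c_\psi(0,0)=2k$. This shows simultaneously that $B(\psi)$ has weight $k$ and that its first Fourier--Jacobi coefficient equals $\Theta$.

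It then remains to compare divisors. By the Gritsenko--Nikulin divisor formula (\cite[Thm.~2.1]{GN}) the divisor of $B(\psi)$ is a sum of Humbert surfaces with multiplicities $\sum_{d\ge1}c_\psi(d^2n,dr)$, which are non-negative by the hypotheses of Theorem~\ref{prop:val-1}, so $B(\psi)$ is holomorphic; on the other side $\mathrm{Lift}(\Theta)$ vanishes along the $\Gamma_t$-orbit of the zero divisor of $\Theta(\tau,z)\,\ex(t\omega)$, i.e.~along the paramodular closure of the division-point zeros of the theta block $\Theta$. The heart of the argument is to prove the containment $\sym{div}\,B(\psi)\le\sym{div}\,\mathrm{Lift}(\Theta)$. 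Granting it, the quotient $\mathrm{Lift}(\Theta)/B(\psi)$ has no poles, is $\Gamma_t$-invariant of weight~$0$, and is therefore constant by the Koecher principle; since numerator and denominator have the same first Fourier--Jacobi coefficient $\Theta$, the constant is $1$, which proves the conjecture (and, a posteriori, that the two divisors coincide).

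The main obstacle is precisely this divisor matching in full generality. For $t=13$ it is settled in Example~\ref{ex:val-2} by a finite computation: analyzing the residues $r^2\bmod 4t$ isolates the finitely many coefficients $c_\psi(n,r)$ with $4tn-r^2<0$ and identifies the resulting Humbert surfaces with the paramodular closure of the zeros of $\Theta$. For general $t$ and a general theta block one needs a \emph{structural} identification of the Humbert-surface multiplicities $\sum_{d}c_\psi(d^2n,dr)$ produced by $\psi=-\Theta|V_2/\Theta$ with the zero locus of $\mathrm{Lift}(\Theta)$, valid uniformly rather than case by case. Supplying such an argument, relating the multiplicative data of the Borcherds product to the additive data of the Gritsenko lift for every index, is exactly what is missing and keeps the statement a conjecture.
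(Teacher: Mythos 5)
The statement you are addressing is presented in the paper as an open \emph{Conjecture} (attributed to~\cite{GPY}); the paper contains no proof of it, so the only material to compare against is the special cases the paper actually establishes. Your outline reproduces the standard reduction: both sides are holomorphic modular forms of weight~$k$ on $\Gamma_t$ (via Theorem~\ref{thm:GPY-theorem} and Theorem~\ref{prop:val-1}), their first Fourier--Jacobi coefficients agree (the identities $\Th(\psi)=\Theta$ and $c_\psi(0,0)=2k$, which are themselves a nontrivial computation from~\cite{GPY}, since the $q^0$-part of $\psi=-\Theta|V_2/\Theta$ is the ratio $-[\Theta]_2/[\Theta]_1$ of the $q^2$- and $q^1$-coefficients of $\Theta$, not something one simply reads off), and then the K\"ocher principle would finish the argument \emph{provided} one knows the divisor containment $\sym{div}\,B(\psi)\le\sym{div}\,\sym{Lift}(\Theta)$. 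As you concede in your final paragraph, that containment is exactly what you cannot supply. The concession is correct, and it is fatal: the multiplicities $\sum_{d\ge1}c_\psi(d^2n,dr)$ of the Humbert surfaces in $\sym{div}\,B(\psi)$ are encoded in the Fourier coefficients of $-\Theta|V_2/\Theta$, while all one knows about $\sym{div}\,\sym{Lift}(\Theta)$ is that it contains the Humbert surfaces coming from the division-point zeros of $\Theta(\tau,z)\,\ex(t\omega)$, with no uniform control of multiplicities. Matching these two sets of data for an arbitrary theta block of $q$-order~$1$ is an open problem, and it is precisely why the statement is a conjecture and not a theorem. So your proposal is an accurate description of a strategy, not a proof.

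It is instructive to contrast this with how the paper proves the one instance it can, Theorem~\ref{thm:app-theorem}(1) (products of three theta quarks, a genuine special case of the conjecture): it does \emph{not} attempt the divisor matching for the one-variable theta block directly. Instead it lifts the whole problem to Jacobi forms of lattice index, replacing $\Theta$ by $\vartheta_{3A_2}\in J_{3,3\lat{A_2}}$, whose zero divisor is governed by the reflective vectors of the root lattice; there the finitely many negative-norm Fourier coefficients of $\varphi_{0,3A_2}=-\vartheta_{3A_2}|V_2/\vartheta_{3A_2}$ can be listed explicitly, the divisor of $B(\varphi_{0,3A_2})$ is identified with the reflective divisor of $\sym{Lift}(\vartheta_{3A_2})$, the K\"ocher principle is applied at that level, and the identity for the individual theta blocks follows by pulling back along $w\mapsto(a_1,b_1,a_2,b_2,a_3,b_3)w$. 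In other words, the paper circumvents the general divisor problem by working with a universal multi-variable object whose divisor is controlled by symmetry, exactly the kind of structural input your case-by-case scheme (finite inspection of the coefficients $c_\psi(n,r)$ with $4tn-r^2<0$, as in Example~\ref{ex:val-2} for $t=13$) lacks for general~$t$.
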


The next theorem shows that a similar conjecture might be true for
theta blocks with order in $q$ smaller than~$1$.

\begin{Theorem}
  \label{thm:app-theorem}
  \leavevmode
  \begin{enumerate}
  \item Let $\Theta=\prod_{i=1}^3Q_{a_i,b_i}\in J_{3,d}$ be a product
    of three theta quarks and set
    \begin{equation*}
      \phi:=-\frac{\Theta|V_2}{\Theta}
      .
    \end{equation*}
    (Note that Theorem~\ref{thm:GPY-theorem} implies that~$\phi$ is in
    $J_{0,d}^!$ satisfying the assumptions of
    Theorem~\ref{prop:val-1}.)  Then
    \begin{equation*}
      \sym{Lift}(\Theta)=B(\phi)\in M_3(\Gamma_d).
    \end{equation*}
    This is a cusp form if at least one of the three theta quarks is a
    cusp form.

  \item Let $Q_{a,b}\in J_{3,t}(\varepsilon^8)$ be an arbitrary theta
    quark, and set
    \begin{equation*}
      \phi
      :=
      -\frac{Q_{a,b}|_{\varepsilon^8}V_4}{Q_{a,b}}
      .
    \end{equation*}
    Then $\phi$ is in $J_{0,3t}^!$, and one has
    \begin{equation*}
      {\rm Lift}_{\varepsilon^8}(Q_{a,b})=B(\phi)
      .
    \end{equation*}
    This function defines a modular form of weight one with respect to
    the paramodular group $\Gamma_{3t}$ with a character $\chi_3$ of
    order!$3$.
  \end{enumerate}
\end{Theorem}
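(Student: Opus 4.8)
The plan is to treat both parts as instances of the comparison principle already used in Example~\ref{ex:val-2} for $\varphi_{3,13}=Q_{1,1}^2Q_{1,2}$: exhibit $\sym{Lift}(\cdot)$ and $B(\phi)$ as two holomorphic Siegel modular forms of the same weight on the same paramodular group, show that their quotient is holomorphic and modular of weight~$0$, conclude by the Koecher principle that it is constant, and finally fix the constant to be~$1$ by comparing first Fourier--Jacobi coefficients.

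For part (1) I would first note that $\Theta=\prod_{i=1}^3Q_{a_i,b_i}$ is a theta block of weight~$3$, index~$d$ and trivial character (each quark has character $\varepsilon^8$ and $\varepsilon^{24}=1$), holomorphic at infinity, with $q$-order exactly $v=1$: the coefficient of $q^1$ is the product of the three nonzero leading Laurent polynomials $(\zeta^{a_i/2}-\zeta^{-a_i/2})(\zeta^{b_i/2}-\zeta^{-b_i/2})(\zeta^{(a_i+b_i)/2}-\zeta^{-(a_i+b_i)/2})$. Theorem~\ref{thm:GPY-theorem} with $v=1$ then gives $\phi=-\Theta|V_2/\Theta\in J_{0,d}^!$ satisfying the hypotheses of Theorem~\ref{prop:val-1}, so $B(\phi)$ is a holomorphic Borcherds product whose first Fourier--Jacobi coefficient is $\Th(\phi)$. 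Matching exponents in $\Th(\phi)=\eta^{c_\phi(0,0)}\prod_{l\ge1}(\vartheta_l/\eta)^{c_\phi(0,l)}$ with $\eta^{-3}\prod_i\vartheta_{a_i}\vartheta_{b_i}\vartheta_{a_i+b_i}$ forces $\sum_l c_\phi(0,l)=9$ and $c_\phi(0,0)=6$, so $\Th(\phi)=\Theta$ and $B(\phi)$ has weight $c_\phi(0,0)/2=3$, the weight of $\sym{Lift}(\Theta)\in M_3(\Gamma_d)$, whose first Fourier--Jacobi coefficient is $\Theta|V_1=\Theta$ (here $G_3=0$). I would then compare divisors: by \cite[Thm.~2.1]{GN} the divisor of $B(\phi)$ is a sum of Humbert surfaces with multiplicities given by the sums $\sum_{d\ge1}c_\phi(d^2n,dl)$, while these same Humbert surfaces occur in $\sym{div}\,\sym{Lift}(\Theta)$ because the additive lift reproduces the divisor of $\Theta(\tau,z)\ex(d\omega)$, i.e.\ the division-point zeros of the theta block $\Theta$. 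Hence $\sym{div}\,B(\phi)\le\sym{div}\,\sym{Lift}(\Theta)$, the quotient $\sym{Lift}(\Theta)/B(\phi)$ is holomorphic, $\Gamma_d$-invariant of weight~$0$, hence constant by Koecher, and comparing first Fourier--Jacobi coefficients (both $\Theta$) makes the constant~$1$. For cuspidality: if some $Q_{a_i,b_i}$ is a cusp form then $\ord_{Q_{a_i,b_i}}>0$ everywhere, so by additivity (Theorem~\ref{thm:order}(4)) $\ord_\Theta>0$, $\Theta$ is a Jacobi cusp form, and the additive lift of a Jacobi cusp form is a Siegel cusp form.

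For part (2) I would run the same three steps while carrying the character $\varepsilon^8$, which has order~$3$ since $\gcd(8,24)=8$. The quark $Q_{a,b}\in J_{1,t}(\varepsilon^8)$ is holomorphic at infinity (Theorem~\ref{thm:quarks-are-Jf}), and the reason for the twisted operator $|_{\varepsilon^8}V_4$ rather than $V_2$ is that $V_4$ preserves the character class $\varepsilon^8$ (the index~$4$ being the choice compatible with an order-$3$ character), so that $\phi=-Q_{a,b}|_{\varepsilon^8}V_4/Q_{a,b}$ has trivial character, integral $q$-powers, weight~$0$ and index $4t-t=3t$, i.e.\ $\phi\in J_{0,3t}^!$. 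A twisted analogue of Theorem~\ref{thm:GPY-theorem} (or a direct check on Fourier coefficients) supplies the integrality and positivity needed for Theorem~\ref{prop:val-1}, so $B(\phi)$ is holomorphic with first Fourier--Jacobi coefficient $\Th(\phi)$; matching exponents as above, $\Th(\phi)=\eta^{-1}\vartheta_a\vartheta_b\vartheta_{a+b}=Q_{a,b}$ forces $c_\phi(0,0)=2$ and weight~$1$. The character $\varepsilon^8$ then propagates through the lift to an order-$3$ character $\chi_3$ of $\Gamma_{3t}$ (its cube is trivial because $\varepsilon^{24}=1$), and once one checks that $\sym{Lift}_{\varepsilon^8}(Q_{a,b})$ and $B(\phi)$ are modular for $\Gamma_{3t}$ with this same $\chi_3$, the divisor comparison, Koecher principle and Fourier--Jacobi comparison conclude the identity exactly as in part (1).

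The main obstacle in both parts is showing that $\sym{Lift}(\cdot)/B(\phi)$ is \emph{holomorphic}, which reduces to the divisor inequality $\sym{div}\,B(\phi)\le\sym{div}\,\sym{Lift}(\cdot)$: one must match the Humbert-surface multiplicities read off from the Borcherds product \cite[Thm.~2.1]{GN} against the division-point zeros of the theta block that is the first Fourier--Jacobi coefficient. In part (2) the extra difficulty is the character arithmetic---correctly defining $|_{\varepsilon^8}V_4$ and $\sym{Lift}_{\varepsilon^8}$, verifying that $\phi$ genuinely lands in $J_{0,3t}^!$ with trivial character, and pinning down the order-$3$ paramodular character $\chi_3$ together with the refined Fourier--Jacobi expansion---all carried out in the very low weight~$1$, where one must argue the holomorphy and constancy steps with particular care.
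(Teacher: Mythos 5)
Your overall strategy --- a Koecher-principle comparison of $\sym{Lift}(\cdot)$ and $B(\phi)$, normalized by first Fourier--Jacobi coefficients, as in Example~\ref{ex:val-2} --- is the right template, and your identification $\Th(\phi)=\Theta$, $c_\phi(0,0)=6$ is correct. But there is a genuine gap at the divisor-comparison step, and you yourself flag it as ``the main obstacle'' without resolving it. To get $\sym{div}\,B(\phi)\le\sym{div}\,\sym{Lift}(\Theta)$ you must know the \emph{full} divisor of $B(\phi)$, and by \cite[Thm.~2.1]{GN} this is governed by \emph{all} singular Fourier coefficients of $\phi=-\Theta|V_2/\Theta$, i.e.\ all $c_\phi(n,l)\ne0$ with $4dn-l^2<0$, not only those with $n=0$. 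A weak Jacobi form of weight $0$ and index $d$ can perfectly well have nonzero singular coefficients with $n\ge1$ (periodicity~\eqref{eq:periodicity} only reduces such a pair to some $n_0<d/4$), and any such coefficient would place in $\sym{div}\,B(\phi)$ a Humbert surface having nothing to do with the division-point zeros of $\Theta$, breaking your inequality. This is exactly why Example~\ref{ex:val-2} had to check by hand, for the single index $13$, that the $q^0$-part exhausts the singular coefficients of $\varphi_{0,13}$; you give no such argument, and it is unclear how to give one uniformly in $(a_1,b_1,a_2,b_2,a_3,b_3)$, since the index $d$ is unbounded. In part~(2) there is the additional gap that the ``twisted analogue of Theorem~\ref{thm:GPY-theorem}'' is invoked but proved nowhere, and the existence and identification of the order-$3$ character $\chi_3$ on $\Gamma_{3t}$ is asserted rather than constructed.

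The paper closes precisely this hole by proving the identity once, upstairs, for lattice-index Jacobi forms: it takes $\vartheta_{3A_2}\in J_{3,3\lat{A_2}}$ (from Theorem~\ref{thm:McDonalds-identity}), sets $\varphi_{0,3A_2}=-\vartheta_{3A_2}|V_2/\vartheta_{3A_2}$, and there the singular part is completely controlled: $c(n,\ell)=0$ unless $2n-(\ell,\ell)\ge-\min_{v\in\ell+3A_2}(v,v)\ge-2$, which determines the entire divisor of $B(\varphi_{0,3A_2})$ as orbits of the reflective vectors $\pm\lambda_i$, $\pm\lambda_{i+1}$, $\pm(\lambda_i-\lambda_{i+1})$. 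The Koecher argument is then run for $\widetilde{\Orth}^+(2\lat U\oplus3\lat{A_2}(-1))$, and part~(1) follows for \emph{all} parameters at once by pulling back along $w\mapsto(a_1,b_1,a_2,b_2,a_3,b_3)w$; note that a singular coefficient of the pullback can only arise from a singular coefficient upstairs (Cauchy--Schwarz: if $2n\ge(\ell,\ell)$ and $r=(\ell,v)$ then $r^2\le 2(\ell,\ell)d\le4nd$), so every specialization inherits the divisor control for free. Part~(2) is likewise obtained upstairs via the embedding $\alpha:\lat{A_2}(3)\to3\lat{A_2}$, $x\mapsto(x,x,x)$: with $\varphi_{0,A_2}=\tfrac13\alpha^*\varphi_{0,3A_2}$, the product $B(\varphi_{0,A_2})$ is a third root of $\alpha^*B(\varphi_{0,3A_2})$, and it is this cube-root structure that produces the weight-$1$ form together with its order-$3$ character --- your character bookkeeping for $|_{\varepsilon^8}V_4$ does not substitute for it. To salvage your route you would have to prove the singular-coefficient exhaustion for every product of three quarks, which in effect amounts to redoing the lattice-level computation; it is simpler to work at the lattice level from the start, as the paper does.
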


The lift $\sym{Lift}_{\varepsilon^8}(\phi)$ for a Jacobi form $\phi$
with character $\varepsilon^8$, which was introduced in ~\cite[Theorem
1.12]{GN}, is defined as in~\eqref{eq:lift} but with $\phi|V_m$
replaced by $\phi|_{\varepsilon^8}V_m$ and the summation restricted
to all $m\equiv 1\bmod 3$. The operator $|_{\varepsilon^8}V_m$ is a
Hecke type operator defined similar to $|V_m$, whose precise
definition is given in~\cite[1.12]{GN}.  The identity
$\sym{Lift}(\Theta)=B(\phi)$ of~(1) was already stated
in~\cite[Thm~8.3]{GPY}, and is in fact a corollary
of~\cite[Thm.~5.6]{G18}).

\begin{proof}[Proof of Theorem~\ref{thm:app-theorem}]
  We consider the function $\vartheta_{A_2}$ of
  Theorem~\ref{thm:McDonalds-identity} associated to the root
  system~$A_2$, which defines a Jacobi form of weight~$1$ with
  character~$\varepsilon^8$ and with lattice index~$\lat{A_2}$
  (defined in~\eqref{eq:lat-R-definition}).  Recall that this is the
  function occurring in the Macdonald identity (also known as
  denominator function) of the affine Kac-Moody algebra
  $\widehat A_2$. The lattice $\lat{A_2}$ is a root lattice of type
  $A_2$ (i.e.~its vectors of square length~$2$ span it and form a
  root system $\Phi$ of type~$A_2$). If $f_1$, $f_2$ are primitive
  roots of~$A_2$, then $\lambda_1=f_1$ and $\lambda_2=-f_2$ are
  fundamental weights of $\Phi$ (i.e.~$\lambda_1$, $\lambda_2$ form
  a dual basis of a set of primitive roots of $\Phi$). For $z$ in
  $\C\otimes \lat{A_2}$, we set $z_i=(z,\lambda_i)$, where $(\_,\_)$
  denotes the bilinear form of $\lat{A_2}$.  Then $\vartheta_{A_2}$
  becomes (see Example~\ref{A-n-example})
  \begin{equation*}
    \vartheta_{A_2}(\tau,z)
    = 
    \vartheta(\tau,z_1)\vartheta(\tau,z_2-z_1)\vartheta(\tau,z_2)
    /{\eta(\tau)}
    .
  \end{equation*}
  We note that $3\lambda_i$ is a reflective\footnote{A vector $x$ of
    a lattice~$\lat L=(L,\beta)$ is called {\em reflective} if the
    reflection $\sigma_x(y)=y-2x\beta(x,y)/\beta(x,x)$ defines an
    isometry of~$\lat L$.} vector of square length~$6$ in
  $\lat {A_2}$, and the divisor $z_i=0$ is the hyperplane of the
  reflection $\sigma_{\lambda_i}$.
  
  We need also the Jacobi form
  \begin{equation*}
    \vartheta_{3A_2}(\tau,Z)
    =
    \vartheta_{A_2}(\tau,Z_1)
    \vartheta_{A_2}(\tau,Z_2)
    \vartheta_{A_2}(\tau,Z_3) \in J_{3,3\lat {A_2}}
    .
  \end{equation*}
  Here $3\lat {A_2}$ stands for the threefold orthogonal sum of
  $\lat {A_2}$; moreover, we identify
  $\C\otimes \left(3\lat {A_2}\right)$ with the threefold direct sum
  of $\C\otimes \lat {A_2}$ and write accordingly any $Z$ in the
  former space as $Z=(Z_1,Z_2,Z_3)$ with $Z_i$ in
  $\C\otimes \lat {A_2}$.  We remark that $\vartheta_{3A_2}$
  coincides also with the Jacobi form associated by
  Theorem~\ref{thm:McDonalds-identity} to the threefold orthogonal
  sum of the root system~$A_2$.

  To the Jacobi forms $\vartheta_{A_2}$ and $\vartheta_{3A_2}$ we
  can apply a lifting construction similar to (\ref{eq:lift}) above
  (see~\cite{G94} and~\cite{CG} for the case of Jacobi forms with
  characters).  We obtain orthogonal modular forms
  \begin{gather*}
    \sym{Lift}_{\varepsilon^8}(\vartheta_{A_2}) \in
    M_1(\widetilde{\Orth}^+(2\lat U\oplus \lat {A_2}(-3)),\chi_3)
    ,\\
    \sym{Lift}(\vartheta_{3A_2})\in M_3(\widetilde{\Orth}^+(2\lat
    U\oplus 3 \lat {A_2}(-1))) ,
  \end{gather*}
  where $\lat U$ is the even unimodular lattice of signature
  $(1,1)$, $\lat {A_2}(n)$ is the lattice obtained from $\lat {A_2}$
  by renormalizing its bilinear form by the factor $n$ and
  $\widetilde{\Orth}^+(\dots)$ denote the stable orthogonal groups
  of the given lattices (which are both of signature $(2,8)$).  In
  both cases the (reflective) divisor of the lifted Jacobi form
  induces a subdivisor of the lifting.
  
  We construct a Jacobi form of weight $0$ with index $3\lat{A_2}$
  using again the operator $V_2$; we set
  \begin{equation}
    \label{eq:phi-3-A-2}
    \begin{aligned}
      \varphi_{0,3A_2}(\tau,Z) &:= -\frac{\vartheta_{3A_2}|
        V_2}{\vartheta_{3A_2}}= \sum_{
        \begin{subarray}c n\ge 0
          \\
          \ell\in \dual{3\lat{A_2}}
        \end{subarray}
      } c(n,\ell) \, q^n\,\ex((\ell,Z))
      \\
      &= 6+\sum_{i=1,3,5} (\zeta_i^{\pm 1}+\zeta_{i+1}^{\pm 1} +
      (\zeta_i\zeta_{i+1}^{-1})^{\pm 1}) + O(q)
    \end{aligned}
  \end{equation}
  where $\zeta_i=\exp(2\pi i z_i)$,
  $\zeta_i^{\pm 1}=\zeta_i+\zeta_i^{-1}$. The action of $V_2$
  on~$\varphi_{0,3A_2}$ is given by
  \begin{equation*}
    \vartheta_{3A_2}|\, V_2= 4\vartheta_{3A_2}(2\tau,2Z) +\tfrac{1}2
    \vartheta_{3A_2}(\tfrac{\tau}2,Z) +\tfrac{1}2
    \vartheta_{3A_2}(\tfrac{\tau+1}2,Z)
    .
  \end{equation*}
  Using this formula and the explicitly known divisor of
  $\vartheta_{3A_2}$ one verifies that
  $\varphi_{0,\,3A_2}\in J^{\mathrm{weak}}_{0,3\lat{A_2}}$, where
  the superscript {\em weak} means that $c(n,\ell)=0$ unless
  $n\ge 0$.  For any $\ell\in \dual{3\lat{A_2}}$, we therefore have
  $c(n,\ell) = 0$ unless
  \begin{equation*}
    2n-(\ell,\ell)\ge -\min_{v\in \ell+3A_2}
    (v,v)\ge -2.
  \end{equation*}
  This justifies the first terms of the Fourier expansion
  in~\eqref{eq:phi-3-A-2}. Consequently the Borcherds product
  $B(\varphi_{0,\,3A_2})$ is a holomorphic form of weight
  $c(0,0)/2=3$ with divisors of order~$1$ along all
  $\widetilde{\Orth}^+(2\lat U\oplus 3\lat {A_2}(-1))$-orbits of the
  vectors (of square length~$-6$) $\pm \lambda_i$,
  $\pm \lambda_{i+1}$ and $\pm(\lambda_i-\lambda_{i+1})$
  ($i\in \{1,2,3\}$).  Using the K\"ocher principle as in
  Example~\ref{ex:val-2} we finally obtain
  \begin{equation*}
    \sym{Lift}(\vartheta_{3A_2})=B(\varphi_{0,\,3A_2})
    .
  \end{equation*}
  This identity remains true if we replace $\vartheta_{3A_2}$ and
  $\varphi_{0,\,3A_2}$ by its pullbacks via
  $\C\ni w\mapsto (a_1,b_1,a_2,b_2,a_3,b_3)w$, which yields the
  identity claimed in~(1).
  
  Via the isometric embedding
  $\alpha:\lat {A_2}(3)\rightarrow 3\lat {A_2}$, $x\mapsto (x,x,x)$,
  we obtain pullbacks
  \begin{equation*}
    \alpha^*B(\varphi_{0,\,3A_2})
    \in
    M_3(\widetilde{\Orth}^+(2\lat U\oplus \lat{A_2}(-3)))
  \end{equation*}
  and
  \begin{equation*}
    \varphi_{0,\,A_2}
    :=
    \tfrac{1}3 \alpha^*\varphi_{0,\,3A_2}
    = 2+\zeta_1^{\pm 1}+\zeta_{2}^{\pm 1} + (\zeta_1\zeta_2^{-1})^{\pm 1}
    + O(q)
    ,
  \end{equation*}
  the latter defining a Jacobi form of index~$\lat{A_2}(3)$. The
  Borcherds lift
  $B(\varphi_{0,\,A_2}) \in M_1(\widetilde{\Orth}^+(2\lat U\oplus
  A_2(-3)),\chi_3)$ is a third root of
  $\alpha^*B(\varphi_{0,\,3A_2})$.  Its divisor is determined by the
  reflections corresponding to the fundamental weights.  Again
  $\sym{Lift}_{\varepsilon^8}(\varphi_{0,\,A_2})$ defines a function
  with the same divisor and we obtain
  \begin{equation}
    \label{eq:A-2-lift}
    B(\varphi_{0,\,A_2})
    =
    \sym{Lift}_{\varepsilon^8}(\vartheta_{A_2})
    .
  \end{equation}
  The specialization to $(z_1,z_2)=(-a,b)z$ is the second identity of
  the theorem.
\end{proof}

\begin{Remark}
  We remark that the proof of~\eqref{eq:A-2-lift} and the fact that
  both sides of this identity are holomorphic did not make use of
  the fact that $\vartheta_{A_2}$ is holomorphic at
  infinity. However, this is implied by~\eqref{eq:A-2-lift}, which
  yields the sixth proof of the fact that the theta quarks are
  holomorphic at infinity.
\end{Remark}

\section{Miscellaneous observations and open questions}

\subsection{Jacobi-Eisenstein series and Jacobi cusp forms of small weight}

The simplest theta block with trivial character is the product of
eight theta series
$\prod_{i=1}^8\,{\vartheta_{a_i}}\in J_{4,(a_1^2+\ldots+a_8^2)/2}$,
where $a_1+\ldots+a_8$ is even (and as usual
$\vt a(\tau,z)=\vartheta(\tau,az)$).  This is a cusp form if and only
if $(a_1\cdot\ldots\cdot a_8)/d^8$ is even, where
$d=\gcd(a_1,\ldots, a_8)$.  A similar product of~$24$ quintuple
products
$\prod_{i=1}^{24}\eta\,{\vartheta^*_{a_i}}\in
J_{12,\frac{3}2(a_1^2+\ldots+a_{24}^2)}$ (where
$\vartheta^*_a = \vartheta_{2a}/\vartheta_a$) is a Jacobi cusp form if
$(a_1\cdot\ldots\cdot a_{24})/d^{24}$ is divisible by $2$ or $3$
(see~\cite[Lemma 1.2]{GH98}).

In particular, $\vartheta^8$ equals the Jacobi-Eisenstein series
$E_{4,4,1}$ of weight $4$ and index $4$
(see~\cite[p.~25]{Eichler-Zagier}). The first Jacobi cusp form of
weight~$4$ is $\vartheta^6\vartheta_2^2\in J_{4,7}$.

The Fourier coefficients of the $24$-fold product $\vartheta^8$,
i.e., the eighth power of the Jacobi triple product, can be calculated
explicitly in terms of Cohen's numbers (see~\cite{GW-17}).  {\em It
  would be interesting to calculate the Fourier coefficients of the
  $120$-fold product $(\vartheta^*)^{24}\in J_{12,36}$.}  \smallskip

The first two examples of Jacobi forms of weights $2$ and $3$ are the
Jacobi-Eisenstein series $E_{2, 25}^{(\chi)}$ and $E_{3, 9, 1}$ where
$\chi=\leg *5$ is the primitive even character modulo~$5$ (we use the
notations of~\cite[p.~25--26]{Eichler-Zagier}). Both series are theta
blocks:
\begin{equation*}
  E_{2, 25}^{(\chi)}=\eta^{-6}
  \vartheta^4\vartheta_2^3\vartheta_3^2\vartheta_4 \quad{\rm and}\quad
  E_{3, 9, 1}=Q_{1,1}^3=\eta^{-3}\vartheta^6\vartheta_2^3.
\end{equation*}
It would be interesting to find explicitly their Fourier coefficients
similar to~\cite{Eichler-Zagier} and~\cite{GW-17}, which would give
new identities for these $24$-fold products.

The next two Jacobi forms of weight $2$ and $3$ are the cusp forms
$\varphi_{2,37}$ and $\varphi_{3,13}$ of weight $2$ and~$3$ and index
$37$ and~$13$, respectively.
A table of Fourier coefficients of $\varphi_{2,37}$ was given
in~\cite{Eichler-Zagier} (see pages 118--120 and Table 4 on page 145).
Now we can give explicit formulas for these two Jacobi cusp forms:
\begin{equation*}
  \varphi_{2,37}=\eta^{-6}
  \vartheta^3\vartheta_2^3\vartheta_3^2\vartheta_4\vartheta_5 \quad{\rm
    and}\quad
  \varphi_{3,13}=\eta^{-3}\vartheta^5\vartheta_2^3\vartheta_3.
\end{equation*}

We note that $\varphi_{3,13}$ provides the existence of a canonical
differential form on the moduli space of $(1,13)$-polarized abelian
surfaces and non-triviality of the third cohomology group
$H^3(\Gamma_{13},\C)$ of the paramodular group $\Gamma_{13}$ (see
\cite{G94}).

\subsection{\bf Jacobi cusp forms of weight $2$ and $3$ with large
  $q$-order}

The product of three theta quarks is a holomorphic Jacobi form of
type $9$-$\vartheta/3$-$\eta$. It has $q$-order one.  We can construct
$21$-$\vartheta/15$-$\eta$ theta blocks, which have then weight~$3$
and $q$-order $2$.  The following three examples are related to the
antisymmetric Siegel paramodular forms of weight $3$
(see~\cite{GPY16})
\begin{align*}
  \varphi_{3,122}&=\vartheta[-15;\,1^5,2^5,3^4,4^3, 5^2,6,7],\\
  \varphi_{3,167}&=\vartheta[-15;\,1^4,2^5,3^3,4^3, 5^2,6^2,7,8],\\
  \varphi_{3,173}&=\vartheta[-15;\,1^4,2^4,3^3,4^4, 5^2,6^2,7,8].
\end{align*}
Here we use the notation
\begin{equation*}
  \vartheta[-N;\, a^n,...,b^m]=\eta^{-N}\vartheta_a^n\cdot \ldots\cdot
  \vartheta_b^m.
\end{equation*}

For weight $2$, there are holomorphic theta blocks of type
$22$-$\vartheta/18$-$\eta$, which have then $q$-order~$2$:
\begin{align*}
  \varphi_{2,587}&=
                   \vartheta[-18;\,(1,2,3,4,5,6,8)^2,2,7,9,10,11,12,13,14],\\
  \varphi_{2,713}&=
                   \vartheta[-18;\,(1,2,4,5,6,8)^2,2,3,7,8,9,10,11,12,13,15],\\
  \varphi_{2,893}&=
                   \vartheta[-18;\,1,(2,3,4,5,6,8)^2,7,9,10,11,12,13,14,16,19].
\end{align*}
The problem of constructing new Hecke paramodular forms of
genus~$2$ is related to the question of existence of theta blocks of
$q$-order $2$.  The form $\varphi_{2,587}$ is the leading
Fourier-Jacobi coefficient of the unique antisymmetric Siegel
form~$F^{(587)}$ of weight $2$ for the paramodular
group~$\Gamma_{587}$ (see~\cite{GPY16}). The existence of~$F^{(587)}$
supports the first part of the Brumer conjecture.  According to its
second part the Spin-$L$-function of $F^{(587)}$ is equal to the
Hasse-Weil $L$-function of an abelian surface with conductor $N=587$.

The form $\varphi_{3,122}$ is the leading Fourier-Jacobi coefficient
of the antisymmetric Siegel form of weight $3$ for the paramodular
group $\Gamma_{112}$. It is expected that the $L$-function of this
paramodular form is related to a motivic $L$-function of Calabi-Yau
treefolds.

We can give also an example of a weight $2$ Jacobi form of
$q$-order~$3$, namely a theta block of type
$34$-$\vartheta/30$-$\eta$:
\begin{equation*}
  \varphi_{2,2p}= \vartheta[-30;\,(1,2,3,4,5)^2,6,7,8,9,10,\dots,
  27,28,30]
  .
\end{equation*}
Its index equals $2$ times the prime $p=8669$.

Theorem~\ref{thm:GPY-theorem} together with~\ref{prop:val-1} provides
a method to construct a theta block which is holomorphic at infinity
from a given theta block satisfying certain mild conditions.  We apply
this method to the $34$-$\vartheta/30$-$\eta$-block $\varphi_{2,2p}$.
Its $q$-order is~$3$ and it is holomorphic at infinity. Hence we can
apply the two cited theorems: setting
\begin{equation*}
  \psi_{0,2p}
  =\frac{\varphi_{2,2p}|V_2}{\varphi_{2,2p}}
  =
  c(0,0)+\sum_{0<l<m}c(0,l)(\zeta^l+\zeta^{-l}) + O(q)
  ,
\end{equation*}
the theta block
\begin{equation*}
  \Th(\varphi_{2,2p})
  =
  \eta^{c(0,0)}\prod_{l>0}
  \left(\frac{\vartheta_l}{\eta}\right)^{c(0,l)}
\end{equation*}
defines a Jacobi form. Note that the block $\Th(\varphi_{2,2p})$ has
weight~$444$, index~$41888608$, and $q$-order~$2488$; it is of the
form~$29412$-$\vartheta/28524$-$\eta$.

From Theorem~\ref{thm:theoretical-answer-to-the-challenge} we know
that the number $N$ of $\vartheta$ in a theta block of weight~$2$
which is holomorphic at infinity is bounded; namely, one has
$H_{2N}/555.960 \le 2$, which implies
$N\le \frac 12 e^{2\cdot 555.960} $. The $q$-order of a theta block of
type $N$-$\vartheta$/$n$-$\eta$ equals $N/8 - n/24$. Hence the
$q$-order~$v$ of a theta block of weight~$2$ and holomorphic at
infinity is bounded; one has $v\le \frac 1{16}e^{2\cdot
  555.960}$. This leads to the natural questiom: {\em to find the
  maximal possible $q$-order of theta blocks of weight $2$ or to find
  a reasonable upper bound.}

A theta block of weight~$2$ and trivial character needs to be of the
form $(10+12d)$-$\vartheta/(6+12d)$-$\eta$ ($d=0,1,2,\dots$).  In
Part~III we found four infinite families of theta blocks holomorphic
at infinity of weight~$2$ with trivial character of type
$10$-$\vartheta/6$-$\eta$ (see
Table~\ref{tab:theta-R-representations}).  In this section we saw
examples of theta blocks holomorphic at infinity of weight $2$ with
trivial character of type $22$-$\vartheta/18$-$\eta$ and
$34$-$\vartheta/30$-$\eta$. This raises the question: {\em to find an
  arithmetic or representation theoretic explanation for the existence
  of theta blocks of types $(10+12d)$-$\vartheta/(6+12d)$-$\eta$
  for~$d\ge 1$.}

\subsection{Jacobi forms of weight $2$ without character}

As we saw in Section~\ref{sec:small-weights}, all spaces of Jacobi
forms of weight~$1/2$ and weight~$1$ are spanned by theta blocks (with
the exception of weight $1$ and character~$\varepsilon^2$). We also know
from Section~\ref{sec:generalities} (Remark after
Theorem~\ref{thm:number-of-general-theta-blocks}) that, for growing
weight~$k$ and fixed index~$m$ and character~$\varepsilon^h$, the
proportion of the subspace of $J_{k,m}\left(\varepsilon^h\right)$
spanned by theta blocks becomes smaller and smaller.  In view of the
lifting of Jacobi forms in $J_{2,m}$ to modular forms of weight $2$
and level~$m$ (see~\cite[Thm.~5]{Skoruppa-Zagier}) it is of interest
{\em to know if all of the spaces $J_{2,m}$ are still spanned by theta
  blocks, or how big the subspace spanned by theta blocks is.}

\begin{table}[ht]
  \begin{adjustwidth}{-1cm}{-1cm}
    \centering
    \caption{\Small The table lists, for each index $1\le m<200$ such
      that $J_{2,m}\not=0$, the dimensions e and c of the subspace of
      Eisenstein series and cusp forms and the numbers te and tc of
      theta blocks in $J_{2,m}$ which are non-cusp forms and cusp
      forms, respectively.}

\newcolumntype{R}{>{$}r<{$}}

\begin{tabular}{RRRRR}
\toprule
   m &   e &   c &   te &   tc \\
\midrule
  25 &   1 &   0 &    1 &    0 \\
  37 &   0 &   1 &    0 &    1 \\
  43 &   0 &   1 &    0 &    1 \\
  49 &   2 &   0 &    3 &    0 \\
  50 &   1 &   0 &    1 &    0 \\
  53 &   0 &   1 &    0 &    1 \\
  57 &   0 &   1 &    0 &    1 \\
  58 &   0 &   1 &    0 &    1 \\
  61 &   0 &   1 &    0 &    1 \\
  64 &   1 &   0 &    1 &    0 \\
  65 &   0 &   1 &    0 &    1 \\
  67 &   0 &   2 &    0 &    3 \\
  73 &   0 &   2 &    0 &    3 \\
  74 &   0 &   1 &    0 &    1 \\
  75 &   1 &   0 &    1 &    0 \\
  77 &   0 &   1 &    0 &    1 \\
  79 &   0 &   1 &    0 &    1 \\
  81 &   2 &   0 &    3 &    0 \\
  82 &   0 &   1 &    0 &    1 \\
  83 &   0 &   1 &    0 &    1 \\
  85 &   0 &   2 &    0 &    3 \\
  86 &   0 &   1 &    0 &    1 \\
  88 &   0 &   1 &    0 &    1 \\
  89 &   0 &   1 &    0 &    1 \\
  91 &   0 &   2 &    0 &    3 \\
  92 &   0 &   1 &    0 &    1 \\
  93 &   0 &   2 &    0 &    3 \\
  97 &   0 &   3 &    0 &    7 \\
  98 &   2 &   0 &    3 &    0 \\
  99 &   0 &   1 &    0 &    1 \\
\bottomrule
\end{tabular}
\begin{tabular}{RRRRR}
\toprule
   m &   e &   c &   te &   tc \\
\midrule  
 100 &   2 &   0 &    3 &    0 \\
 101 &   0 &   1 &    0 &    1 \\
 102 &   0 &   1 &    0 &    1 \\
 103 &   0 &   2 &    0 &    3 \\
 106 &   0 &   2 &    0 &    2 \\
 107 &   0 &   2 &    0 &    3 \\
 109 &   0 &   3 &    0 &    6 \\
 111 &   0 &   1 &    0 &    1 \\
 112 &   0 &   1 &    0 &    1 \\
 113 &   0 &   3 &    0 &    7 \\
 114 &   0 &   1 &    0 &    1 \\
 115 &   0 &   2 &    0 &    3 \\
 116 &   0 &   1 &    0 &    1 \\
 117 &   0 &   1 &    0 &    1 \\
 118 &   0 &   1 &    0 &    1 \\
 121 &   4 &   1 &   16 &    1 \\
 122 &   0 &   2 &    0 &    3 \\
 123 &   0 &   2 &    0 &    3 \\
 124 &   0 &   1 &    0 &    1 \\
 125 &   1 &   2 &    3 &    3 \\
 127 &   0 &   3 &    0 &    7 \\
 128 &   1 &   1 &    2 &    1 \\
 129 &   0 &   2 &    0 &    2 \\
 130 &   0 &   2 &    0 &    3 \\
 131 &   0 &   1 &    0 &    1 \\
 133 &   0 &   4 &    0 &   12 \\
 134 &   0 &   2 &    0 &    3 \\
 135 &   0 &   1 &    0 &    1 \\
 136 &   0 &   1 &    0 &    1 \\
 137 &   0 &   4 &    0 &   11 \\
\bottomrule
\end{tabular}
\begin{tabular}{RRRRR}
\toprule
   m &   e &   c &   te &   tc \\
\midrule
 138 &   0 &   1 &    0 &    1 \\
 139 &   0 &   3 &    0 &    6 \\
 141 &   0 &   2 &    0 &    2 \\
 142 &   0 &   2 &    0 &    3 \\
 143 &   0 &   1 &    0 &    1 \\
 144 &   1 &   0 &    1 &    0 \\
 145 &   0 &   3 &    0 &    7 \\
 146 &   0 &   2 &    0 &    2 \\
 147 &   2 &   2 &    8 &    3 \\
 148 &   0 &   3 &    0 &    7 \\
 149 &   0 &   3 &    0 &    5 \\
 150 &   1 &   0 &    1 &    0 \\
 151 &   0 &   3 &    0 &    5 \\
 152 &   0 &   1 &    0 &    1 \\
 153 &   0 &   2 &    0 &    3 \\
 154 &   0 &   2 &    0 &    3 \\
 155 &   0 &   2 &    0 &    3 \\
 156 &   0 &   1 &    0 &    1 \\
 157 &   0 &   5 &    0 &   18 \\
 158 &   0 &   3 &    0 &    6 \\
 159 &   0 &   1 &    0 &    1 \\
 160 &   0 &   1 &    0 &    1 \\
 161 &   0 &   2 &    0 &    2 \\
 162 &   2 &   1 &    5 &    1 \\
 163 &   0 &   6 &    0 &   26 \\
 164 &   0 &   1 &    0 &    0 \\
 165 &   0 &   2 &    0 &    3 \\
 166 &   0 &   2 &    0 &    2 \\
 167 &   0 &   2 &    0 &    3 \\
 169 &   5 &   3 &   45 &    5 \\
\bottomrule
\end{tabular}
\begin{tabular}{RRRRR}
\toprule
   m &   e &   c &   te &   tc \\
\midrule
 170 &   0 &   3 &    0 &    7 \\
 171 &   0 &   2 &    0 &    3 \\
 172 &   0 &   3 &    0 &    6 \\
 173 &   0 &   4 &    0 &   11 \\
 174 &   0 &   1 &    0 &    0 \\
 175 &   1 &   2 &    3 &    2 \\
 176 &   0 &   2 &    0 &    3 \\
 177 &   0 &   4 &    0 &    9 \\
 178 &   0 &   3 &    0 &    5 \\
 179 &   0 &   3 &    0 &    5 \\
 181 &   0 &   5 &    0 &   14 \\
 182 &   0 &   2 &    0 &    3 \\
 183 &   0 &   3 &    0 &    5 \\
 184 &   0 &   3 &    0 &    6 \\
 185 &   0 &   4 &    0 &   11 \\
 186 &   0 &   2 &    0 &    2 \\
 187 &   0 &   5 &    0 &   20 \\
 188 &   0 &   2 &    0 &    2 \\
 189 &   0 &   2 &    0 &    3 \\
 190 &   0 &   2 &    0 &    3 \\
 191 &   0 &   2 &    0 &    1 \\
 192 &   1 &   1 &    2 &    1 \\
 193 &   0 &   7 &    0 &   33 \\
 194 &   0 &   3 &    0 &    4 \\
 195 &   0 &   1 &    0 &    1 \\
 196 &   4 &   1 &   13 &    1 \\
 197 &   0 &   6 &    0 &   27 \\
 198 &   0 &   2 &    0 &    3 \\
  199 &   0 &   4 &    0 &    8 \\
  &&&&\\
\bottomrule
\end{tabular}
    
    \label{tab:weight-2-no-character}
  \end{adjustwidth}
\end{table}

The first question can be quickly answered. A computer search for
$m<200$ shows that $J_{2,m}$ is spanned by theta blocks for all~$m$
with the exception of $m=164$ (see
Table~\ref{tab:weight-2-no-character}). In fact, $J_{2,164}$, which is
one-dimensional and contains exactly one cusp form, does not contain a
theta block, not even a single generalized theta block.

However, for computational purposes this is often no serious
problem. For instance, the one-dimensional space $J_{2,164}$ can be
easily obtained applying the index raising operator $V_2$
(see~\cite[\S 4]{Eichler-Zagier}) to the single theta block in
$J_{2,82}$ (which is
$\vartheta_{1}\vartheta_{2}^{3}\vartheta_{3}\vartheta_{4}^{2}\vartheta_{5}\vartheta_{6}\vartheta_{7}/\eta^6$). Alternatively
one can try to find sufficiently many theta blocks which are not
necessarily holomorphic at infinity but span a space containing a
given $J_{k,m}$.

In the context of the mentioned computations it is worthwhile to
mention that, for $1\le m<200$, the spaces $J_{2,m}$ contain no theta
quotients.

Concerning the second question we do not know of any method to
determine the size of the subspace in $J_{2,m}$ spanned by theta
blocks. Heuristically, however, one might expect it to be large in
general. Indeed, the well-known dimension formula shows
$\dim J_{2,m}\sim \frac {m+1}{24}$. On the other hand, already the
four families of theta blocks from
Table~\ref{tab:theta-R-representations} each provide as many theta
blocks of index~$m$ as there are positive integers $a,b,c,d$ such that
the sum of the squares of the indices of the theta block defining this
family equals $2m$, a number whose order of magnitude is~$m$.

\subsection{Theta blocks and elliptic curves}

As mentioned in the introduction, the first Jacobi cusp form of
weight~$2$, which has index $37$, is a theta block. This is of
particular interest since this form corresponds to the first elliptic
curve of odd rank (which has in fact rank~$1$ and level~$37$) via the Hecke
equivariant lifting of $J_{2,37}$ onto the space of modular forms of
weight $2$ and level~$37$.

In general, we do not know any reason that an arbitrary theta block in
$J_{2,m}$ is a Hecke eigenform except for the banal reason that
$J_{2,m}$ or the subspace of cusp forms in $J_{2,m}$ is
one-dimensional, so that any Jacobi form in one of these spaces is
trivially an eigenform. In particular, we do not expect that the
Jacobi form associated to an elliptic curve is a theta block. However,
there are exactly $52$ indices where $J_{2,m}$ contains only one cusp
form. For $10$ of these indices the corresponding Jacobi form is an
old form. For each index $m$ in the set $S$ of the remaining $42$ (see
Table~\ref{tab:elliptic-curves-and-theta-blocks}) the associated cusp
Jacobi form $\phi_m$ corresponds via the mentioned lifting to an
elliptic curve over the rationals of conductor~$m$ whose $L$-series
$L(E,s)$ has a minus sign in its functional equation. This
correspondence is given by the identities
\begin{equation*}
  \sum_{n\ge 1}{\tleg Dn}{n^{-s}}
  \sum_{n\ge 1}{C_{\phi_m}(Dn^2,rn)}{n^{-s}}
  =
  C_{\phi_m}(D,r) L(E,s)
  ,
\end{equation*}
valid for any negative fundamental discriminant $D$ and integer $r$
such that $D\equiv r^2 \bmod 4m$.

As it turns out, each of these $\phi_m$ with the exception of
$\phi_{300}$ is a theta block. More precisely, we found that for each
index $m\not=300$ in~$S$ there is exactly one theta block of
length~$10$ in $J_{2,m}$ which is a cusp form. (For $m\le 200$ and
$m=216$ we verified in addition that there is no theta block of length
strictly greater than~$10$ in the subspace of cusp forms of
$J_{2,m}$.)

In Table~\ref{tab:elliptic-curves-and-theta-blocks} we give
for each $m$ in~$S$ a minimal
equation for an elliptic curve over $\Q$ with conductor $m$ and root
number~$-1$ (in general the isogeny classes of the given curves
decompose into more than one rational isomorphism classes) and, for $m\not=300$, the corresponding theta block. All these
elliptic curves have rank~$1$. Except for $m\in\{89,121\}$ the
theta blocks in this table belong to one or more of the four families
associated to the root systems $A_4$, $G_2\oplus B_2$, $A_1\oplus B_3$
and $A_1\oplus C_3$ (see Table~\ref{tab:theta-R-representations}).

\begin{table}[htbp]
  \begin{adjustwidth}{-1cm}{-1cm}
    \centering
    \caption{\Small For each $m$ such that the subspace of cusp forms
      in $J_{2,m}$ is generated by a new form $\phi_m$ , the
      associated elliptic curve and a theta block representation of
      $\eta^6\,\phi_m$. (CL is the {\em Cremona label} of the respective
      elliptic curve.)}
    \begin{tabular}{llll}
      \toprule
      $m$&CL&Curve&Theta block\\
      \midrule

$37$&37a1&$y^2 + y = x^{3} -  x $&$\vartheta_{1}^{3}\vartheta_{2}^{3}\vartheta_{3}^{2}\vartheta_{4}\vartheta_{5}$\\
$43$&43a1&$y^2 + y = x^{3} + x^{2} $&$\vartheta_{1}^{3}\vartheta_{2}^{2}\vartheta_{3}^{2}\vartheta_{4}^{2}\vartheta_{5}$\\
$53$&53a1&$y^2 + x y + y = x^{3} -  x^{2} $&$\vartheta_{1}^{3}\vartheta_{2}^{2}\vartheta_{3}^{2}\vartheta_{4}\vartheta_{5}\vartheta_{6}$\\
$57$&57a1&$y^2 + y = x^{3} -  x^{2} - 2 x + 2 $&$\vartheta_{1}^{2}\vartheta_{2}^{2}\vartheta_{3}^{3}\vartheta_{4}\vartheta_{5}\vartheta_{6}$\\
$58$&58a1&$y^2 + x y = x^{3} -  x^{2} -  x + 1 $&$\vartheta_{1}^{2}\vartheta_{2}^{3}\vartheta_{3}\vartheta_{4}^{2}\vartheta_{5}\vartheta_{6}$\\
$61$&61a1&$y^2 + x y = x^{3} - 2 x + 1 $&$\vartheta_{1}^{2}\vartheta_{2}^{3}\vartheta_{3}^{2}\vartheta_{4}\vartheta_{5}\vartheta_{7}$\\
$65$&65a1&$y^2 + x y = x^{3} -  x $&$\vartheta_{1}^{2}\vartheta_{2}^{2}\vartheta_{3}^{2}\vartheta_{4}\vartheta_{5}^{2}\vartheta_{6}$\\
$77$&77a1&$y^2 + y = x^{3} + 2 x $&$\vartheta_{1}^{2}\vartheta_{2}^{2}\vartheta_{3}^{2}\vartheta_{4}\vartheta_{5}\vartheta_{6}\vartheta_{7}$\\
$79$&79a1&$y^2 + x y + y = x^{3} + x^{2} - 2 x $&$\vartheta_{1}^{2}\vartheta_{2}^{2}\vartheta_{3}^{2}\vartheta_{4}\vartheta_{5}^{2}\vartheta_{8}$\\
$82$&82a1&$y^2 + x y + y = x^{3} - 2 x $&$\vartheta_{1}\vartheta_{2}^{3}\vartheta_{3}\vartheta_{4}^{2}\vartheta_{5}\vartheta_{6}\vartheta_{7}$\\
$83$&83a1&$y^2 + x y + y = x^{3} + x^{2} + x $&$\vartheta_{1}^{2}\vartheta_{2}\vartheta_{3}^{2}\vartheta_{4}^{2}\vartheta_{5}\vartheta_{6}\vartheta_{7}$\\
$88$&88a1&$y^2 = x^{3} - 4 x + 4 $&$\vartheta_{1}^{2}\vartheta_{2}^{2}\vartheta_{3}\vartheta_{4}^{2}\vartheta_{5}\vartheta_{6}\vartheta_{8}$\\
$89$&89a1&$y^2 + x y + y = x^{3} + x^{2} -  x $&$\vartheta_{1}^{3}\vartheta_{2}\vartheta_{3}\vartheta_{4}\vartheta_{5}\vartheta_{6}^{2}\vartheta_{7}$\\
$92$&92b1&$y^2 = x^{3} -  x + 1 $&$\vartheta_{1}\vartheta_{2}^{2}\vartheta_{3}^{2}\vartheta_{4}^{2}\vartheta_{5}\vartheta_{6}\vartheta_{8}$\\
$99$&99a1&$y^2 + x y + y = x^{3} -  x^{2} - 2 x $&$\vartheta_{1}^{2}\vartheta_{2}\vartheta_{3}^{2}\vartheta_{4}^{2}\vartheta_{5}\vartheta_{6}\vartheta_{9}$\\
$101$&101a1&$y^2 + y = x^{3} + x^{2} -  x - 1 $&$\vartheta_{1}^{2}\vartheta_{2}\vartheta_{3}\vartheta_{4}\vartheta_{5}^{2}\vartheta_{6}^{2}\vartheta_{7}$\\
$102$&102a1&$y^2 + x y = x^{3} + x^{2} - 2 x $&$\vartheta_{1}\vartheta_{2}^{2}\vartheta_{3}^{2}\vartheta_{4}\vartheta_{5}\vartheta_{6}^{2}\vartheta_{8}$\\
$112$&112a1&$y^2 = x^{3} + x^{2} + 4 $&$\vartheta_{1}\vartheta_{2}^{2}\vartheta_{3}\vartheta_{4}^{2}\vartheta_{5}\vartheta_{6}\vartheta_{7}\vartheta_{8}$\\
$117$&117a1&$y^2 + x y + y = x^{3} -  x^{2} + 4 x + 6 $&$\vartheta_{1}\vartheta_{2}^{2}\vartheta_{3}^{2}\vartheta_{4}\vartheta_{5}\vartheta_{6}\vartheta_{7}\vartheta_{9}$\\
$118$&118a1&$y^2 + x y = x^{3} + x^{2} + x + 1 $&$\vartheta_{1}^{2}\vartheta_{2}^{2}\vartheta_{4}\vartheta_{5}\vartheta_{6}^{2}\vartheta_{7}\vartheta_{8}$\\
$121$&121b1&$y^2 + y = x^{3} -  x^{2} - 7 x + 10 $&$\vartheta_{1}\vartheta_{2}^{2}\vartheta_{3}^{2}\vartheta_{4}\vartheta_{5}^{2}\vartheta_{7}\vartheta_{10}$\\
$124$&124a1&$y^2 = x^{3} + x^{2} - 2 x + 1 $&$\vartheta_{1}^{2}\vartheta_{2}\vartheta_{4}^{2}\vartheta_{5}\vartheta_{6}^{2}\vartheta_{7}\vartheta_{8}$\\
$128$&128a1&$y^2 = x^{3} + x^{2} + x + 1 $&$\vartheta_{1}\vartheta_{2}^{2}\vartheta_{3}\vartheta_{4}^{2}\vartheta_{5}\vartheta_{6}\vartheta_{8}\vartheta_{9}$\\
$131$&131a1&$y^2 + y = x^{3} -  x^{2} + x $&$\vartheta_{1}^{2}\vartheta_{2}^{2}\vartheta_{3}^{2}\vartheta_{4}\vartheta_{5}\vartheta_{7}\vartheta_{12}$\\
$135$&135a1&$y^2 + y = x^{3} - 3 x + 4 $&$\vartheta_{1}\vartheta_{2}\vartheta_{3}^{2}\vartheta_{4}\vartheta_{5}^{2}\vartheta_{6}\vartheta_{8}\vartheta_{9}$\\
$136$&136a1&$y^2 = x^{3} + x^{2} - 4 x $&$\vartheta_{1}\vartheta_{2}^{2}\vartheta_{3}\vartheta_{4}\vartheta_{5}\vartheta_{6}\vartheta_{7}\vartheta_{8}^{2}$\\
$138$&138a1&$y^2 + x y = x^{3} + x^{2} -  x + 1 $&$\vartheta_{1}\vartheta_{2}\vartheta_{3}^{2}\vartheta_{4}^{2}\vartheta_{6}^{2}\vartheta_{7}\vartheta_{10}$\\
$143$&143a1&$y^2 + y = x^{3} -  x^{2} -  x - 2 $&$\vartheta_{1}^{2}\vartheta_{2}\vartheta_{3}\vartheta_{4}\vartheta_{5}\vartheta_{6}\vartheta_{7}\vartheta_{8}\vartheta_{9}$\\
$152$&152a1&$y^2 = x^{3} + x^{2} -  x + 3 $&$\vartheta_{1}\vartheta_{2}\vartheta_{3}^{2}\vartheta_{4}^{2}\vartheta_{6}\vartheta_{7}\vartheta_{8}\vartheta_{10}$\\
$156$&156a1&$y^2 = x^{3} -  x^{2} - 5 x + 6 $&$\vartheta_{1}\vartheta_{2}^{2}\vartheta_{3}^{2}\vartheta_{4}\vartheta_{5}\vartheta_{6}\vartheta_{8}\vartheta_{12}$\\
$160$&160a1&$y^2 = x^{3} + x^{2} - 6 x + 4 $&$\vartheta_{1}\vartheta_{2}\vartheta_{3}\vartheta_{4}^{2}\vartheta_{5}\vartheta_{6}\vartheta_{7}\vartheta_{8}\vartheta_{10}$\\
$162$&162a1&$y^2 + x y = x^{3} -  x^{2} - 6 x + 8 $&$\vartheta_{1}\vartheta_{2}\vartheta_{3}\vartheta_{4}^{2}\vartheta_{5}\vartheta_{6}^{2}\vartheta_{9}\vartheta_{10}$\\
$192$&192a1&$y^2 = x^{3} -  x^{2} - 4 x - 2 $&$\vartheta_{1}\vartheta_{2}\vartheta_{3}\vartheta_{4}\vartheta_{5}\vartheta_{6}^{2}\vartheta_{7}\vartheta_{8}\vartheta_{12}$\\
$196$&196a1&$y^2 = x^{3} -  x^{2} - 2 x + 1 $&$\vartheta_{1}\vartheta_{2}\vartheta_{3}\vartheta_{4}^{2}\vartheta_{5}\vartheta_{7}\vartheta_{8}^{2}\vartheta_{12}$\\
$200$&200b1&$y^2 = x^{3} + x^{2} - 3 x - 2 $&$\vartheta_{1}\vartheta_{2}\vartheta_{3}\vartheta_{4}\vartheta_{5}\vartheta_{6}\vartheta_{8}^{2}\vartheta_{9}\vartheta_{10}$\\
$210$&210d1&$y^2 + x y = x^{3} + x^{2} - 3 x - 3 $&$\vartheta_{1}\vartheta_{2}\vartheta_{3}\vartheta_{4}\vartheta_{5}\vartheta_{6}^{2}\vartheta_{7}\vartheta_{10}\vartheta_{12}$\\
$216$&216a1&$y^2 = x^{3} - 12 x + 20 $&$\vartheta_{2}^{2}\vartheta_{3}\vartheta_{4}\vartheta_{5}\vartheta_{6}\vartheta_{7}\vartheta_{8}\vartheta_{9}\vartheta_{12}$\\
$220$&220a1&$y^2 = x^{3} + x^{2} - 45 x + 100 $&$\vartheta_{2}^{2}\vartheta_{3}\vartheta_{4}\vartheta_{5}^{2}\vartheta_{7}\vartheta_{8}\vartheta_{10}\vartheta_{12}$\\
$240$&240c1&$y^2 = x^{3} -  x^{2} + 4 x $&$\vartheta_{1}\vartheta_{2}\vartheta_{3}\vartheta_{4}\vartheta_{5}\vartheta_{6}\vartheta_{8}\vartheta_{9}\vartheta_{10}\vartheta_{12}$\\
$252$&252b1&$y^2 = x^{3} - 12 x + 65 $&$\vartheta_{1}\vartheta_{2}\vartheta_{3}\vartheta_{4}\vartheta_{6}\vartheta_{7}\vartheta_{8}\vartheta_{9}\vartheta_{10}\vartheta_{12}$\\
$300$&300d1&$y^2 = x^{3} -  x^{2} - 13 x + 22 $&$\mathrm{?}$\\
$360$&360e1&$y^2 = x^{3} - 18 x - 27 $&$\vartheta_{2}\vartheta_{3}\vartheta_{4}\vartheta_{5}\vartheta_{6}\vartheta_{7}\vartheta_{9}\vartheta_{10}\vartheta_{12}\vartheta_{16}$\\

      \bottomrule
    \end{tabular}
    \label{tab:elliptic-curves-and-theta-blocks}
  \end{adjustwidth}
\end{table}

The space $J_{2,300}$ has dimension~$3$ and contains $5$~theta blocks
of length~$10$, which span the whole space. Here $\phi_{300}$ does not
equal any of these $5$ theta blocks. {\em We do not know whether it equals
a theta block of length greater than~$10$ (i.e., of length
$N=22,34,46,\dots$) or a generalized theta block.}

Concerning the question of an explicit example of a rational elliptic
curve whose associated Jacobi form is not a theta block, we found
$m=91$ as the first $m$ such that the subspace of cusp forms in
$J_{2,m}$ has dimension greater than~$1$ and contains a Hecke
eigenform with rational eigenvalues. In fact, $J_{2,91}$ contains no
non-cusp forms and has dimension~$2$, so that both Hecke eigenforms in
this space have rational eigenvalues and hence correspond to elliptic
curves. The Cremona label of these elliptic curves are 91.a1 and
91.b1, 91.b2, 91.b3, and they all have rank~$1$. The space $J_{2,91}$
contains exactly three cuspidal generalized theta blocks (which are in
fact theta blocks):
\begin{equation*}
  A = \frac {\vt{}^2\vt2\vt3\vt4^2\vt5^2\vt6\vt7}{\eta^{6}},\quad
  B = \frac {\vt{}^2\vt2^2\vt3^2\vt4\vt5\vt7\vt8}{\eta^{6}},\quad
  C = \frac {\vt{}\vt2^2\vt3^2\vt4^2\vt5\vt7^2}{\eta^{6}}
  .
\end{equation*}
One has $A+B=C$ (as follows for instance from the theta
relations below). The Hecke eigenforms are
\begin{equation*}
  A+B=C,\quad A-B
  .
\end{equation*}
Hence one is a theta block, the other one is not.

\subsection{Linear relations among theta blocks}

When studying linear dependencies between sets of theta blocks one can
restrict to sets whose elements have the same weight, same index and
same character (since the ring of all Jacobi forms is graded by
weight, index, character). Table~\ref{tab:weight-2-no-character}
suggests many concrete examples of linear dependencies. For instance
$J_{2,169}$ has dimension~$8$ but contains $50$ theta blocks.

Using the following identity, which seems to be due to Weierstrass
(see~\cite[1.]{Weierstrass}),
\begin{align*}
  &\vartheta(\tau,z_0+z_1) \vartheta(\tau,z_0-z_1) \vartheta(\tau,z_2+z_3) \vartheta(\tau,z_2-z_3)\\
  + &\vartheta(\tau,z_0+z_2) \vartheta(\tau,z_0-z_2) \vartheta(\tau,z_3+z_1) \vartheta(\tau,z_3-z_1)\\
  + &\vartheta(\tau,z_0+z_3) \vartheta(\tau,z_0-z_3) \vartheta(\tau,z_1+z_2) \vartheta(\tau,z_1-z_2)
  = 0
  .
\end{align*}
one obtains immediately an infinite family of linear relations between
theta blocks. Namely, using again $\vt a(\tau,z)=\vartheta(\tau,az)$
and substituting $(z_0+z_1,z_0-z_1,z_2+z_3,z_2-z_3)=(a,b,c,d)z$ yields
the relations
\begin{multline*}
  \vt {a} \vt {b} \vt {c} \vt {d} + \vt {(a+b+c-d)/2} \vt
  {(a+b-c+d)/2} \vt {(a-b+c+d)/2} \vt {(a-b-c-d)/2}
  \\
  = \vt {(a+b+c+d)/2} \vt {(a+b-c-d)/2} \vt {(a-b+c-d)/2} \vt
  {(a-b-c+d)/2} .
\end{multline*}
Here $a,b,c,d$ denotes any quadruple of integers whose sum is
even. For instance, for $a,b,c,d=1,4,5,6$ we obtain
$\vt {}\vt 4\vt 5\vt 6 + \vt 2 \vt 3\vt 4 \vt {-7} = \vt 8\vt {-3}\vt
{-2}\vt {-1}$, which after multiplication by
$\vt {}\vt 2\vt 3\vt 4\vt 5\vt 7/\eta^6$ yields the identity $A+B=C$ of
the preceding section.

There is also a five term relation similar to Weierstrass' three term
relation, whose terms are also products of four $\vartheta$, and which
is due to Jacobi (see~\cite[p.~507, formula (A)]{Jacobi}). {\em It is
  an interesting question if one can develop a theory based on such
  relations for theta functions in several variables which explains
  all linear relations among theta blocks.}

\FloatBarrier

\bibliography{quarks}
\bibliographystyle{alpha}

\end{document}